\newcommand{\bC}{\mathbb{C}}
\newcommand{\bG}{\mathbb{G}}
\newcommand{\bH}{\mathbb{H}}
\newcommand{\bK}{\mathbb{K}}
\newcommand{\bQ}{\mathbb{Q}}
\newcommand{\bR}{\mathbb{R}}
\newcommand{\bS}{\mathbb{S}}
\newcommand{\bZ}{\mathbb{Z}}
\newcommand{\ZZ}{\mathbb{Z}}
\newcommand{\QQ}{\mathbb{Q}}
\newcommand{\RR}{\mathbb{R}}
\newcommand{\CC}{\mathbb{C}}
\newcommand{\cA}{\mathcal{A}}
\newcommand{\cC}{\mathcal{C}}
\newcommand{\cF}{\mathcal{F}}
\newcommand{\cH}{\mathcal{H}}
\newcommand{\cM}{\mathcal{M}}
\newcommand{\cO}{\mathcal{O}}
\newcommand{\cV}{\mathcal{V}}
\newcommand{\fc}{\mathfrak{c}}
\newcommand{\fS}{\mathfrak{S}}
\newcommand{\Ag}{\mathcal{A}_g}
\newcommand{\Fg}{\mathcal{F}_g}
\newcommand{\Hg}{\mathcal{H}_g}
\newcommand{\gG}{\mathbf{G}}
\newcommand{\gGL}{\mathbf{GL}}
\newcommand{\gGSp}{\mathbf{GSp}}
\newcommand{\gPGSp}{\mathbf{PGSp}}
\newcommand{\gH}{\mathbf{H}}
\newcommand{\gP}{\mathbf{P}}
\newcommand{\gS}{\mathbf{S}}
\newcommand{\gSp}{\mathbf{Sp}}
\newcommand{\rH}{\mathrm{H}}
\newcommand{\rM}{\mathrm{M}}
\DeclareMathOperator{\Aut}{Aut}
\DeclareMathOperator{\characteristic}{char}
\DeclareMathOperator{\covol}{covol}
\DeclareMathOperator{\disc}{disc}
\DeclareMathOperator{\End}{End}
\DeclareMathOperator{\GL}{GL}
\DeclareMathOperator{\Hom}{Hom}
\DeclareMathOperator{\Nm}{Nm}
\DeclareMathOperator{\Nrd}{Nrd}
\DeclareMathOperator{\rk}{rk}
\DeclareMathOperator{\Stab}{Stab}
\DeclareMathOperator{\Tr}{Tr}
\DeclareMathOperator{\Trd}{Trd}
\newcommand{\ad}{\mathrm{ad}}
\newcommand{\id}{\mathrm{id}}
\newcommand{\op}{\mathrm{op}}
\newcommand{\extpower}{\bigwedge\nolimits}
\newcommand{\abs}[1]{\lvert #1 \rvert}
\newcommand{\Bigabs}[1]{\Bigl\lvert #1 \Bigr\rvert}
\newcommand{\length}[1]{\lVert #1 \rVert}
\newcommand{\bs}{\backslash}
\newcommand{\ov}{\overline}
\newcommand{\fullmatrix}[4]{\left( \begin{matrix} #1 & #2 \\ #3 & #4 \end{matrix} \right)}
\newcommand{\fullsmallmatrix}[4]{\bigl( \begin{smallmatrix} #1 & #2 \\ #3 & #4 \end{smallmatrix} \bigr)}
\newcommand{\defterm}[1]{\textbf{#1}}
\newtheorem{lemma}{Lemma}[section]
\newtheorem{proposition}[lemma]{Proposition}
\newtheorem{theorem}[lemma]{Theorem}
\newtheorem{corollary}[lemma]{Corollary}
\newtheorem{conjecture}[lemma]{Conjecture}
\Crefname{conjecture}{Conjecture}{Conjectures} 
\Crefname{claim}{Claim}{Claims}
\newtheorem*{lemma*}{Lemma}
\newtheorem*{proposition*}{Proposition}
\newtheorem*{theorem*}{Theorem}
\newtheorem*{corollary*}{Corollary}
\newtheorem*{claim*}{Claim}
\theoremstyle{definition}
\newcounter{constant}
\newcommand{\createC}[1]{\refstepcounter{constant} \label{C:#1}}
\newcommand{\newC}[1]{%
   \ifthenelse{\equal{#1}{*}} {%
      \stepcounter{constant} C_{\theconstant}%
   } {%
      \refstepcounter{constant} C_{\theconstant} \label{C:#1}%
   }%
}
\newcommand{\refC}[1]{C_{\ref*{C:#1}}}
\newcommand{\Qbar}{\overline \bQ}
\newcommand{\fA}{\mathfrak{A}}
\title[Lattices with skew-Hermitian forms and unlikely intersections]{Lattices with skew-Hermitian forms over division algebras and unlikely intersections}
\author{Christopher Daw}
\author{Martin Orr}
\address{Daw: Department of Mathematics and Statistics, University of Reading,
    White\-knights,  PO Box 217,  Reading,  Berkshire RG6 6AH,  United Kingdom}
\email{chris.daw@reading.ac.uk}
\address{Orr: Department of Mathematics, The University of Manchester, Alan Turing Building, Oxford Road, Manchester M13 9PL, United Kingdom}
\email{martin.orr@manchester.ac.uk}
\subjclass[2020]{11E39, 11G18}
\keywords{Division algebras, Hermitian forms, abelian varieties, Zilber--Pink conjecture, unlikely intersections}
\begin{document}

\begin{abstract}
This paper has two objectives. First, we study lattices with skew-Hermitian forms over division algebras with positive involutions.  For division algebras of Albert types I and~II, we show that such a lattice contains an ``orthogonal'' basis for a sublattice of effectively bounded index.  Second, we apply this result to obtain new results in the field of unlikely intersections. More specifically, we prove the Zilber--Pink conjecture for the intersection of curves with special subvarieties of simple PEL type I and II under a large Galois orbits conjecture. We also prove this Galois orbits conjecture for certain cases of type~II.

\vspace{1em}

\noindent \textsc{Résumé}
(Réseaux munis de formes anti-Hermitiennes sur des algèbres à division et intersections atypiques)

Cet article a deux objectifs.  Nous étudions d'abord les réseaux munis de formes anti-Hermitiennes sur des algèbres à division avec involutions positives.  Pour les algèbres à division de type I et~II dans la classification d'Albert, nous montrons qu'un tel réseau contient une base ``orthogonale'' pour un sous-réseau d'indice effectivement bornée.  Ensuite, nous appliquons ce résultat pour obtenir des nouveaux résultats dans la théorie d'intersections atypiques.  En particulier, nous prouvons la conjecture de Zilber--Pink pour l'intersection des courbes avec les sous-variétés spéciales de type PEL simple I et~II sous une conjecture de grandes orbites de Galois.  De plus, nous prouvons cette conjecture sur les orbites Galoisiennes dans certains cas de type~II.
\end{abstract}

\maketitle

\setcounter{tocdepth}{1}

\tableofcontents

\section{Introduction}

In this paper we develop a quantitative result on reduction theory for lattices over division algebras equipped with skew-Hermitian forms.  Our main theorem is inspired by Minkowski's theorems on lattices and Masser and Wüstholz's class index lemma \cite{MW95}, with the additional ingredient of looking for a basis which interacts nicely with a skew-Hermitian form.

Our purpose in proving this theorem is to apply it to certain cases of the Zilber--Pink conjecture in moduli spaces of abelian varieties.  The theorem on lattices supplies the ``parameter height bound'' needed for the Pila--Zannier strategy.
This generalises our earlier paper \cite{QRTUI}, where we proved some cases of Zilber--Pink for the moduli space of abelian \textit{surfaces} using quantitative reduction theory.

\subsection{Bases and skew-Hermitian forms over division algebras}

A classical result in algebraic number theory, due to Minkowski, asserts that if $R$ is the ring of integers of a number field, then every ideal $I \subset R$ contains an element~$x$ such that the index $[I:Rx]$ is bounded by an explicit multiple of $\sqrt{\disc(R)}$.
A similar result can be proved for torsion-free modules of finite rank over the ring of integers of a number field, by combining Minkowski's theorem with the structure theory of finite-rank modules over a Dedekind domain (see \cite[\S 22, Exercise~6]{CR62}).

In \cite{MW95}, Masser and Wüstholz generalised this theorem to torsion-free $R$-modules $L$ of finite rank over any order $R$ in a division $\bQ$-algebra.
This generalisation shows that there is a free $R$-submodule of finite index in~$L$, with index $[L:R]$ bounded polynomially in terms of $\disc(R)$.
The statement is as follows. (See section \ref{sec:orders} for the definition of the discriminant of an order in a semisimple $\QQ$-algebra.)

\begin{theorem} {\cite[Chapter 2, Class Index Lemma]{MW95}} \label{minkowski-general-index}
Let $D$ be a division $\bQ$-algebra and let $R$ be an order in $D$.
Let $L$ be a torsion-free $R$-module of finite rank $m$.
Then there exists a left $D$-basis $v_1, \dotsc, v_m$ for $D \otimes_R L$ such that $v_1, \dotsc, v_m$ are in $L$ and $[L:Rv_1 + \dotsb + Rv_m] \leq \abs{\disc(R)}^{m/2}$.
\end{theorem}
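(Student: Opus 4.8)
The plan is to induct on $m$, reducing everything to the rank-one case, which is a version of Minkowski's theorem for orders in division algebras. To set up, note that since $L$ is torsion-free it embeds in $D\otimes_R L$, and after fixing an isomorphism $D\otimes_R L\cong D^m$ of left $D$-modules we may regard $L$ as a subgroup of $D^m$. Using that $R$ spans $D$ over $\bQ$ and that $RL\subseteq L$, while the image of $L$ generates $D\otimes_R L$ over $D$, one checks that $L$ is then a full $\bZ$-lattice in $D^m$, i.e.\ $\bQ L=D^m$ and $\rk_\bZ L=m\dim_\bQ D$. The case $m=0$ is trivial, so assume $m\ge 1$ and that the statement holds in smaller rank.

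For the inductive step, let $p\colon D^m\to D$ be projection onto the last coordinate, put $L'=\ker(p|_L)$ and $\bar L=p(L)\subseteq D$, so that $0\to L'\to L\xrightarrow{p}\bar L\to 0$ is an exact sequence of left $R$-modules. A dimension count shows $L'$ is a full lattice of rank $m-1$ in $D^{m-1}\times 0$ and $\bar L$ is a full left $R$-lattice in $D$, in particular torsion-free of rank $1$. Apply the rank-one case (below) to get $\bar v_m\in\bar L\setminus\{0\}$ with $[\bar L:R\bar v_m]\le\abs{\disc(R)}^{1/2}$, choose $v_m\in L$ with $p(v_m)=\bar v_m$, and apply the inductive hypothesis to $L'$ to get a $D$-basis $v_1,\dotsc,v_{m-1}$ of $D^{m-1}$ inside $L'$ with $[L':Rv_1+\dotsb+Rv_{m-1}]\le\abs{\disc(R)}^{(m-1)/2}$. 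Then $v_1,\dotsc,v_m$ is a $D$-basis of $D^m$ contained in $L$. Writing $N=Rv_1+\dotsb+Rv_m$, the key point is that $p(N)=R\bar v_m$ while $N\cap L'=Rv_1+\dotsb+Rv_{m-1}$ \emph{exactly}: if $\sum r_i v_i\in L'$ then $r_m\bar v_m=0$ in $D$, and since $D$ is a division algebra and $\bar v_m\ne0$ this forces $r_m=0$. Hence indices multiply,
\[
  [L:N]=[L':N\cap L']\cdot[\bar L:R\bar v_m]\le\abs{\disc(R)}^{(m-1)/2}\cdot\abs{\disc(R)}^{1/2}=\abs{\disc(R)}^{m/2},
\]
completing the induction.

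It remains to prove the rank-one case: a full left $R$-lattice $M\subseteq D$ contains $v\ne0$ with $[M:Rv]\le\abs{\disc(R)}^{1/2}$. For $v\in M\setminus\{0\}$, right multiplication $\rho_v\colon x\mapsto xv$ is a $\bQ$-linear automorphism of $D$ with $Rv=\rho_v(R)$, so for any Euclidean structure on $D\otimes_\bQ\bR$ one has $[M:Rv]=\abs{\det\rho_v}\,\covol(R)/\covol(M)$, where $\det\rho_v$ is the norm of $v$ for the regular representation. Taking the Euclidean structure attached to the reduced trace form, normalised at the archimedean places exactly as in the Minkowski embedding of a number field, makes $\covol(R)^2=\abs{\disc(R)}$, so the claim reduces to: $M$ contains $v\ne0$ with $\abs{\det\rho_v}\le\covol(M)$. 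This is the exact analogue of Minkowski's classical bound for the norm form of a number field. The set $\{x\in D\otimes\bR:\abs{\det\rho_x}\le1\}$ is not convex, but by the arithmetic--geometric-mean inequality applied to the singular values of $\rho_x$ on each simple factor of $D\otimes\bR$ it contains an explicit convex symmetric body whose volume one computes, and Minkowski's convex body theorem applied to $M$ inside this body produces the required $v$.

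The main obstacle is this rank-one case, and within it the sharpness of the constant: a crude geometry-of-numbers argument only yields $[M:Rv]\le c(\dim_\bQ D)\abs{\disc(R)}^{1/2}$ for some dimension constant $c$, and trimming $c$ down to $1$ requires both the arithmetic--geometric-mean convexification and the precise normalisation identifying $\covol(R)$ with $\abs{\disc(R)}^{1/2}$ (the non-commutative analogue of the $(2/\pi)^{r_2}$ savings in the classical Minkowski bound). By contrast, the inductive step is pure bookkeeping once one exploits that $D$ is a division algebra to rule out torsion in the successive quotients.
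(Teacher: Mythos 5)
The paper offers no proof of \cref{minkowski-general-index} — it is quoted verbatim from [MW95] — so there is no in-paper argument to compare against; your proposal has to stand on its own. Your inductive skeleton does: the reduction via $0 \to L' \to L \to \bar L \to 0$, the multiplicativity $[L:N]=[L':N\cap L']\,[\bar L:p(N)]$, and the identification $N\cap L' = Rv_1+\dotsb+Rv_{m-1}$ (using that $r_m\bar v_m=0$ forces $r_m=0$ in a division algebra) are all correct, so the theorem really does reduce to the rank-one case with the exact constant $\abs{\disc(R)}^{1/2}$. (One reading point: ``torsion-free of finite rank'' must be interpreted as finitely generated, as you implicitly do when declaring $L$ a full $\bZ$-lattice in $D^m$; otherwise the index need not be finite.)

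The gap is exactly where you yourself locate the difficulty: the rank-one case with constant $1$ is asserted, not proved. Two concrete issues. First, the normalisation claim ``$\covol(R)^2=\abs{\disc(R)}$ for the Euclidean structure attached to the reduced trace form'' is false for noncommutative $D$: with the inner product $\Trd_{D_\bR/\bR}(ab^\dag)$ one has $\abs{\disc(R)}=d^{d^2e}\covol(R)^2$, precisely the paper's \eqref{eqn:disc-covol}, because $\disc(R)$ is taken with the \emph{non-reduced} trace. You may of course rescale the measure to force $\covol(R)^2=\abs{\disc(R)}$, but then the Minkowski volume computation must be redone in that measure, and ``exactly as in the Minkowski embedding of a number field'' does not say what to do on factors $\rM_d(\bR)$, $\rM_d(\bC)$, $\rM_{d/2}(\bH)$. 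Second, the decisive quantitative step — that the convex body produced by AM--GM on singular values is large enough for Minkowski's theorem to yield $v\neq 0$ with $\abs{\Nm_{D/\bQ}(v)}\le \bigl(\covol(M)/\covol(R)\bigr)\abs{\disc(R)}^{1/2}$ — is the entire content of the lemma and is nowhere verified. It is in fact true: with the $\Trd(ab^\dag)$ metric the target becomes $\abs{\Nm_{D/\bQ}(v)}\le d^{d^2e/2}\covol(M)$, and a place-by-place estimate (a Euclidean ball inside the AM--GM region on each simple factor of $D_\bR$) gives a constant at most the allowed share $d^{d^2[F_v:\bR]/2}$ at every place, with equality only at real places with $d=1$, i.e.\ the totally real field case; the factor $d^{d^2e/2}$ coming from the non-reduced trace provides ample slack at the noncommutative places. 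So your route can be completed, but as written the proposal stops exactly where the work begins — the volume estimates and normalisation bookkeeping — and the one explicit normalisation identity it does assert is off by the factor $d^{d^2e}$ (fortunately in the favourable direction).
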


In another direction, if $L$ is a $\bZ$-module of finite rank equipped with a positive definite symmetric bilinear form $\psi \colon L \times L \to \bZ$, then one can use the classical reduction theory of quadratic forms to find an \textit{orthogonal} basis $v_1, \dotsc, v_m$ for $L \otimes_\bZ \bQ$ such that $v_1, \dotsc, v_n \in L$ and $[L:\bZ v_1 + \dotsb + \bZ v_m]$ is bounded by a polynomial in $\abs{\disc(L)}$.
A similar result for a $\bZ$-module of finite rank equipped with a symplectic form can be found in \cite{Orr15} (see Lemma~4.3 therein).

In this paper, we obtain a version of \cref{minkowski-general-index} in which $L$ is equipped with a $(D,\dag)$-skew-Hermitian form (see section \ref{subsec:skew-hermitian-forms} for the definition of a $(D,\dag)$-skew-Hermitian form.)
We seek a basis of $D \otimes_R L$ which is weakly symplectic or weakly unitary with respect to this form.  Weakly symplectic or weakly unitary bases are the analogues for $(D,\dag)$-skew-Hermitian forms of bases which are orthogonal but not necessarily orthonormal: we say that a $D$-basis $v_1, \dotsc, v_m$ is \defterm{weakly symplectic} if $\psi(v_i, v_j) = 0$ for all $i, j$ except when $\{i,j\} = \{2k-1,2k\}$ for some $k \in \bZ$, and that the basis is \defterm{weakly unitary} if $\psi(v_i, v_j) = 0$ for all $i, j \in \{ 1, \dotsc, m \}$ such that $i \neq j$.

\begin{theorem} \label{minkowski-hermitian-perfect}
Let $D$ be either a totally real number field or a totally indefinite quaternion algebra over a totally real number field.
Let $\dag$ be a positive involution of~$D$.
Let $V$ be a left $D$-vector space of dimension~$m$, equipped with a non-degenerate $(D,\dag)$-skew-Hermitian form $\psi \colon V \times V \to D$.
Let $L$ be a $\bZ$-lattice of full rank in~$V$ such that $\Trd_{D/\bQ} \psi(L \times L) \subset \bZ$.
Let $R=\Stab_D(L)$ denote the stabiliser of $L$ in~$D$.

Then there exists a $D$-basis $v_1, \dotsc, v_m$ for $V$ such that:
\begin{enumerate}[(i)]
\item $v_1, \dotsc, v_m \in L$;
\item the basis is weakly symplectic (when $D$ is a field) or weakly unitary (when $D$ is a quaternion algebra) with respect to~$\psi$;
\item $[L:Rv_1 + \dotsb + Rv_m] \leq \newC{minkowski-main-first} \abs{\disc(R)}^{\newC*} \abs{\disc(L)}^{\newC*}$;
\item $\abs{\psi(v_i, v_j)}_D \leq \newC* \abs{\disc(R)}^{\newC*} \abs{\disc(L)}^{\newC{minkowski-main-last}}$ for $1 \leq i, j \leq m$.
\end{enumerate}
The constants $\refC{minkowski-main-first}, \dotsc, \refC{minkowski-main-last}$ depend only on $m$ and $\dim_\bQ(D)$.
\end{theorem}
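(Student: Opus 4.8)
The plan is to prove the theorem by induction on $m$, at each stage peeling off a $\psi$-orthogonal summand of $V$ — a hyperbolic plane $P = Dv_{1} + Dv_{2}$ when $D$ is a field, or a line $P = Dv_{1}$ when $D$ is a quaternion algebra — and recursing on the restriction of $\psi$ to $P^{\perp}$ (the recursion being vacuous once $P = V$, which covers the base cases $m \leq 2$). The non-effective skeleton is the classical structure theory of skew-Hermitian forms: over a positive involution of a totally real field such a form is alternating, hence has a symplectic basis; over a positive (orthogonal-type) involution of a totally indefinite quaternion algebra it behaves like a symmetric bilinear form and has an orthogonal basis, and in particular the locus $\{x : \psi(x,x) = 0\}$ then contains no full sublattice. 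So the real work is to carry out this splitting effectively over $\bZ$.

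The two effective tools are Minkowski's convex body theorem and \cref{minkowski-general-index}. Using that $\dag$ is a positive involution, I would fix a positive definite quadratic form $h$ on $V_{\bR}$ compatible with $\psi$ and with the $D$-module structure, with covolume of $L$ polynomially bounded in terms of $\abs{\disc(L)}$; Minkowski's theorem then yields a nonzero $v_{1} \in L$ with $h(v_{1})$, and hence $\abs{\psi(v_{1},x)}_{D}$ for $x$ of bounded length, polynomially bounded in $\abs{\disc(R)}$ and $\abs{\disc(L)}$. When $D$ is a quaternion algebra I would take $v_{1}$ short with $\psi(v_{1},v_{1}) \neq 0$ (possible since $\{x:\psi(x,x)=0\}$ has no full sublattice), and set $P = Dv_{1}$. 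When $D$ is a field I would additionally take $v_{2} \in L$ short and with $\psi(v_{1},v_{2}) \neq 0$ — this condition excludes only the proper $\bQ$-subspace $\{x : \psi(v_{1},x) = 0\}$, so a short such $v_{2}$ still exists by Minkowski's theorem — and set $P = Dv_{1} + Dv_{2}$, a non-degenerate plane on which $\psi$ is hyperbolic. In either case $L \cap P$ is a full lattice in $P$, and applying \cref{minkowski-general-index} to it (together with the shortness of the $v_{i}$, to obtain a genuine size bound) produces a free $R$-submodule $Rv_{1}$ (resp. $Rv_{1}+Rv_{2}$) of effectively bounded index in $L\cap P$; such a submodule is automatically weakly unitary (resp. weakly symplectic) inside $P$, by non-degeneracy and alternation.

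To make the induction close I must bound the discriminants of $L' := L \cap P^{\perp}$ and of $R' := \Stab_{D}(L')$ in terms of the original data. Since $\psi$ is $D$-linear in its first argument, $P^{\perp}$ is a $D$-submodule and $R \subseteq R'$, so $\abs{\disc(R')} \leq \abs{\disc(R)}$. For the lattice I would estimate the denominators of the $\psi$-orthogonal projections $V \to P$, $V \to P^{\perp}$ in terms of $\abs{\psi(v_{1},v_{2})}_{D}$ (resp. $\abs{\psi(v_{1},v_{1})}_{D}$) and $\abs{\disc(R)}$; this bounds $[\,L : (L\cap P)\oplus L'\,]$, and combined with $\abs{\disc(L\cap P)} \geq 1$ it yields a polynomial bound for $\abs{\disc(L')}$. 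Feeding $(P^{\perp}, L', R')$ into the inductive hypothesis gives the remaining basis vectors, which lie in $P^{\perp}$ and are therefore $\psi$-orthogonal to $v_{1}$ and $v_{2}$; so the whole list is weakly symplectic (resp. weakly unitary), proving (i) and (ii). Multiplying the within-$P$ index, the block-splitting index, and the indices from the recursion gives (iii); (iv) follows from the shortness of the $v_{i}$ and the integrality of $\psi$ on $L$. Because the estimates are uniform in $(L,\psi)$ and the recursion has depth at most $m$, the constants depend only on $m$ and $\dim_{\bQ}D$.

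The step I expect to be the main obstacle is the discriminant bookkeeping in the inductive step: controlling the denominators of the $\psi$-orthogonal projection onto $P^{\perp}$, and hence $\abs{\disc(L')}$ and the sizes $\abs{\psi(v_{i},v_{j})}_{D}$, by genuine polynomials in $\abs{\disc(R)}$ and $\abs{\disc(L)}$, so that iterating the construction $m$ times does not destroy polynomiality. Subsidiary difficulties are the construction of the compatible positive definite form $h$ with controlled covolume, the effective selection of the short vectors $v_{1}$ (and $v_{2}$) outside the relevant proper subvariety, and verifying via the structure theory that the orthogonal (weakly unitary) rather than symplectic target is correct in the quaternion case.
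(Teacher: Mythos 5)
Your overall skeleton (Gram--Schmidt induction, peeling off a hyperbolic plane in the field case or an anisotropic piece in the quaternion case, then bounding the discriminant of the orthogonal complement and recursing) is indeed the paper's strategy (\cref{pre-induction,weakly-unitary-induction}), but there is a genuine gap at exactly the point the paper works hardest. In the quaternion case you claim one can take $v_1\in L$ \emph{short} with $\psi(v_1,v_1)\neq 0$ ``since $\{x:\psi(x,x)=0\}$ contains no full sublattice.'' That observation only yields the \emph{existence} of anisotropic vectors in $L$; it gives no control on their length. The short vectors produced by reduction theory (indeed the whole rank-$m$ sublattice they generate over $\bZ$) may all be isotropic, and nothing in your argument bounds $\abs{\psi(v_1,v_1)}_D$ for the shortest anisotropic vector polynomially in $\abs{\disc(R)}$ and $\abs{\disc(L)}$. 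This is precisely the case the paper must handle separately: when the chosen short pair satisfies $\psi(w_i,w_i)=0$, it manufactures two anisotropic vectors $v_1=w_i-w_j'$, $v_2=w_i+w_j'$ with $w_j'=2\psi(w_i,w_j)\omega w_j-\omega\psi(w_j,w_j)w_i$, using an auxiliary anti-symmetric element $\omega\in D^-$ with $\omega R^*\subset R$ and polynomially bounded norm (\cref{small-antisymm-star}), which in turn needs conductor/dual-order estimates and the $\dag$-stability input $\eta R^\dag\subset R$ supplied by \cref{R-cap-Rdag}. Your proposal has no substitute for this step.

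A second, related soft spot is the selection of the short pair itself. You take $v_1$ the shortest vector and then $v_2$ ``short with $\psi(v_1,v_2)\neq 0$ \dots by Minkowski's theorem,'' but Minkowski's convex-body theorem does not produce a short vector outside a prescribed hyperplane, and what the final estimates require is a bound on the \emph{product} $\abs{v_1}\,\abs{v_2}$: individual successive minima are not controlled by $\abs{\disc(R)},\abs{\disc(L)}$ (the first minimum can be arbitrarily small for fixed discriminants, forcing any non-orthogonal partner to be long), so ``shortest plus shortest non-orthogonal'' is not obviously good enough. The paper circumvents this by taking a full $D$-basis with bounded product of lengths via Minkowski's second theorem over $D$ (\cref{D-minkowski}), invoking Hall's marriage theorem to find a permutation $\sigma$ with $\psi(w_i,w_{\sigma(i)})\neq 0$ (\cref{non-zero-permutation}), and then choosing the pair minimizing $\abs{w_i}\abs{w_{\sigma(i)}}$, so that only $\bigl(\prod_i\abs{w_i}\bigr)^{2/m}$ enters the bounds (\cref{short-non-degenerate-vectors}). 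Unless you either adopt such a pair-selection argument or prove a product bound for your selection, the induction does not close with polynomial constants; the rest of your bookkeeping (orthogonal projection denominators, $\disc$ of $L\cap P^{\perp}$, enlarging the order) is in line with the paper's \cref{disc-lattice-complement,disc-triangular}.
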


Explicit, but not optimal, values for the constants are given in \cref{weakly-unitary-induction}.
One could also prove a version of this theorem that bounds the lengths of the vectors $v_i$, in the style of Minkowski's second theorem, but this is stronger than needed for our application, and according to the proof that we know, the constants are exponential instead of polynomial in~$m$.

Division $\bQ$-algebras with positive involution were classified by Albert into four types (see section~\ref{subsec:albert}).
The division algebras treated in \cref{minkowski-hermitian-perfect} are those of types I and~II in Albert's classification.
It is likely that this paper's strategy could be adapted to prove \cref{minkowski-hermitian-perfect} for division $\bQ$-algebras with positive involution of types III and~IV, as well as a version for Hermitian forms instead of skew-Hermitian forms, although various steps in the argument would require modification.

\subsection{Applications to the Zilber--Pink conjecture} \label{subsec:intro-zp-theorem}

We apply \cref{minkowski-hermitian-perfect} to prove certain cases of the Zilber--Pink conjecture on unlikely intersections in the moduli space $\cA_g$ of principally polarised abelian varieties of dimension $g$ (which is an example of a Shimura variety), as follows.

\begin{theorem} \label{main-theorem-zp}
Let $g \geq 3$.
Let $\Sigma$ denote the set of points $s \in \Ag(\bC)$ for which the endomorphism algebra of the associated abelian variety $A_s$ is either a totally real field, other than~$\bQ$, or a non-split totally indefinite quaternion algebra over a totally real field.
Let $C$ be an irreducible Hodge generic algebraic curve in $\Ag$.

If $C$ satisfies \cref{galois-orbits}, then $C \cap \Sigma$ is finite.
\end{theorem}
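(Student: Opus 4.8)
The plan is to run the Pila--Zannier strategy in $\cA_g$, with \cref{minkowski-hermitian-perfect} supplying the arithmetic input that the method needs. A standard spreading-out and specialisation argument, preserving Hodge genericity and the infinitude of the intersection with $\Sigma$, reduces us to the case that $C$ is defined over a number field $\bK$; so suppose for contradiction that $C$ is Hodge generic, defined over $\bK$, with $C\cap\Sigma$ infinite. For $s\in C\cap\Sigma$ put $D=\End(A_s)\otimes\bQ$ — a totally real field $\neq\bQ$ or a totally indefinite quaternion division algebra over a totally real field — and let $\dag$ be the Rosati involution and $L=\rH_1(A_s,\bZ)$. The principal polarisation endows $L$ with a unimodular alternating form which, by the Rosati compatibility, corresponds to a non-degenerate $(D,\dag)$-skew-Hermitian form $\psi$ on $V=D\otimes_\bZ L$ with $\Trd_{D/\bQ}\psi(L\times L)\subseteq\bZ$; thus $s$ lies on the special subvariety $Z_s\subset\cA_g$ of simple PEL type I or II attached to $(D,\dag,\psi,L)$. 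Because $g\ge 3$, a short case check shows every such $Z_s$ has codimension $\ge 2$ in $\cA_g$; this is where the hypothesis $g\ge 3$ enters, and it is what makes these intersections atypical. As $C$ is Hodge generic it lies in no proper special subvariety, so it meets each $Z_s$ in a finite set; since $Z_s$ and $C$ are defined over number fields, that set lies in $\cA_g(\Qbar)$, so $s\in\cA_g(\Qbar)$ has $[\bK(s):\bK]<\infty$ Galois conjugates over $\bK$, all again in $C\cap\Sigma$. Finally, as $C\cap Z_s$ is finite, infinitely many distinct $Z_s$ occur, so $\Delta_s:=\abs{\disc(\Stab_D(L))}\cdot\abs{\disc(L)}$ is unbounded along $C\cap\Sigma$.

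Next I would turn this into a definable counting problem. Fix the uniformisation $\pi\colon\Hg\to\cA_g$ by $\Sp_{2g}(\bZ)$ and a semialgebraic fundamental set $\mathcal{F}\subset\Hg$, definable in $\Ranexp$, together with standard models of the sub-Shimura data for each of the finitely many admissible combinatorial types. Applying \cref{minkowski-hermitian-perfect} to $(V,\psi,L)$ produces a $D$-basis of $V$ lying in $L$ — weakly symplectic in the field case and weakly unitary in the quaternion case — whose index in $L$ and whose form-values are bounded by a fixed power of $\Delta_s$. Such a basis amounts to an element $g_s\in\gGSp_{2g}(\bQ)$, of height bounded by a fixed power of $\Delta_s$, conjugating the standard model of the relevant type onto the PEL datum carried by $A_s$; correspondingly the complexity of $Z_s$ as a special subvariety is bounded by a fixed power of $\Delta_s$, and conversely $\Delta_s$ by a power of the complexity. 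Choosing a lift $\tilde s\in\mathcal{F}$ of $s$, the pair $(\tilde s,g_s)$ then lies on a definable subset $\mathcal{Z}\subseteq\mathcal{F}\times\gGSp_{2g}(\bR)$ — roughly, the locus of $(\tilde x,g)$ with $\pi(\tilde x)\in C$ and $g$ carrying the standard model onto the structure determined by $\tilde x$ — whose rational points of bounded height encode points of $C$ lying on a special subvariety of the given type. Since every $\Gal(\Qbar/\bK)$-conjugate $s^\sigma$ has the same invariants, hence the same $\Delta_s$, the set $\mathcal{Z}$ contains at least $[\bK(s):\bK]$ rational points of height at most a fixed power of $\Delta_s$.

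Now \cref{galois-orbits}, via the polynomial equivalence between the complexity of $Z_s$ and $\Delta_s$, gives $[\bK(s):\bK]\gg\Delta_s^{\delta}$ for some fixed $\delta>0$. Hence $\mathcal{Z}$ contains $\gg T^{\delta/c}$ rational points of height $\le T$, where $T=\Delta_s^{c}$ and $T\to\infty$ along $C\cap\Sigma$. Taking the exponent in the Pila--Wilkie counting theorem smaller than $\delta/c$, for $\Delta_s$ large these points cannot all lie in the transcendental part of $\mathcal{Z}$; so a connected positive-dimensional semialgebraic subset of $\mathcal{Z}$ contains one of them. Projecting to $\Hg$, and using the standard argument to rule out blocks contained in a single fibre $\{\tilde x\}\times\gGSp_{2g}(\bR)$, we obtain a positive-dimensional semialgebraic subset of $\pi^{-1}(C)$. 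The Ax--Lindemann theorem for $\cA_g$ (Pila--Tsimerman) then forces $C$ to be weakly special, contradicting the fact that a Hodge generic curve in $\cA_g$ with $g\ge 3$ is not weakly special. Therefore $C\cap\Sigma$ is finite.

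I expect the main obstacle to lie in the middle step: extracting from the essentially lattice-theoretic conclusion of \cref{minkowski-hermitian-perfect} a genuine polynomial height bound for $g_s\in\gGSp_{2g}(\bQ)$ and for the complexity of $Z_s$, while controlling how the varying algebra $D$, involution $\dag$ and form $\psi$ are carried onto the fixed standard models, and verifying that the complexity produced this way is precisely the invariant governed by \cref{galois-orbits}. The o-minimality, Pila--Wilkie and Ax--Lindemann ingredients are by now standard for $\cA_g$, and the reduction to a number field is routine.
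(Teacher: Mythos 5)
Your overall architecture (Pila--Zannier with \cref{minkowski-hermitian-perfect} as the arithmetic input) is the right one, but the step you yourself flag as the main obstacle is not just hard — as you have set it up, it fails. There is in general \emph{no} element $g_s \in \gGSp_{2g}(\bQ)$, of any height, carrying a fixed standard model onto the PEL datum at $s$: the algebra $D=\End(A_s)\otimes\bQ$ (a totally real field $\neq\bQ$, or a non-split quaternion algebra) is not $\bQ$-isomorphic to the split algebra $\rM_d(\bQ)^e$ underlying the fixed model, so the group $\gH_s$ is only $\gGSp_{2g}(\bR)$-conjugate to $\gH_0$ (\cref{conj-class-mt}), never $\gGSp_{2g}(\bQ)$-conjugate; moreover normalising the weakly symplectic/unitary basis of \cref{minkowski-hermitian-perfect} to a symplectic/unitary one requires square roots and only works over $\bR$ (\cref{semi-orthogonal-normalise}). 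Consequently you cannot count rational group elements in $\cF\times\gGSp_{2g}(\bR)$. The paper's way around this is the content of sections \ref{sec:representation} and \ref{sec:rep-bound}: one builds the representation $\rho_L$ on $W=\extpower^{mn}\End(V)$ with closed orbit and stabiliser $\gH_0$ (\cref{reps-closed-orbits}), and the rational data attached to $s$ is an \emph{integral vector} $w=\rho_L(b^{-1}u)w_u\in\Lambda$ in the expanded orbit $\Aut_{\rho_L(\gG)}(\Lambda_\bR)\rho_L(\gG(\bR)^+)w_0$ (essentially the exterior power of the order $S_u=\End_{R_u}(L)$), whose height is polynomially bounded (\cref{rep-bound-arithmetic}, using \cref{minkowski-hermitian-perfect} via \cref{theta-def} together with \cite[Theorem~1.2]{QRTUI} for quantitative fundamental sets); the group is then recovered as the real stabiliser of $w$. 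Incidentally, your preliminary reduction to a number field is both unnecessary and problematic: \cref{galois-orbits} is a hypothesis about the given curve over a finitely generated field, and specialisation neither transfers that hypothesis nor preserves membership in $\Sigma$ (endomorphism algebras can grow).

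There is a second genuine gap in your endgame. The counting theorem needed here is the Habegger--Pila--Wilkie semi-rational version, and its conclusion is a definable path in the incidence set whose projection to the \emph{parameter} space is semialgebraic, while its projection to $\cC\subset\pi^{-1}(C)$ is only continuous, definable and non-constant. So you do not obtain a positive-dimensional semialgebraic subset of $\pi^{-1}(C)$, and you cannot invoke Ax--Lindemann on $\pi^{-1}(C)$ to conclude that $C$ is weakly special. The paper (following \cite[sec.~6.5]{QRTUI}) instead complexifies the semialgebraic curve in the parameter space to an algebraic set $\tilde B$, shows the lift $\tilde C$ is contained in $\tilde B\cdot(X_0^+)^\vee$, uses the inverse Ax--Lindemann theorem to identify the Zariski closure of $\tilde C$ with $\tilde S$, and gets the contradiction from the dimension count $\dim(\tilde B\cdot(X_0^+)^\vee)\leq 1+\dim(X_0^+)\leq\dim(S)-1$ — which is where the codimension-$\geq 2$ bound (\cref{codim-pel}) and the uniqueness of the subdatum with group $\gH_0$ (\cref{unique-datum}) are actually used. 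Finally, note that $\abs{\disc(L)}=1$ for a principal polarisation, and that the unboundedness of $\abs{\disc(\End(A_s))}$ along an infinite $C\cap\Sigma$ is not automatic: it rests on the finiteness of special subvarieties of bounded discriminant (\cref{prop:finitely-many}), which is itself deduced from the parameter height bound.
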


 Throughout this paper, whenever we refer to endomorphisms of an abelian variety, we refer to its endomorphisms over an algebraically closed field.

The analogous statement to \cref{main-theorem-zp} for $g=2$ was proved in our earlier work \cite{QRTUI}.
In that paper, \cite[Lemma~5.7]{QRTUI} played the role which is now played by \cref{minkowski-hermitian-perfect}.
Indeed this paper represents the next stage of our programme on the Zilber--Pink conjecture for Shimura varieties, following on from \cite{ExCM} and \cite{QRTUI}, which were inspired by the earlier papers \cite{HP12}, \cite{HP16}, \cite{DR18}, \cite{OrrUI}.

\Cref{galois-orbits}, referred to in \cref{main-theorem-zp}, is a large Galois orbits conjecture, of the sort appearing in many works on unlikely intersections (for example, \cite[Conjecture~2.7]{Ull14}, \cite[Conjecture~8.2]{HP16}, \cite[Conjecture~11.1]{DR18}, \cite[Conjecture~6.2]{QRTUI}).

\begin{conjecture} \label{galois-orbits}
Define $\Sigma \subset \Ag$ as in \cref{main-theorem-zp} and let $C\subset\cA_g$ denote an irreducible Hodge generic algebraic curve defined over a finitely generated field $L \subset \bC$.
Then there exist positive constants $\newC{galois-orbits-mult}$ and $\newC{galois-orbits-exp}$, depending only on $g$, $L$ and~$C$, such that, for any point $s \in C \cap \Sigma$,
\[ \# \Aut(\bC/L) \cdot s \geq \refC{galois-orbits-mult} \abs{\disc(\End(A_s))}^{\refC{galois-orbits-exp}}. \]
\end{conjecture}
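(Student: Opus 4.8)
The plan is to bound $\#\Aut(\bC/L)\cdot s$ from below by bounding $[L(s):L]$: if $s$ is transcendental over $L$ its orbit is infinite and there is nothing to prove, so we may assume $s$ is algebraic over $L$, whence the orbit has size $[L(s):L]$. Write $A=A_s$ and let $E=\End(A)$, a $\bZ$-order in a totally real field or in a totally indefinite quaternion algebra over a totally real field; the quantity to control is $|\disc(E)|$. First, a standard specialisation argument (spreading $C$ and the universal family out over a finitely generated $\bZ$-subalgebra of $L$ and specialising at a sufficiently general closed point, so that the endomorphism ring does not grow and Faltings heights stay comparable) reduces everything to the case that $L$ is a number field, at the cost of constants depending only on $g$, $L$ and $C$. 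After this reduction, replacing $L(s)$ by an extension of degree bounded only in terms of $g$ (Silverberg's theorem on fields of definition of endomorphisms of abelian varieties), we may assume that every element of $\End(A)$ is defined over $K:=L(s)$; this changes the orbit size by at most a bounded factor.

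The core is to prove an inequality of the shape $|\disc(E)|\le c_1\bigl([K:\bQ]\,\max(1,h_{\mathrm{Fal}}(A))\bigr)^{c_2}$, with $c_1,c_2$ depending only on $g$. This should follow from the Masser--Wüstholz isogeny estimates (in the explicit form of Gaudron--Rémond). Indeed $A$ is isogenous over $\overline{\bQ}$ to a power of a simple abelian variety $B$ whose endomorphism algebra is a totally real field or totally indefinite quaternion algebra, and applying the isogeny estimate to a minimal isogeny realising this decomposition bounds the arithmetic invariants of $\End^{0}(A)$, hence $|\disc(\mathcal{O})|$ for a maximal order $\mathcal{O}\supseteq E$, polynomially in $[K:\bQ]$ and $h_{\mathrm{Fal}}(A)$; separately, there is an abelian variety $A'$ over $K$ with $\End(A')=\mathcal{O}$ together with a $K$-isogeny $A\to A'$ of degree polynomially bounded in $[K:\bQ]$ and $h_{\mathrm{Fal}}(A)$, and $[\mathcal{O}:E]$ divides a power of this degree. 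Since $|\disc(E)|$ equals $|\disc(\mathcal{O})|$ times a power of $[\mathcal{O}:E]$, the displayed inequality follows. Rearranging, and using $[K:\bQ]\ll_{L}[L(s):L]$, one obtains $\#\Aut(\bC/L)\cdot s\gg |\disc(E)|^{c_3}$ — provided $h_{\mathrm{Fal}}(A_s)$ is itself bounded by a fixed power of $|\disc(E)|$. One also checks, as usual, that the complexity invariant of the smallest special subvariety of $\Ag$ containing $s$ is polynomially comparable to $|\disc(E)|$, so that the bound has exactly the form asserted in \cref{galois-orbits}; the constants produced depend only on $g$, $L$ and $C$, as needed for \cref{main-theorem-zp}.

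The main obstacle is precisely this last point: bounding the Faltings height $h_{\mathrm{Fal}}(A_s)$ — equivalently a Weil height of $s$ in $\Ag$ — polynomially in $|\disc(E)|$. For CM points this is Tsimerman's theorem, which rests on the averaged Colmez conjecture of Andreatta--Goren--Howard--Madapusi Pera and of Yuan--Zhang and gives a bound of the form $O_{\varepsilon}(|\disc|^{\varepsilon})$; but a point of $\Sigma$ moves in a positive-dimensional special subvariety (a Hilbert modular variety, or a quaternionic analogue), so $A_s$ is not of CM type and no absolute height bound is available. In the cases of Albert type~II treated in this paper, one exploits the extra structure — for instance a CM point produced from the quaternionic data, or an associated point on a Shimura curve, to which Tsimerman-type bounds do apply, or a constraint forcing the totally real centre to have bounded degree so that Siegel's unconditional lower bound for its discriminant enters — to supply the missing height estimate. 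Without such auxiliary structure, this height bound, and hence \cref{galois-orbits} in general, remains open, which is why \cref{main-theorem-zp} is stated conditionally on it. (For the quantitative input of the opposite kind, namely the parameter height bound needed by the Pila--Zannier method, one uses \cref{minkowski-hermitian-perfect} in place of the CM-theoretic bounds used in the André--Oort setting.)
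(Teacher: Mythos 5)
You are right that \cref{galois-orbits} is a conjecture: the paper does not prove it, but only assumes it in \cref{main-theorem-zp}, and your proposal — correctly — stops short of a proof, since the decisive ingredient (a height bound for the points $s\in C\cap\Sigma$) is missing in general. So your honest conclusion matches the paper's framing, and to that extent there is nothing to object to. However, your description of how the paper does obtain its unconditional cases (\cref{unconditional}) is inaccurate: there is no recourse to CM points, auxiliary Shimura curves, Tsimerman/averaged-Colmez height bounds, or Siegel's discriminant lower bound. The paper proves \cref{EQ-scheme} by Andr\'e's G-functions method, in the form of \cite[Theorem~8.2]{ExCM}: under the hypotheses that the curve extends to a semiabelian scheme degenerating to a torus at a boundary point (intersection with the zero-dimensional Baily--Borel stratum) and that $\End(A_s)\otimes_\bZ\bQ$ is \emph{exceptional}, i.e.\ does not embed into $\rM_g(\bQ)$ — which for type~II algebras is exactly the condition $4e\nmid g$ — one gets a height bound for $s$ that is polynomial in the degree $[L(s):L]$. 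This is then combined with the Masser--W\"ustholz endomorphism estimates \cite{MW94} (your second ingredient, which does match the paper) to bound $\abs{\disc(\End(A_s))}$ polynomially in $[L(s):L]$, which rearranges to the asserted Galois orbit inequality.

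There is also a logical slip in how you phrase the missing ingredient. If Masser--W\"ustholz gives $\abs{\disc(E)}\leq c_1\bigl([K:\bQ]\max(1,h_{\mathrm{Fal}}(A_s))\bigr)^{c_2}$ and you only knew $h_{\mathrm{Fal}}(A_s)\leq c\,\abs{\disc(E)}^{c_4}$ for some \emph{fixed} power $c_4$, then substituting yields $\abs{\disc(E)}^{1-c_2c_4}\leq C\,[K:\bQ]^{c_2}$, which gives a bound of the desired shape only when $c_2c_4<1$; since $c_2$ is a large constant depending on $g$, ``bounded by a fixed power of $\abs{\disc(E)}$'' does not suffice as stated (in the CM/Andr\'e--Oort setting the bound is $O_\eps(\abs{\disc}^{\eps})$, which is why that argument closes). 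What is actually needed — and what the G-function method supplies in the restricted setting — is a height bound polynomial in the degree $[L(s):L]$ (equivalently, in the Galois orbit size), with the discriminant not entering at all. Finally, your closing parenthetical is correct: \cref{minkowski-hermitian-perfect} is used only for the parameter height bound in the Pila--Zannier strategy and plays no role in \cref{galois-orbits} itself.
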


The most general conjecture of this type in the context of Shimura varieties which has been written down is \cite[Conjecture~11.1]{DR18}.
It is not clear whether \cite[Conjecture~11.1]{DR18} implies \cref{galois-orbits}, because it is not clear how $\abs{\disc(\End(A_s))}$ is related to the complexity $\Delta(\langle s \rangle)$ defined in \cite{DR18}.
For example, in \cite[Conjecture~11.1]{DR18}, $\Delta(\langle s \rangle)$ is the complexity of the smallest special subvariety of~$\Ag$ containing~$s$.
In \cref{galois-orbits}, $\abs{\disc(\End(A_s))}$ is a measure of the complexity of the smallest special subvariety \textit{of PEL type} containing~$s$, which might not be the same as the smallest special subvariety containing~$s$.
However, for the purpose of proving cases of the Zilber--Pink conjecture, the precise definition of complexity is not important: we only need a parameter height bound and a Galois orbits bound which involve the same notion of complexity.  Since we are focussing on special subvarieties of PEL type, the discriminant of the endomorphism ring is a natural measure of complexity. 

Using André's G-functions method \cite{And89}, in the form of \cite[Theorem~8.2]{ExCM}, we prove \cref{galois-orbits} in certain cases and thus establish \cref{main-theorem-zp} unconditionally in those settings.
The proof of large Galois orbits in \cref{unconditional} does not involve new ideas beyond those in \cite{ExCM}, \cite{QRTUI}: the new contribution of this paper is in the parameter height bound.

\begin{theorem}\label{unconditional}
Let $g$ be an even positive integer.
Let $\Sigma^*$ denote the set of points $s \in \Ag$ for which $\End(A_s) \otimes_\bZ \bQ$ is a non-split totally indefinite quaternion algebra whose centre is a totally real field of degree $e$ such that $4e$ does not divide $g$.

Let $C\subset\cA_g$ denote an irreducible Hodge generic algebraic curve defined over a number field.  Suppose that the Zariski closure of $C$ in the Baily--Borel compactification of $\Ag$ intersects the 0-dimensional stratum.

Then $C$ satisfies \cref{galois-orbits} for $\Sigma^*$ (in the place of $\Sigma$). Hence, $C \cap \Sigma^*$ is finite.
\end{theorem}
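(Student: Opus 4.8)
The plan is to derive the large Galois orbits bound in \cref{galois-orbits} for $\Sigma^*$ by combining the $G$-functions method of \cite{ExCM} with the specific structure of the endomorphism algebras in question, and then to deduce finiteness of $C \cap \Sigma^*$ by invoking \cref{main-theorem-zp} (whose proof, via the Pila--Zannier strategy and \cref{minkowski-hermitian-perfect}, supplies the matching parameter height bound). First I would reduce to a statement about a single point $s \in C \cap \Sigma^*$: let $A_s$ be the associated abelian variety, $D = \End(A_s) \otimes_\bZ \bQ$ a non-split totally indefinite quaternion algebra with centre $F$ a totally real field of degree $e$, and set $\Delta = \abs{\disc(\End(A_s))}$. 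The goal is a lower bound $\# \Aut(\bC/L) \cdot s \geq \newC* \Delta^{\newC*}$ with constants depending only on $g$, $L$, $C$.

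The heart of the argument is the hypothesis that the Zariski closure of $C$ in the Baily--Borel compactification meets the $0$-dimensional stratum. This is exactly the input required to apply \cite[Theorem~8.2]{ExCM}: the $0$-dimensional cusp gives a point at which the period map degenerates maximally, and near it the coordinates of $C$ are governed by $G$-functions (arising from the Picard--Fuchs equations of the universal family). André's method then yields a bound of the form $\Delta \leq \newC* \abs{\Gal}^{\newC*}$ whenever $s$ lies on $C$ and is a \emph{weakly special} point — more precisely, a point of PEL type whose Mumford--Tate group is suitably small — provided the relevant "height" of $s$ is comparable to $\Delta$. So the key step is to check that for $s \in C \cap \Sigma^*$ the abelian variety $A_s$ satisfies the hypotheses of \cite[Theorem~8.2]{ExCM}, and in particular that the endomorphism structure forces the Mumford--Tate group of $A_s$ to be of the type handled there. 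Here the numerical condition "$4e$ does not divide $g$" enters: it guarantees that a totally indefinite quaternion algebra of this shape cannot act on $A_s$ in a way that makes $s$ a special point of the Shimura variety (which would make the Galois orbit potentially small and the method inapplicable); rather, the generic such $A_s$ in the PEL family still has a positive-dimensional, non-trivial Mumford--Tate group, so the $G$-functions argument applies with $\Delta$ as the complexity parameter. I would spell out, using the classification in section \ref{subsec:albert} and the dimension formulas for PEL Shimura data of type II, that $4e \nmid g$ precisely rules out the "$\dim = 0$" degenerate possibility.

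The main obstacle I anticipate is verifying the precise compatibility between the complexity measure $\Delta = \abs{\disc(\End(A_s))}$ appearing in \cref{galois-orbits} and the quantity actually bounded by \cite[Theorem~8.2]{ExCM}, which is phrased in terms of a height or a different invariant of the point $s$. One must show these two quantities are polynomially equivalent (up to constants depending only on $g$), using that $\End(A_s)$ is an order in the quaternion algebra $D$ and relating $\disc$ of this order to $\disc(D)$, $\disc(F)$, and the conductor; the totally indefinite hypothesis and the bounded degree $[D:\bQ] \mid g$ keep this under control. Once the Galois orbits bound is established, the final sentence follows formally: \cref{main-theorem-zp} with $\Sigma^*$ in place of $\Sigma$ (note $\Sigma^* \subset \Sigma$, since a non-split totally indefinite quaternion algebra over a totally real field is exactly one of the allowed endomorphism algebras) gives $C \cap \Sigma^*$ finite, because $C$ now provably satisfies the required \cref{galois-orbits}. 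A secondary technical point will be confirming that the curve $C$ being defined over a number field (rather than a general finitely generated field) is what makes André's $G$-functions method applicable in the form of \cite[Theorem~8.2]{ExCM}; I would note this matches the hypothesis there and requires no extra work.
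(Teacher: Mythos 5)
Your overall architecture does match the paper's route: the boundary hypothesis is exactly what feeds André's G-functions method in the form of \cite[Theorem~8.2]{ExCM}, and finiteness then follows from the Zilber--Pink machinery behind \cref{main-theorem-zp} applied to $\Sigma^* \subset \Sigma$ (the paper does this via the intermediate \cref{EQ-scheme}). However, your explanation of where the hypothesis ``$4e$ does not divide $g$'' enters is wrong, and this is the crux of the theorem, not a side condition. \cite[Theorem~8.2]{ExCM} bounds heights only at points whose fibre is \emph{exceptional} in the sense of \cite[Definition~8.1]{ExCM}, i.e.\ whose endomorphism algebra does not embed into $\rM_g(\QQ)$. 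For a division algebra $D$ this is equivalent to $\dim_\QQ(D) \nmid g$; a type~I algebra always satisfies $\dim_\QQ(D) \mid g$, while a type~II algebra has $\dim_\QQ(D) = 4e$, so exceptionality is precisely $4e \nmid g$. Your reading --- that the condition prevents $s$ from being a special point, or rules out a ``$\dim = 0$'' PEL family, so that a positive-dimensional Mumford--Tate group keeps the method applicable --- is not what the condition does, and the verification you propose would fail: for $g=8$, $e=2$, $m=2$ the type~II PEL locus has dimension $\tfrac12 m(m+1)e = 6 > 0$, yet such points are excluded from $\Sigma^*$ because $D$ embeds in $\rM_8(\QQ)$ and the G-functions method then yields nothing. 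Special points are not the obstruction at all; the method's limitation is the exceptionality condition, which is also exactly why type~I (totally real) endomorphisms cannot be handled unconditionally.

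A second gap is the passage from \cite[Theorem~8.2]{ExCM} to the bound in \cref{galois-orbits}. That theorem bounds the height of $s$ polynomially in the degree $[L(s):L]$; to convert this into a lower bound for $\#\Aut(\CC/L)\cdot s$ in terms of $\abs{\disc(\End(A_s))}$ one needs the Masser--Wüstholz endomorphism estimates \cite{MW94} (in the form of \cite[Theorem~6.6]{QRTUI}), which bound $\abs{\disc(\End(A_s))}$ polynomially in the degree of a field of definition and the Faltings height of $A_s$. Comparing the discriminant of the order with $\disc(D)$, $\disc(F)$ and the conductor, as you suggest, does not supply this link. Finally, the paper's proof also performs the (routine but necessary) reductions before invoking \cite[Theorem~8.2]{ExCM} --- enlarging the number field, passing to the normalisation of the compactified curve carrying the semiabelian scheme --- and notes that $s \in C(\Qbar)$ automatically; your sketch should at least record these, while your closing deduction of finiteness from the Galois bound on $\Sigma^*$ is fine, since the proof of \cref{main-theorem-zp} proceeds point by point and so applies to the subset $\Sigma^*$.
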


Compared with \cref{galois-orbits}, \cref{unconditional} adds two restrictions: $\Sigma^*$ is defined by a smaller class of endomorphism algebras than $\Sigma$, and there is a condition on the intersection of the Zariski closure of $C$ with the boundary of the Baily--Borel compactification.
We recall that the Baily--Borel compactification of the moduli space~$\cA_g$ is naturally stratified as a disjoint union
\[\cA_g\sqcup\cA_{g-1}\sqcup\cdots\sqcup\cA_1\sqcup\cA_0\]
of locally closed subvarieties.
The zero-dimensional stratum is $\cA_0$, which is a point.
The condition that $C$ intersects the zero-dimensional stratum is equivalent to saying that the associated family of principally polarised abelian varieties degenerates to a torus (this informal statement can be made precise as in \cite[Theorem~1.4]{ExCM}).

\subsection{The Zilber--Pink conjecture and special subvarieties of PEL type} \label{subsec:intro-zp-context}

Let us recall a general statement of the Zilber--Pink conjecture for Shimura varieties.  A \defterm{special subvariety} of a Shimura variety $S$ means an irreducible component of a Shimura subvariety of~$S$.
An irreducible subvariety of $S$ is \defterm{Hodge generic} if it is not contained in any special subvariety other than a component of $S$ itself.

\begin{conjecture} \label{zilber-pink} \cite[Conjecture~1.3]{pink:generalisation}
Let $S$ be a Shimura variety and let $V$ be an irreducible Hodge generic subvariety of $S$.
Then the intersection of $V$ with the special subvarieties of $S$ having codimension greater than $\dim V$ is not Zariski dense in $V$.
\end{conjecture}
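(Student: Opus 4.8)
The plan is to attack \cref{zilber-pink} by the Pila--Zannier strategy, in the form developed for Shimura varieties by Habegger--Pila, Daw--Ren, Daw--Orr and others. Write $S = \Gamma\backslash X^+$ for the (connected component of the) Shimura variety, with uniformisation $\pi\colon X^+\to S$ and a fundamental set $\mathcal{F}\subset X^+$ which is definable in $\Ranexp$ (Klingler--Ullmo--Yafaev). Given a special subvariety $T\subset S$ of codimension $>\dim V$, one attaches to it a ``pre-special'' subvariety $\tilde T\subset X^+$ together with a finite tuple of arithmetic data describing $\tilde T$ (a semisimple $\bQ$-subgroup, or for PEL type the endomorphism structure), and a \emph{complexity} $\Delta(T)$ measuring the size of that data. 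The goal becomes: assuming the union $\bigcup_{\operatorname{codim} T>\dim V}(V\cap T)$ is Zariski dense in $V$, derive a contradiction.

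Two Diophantine inputs drive the argument. First, a \emph{parameter height bound}: for each point $s\in V\cap T$, some $\Aut(\bC/L)$-conjugate of $s$ has a preimage in $\mathcal{F}$ whose image under a suitable coordinatisation has height polynomially bounded in $\Delta(T)$; this is exactly the statement that quantitative reduction theory supplies, and \cref{minkowski-hermitian-perfect} is the instance handling $V$ a curve in $\cA_g$ meeting special subvarieties of simple PEL types~I and~II (cf.\ \cref{main-theorem-zp}). Second, a \emph{large Galois orbits} lower bound $\#\Aut(\bC/L)\cdot s \geq c\,\Delta(T)^{\delta}$ for $V$ defined over a finitely generated field $L$; this is \cref{galois-orbits}, established for some PEL~II situations via André's G-functions method in \cref{unconditional}.

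Granting these, one applies the Pila--Wilkie counting theorem to the definable set $Z\subset\bR^N$ obtained by intersecting a coordinatisation of $\pi^{-1}(V)\cap\mathcal{F}$ with the locus of parameters: Zariski density of the intersection produces, for arbitrarily large $\Delta$, more than $\Delta^{\delta}$ rational points of height $\leq\Delta^{\kappa}$ in $Z$, hence — once $\Delta$ is large — a connected positive-dimensional semialgebraic ``block'' $B\subset Z$ through many of them. Pushing $B$ down by $\pi$ and invoking the Ax--Schanuel theorem for Shimura varieties (Mok--Pila--Tsimerman), together with an induction on $\dim V$ organised via the notion of optimal (equivalently, anomalous) subvarieties, one forces either that $V$ is contained in a proper special subvariety — contradicting Hodge genericity — or that the atypical intersections already occur inside a proper subvariety of $V$, which is handled at a lower stage of the induction. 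This yields the non-density asserted by \cref{zilber-pink}.

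The functional-transcendence and point-counting ingredients (Ax--Schanuel, Pila--Wilkie, definability of~$\mathcal{F}$) are unconditional, so the main obstacle is entirely arithmetic: proving the parameter height bound and the large Galois orbits bound in full generality. Both are genuinely hard and presently open beyond special cases — the height bound is known via explicit reduction theory only for restricted shapes of special subvariety, and the Galois orbits bound is known unconditionally essentially only for CM points and a handful of PEL families. The present paper advances the first of these, removing the height-bound obstruction for special subvarieties of simple PEL types~I and~II and thereby reducing those cases of \cref{zilber-pink} (for curves in $\cA_g$) to \cref{galois-orbits} alone; a complete proof of \cref{zilber-pink} must await general progress on the Galois orbits conjecture.
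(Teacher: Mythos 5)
The statement you were asked to prove is \cref{zilber-pink}, which is Pink's Zilber--Pink conjecture itself; the paper does not prove it, it only cites it as the motivating open conjecture and then establishes certain special cases (\cref{main-theorem-zp}, \cref{unconditional}) for curves in $\Ag$ meeting special subvarieties of simple PEL type I and~II, conditionally on \cref{galois-orbits}. Your submission is therefore not a proof of the statement but an (accurate) outline of the Pila--Zannier strategy, and you concede the decisive point yourself in your final paragraph: the two arithmetic inputs on which the whole argument rests --- a parameter height bound valid for \emph{all} special subvarieties of \emph{all} Shimura varieties, and the large Galois orbits lower bound --- are unproven in the required generality. An argument that is conditional on open conjectures does not establish the conjecture; at best it reduces it to those conjectures, and even that reduction is only carried out rigorously in the literature (and in this paper) for restricted families of special subvarieties and for $V$ a curve.

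Concretely, the gap is this: for a general Shimura variety $S$ and a general Hodge generic $V$ of arbitrary dimension, there is at present no construction of a parameter space with polynomially controlled heights for arbitrary special subvarieties (the quantitative reduction theory of \cref{minkowski-hermitian-perfect} handles only the type I and~II PEL cases, and even there only after the conjugacy-class analysis of \cref{conj-class-mt,unique-datum} and the representation-theoretic work of sections~\ref{sec:representation} and~\ref{sec:rep-bound}), and no proof of \cref{galois-orbits} beyond the cases accessible to Andr\'e's G-functions method. Your sketch also glosses over the induction on optimal subvarieties needed when $\dim V > 1$, which requires a finiteness statement for optimal subvarieties on top of Ax--Schanuel; this is known but is a genuine additional ingredient not present in the curve case treated here. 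So the correct assessment is that you have described why \cref{zilber-pink} is plausible and what a proof would need, not proved it.
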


In order to relate this to \cref{main-theorem-zp}, we introduce a class of special subvarieties of~$\Ag$ which come from endomorphisms of abelian varieties.
We recall that $\cA_g$ is an irreducible algebraic variety over~$\bQ$.
For any algebraically closed field $k$ containing $\bQ$ and any point $s \in \Ag(k)$, we write $A_s$ for the principally polarised abelian variety over $k$ (defined up to isomorphism) corresponding to the point $s$.

For any ring $R$, the set
\[ \cM_R = \{ s \in \Ag(\bC) : \text{there exists an injective homomorphism } R \to \End(A_s) \} \]
is a countable union of algebraic subvarieties of $\Ag$.
Each irreducible component of $\cM_R$ is a special subvariety of $\Ag$.
We call a subvariety of $\Ag$ a \defterm{special subvariety of PEL type} if it is an irreducible component of $\cM_R$ for some $R$.

If $R \not\cong \bZ$, then $\cM_R$ is strictly contained in $\Ag$.
Hence the set $\Sigma$ defined in \cref{main-theorem-zp} is contained in the union of the proper special subvarieties of PEL type of $\Ag$.
Furthermore, as we prove in \cref{codim-pel}, for $g \geq 3$, all proper special subvarieties of PEL type of $\Ag$ have codimension at least~$2$.
Thus, \cref{zilber-pink} predicts that the intersection $C \cap \Sigma$ of \cref{main-theorem-zp} should not be Zariski dense in the curve~$C$, that is, it should be finite.

For each special subvariety of PEL type $S \subset \Ag$, there is a largest ring $R$ such that $S$ is a component of $\cM_R$.
We call this ring $R$ the \defterm{generic endomorphism ring} of $S$, and we call $R \otimes_\bZ \bQ$ the \defterm{generic endomorphism algebra} of $S$.
We say that a point $s\in S(\CC)$ is \defterm{endomorphism generic} if the endomorphism ring of $A_s$ is equal to $R$.  Note that all points in the complement of countably many proper subvarieties of $S$ are endomorphism generic.

We call $S \subset \Ag$ a \defterm{special subvariety of simple PEL type} if it is a special subvariety of PEL type and its generic endomorphism algebra is a division algebra.
(Equivalently, $A_s$ is a simple abelian variety for endomorphism generic points $s \in S(\bC)$.)
We call $S$ a \defterm{special subvariety of simple PEL type I or~II} if it is a special subvariety of PEL type whose generic endomorphism ring is a division algebra of type I or~II in the Albert classification (see section~\ref{subsec:albert}).
Thus the set $\Sigma$ in \cref{main-theorem-zp} is the union of the endomorphism generic loci of all special subvarieties of simple PEL type I or~II, excluding $\Ag$ itself.

In section \ref{sec:dims} we establish the following bounds on the dimensions of special subvarieties of PEL type in $\Ag$.  These are not necessary for proving \cref{main-theorem-zp}, but they are interesting for understanding the Zilber--Pink conjecture in the context of special subvarieties of PEL type.
In particular, when $g \geq 3$, \cref{codim-pel} guarantees that intersections between a Hodge generic curve and all proper special subvarieties of PEL type in $\Ag$ are predicted to be ``unlikely'' by the Zilber--Pink conjecture.

\begin{proposition}\label{codim-pel-intro}
Let $S \subset \Ag$ be a special subvariety, not equal to~$\Ag$.
\begin{enumerate}[(i)]
\item If $S$ is of simple PEL type, then $\dim(S) \leq \dim(\Ag) - g^2/4$.
\item If $S$ is of PEL type, then $\dim(S) \leq \dim(\Ag) - g + 1$.
\end{enumerate}
\end{proposition}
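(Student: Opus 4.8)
The plan is to work on the level of Shimura data, using the standard correspondence between special subvarieties of $\Ag$ containing a given point and connected reductive subgroups of $\gGSp_{2g}$ of Hodge type, together with the structure of the Mumford--Tate group of an abelian variety with a given endomorphism algebra. Let me explain this setup. A special subvariety $S$ of $\Ag$ is attached to a Shimura subdatum $(\gH, X_\gH) \hookrightarrow (\gGSp_{2g}, \mathfrak{H}_g^\pm)$, and $\dim(S)$ equals the dimension of the symmetric space $X_\gH$, which is governed by the non-compact factors of $\gH^{\mathrm{ad}}(\RR)$. For a special subvariety of PEL type with generic endomorphism algebra $D$ (a semisimple $\QQ$-algebra with positive involution) acting on $(\QQ^{2g}, \psi)$, the generic Mumford--Tate group $\gH$ is the connected component of the group of $D$-linear symplectic similitudes; this is a reductive group whose derived group is, up to isogeny, a product of classical groups determined by $D$ and its action. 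The codimension $\dim(\Ag) - \dim(S)$ is then computed as the difference between $\dim(\mathfrak{H}_g^\pm) = g(g+1)/2$ and $\dim(X_\gH)$, and the key point is to bound $\dim(X_\gH)$ from above when $D \neq \QQ$.

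For part (i), I would go through the Albert classification type by type. Write $F$ for the centre of $D$ and $F_0$ for the totally real subfield fixed by the involution. If $D$ has type I ($D = F = F_0$ totally real of degree $e \geq 2$ over $\QQ$), then $\gH^{\der}$ is $\Res_{F/\QQ}$ of a symplectic group $\gSp_{2g/e}$ over $F$, and $\dim(X_\gH) = e \cdot \tfrac{(g/e)(g/e+1)}{2} = \tfrac{g(g+e)}{2e}$; the codimension is $\tfrac{g(g+1)}{2} - \tfrac{g(g+e)}{2e} = \tfrac{g^2(e-1)}{2e} \geq \tfrac{g^2}{4}$ since $e \geq 2$. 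For type II ($D$ a totally indefinite quaternion algebra over $F_0$ of degree $e$, involution of orthogonal type), $\gH^{\der}$ is $\Res_{F_0/\QQ}$ of $\gSO_{g/e}$-type group (each $\gSp_{2g/e}$ over $\RR$ becomes $\gSO_{g/e}(\RR)$-related after passing to the quaternion algebra), giving $\dim(X_\gH) = e \cdot \tfrac{(g/e)(g/e-1)}{2} = \tfrac{g(g-e)}{2e}$, and the codimension is even larger. For type III the relevant group is of $\gSp$-type over a definite quaternion algebra, and for type IV it is a unitary group $\gU(p,q)$ with $p+q = g/(e/2)$ over the CM field; in each case a direct computation shows the codimension exceeds $g^2/4$. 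Since the bound $g^2/4$ is attained only by type I with $e = 2$, taking the minimum over all types gives (i). (Strictly, simple PEL type forces $D$ to be a division algebra, so only $D \neq \QQ$ matters, and the worst case is a real quadratic field.)

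For part (ii), $S$ of PEL type need not have simple (division-algebra) generic endomorphism algebra: $D$ can be a product of simple factors, and moreover the action of $D$ on $\QQ^{2g}$ can have several isotypic components with multiplicities. This is where the analysis is more delicate, because splitting off a small factor — e.g.\ a one-dimensional piece on which $D$ acts trivially, the rest being all of the remaining symplectic space — produces a special subvariety of large dimension. The extreme case is $D = \QQ \times \QQ$ acting so that $\QQ^{2g}$ decomposes as a symplectic space of dimension $2$ plus one of dimension $2g-2$; then $\gH^{\der} = \gSp_2 \times \gSp_{2g-2}$, $\dim(X_\gH) = 1 + \tfrac{(g-1)g}{2} = \tfrac{g^2-g+2}{2}$, and the codimension is $\tfrac{g(g+1)}{2} - \tfrac{g^2-g+2}{2} = g-1$. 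So I expect (ii) to follow from showing that \emph{any} nontrivial endomorphism structure costs at least $g-1$ in codimension: the cheapest way to impose nontrivial endomorphisms is to split off a single $\gSp_2$ factor (or, equivalently, to impose a single extra elliptic-curve factor up to isogeny), and every other configuration — larger split pieces, nontrivial fields, quaternion or CM structure, or higher multiplicities — is strictly more expensive. The main obstacle is the bookkeeping of the general case: one must set up notation for an arbitrary $D$-module structure (isotypic decomposition, multiplicities, and the type of each simple factor), express $\dim(X_\gH)$ as a sum of the contributions $\tfrac{g(g+1)}{2} - \text{codim}$ over the factors, and verify that the total codimension is minimised at the configuration above. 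I would handle this by a convexity/monotonicity argument: the codimension contributed by a simple factor with underlying rational dimension $d$ and multiplicity $r$ is a function of $(d,r)$ that is superadditive under splitting and under increasing $d$ or the arithmetic complexity of the factor, so the minimum over all PEL structures with $\sum = 2g$ is achieved by making one factor as small as possible (an $\gSp_2$ with multiplicity $1$) and absorbing everything else into a single trivial-endomorphism block. Carrying out this optimisation carefully — in particular checking the case $g/e$ small, where the classical groups degenerate — is the routine but nontrivial part of the argument.
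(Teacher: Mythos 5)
Your part (i) follows the same route as the paper's \cref{codim-pel-simple} (case-by-case over the Albert types using Shimura's dimension formulas), and your type I computation is correct, but your type II formula is wrong. For a totally indefinite quaternion algebra over a totally real field of degree $e$, the multiplicity satisfies $em=g/2$, and at each real place the group is (by Morita equivalence, since the transpose involution on $\rM_2(\RR)$ is of orthogonal type and the form is skew-Hermitian) a symplectic group $\Sp_{2m}(\RR)$, so $\dim(S)=\tfrac12 m(m+1)e=\tfrac{g^2}{8e}+\tfrac{g}{4}$, not $e\cdot\tfrac{(g/e)(g/e-1)}{2}$. The discrepancy matters: type II occurs with $e=1$ (unlike type I), and your formula would then give codimension only $g$ — for instance it predicts a $28$-dimensional quaternionic locus inside the $36$-dimensional $\cA_8$, whereas the true dimension is $10$ — so your claim that "the codimension is even larger" does not follow from what you wrote and is false under your stated formula. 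With the correct formula one gets $\dim(S)\le\tfrac{g^2}{8}+\tfrac{g}{4}$, hence codimension at least $\tfrac{3g^2}{8}$, and the argument closes; the type III and IV cases, which you only assert, do check out (dimensions $\tfrac12 m(m-1)e$ and $\sum_i r_i s_i\le\tfrac{g^2}{4}$ respectively).

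For part (ii) your proposal is only a plan: the optimisation over all semisimple algebras with positive involution, isotypic decompositions and multiplicities — which you yourself defer as "the routine but nontrivial part" — is precisely the content of the statement, it is not carried out, and as sketched it would rest on per-type dimension formulas one of which is already incorrect. The paper's \cref{codim-pel} avoids this bookkeeping altogether: since $g^2/4\ge g-1$, part (i) disposes of the simple case, and for non-simple PEL type one passes to a fine moduli cover $S'\to S$, writes the universal abelian scheme as isogenous to $A_1\times A_2$ with relative dimensions $g_1,g_2\ge 1$, and considers the isogeny locus $T\subset S'\times\cA_{g_1}\times\cA_{g_2}$, which dominates $S'$ but has countable fibres over $\cA_{g_1}\times\cA_{g_2}$; hence $\dim(S)\le\dim(\cA_{g_1})+\dim(\cA_{g_2})=\dim(\Ag)-g_1g_2\le\dim(\Ag)-(g-1)$. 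If you wish to keep your group-theoretic route, you must actually prove the minimisation claim (that the cheapest nontrivial PEL configuration is $\gSp_2\times\gSp_{2g-2}$, of codimension $g-1$) by a genuine case analysis over all PEL data; the superadditivity heuristic as stated is not a proof.
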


We also prove a finiteness result for special subvarieties of simple PEL type I or~II of bounded complexity (\cref{prop:finitely-many}).
This is the analogue of a special case of \cite[Conjecture~10.3]{DR18}, using our notion of complexity (cf.\ discussion of complexity of special subvarieties below \cref{galois-orbits}).

\begin{proposition}\label{prop:finitely-many-intro}
Define $\Sigma \subset \Ag$ as in \cref{main-theorem-zp}.
For each $b\in\RR$, the points $s\in\Sigma$ such that $\abs{\disc(\End(A_s))} \leq b$ belong to only finitely many proper special subvarieties of simple PEL type I or II.
\end{proposition}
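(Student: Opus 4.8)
The plan is to verify that every point in question lies in a proper special subvariety of simple PEL type~I or~II whose generic endomorphism ring has discriminant bounded in terms of~$b$, and then to quote the finiteness of such subvarieties. Fix $b \in \bR$ and let $s \in \Sigma$ satisfy $\abs{\disc(\End(A_s))} \leq b$. Set $R_s = \End(A_s)$; by the definition of~$\Sigma$ this is an order in a division $\bQ$-algebra of Albert type~I or~II, and in particular $R_s \not\cong \bZ$. The identity map embeds $R_s$ into $\End(A_s)$, so $s \in \cM_{R_s}$, and hence $s$ lies in some irreducible component~$S_s$ of $\cM_{R_s}$; by the discussion in \cref{subsec:intro-zp-context} this $S_s$ is a special subvariety of PEL type, and it is proper because $R_s \not\cong \bZ$. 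Let $R'$ be the generic endomorphism ring of~$S_s$. Since $S_s$ is an irreducible component of $\cM_{R'}$ and $s \in S_s$, there is an injective ring homomorphism $R' \hookrightarrow \End(A_s) = R_s$; and since $S_s$ is an irreducible component of $\cM_{R_s}$, taking an endomorphism generic point~$t$ of~$S_s$ yields an injective ring homomorphism $R_s \hookrightarrow \End(A_t) = R'$. Injectivity of both maps forces $R_s$ and $R'$ to have equal $\bZ$-rank, so each realises its source as a finite-index subring of its target; as the discriminant of a finite-index subring is that of the overring multiplied by the square of the index, this gives $\abs{\disc(R_s)} = \abs{\disc(R')}$ with both indices equal to~$1$, whence $R' \cong R_s$. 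Therefore $S_s$ is a proper special subvariety of simple PEL type~I or~II whose generic endomorphism ring satisfies $\abs{\disc(R')} = \abs{\disc(\End(A_s))} \leq b$.

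It remains to show that, for each~$b$, only finitely many proper special subvarieties of simple PEL type~I or~II have generic endomorphism ring of discriminant at most~$b$ in absolute value: this is \cref{prop:finitely-many}, and the points considered in the present statement then all lie in that finite collection of subvarieties. The strategy for \cref{prop:finitely-many} is that such a subvariety~$S$, with generic endomorphism ring an order~$R$ in a division algebra $D = R \otimes_\bZ \bQ$ of type~I or~II with positive involution~$\dag$, is pinned down, up to finitely many choices, by: the isomorphism class of~$R$; an $R$-lattice~$\Lambda$ of $\bZ$-rank~$2g$ carrying a non-degenerate $(D,\dag)$-skew-Hermitian form~$\psi$ on $\Lambda \otimes_\bZ \bQ$ for which $\Trd_{D/\bQ}\psi$ restricts to a perfect symplectic pairing on~$\Lambda$; and a connected component of the resulting PEL Shimura subdatum of the Siegel Shimura datum. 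A bound on $\abs{\disc(R)}$ forces $\dim_\bQ D \mid 2g$, bounds the discriminant of a maximal order of~$D$ containing~$R$, and bounds the index of~$R$ in that maximal order, so by Hermite's theorem there are finitely many~$D$ and then finitely many~$R$. For each~$R$, the Jordan--Zassenhaus theorem gives finitely many isomorphism classes of $R$-lattices~$\Lambda$ of rank~$2g$; for each~$\Lambda$ there are finitely many~$\psi$ making $\Trd_{D/\bQ}\psi$ perfect on~$\Lambda$ (a unimodular finiteness of the kind underlying the reduction theory of \cref{minkowski-hermitian-perfect}); and each such datum yields finitely many special subvarieties of~$\Ag$, by the standard finiteness of connected components of Shimura varieties.

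The reduction in the first paragraph is formal, so the substance lies in \cref{prop:finitely-many}. There the main obstacle is twofold: making precise and correct the classification of PEL-type special subvarieties by the finite data $(R,\Lambda,\psi,\text{component})$, which needs the explicit PEL description of subdata of the Siegel Shimura datum and control of the fibres of the passage from such data to connected components of~$\Ag$; and proving the finiteness of unimodular $(D,\dag)$-skew-Hermitian $R$-lattices of a fixed $\bZ$-rank, which requires a genuine reduction-theoretic or mass-formula argument rather than a purely formal one. One should also check the subsidiary fact that orders of bounded discriminant in $\bQ$-algebras of bounded dimension form only finitely many isomorphism classes, by combining Hermite's theorem with the finiteness of sublattices of bounded index in a fixed maximal order.
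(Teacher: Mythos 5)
Your first paragraph is a correct, essentially formal, reduction: each $s\in\Sigma$ with $\abs{\disc(\End(A_s))}\leq b$ lies on a proper special subvariety of simple PEL type I or~II whose generic endomorphism ring is isomorphic to $\End(A_s)$ (the two-way injection plus the index--discriminant argument is fine, since the discriminant is an isomorphism invariant of orders). But everything after that is where the content of the proposition lives, and it is not proved. Citing \cref{prop:finitely-many} is circular, since that corollary is the same statement as the one you are asked to prove; and the classification sketch by data $(R,\Lambda,\psi,\text{component})$ leaves precisely the decisive steps open, as you yourself acknowledge. Concretely: (1) the claim that for a fixed $R$-lattice $\Lambda$ there are ``finitely many $\psi$'' with $\Trd_{D/\bQ}\psi$ perfect is false as literally stated --- what is needed is finiteness up to the right equivalence (isometry of unimodular skew-Hermitian $R$-lattices, equivalently $\gSp_{2g}(\bZ)$-conjugacy of symplectic $R$-module structures on $(\bZ^{2g},\phi)$), and that finiteness is a genuine theorem, not a formal consequence of Jordan--Zassenhaus; (2) the correspondence between such equivalence classes and special subvarieties, including the finiteness of the set of subvarieties attached to a single class (finiteness of components modulo the arithmetic group), is asserted but never verified. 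So the proposal establishes the easy reduction and defers the actual finiteness to statements it neither proves nor correctly formulates.

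The paper's own route is different and much shorter: by \cref{z-w} (which packages \cref{minkowski-hermitian-perfect}, \cref{rep-bound-arithmetic} and \cite[Theorem~1.2]{QRTUI}), every $s\in\Sigma_{d,e,m}$ with $\abs{\disc(\End(A_s))}\leq b$ admits a lift $z\in\cF$ with $z(\bS)\subset\gG(w)$ for some integral vector $w\in\Lambda$ whose height is polynomially bounded in terms of $b$; there are only finitely many such $w$, and each determines only finitely many special subvarieties meeting the image of $\cF$, which gives the finiteness exactly as in \cite[Corollary~6.4]{QRTUI}. If you wanted to complete your alternative route, the missing unimodular finiteness could in fact be extracted from \cref{minkowski-hermitian-perfect} itself: for a unimodular $(L,\psi)$ with $\abs{\disc(R)}$ bounded, the theorem produces a weakly symplectic or weakly unitary basis with bounded index and Gram entries $\psi(v_i,v_j)$ of bounded norm lying in the fixed lattice $R^*$, so there are finitely many possible Gram matrices and finitely many possibilities for $L$ between $\sum R v_i$ and its bounded-denominator overlattices --- but that argument, together with the precise matching of classes to subvarieties and their components, is exactly the work your proposal leaves undone.
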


\subsection{High-level proof strategy for Theorem~\ref{main-theorem-zp}} \label{subsec:proof-strategy-high-level}

We now outline the strategy of the proof of \cref{main-theorem-zp}, which is carried out in sections \ref{sec:ZP-high-level} to~\ref{cases-of-ZP}, making use of \cref{minkowski-hermitian-perfect}. For our notation and terminology around Shimura datum components, see \cite[sec.\ 2.A and~2.B]{ExCM}.

Let $\gG = \gGSp_{2g}$ and let $(\gG, X^+)$ denote the Shimura datum component defined in section \ref{subsec:shimura-data}, which gives rise to the Shimura variety~$\Ag$. By \cref{conj-class-mt}, the Shimura subdatum components $(\gH, X_\gH^+)$ associated with special subvarieties of simple PEL type I or~II lie in only finitely many $\gG(\bR)$-conjugacy classes.  Hence it suffices to prove \cref{main-theorem-zp} ``one $\gG(\bR)$-conjugacy class at a time.''
Thanks to \cref{conj-class-mt}, this means that we choose positive integers $d,e,m$ and let $\gH_0$ be the subgroup of $\gG = \gGSp_{2g}$ defined in \eqref{eqn:H0} for these $d,e,m$.
We prove \cref{main-theorem-zp} with $\Sigma$ replaced by $\Sigma_{d,e,m}$, namely, the union of the endomorphism generic loci of all proper special subvarieties of $\Ag$ of simple PEL type I or~II whose underlying group is $\gG(\bR)$-conjugate to $\gH_0$.

Let $\pi$ denote the standard quotient map $X^+ \to \Ag(\bC)$ and let $\Fg$ denote a Siegel fundamental set of $X^+$, as defined in \cite[sec. 2]{Orr18} and \cite[sec.~2.G]{ExCM}.

In order to prove \cref{main-theorem-zp} for $\Sigma_{d,e,m}$, we follow the same proof strategy as \cite{QRTUI} (which proves the analogous result for $g=2$, $d=2$, $e=m=1$).
The idea is to apply the Habegger--Pila--Wilkie counting theorem \cite[Corollary 7.2]{HP16} to a definable set of the form
\[ D = \{ (y,z) \in Y \times \cC : z \in X_y \} \]
where $Y \subset \bR^n$ is a semialgebraic parameter space for subsets $X_y \subset X^+$ and $\cC = \pi^{-1}(C(\bC)) \cap \Fg$.
The parameter space $Y$ has the following properties:
\begin{enumerate}[itemsep=2pt]
\item For every rational point $y \in Y \cap \bQ^n$, $X_y$ is a pre-special subvariety of $X^+$ whose underlying group is $\gG(\bR)$-conjugate to $\gH_0$.
\item For every point $s \in \Sigma_{d,e,m}$, there exists $z \in \pi^{-1}(s) \cap \Fg$ such that $z$ lies in $X_y$ for some rational point $y \in Y \cap \bQ^n$, with the height $H(y)$ polynomially bounded in terms of $\End(A_s)$.
\end{enumerate}

Consequently, if $C \cap \Sigma_{d,e,m}$ is infinite, and if the large Galois orbits conjecture holds, then the number of points $(y,z) \in D$ with $y \in Y \cap \bQ^n$ grows reasonably quickly with respect to $H(y)$.
Then the Habegger--Pila--Wilkie theorem tells us that $D$ contains a path whose projection to $Y$ is semialgebraic and whose projection to~$\cC$ is non-constant.
We can conclude by a functional transcendence argument as in \cite[sec.~6.5]{QRTUI}.

\subsection{Proof strategy: parameter space} \label{subsec:parameter-space}

The strategy described in section~\ref{subsec:proof-strategy-high-level} is the same as that applied in \cite{HP16}, \cite{DR18}, \cite{ExCM}, \cite{QRTUI}, and others.  The new ingredient required to apply the strategy described in section~\ref{subsec:proof-strategy-high-level} in our case is to construct a suitable parameter space $Y$ for special subvarieties of simple PEL type I or~II and prove that it satisfies property~(2) above.

To construct $Y$, we will choose a suitable representation $\rho \colon \gG \to \gGL(W)$, where $W$ is a $\bQ$-vector space, and a vector $w_0 \in W$ such that $\Stab_{\gG}(w_0) = \gH_0$.
Then we define $Y$ to be the ``expanded $\rho$-orbit'' of $w_0$ in $W_\bR$:
\[ Y = \Aut_{\rho(\gG)}(W_\bR) \, \rho(\gG(\bR)) \, w_0. \]
For each $y \in Y$, we define $\gH_y = \Stab_{\gG_\bR}(y)$ and
\[ X_y = \{ z \in X^+ : z(\bS) \subset \gH_y \}. \]

If $y \in Y \cap \bQ^n$, then $\gH_y$ is a $\bQ$-algebraic subgroup of $\gG$, which is $\gG(\bR)^+$-conjugate to $\gH_0$.
By \cref{conj-class-mt}, we have $\gH_{y,\bR} = g\gH_{0,\bR}g^{-1}$ for some $g \in \gG(\bR)^+$ and, for each component $X_y^+$ of $X_y$, $(\gH_0, g^{-1}X_y^+)$ is a Shimura subdatum component.
By \cref{unique-datum}, there is only one Shimura subdatum component with group $\gH_0$.
We denote this component by~$X^+_0$. Therefore, $g^{-1}X_y^+=X^+_0$ for every component $X_y^+$ of $X_y$.
Hence, $X_y$ is connected and $(\gH_y, X_y)$ is a Shimura subdatum component of $(\gG, X^+)$.

This  establishes property~(1) of section~\ref{subsec:proof-strategy-high-level}. To establish property~(2) of section~\ref{subsec:proof-strategy-high-level}, we use the method of \cite[Proposition~6.3]{QRTUI}.
All we have to do is understand how fundamental sets in $\gH_y$ vary through the $\gG(\bR)$-conjugacy class.
A quantitative description of these fundamental sets is given by \cite[Theorem~1.2]{QRTUI}, but it requires as input a suitable representation $\rho$ and bounds on the lengths of certain vectors in~$\rho$.
This input is given in \cref{reps-closed-orbits,rep-bound-arithmetic}, which together generalise \cite[Proposition~5.1]{QRTUI} (which is the case $d=2$, $e=m=1$).
The representation is constructed in \cref{reps-closed-orbits}, and the construction of vectors $w_u$ with bounds for their lengths is found in \cref{rep-bound-arithmetic}.

To explain how \cref{minkowski-hermitian-perfect} is used, we outline the proof of \cref{rep-bound-arithmetic}. Let $L = \ZZ^{2g}$ and let $V = L_\QQ = \QQ^{2g}$, with the standard action of $\gG=\gGSp_{2g}$ on $V$.  Choosing a lift $\tilde s \in \pi^{-1}(s)$ induces an isomorphism $L \cong H_1(A_s, \ZZ)$, hence an action of $\End(A_s)$ on $L$.  The polarisation induces a $(D,\dag)$-skew-Hermitian form $\psi$ on $V$, where $D = \End(A_s) \otimes \QQ$ and $\dag$ is the Rosati involution.
We use \cref{minkowski-hermitian-perfect} to choose a weakly unitary or weakly symplectic $D$-basis $\{v_i\}$ for $(V, \psi)$ contained in~$L$.
Suitable multiples of the $\{v_i\}$ yield a symplectic or $\alpha$-unitary $D_\RR$-basis for $(V_\RR, \psi)$ (see section~\ref{subsec:unitary-bases} for definitions).
The choice of a symplectic or $\alpha$-unitary $D_\RR$-basis for $(V_\RR, \psi)$  is equivalent to the choice of an element $u' \in \gSp_{2g}(\RR)$ such that $\tilde s \in u'X_0^+$.
This element $u'$ is called $\theta^{-1}=uh$ in section~\ref{sec:rep-bound}, and its construction is detailed in \cref{theta-def,h-in-h0}.

We then use the bound from \cref{minkowski-hermitian-perfect}(iv), via \cref{theta-def}(iii), together with the fact that $v_i \in L$, to obtain $\gamma \in \gSp_{2g}(\ZZ)$ such that the entries of the matrices $\gamma g$ and $(\gamma g)^{-1}$ are polynomially bounded (Lemmas \ref{z-basis-bound} to~\ref{entries-bound}).
Since $\pi$ is $\gSp_{2g}(\ZZ)$-invariant, we still have $\pi^{-1}(s) \cap \gamma gX_0^+ \neq \emptyset$.
From $\gamma g$, we construct a vector (denoted $w_u$ in section \ref{sec:rep-bound}) suitable for use as input to \cite[Theorem~1.2]{QRTUI}, which gives the height bound for $y = \rho(b^{-1}u)w_u$.

\subsection{Remark on effectivity}

We note that \cref{minkowski-hermitian-perfect} and \cref{EQ-scheme} are effective. As such, the obstructions to effectivity in \cref{unconditional} are (1) its dependence on the (ineffective) Habegger--Pila--Wilkie theorem (as stated in \cite[Theorem 9.1]{DR18}) from o-minimality and (2) the ineffectivity in \cite{QRTUI}, as explained in Remark 4.3 therein. Obstruction (1) was recently overcome for the Andr\'e--Oort conjecture for non-compact curves in Hilbert modular varieties by Binyamini and Masser \cite{BM:AO} using so-called $Q$-functions. It seems plausible that these techniques could also apply to our setting.

\subsection{Outline of the paper}

The paper is in two parts.
The first part, sections \ref{sec:division-algebras} to~\ref{sec:minkowski-proof}, proves \cref{minkowski-hermitian-perfect}.  It deals only with modules over division algebras and skew-Hermitian forms, with no mention of Shimura varieties.
The second part, sections \ref{sec:ZP-high-level} to~\ref{cases-of-ZP}, proves \cref{main-theorem-zp}. The main new ingredient is \cref{minkowski-hermitian-perfect}.

In section~\ref{sec:division-algebras}, we introduce terminology around division algebras and their orders, as well as various lemmas used throughout the calculations in sections \ref{sec:skew-hermitian} and~\ref{sec:minkowski-proof}.
In section~\ref{sec:skew-hermitian}, we define the notion of a skew-Hermitian form on a module over a division algebra with involution and define several notions of well-behaved bases with respect to a skew-Hermitian form.
Section~\ref{sec:minkowski-proof} consists of the proof of \cref{minkowski-hermitian-perfect}, which involves substantial calculations.

Section~\ref{sec:ZP-high-level} introduces Shimura data and establishes the basic properties of special subvarieties of simple PEL type I and~II in $\Ag$.
The representation and vectors required as input for \cite[Theorem~1.2]{QRTUI} are constructed in sections \ref{sec:representation} and~\ref{sec:rep-bound}, as sketched in section~\ref{subsec:parameter-space}.  The application of \cref{minkowski-hermitian-perfect} is found in section~\ref{sec:rep-bound}, specifically \cref{theta-def}.  Finally section~\ref{cases-of-ZP} states some slightly stronger versions of \cref{main-theorem-zp,unconditional} and completes their proofs.

\subsection{Notation} \label{subsec:notation}

We shall use the following notation for matrices.
If $A$ and $B$ are square matrices, we will denote by $A\oplus B$ the block diagonal matrix with blocks $A$ (top-left) and $B$ (bottom-right). We will write $A^{\oplus d}$ to denote the block diagonal matrix $A\oplus\cdots\oplus A$ with $A$ appearing $d$ times.

We shall write $J_2 = \fullsmallmatrix{0}{1}{-1}{0}$ and $J_n = J_2^{\oplus n/2}$ for each even positive integer~$n$.

\subsection*{Acknowledgements}  This work was supported by the Engineering and Physical Sciences
Research Council [EP/S029613/1 to C.D., EP/T010134/1 to M.O.].

The authors are grateful to the referee for their careful reading of the paper and helpful suggestions and corrections.
They are also grateful to Bijay Bhatta, who found several errors in a preprint version of the paper.

\section{Division algebras}
\label{sec:division-algebras}

In this section, we introduce the notation and terminology we shall use for division algebras.  A key definition is a norm $\abs{\cdot}_D$ on an $\bR$-algebra with positive involution.  We establish useful properties of this norm and of the discriminants of orders in division algebras.
We also include some broader preliminary lemmas, on discriminants of bilinear forms and versions of Minkowski's second theorem.

In this paper, our main interest will be in division $\bQ$-algebras with positive involution of Albert types I and~II.
However, we have stated many of the definitions and results in this section in greater generality, such as for semisimple algebras over any subfield of $\bR$.
We do this not only because this greater generality is often natural, but also it is sometimes necessary as we wish to apply the results to $D \otimes_\bQ \bR$ where $D$ is a division $\bQ$-algebra, but $D \otimes_\bQ \bR$ might not be a division algebra.
We have not stated all results at their greatest possible generality, if doing so would require additional complications while not being required for our application.

Throughout this section, $k$ denotes a subfield of $\bR$.
Later in the paper, we will usually use $k = \bQ$ or $k = \bR$.
Whenever we say \defterm{$k$-algebra}, we mean a $k$-algebra of finite dimension.
If $V$ is a $k$-vector space or $k$-algebra, then $V_\bR$ denotes $V \otimes_k \bR$.

\subsection{Semisimple algebras, norms and traces} \label{subsec:semisimple-algebras}

As a reference on semisimple algebras, reduced norm and trace, see \cite[sec.~9]{Rei75}.

Let $D$ be a semisimple $k$-algebra.
Then $D \cong \prod_{i=1}^s D_i$ for some simple $k$-algebras $D_1, \dotsc, D_s$.
For each $i$, let $F_i$ be the centre of $D_i$, which is a field.

We write $\Trd_{D_i/F_i}$ and $\Nrd_{D_i/F_i}$ for the reduced trace and reduced norm respectively of the central simple algebra $D_i/F_i$.
Letting $\Tr_{F_i/k}$ and $\Nm_{F_i/k}$ denote the trace and norm of finite extensions of fields, we define 
\[ \Trd_{D/k} = \sum_{i=1}^s \Tr_{F_i/k} \circ \Trd_{D_i/F_i}, \quad \Nrd_{D/k} = \prod_{i=1}^s \Nm_{F_i/k} \circ \Nrd_{D_i/F_i}. \]

Note that $\Trd_{D/k}$ and $\Nrd_{D/k}$ are compatible with extension of scalars.
By this, we mean that, if $K$ is a field containing $k$ and $D_K = D \otimes_k K$, then $\Trd_{D/k} = \Trd_{D_K/K}|_D$ and similarly for $\Nrd_{D/k}$.
This is true even though the simple factors of $D$ might not remain simple after extension of scalars.

Note also that $\Trd_{D/k}(ab) = \Trd_{D/k}(ba)$ for all $a, b \in D$.

Suppose that $D$ is a simple $k$-algebra and let $F$ be the centre of $D$.
Let
\[ d = \sqrt{\dim_F(D)} = \Trd_{D/F}(1),  \qquad  e = [F:k]. \]
Then $\dim_k(D) = d^2e$.
We will use the notation $F$, $d$, $e$ from this paragraph throughout the paper whenever we talk about simple algebras, without further comment.
Note that $\Tr_{D/F}(a) = d\Trd_{D/F}(a)$ and $\Nm_{D/F}(a) = \Nrd_{D/F}(a)^d$ for all $a \in D$, where $\Tr_{D/F}$ and $\Nm_{D/F}$ are the non-reduced trace and norm.

\subsection{Division algebras with positive involution} \label{subsec:albert}

Let $D$ be a semisimple $k$-algebra.
An \defterm{involution} $\dag$ of $D$ means a $k$-linear map $D \to D$ such that $\dag \circ \dag = \id_D$ and $(ab)^\dag = b^\dag a^\dag$ for all $a,b \in D$.
(We follow the convention of \cite[sec.~8]{Mil05} by requiring involutions of $k$-algebras to be $k$-linear.  This is important for \cref{tr-skew-hermitian-form}.  Thus, with our definition, an ``involution of the second kind'' of a central simple $F$-algebra is not an $F$-algebra involution.  However, an involution of the second kind can still be handled within our framework by taking $k$ to be the fixed subfield of~$F$.)

For every $a \in D$, we have $\Trd_{D/k}(a^\dag) = \Trd_{D/k}(a)$.
Consequently the bilinear form $D \times D \to k$ given by $(a,b) \mapsto \Trd_{D/k}(ab^\dag)$ is symmetric.  The involution $\dag$ is said to be \defterm{positive} if this bilinear form is positive definite (equivalently, if the non-reduced trace bilinear form $(a,b) \mapsto \Tr_{D/k}(ab^\dag)$ is positive definite).

Division $\bQ$-algebras with positive involution $(D,\dag)$ were classified by Albert into four types, depending on the isomorphism type of $D_\bR$ \cite[sec.~21, Theorem~2]{Mum74}.

\begin{enumerate}[label=\textbf{Type \Roman*.},align=left,widest*=3,leftmargin=*,itemsep=2pt]
\item $D = F$, a totally real number field.  The involution is trivial. (In this case $D_\bR \cong \bR^e$.)
\item $D$ is a non-split totally indefinite quaternion algebra over a totally real number field $F$.  (Totally indefinite means that $D_\bR \cong \rM_2(\bR)^e$.)
The involution is of orthogonal type, meaning that after extending scalars to $\bR$ it becomes matrix transpose on each copy of $\rM_2(\bR)$.
\item $D$ is a totally definite quaternion algebra over a totally real number field~$F$.  (Totally definite means that $D_\bR \cong \bH^e$ where $\bH$ is Hamilton's quaternions.)
The involution is the ``canonical involution'' $a \mapsto \Trd_{D/F}(a) - a$.
\item $D$ is a division algebra whose centre is a CM field~$F$.
The involution restricts to complex conjugation on $F$. (In this case $D_\bR \cong \rM_d(\bC)^e$.)
\end{enumerate}

\subsection{The norm \texorpdfstring{$\abs{\cdot}_D$}{||D}} \label{subsec:norm-D}

Let $(D,\dag)$ be a semisimple $k$-algebra with a positive involution.
We define a norm $\abs{\cdot}_D$ on $D_\bR$ by:
\[ \abs{a}_D = \sqrt{\Trd_{D_\bR/\bR}(aa^\dag)}. \]
This is a norm in the sense of a real vector space norm (that is, a length function).
Note that $\abs{a^\dag}_D = \abs{a}_D$ for all $a \in D_\bR$.

The norm $\abs{\cdot}_D$ is induced by the inner product $(a,b) \mapsto \Trd_{D_\bR/\bR}(ab^\dag)$ on $D_\bR$.
This inner product (together with an orientation of $D_\bR$) also induces a volume form.  Whenever we refer to the covolume of a lattice in $D_\bR$, we use this volume form.
(Note that the covolume is the absolute value of the integral of the volume form over a fundamental domain, so it is independent of the choice of orientation.)

If $D$ is a semisimple $k$-algebra, then
\[ D_\bR \cong \prod_{i=1}^r \rM_{s_i}(\bK_i) \]
where $\bK_i = \bR$, $\bC$ or~$\bH$.
If $D$ is equipped with a positive involution~$\dag$, then we can choose the isomorphism so that $\dag$ corresponds to conjugate-transpose on each simple factor \cite[Prop.~8.4.7]{Voi21}.
Throughout the paper, whenever we choose an isomorphism between $D_\bR$ and a product of matrix algebras, we implicitly assume that it has this property.

Let $\abs{\cdot}_F$ denote the \defterm{Frobenius norm} on any matrix algebra over $\bR$, $\bC$ or $\bH$:
\[ \abs{M}_F^2 = \sum_{j,k=1}^s M_{jk} \ov{M_{jk}}. \]
Then, for any $a = (a_1, \dotsc, a_r) \in \prod_i \rM_{s_i}(\bK_i)$, we have
\[ \abs{a}_D^2 = \sum_{i=1}^r \abs{a_i}_F^2. \]

The following lemma will be used repeatedly throughout sections \ref{sec:skew-hermitian} and~\ref{sec:minkowski-proof}.  It is well-known in the case $D_\RR = \rM_n(\RR)$ or $\rM_n(\CC)$ -- see, for example, \cite[p.~291]{HJ85}.

\begin{lemma} \label{length-submult}
Let $(D, \dag)$ be a semisimple $k$-algebra with positive involution.
Then $\abs{ab}_D \leq \abs{a}_D \abs{b}_D$ for all $a, b \in D_\bR$.
\end{lemma}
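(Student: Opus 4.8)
The plan is to reduce the statement to the classical submultiplicativity of the Frobenius norm on matrix algebras over $\bR$, $\bC$, or $\bH$. By the discussion preceding the lemma, after choosing an isomorphism $D_\bR \cong \prod_{i=1}^r \rM_{s_i}(\bK_i)$ compatible with the involution, we have $\abs{a}_D^2 = \sum_i \abs{a_i}_F^2$ for $a = (a_1, \dotsc, a_r)$. Since multiplication in $\prod_i \rM_{s_i}(\bK_i)$ is componentwise, it suffices to prove $\abs{a_i b_i}_F \leq \abs{a_i}_F \abs{b_i}_F$ for each $i$ and then combine: if $\abs{a_i b_i}_F \le \abs{a_i}_F \abs{b_i}_F$ for all $i$, then
\[ \abs{ab}_D^2 = \sum_i \abs{a_i b_i}_F^2 \leq \sum_i \abs{a_i}_F^2 \abs{b_i}_F^2 \leq \Bigl( \sum_i \abs{a_i}_F^2 \Bigr) \Bigl( \sum_i \abs{b_i}_F^2 \Bigr) = \abs{a}_D^2 \abs{b}_D^2, \]
where the second inequality is because all terms are nonnegative (dropping the cross terms). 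So the whole lemma comes down to the single-factor statement.

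For a single matrix factor $\rM_s(\bK)$ with $\bK \in \{\bR, \bC, \bH\}$, I would argue as follows. The Frobenius norm is the $\ell^2$-norm of the rows: $\abs{M}_F^2 = \sum_{j} \abs{\text{row}_j(M)}^2$, where the row vector norm uses $\abs{v}^2 = \sum_k v_k \ov{v_k}$. For the product $MN$, the $j$-th row of $MN$ is $\text{row}_j(M) \cdot N$. Writing $r = \text{row}_j(M)$, we have $(rN)_k = \sum_\ell r_\ell N_{\ell k}$, so $\abs{rN}^2 = \sum_k (rN)_k \ov{(rN)_k}$, and by the Cauchy–Schwarz inequality in $\bK^s$ (valid over $\bH$ as well, treating it as a right inner-product space, or simply by viewing everything over $\bR$ of dimension $s \dim_\bR \bK$), each $\abs{(rN)_k} \le \abs{r}\,\abs{\text{col}_k(N)}$. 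Summing over $k$ gives $\abs{rN}^2 \le \abs{r}^2 \sum_k \abs{\text{col}_k(N)}^2 = \abs{r}^2 \abs{N}_F^2$. Summing over $j$ yields $\abs{MN}_F^2 \le \bigl(\sum_j \abs{\text{row}_j(M)}^2\bigr)\abs{N}_F^2 = \abs{M}_F^2 \abs{N}_F^2$, as desired. The cited reference \cite[p.~291]{HJ85} covers the $\bR$ and $\bC$ cases directly; for $\bH$ one either repeats the elementary computation with care about the order of multiplication, or embeds $\rM_s(\bH) \hookrightarrow \rM_{2s}(\bC)$ in the standard way and checks this embedding scales the Frobenius norm by a uniform constant, so submultiplicativity transfers.

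The only mild subtlety — and the thing I would be most careful about — is the quaternionic case: noncommutativity of $\bH$ means one must fix conventions (matrices acting on row vectors from the right, say) and make sure the Cauchy–Schwarz step is applied to the right real inner product $\Re \Trd$. Concretely I would just observe that $\abs{\cdot}_F$ on $\rM_s(\bH)$ equals the ordinary Euclidean norm on $\bR^{4s^2}$ under the identification $\bH \cong \bR^4$, and that the standard $\bC$-linear embedding $\rM_s(\bH) \hookrightarrow \rM_{2s}(\bC)$ multiplies this norm by $\sqrt{2}$; since submultiplicativity is insensitive to a uniform scaling factor and holds in $\rM_{2s}(\bC)$, it holds in $\rM_s(\bH)$. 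That disposes of all three cases uniformly and completes the proof.
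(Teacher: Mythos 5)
Your overall structure is the same as the paper's: identify $D_\bR$ with $\prod_{i=1}^r \rM_{s_i}(\bK_i)$, prove submultiplicativity of the Frobenius norm factor by factor, and combine using the elementary inequality $\sum_i x_i y_i \leq (\sum_i x_i)(\sum_i y_i)$ for non-negative reals. Over $\bR$ and $\bC$ your row--column Cauchy--Schwarz computation is the standard argument and is fine.

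The gap is precisely at the quaternionic factors, which is the only point the paper takes care over. Your ``concrete'' resolution --- transferring submultiplicativity through the embedding $\Phi \colon \rM_s(\bH) \hookrightarrow \rM_{2s}(\bC)$ on the grounds that ``submultiplicativity is insensitive to a uniform scaling factor'' --- fails in the direction you need. Since $\abs{\Phi(M)}_F = \sqrt{2}\,\abs{M}_F$, submultiplicativity in $\rM_{2s}(\bC)$ only gives $\sqrt{2}\,\abs{MN}_F = \abs{\Phi(M)\Phi(N)}_F \leq \abs{\Phi(M)}_F \abs{\Phi(N)}_F = 2\abs{M}_F\abs{N}_F$, i.e.\ $\abs{MN}_F \leq \sqrt{2}\,\abs{M}_F\abs{N}_F$: rescaling a submultiplicative norm \emph{downwards} does not preserve submultiplicativity, so the constant-$1$ bound claimed in the lemma does not transfer for free (the sharper inequality does hold for matrices in the image of $\Phi$, but that is strictly stronger than Frobenius submultiplicativity on $\rM_{2s}(\bC)$ and would itself need proof). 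Your first alternative, the direct computation over $\bH$, does work, but the step $\abs{(rN)_k} \leq \abs{r}\,\abs{\mathrm{col}_k(N)}$ is not literally an instance of real Cauchy--Schwarz after identifying $\bH^s$ with $\bR^{4s}$ (the real inner product of the corresponding real vectors is not $\abs{\sum_\ell r_\ell N_{\ell k}}$); the clean justification is $\abs{\sum_\ell r_\ell N_{\ell k}} \leq \sum_\ell \abs{r_\ell}\abs{N_{\ell k}}$ (triangle inequality plus multiplicativity of the quaternion norm) followed by real Cauchy--Schwarz, or else the genuinely quaternionic Cauchy--Schwarz inequality, which is exactly what the paper states and proves via its discriminant/AM--GM argument. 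With that repair your proof coincides in substance with the paper's.
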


\begin{proof}
Identify $D_\bR$ with $\prod_{i=1}^r \rM_{s_i}(\bK_i)$ and write
\[ a = (a_1, \dotsc, a_r), \; b = (b_1, \dotsc, b_r) \in \prod_{i=1}^r \rM_{s_i}(\bK_i). \]
Then
\[ \abs{ab}_D^2
   = \sum_{i=1}^r \length{a_ib_i}_F^2
   \leq \sum_{i=1}^r \length{a_i}_F^2 \length{b_i}_F^2 
   \leq \Bigl( \sum_{i=1}^r \length{a_i}_F \Bigr)^2 \Bigl( \sum_{i=1}^r \length{b_i}_F \Bigr)^2
   = \abs{a}_D^2 \abs{b}_D^2. \]
This calculation uses the submultiplicativity of the Frobenius norm and the following inequality, valid for all non-negative real numbers $x_1, \dotsc, x_r, y_1, \dotsc, y_r$:
\begin{equation} \label{eqn:sum-squares}
\sum_{i=1}^r x_iy_i \leq \Bigl( \sum_{i=1}^r x_i \Bigr) \Bigl( \sum_{i=1}^r y_i \Bigr).
\end{equation}

Since the Frobenius norm is less well-known over $\bH$, we remark that, just as in the real and complex cases, submultiplicativity of the Frobenius norm follows from the Cauchy--Schwarz inequality
\[ \Bigl( \sum_{j=1}^s x_j \ov{y}_j \Bigr) \Bigl( \sum_{j=1}^s y_j \ov{x}_j \Bigr)  \leq  \Bigl( \sum_{j=1}^s x_j \ov{x}_j \Bigr) \Bigl( \sum_{j=1}^s y_j \ov{y}_j \Bigr) \text{ for all } x, y \in \bK^n. \]
The Cauchy--Schwarz inequality over~$\bH$ can be proved by considering the discriminant of the quadratic polynomial $(\sum_{i=1}^s x_j t + y_j)(\sum_{i=1}^n \ov{x}_j t + \ov{y}_j)$, which is non-negative for all $t \in \bR$, and then applying the arithmetic mean--geometric mean inequality to the left hand side.
\end{proof}

We say that a semisimple $k$-algebra $D$ is \defterm{$\bR$-split} if $D_\bR \cong \rM_d(\bR)^e$ for some positive integers $d$ and~$e$.
Note that a division $\bQ$-algebra with positive involution is $\bR$-split if and only if it has type I or~II in the Albert classification, and these are the types of algebras that we focus on in this paper.

\begin{lemma} \label{Nrd-length-bound}
Let $(D, \dag)$ be an $\bR$-split semisimple $k$-algebra with positive involution and let $F$ be its centre.
Then, for all $a \in D_\bR^\times$:
\begin{enumerate}[(i)]
\item $\abs{\Nrd_{D_\bR/F_\bR}(a)}_D \leq d^{(1-d)/2} \abs{a}_D^d$;
\item $\abs{\Nrd_{D_\bR/\bR}(a)} \leq (de)^{-de/2} \abs{a}_D^{de}$.
\end{enumerate}
\end{lemma}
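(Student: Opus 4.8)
The plan is to reduce both inequalities to estimates on a single matrix factor and then combine the factors, using the explicit description $D_\bR \cong \rM_d(\bR)^e$ (the $\bR$-split hypothesis) so that every reduced norm becomes an honest determinant. Write $a = (a_1, \dotsc, a_e)$ with each $a_i \in \rM_d(\bR)$, so that $\abs{a}_D^2 = \sum_{i=1}^e \abs{a_i}_F^2$ where $\abs{\cdot}_F$ is the Frobenius norm, $\Nrd_{D_\bR/F_\bR}(a) = (\det a_1, \dotsc, \det a_e) \in F_\bR \cong \bR^e$, and $\Nrd_{D_\bR/\bR}(a) = \prod_{i=1}^e \det a_i$.

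For part~(i): the key inequality is the single-matrix bound $\abs{\det M} \le d^{-d/2}\abs{M}_F^d$ for $M \in \rM_d(\bR)$, which follows from Hadamard's inequality $\abs{\det M} \le \prod_{j}\length{\text{row}_j(M)}$ together with the AM--GM inequality $\prod_j \length{\text{row}_j(M)}^2 \le \bigl(\tfrac1d\sum_j \length{\text{row}_j(M)}^2\bigr)^d = (\abs{M}_F^2/d)^d$. Now the element $\Nrd_{D_\bR/F_\bR}(a)$, viewed inside $D_\bR$ via the central embedding $F_\bR \hookrightarrow D_\bR$, is the tuple of scalar matrices $((\det a_i)I_d)_i$, so $\abs{\Nrd_{D_\bR/F_\bR}(a)}_D^2 = \sum_{i=1}^e d\,(\det a_i)^2 \le d \sum_{i=1}^e (d^{-d/2}\abs{a_i}_F^d)^2 = d^{1-d}\sum_{i=1}^e \abs{a_i}_F^{2d}$. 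Since each $\abs{a_i}_F^2 \le \sum_{j} \abs{a_j}_F^2 = \abs{a}_D^2$, we get $\sum_i \abs{a_i}_F^{2d} \le (\sum_i \abs{a_i}_F^2)^d = \abs{a}_D^{2d}$ (super-additivity of $t \mapsto t^d$ on nonnegative reals), and taking square roots gives $\abs{\Nrd_{D_\bR/F_\bR}(a)}_D \le d^{(1-d)/2}\abs{a}_D^d$.

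For part~(ii): apply the same single-matrix bound to each factor, $\abs{\det a_i} \le d^{-d/2}\abs{a_i}_F^d$, so $\abs{\Nrd_{D_\bR/\bR}(a)} = \prod_{i=1}^e \abs{\det a_i} \le d^{-de/2}\prod_{i=1}^e \abs{a_i}_F^d$. By the AM--GM inequality applied to $\abs{a_1}_F^2, \dotsc, \abs{a_e}_F^2$, we have $\prod_{i=1}^e \abs{a_i}_F^{2} \le \bigl(\tfrac1e\sum_{i=1}^e \abs{a_i}_F^2\bigr)^e = (\abs{a}_D^2/e)^e$, hence $\prod_{i=1}^e \abs{a_i}_F^d \le e^{-e/2}\abs{a}_D^e$. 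Combining, $\abs{\Nrd_{D_\bR/\bR}(a)} \le d^{-de/2}e^{-e/2}\abs{a}_D^{de}$. It remains to check $d^{-de/2}e^{-e/2} \le (de)^{-de/2}$, equivalently $e^{-e/2} \le e^{-de/2}$, i.e.\ $e^{(d-1)e/2} \ge 1$, which holds since $d \ge 1$; one can also simply absorb this into the statement's constant.

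The only step with any subtlety is the passage from the per-factor Frobenius norms back to $\abs{a}_D$: in part~(i) this uses super-additivity of $t\mapsto t^d$ while in part~(ii) it uses AM--GM; these go in opposite directions but are each applied to the correct product/sum, so no conflict arises. The genuinely load-bearing fact is the sharp Hadamard--AM--GM bound $\abs{\det M}\le d^{-d/2}\abs{M}_F^d$, and the main thing to be careful about is that the central embedding $F_\bR \to D_\bR$ multiplies the Frobenius norm of a scalar $\lambda \in \bR$ by $\sqrt{d}$ on each factor, which is exactly what produces the exponent $(1-d)/2$ rather than $-d/2$ in~(i).
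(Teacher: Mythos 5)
Your argument is correct and follows essentially the same route as the paper: identify $D_\bR$ with $\rM_d(\bR)^e$, establish the per-factor bound $\abs{\det a_i} \leq d^{-d/2}\abs{a_i}_F^{d}$ (the paper does this via the eigenvalues of $a_i a_i^t$ and AM--GM, you via Hadamard plus AM--GM on row lengths --- the same estimate), and then combine over the factors exactly as in the paper's parts (i) and (ii), including the $\sqrt{d}$ from the central embedding that produces the exponent $(1-d)/2$. One slip in part (ii): after correctly deriving $\prod_{i}\abs{a_i}_F^{d} \leq e^{-de/2}\abs{a}_D^{de}$, your ``Combining'' line writes $e^{-e/2}$ instead of $e^{-de/2}$; combined correctly you get $d^{-de/2}e^{-de/2}\abs{a}_D^{de} = (de)^{-de/2}\abs{a}_D^{de}$ on the nose and no further check is needed. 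The ``remaining check'' you then append is actually false as stated for $d \geq 2$ and $e \geq 2$ (the inequality $e^{-e/2} \leq e^{-de/2}$ goes the wrong way), but it is moot once the typo is fixed, so the proof stands.
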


\begin{proof}
Identify $D_\bR$ with $\rM_d(\bR)^e$ and write
\[ a = (a_1, \dotsc, a_e). \]
For each $i$, the matrix $a_i a_i^t \in \rM_d(\bR)$ is symmetric and positive definite and therefore diagonalisable with positive eigenvalues.  Let its eigenvalues be $\lambda_{i1}, \dotsc, \lambda_{id}$.
Note that $\abs{a_i}_F^2 = \Tr(a_i a_i^t) = \lambda_{i1} + \dotsb + \lambda_{id}$.
By the arithmetic mean--geometric mean inequality,
\begin{equation} \label{eqn:det-ai}
\det(a_i)^{2/d} = \det(a_i a_i^t)^{1/d} = \bigl( \lambda_{i1} \dotsm \lambda_{id} \bigr)^{1/d} \leq d^{-1} \bigl( \lambda_{i1} + \dotsb + \lambda_{id} \bigr) = d^{-1} \abs{a_i}_F^2.
\end{equation}

\begin{enumerate}[(i)]
\item We have $\Nrd_{D_\bR/F_\bR}(a) = (\det(a_1) I_d, \dotsc, \det(a_e)I_d)$ where $I_d$ denotes the identity matrix in $\rM_d(\bR)$.
Hence
\begin{align*}
    \abs{\Nrd_{D_\bR/F_\bR}(a)}_D^2
  & = \sum_{i=1}^e \abs{\det(a_i) I_d}_F^2
    = \sum_{i=1}^e d \abs{\det(a_i)}^2
\\& \leq \sum_{i=1}^e d \bigl( d^{-1} \abs{a_i}_F^2 \bigr)^{d}
    = d^{1-d} \sum_{i=1}^e \abs{a_i}_F^{2d}
\\& \leq d^{1-d} \Bigl( \sum_{i=1}^e \abs{a_i}^2 \Bigr)^d
    = d^{1-d} \abs{a}_D^{2d}.
\end{align*}

\item Using \eqref{eqn:det-ai} and another application of the AM-GM inequality,
\[ \abs{\Nrd_{D_\bR/\bR}(a)}^{2/de} = \Bigl( \prod_{i=1}^e \abs{\det(a_i)}^{2/d} \Bigr)^{1/e}
   \leq e^{-1} \sum_{i=1}^e d^{-1} \abs{a_i}_F^2 = (de)^{-1} \abs{a}_D^2.
\qedhere
\]
\end{enumerate}
\end{proof}

\subsection{The Hermite constant and Minkowski's theorems}

Let $\gamma_n$ denote the Hermite constant for $\bR^n$, that is, the smallest positive real number such that the following holds:
\textit{For every lattice $L$ in $\bR^n$ with the Euclidean norm and volume form, there exists a vector $v \in L$ satisfying $\abs{v} \leq \sqrt{\gamma_n} \covol(L)^{1/n}$.}

It is immediate from the definition that $\gamma_n \geq 1$ for all~$n$.

As a consequence of Minkowski's theorem on convex bodies,
\begin{equation} \label{eqn:minkowski-gamma}
\gamma_n \leq 4 \cV_n^{-2/n} = \tfrac{4}{\pi} \Gamma(\tfrac{n}{2} + 1)^{2/n}
\end{equation}
where $\cV_n$ denotes the volume of the unit ball in $\bR^n$.

\begin{lemma} \label{hermite-constant-bound}
For all positive integers $n$, we have $\gamma_n \leq n$.
\end{lemma}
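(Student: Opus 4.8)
The plan is to use the bound \eqref{eqn:minkowski-gamma}, namely $\gamma_n \leq \tfrac{4}{\pi}\Gamma(\tfrac{n}{2}+1)^{2/n}$, and show that the right-hand side is at most $n$ for every positive integer $n$. Equivalently, raising both sides to the power $n/2$, it suffices to prove
\[ \Gamma\Bigl(\tfrac{n}{2}+1\Bigr) \leq \Bigl(\tfrac{\pi n}{4}\Bigr)^{n/2} \]
for all $n \geq 1$. First I would dispose of the small cases $n = 1, 2$ by direct computation: for $n=1$ one checks $\Gamma(3/2) = \tfrac{\sqrt\pi}{2} \leq \sqrt{\pi/4} = \tfrac{\sqrt\pi}{2}$ (in fact equality, so $\gamma_1 \leq 1$), and for $n = 2$ one has $\Gamma(2) = 1 \leq \pi/2$.

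For the general case I would split according to the parity of $n$. When $n = 2k$ is even, $\Gamma(k+1) = k!$, and the claim becomes $k! \leq (\pi k/2)^k$; since $\pi/2 > 1$ this follows from the elementary bound $k! \leq k^k$. When $n = 2k+1$ is odd, $\Gamma(k + 3/2) = \tfrac{(2k+1)!}{4^k\,k!}\sqrt\pi \cdot \tfrac{1}{2}$ (the standard half-integer Gamma value), and one reduces to a similar elementary inequality; alternatively, one can use log-convexity of $\Gamma$ to interpolate: $\Gamma(k+3/2) \leq \sqrt{\Gamma(k+1)\Gamma(k+2)} = \sqrt{k!\,(k+1)!}$, and then bound $\sqrt{k!\,(k+1)!} \leq \bigl(\tfrac{\pi(2k+1)}{4}\bigr)^{k+1/2}$ using $k!\le k^k$ and $(k+1)! \le (k+1)^{k+1}$ together with $2k+1 \ge k+1$ and $\pi/4 \cdot (2k+1) \ge 1$ for $k \ge 1$.

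The only mildly delicate point is making sure the constant $\pi/4 < 1$ (rather than $1$) does not cause trouble in the odd case when $n$ is small — but the extra factor $(2k+1)/(k+1)$ and the slack in $k! \leq k^k$ absorb it comfortably for $k \ge 1$, and $n=1$ has already been handled separately. So the argument is entirely elementary once \eqref{eqn:minkowski-gamma} is in hand; there is no real obstacle, just a short case analysis on parity plus the base cases $n \le 2$.
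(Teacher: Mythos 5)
Your proposal is correct and follows essentially the same route as the paper: both start from \eqref{eqn:minkowski-gamma} and reduce the lemma to an estimate for $\Gamma(\tfrac{n}{2}+1)$, with $n=1$ handled separately. The only difference is that the paper quotes the inequality $\Gamma(x)\le x^{x-1}$ for real $x>1$ from the literature, which disposes of all $n\ge 2$ at once (and in fact gives $\gamma_n\le\tfrac{2}{\pi}n$), whereas you verify the needed bound $\Gamma(\tfrac n2+1)\le(\tfrac{\pi n}{4})^{n/2}$ by hand via a parity split, $k!\le k^k$, and log-convexity of $\Gamma$ at half-integers --- an elementary and valid substitute, just slightly longer.
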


\begin{proof}
According to \cite[Theorem~1.5]{AQ97}, $\Gamma(x) \leq x^{x-1}$ for all real numbers $x>1$.
Hence
\[ \Gamma(x+1) = x\Gamma(x) < x^x \]
for all $x > 1$.
Furthermore, for $x=1$, we have $\Gamma(x+1) = 1 = x^x$.
Thus $\Gamma(x+1) \leq x^x$ for all $x \geq 1$.
Plugging this into \eqref{eqn:minkowski-gamma}, we obtain $\gamma_n \leq \frac{4}{\pi} \cdot \frac{n}{2} < n$ for all $n \geq 2$.

It is clear that $\gamma_1=1$ \cite[Appendix]{cassels:geom-of-numbers}, so the lemma is also true for $n=1$.
\end{proof}

\Cref{hermite-constant-bound} is not optimal for large~$n$.
Indeed, our proof itself shows that $\gamma_n \leq \frac{2}{\pi}n$ for $n \geq 2$.
Using Stirling's approximation to the Gamma function, one can obtain $\gamma_n \leq 2n/\pi e + o(n)$ as $n \to +\infty$.
However we have chosen to use \cref{hermite-constant-bound} because we need a simple bound for the Hermite constant which is valid for all $n \geq 1$, without hidden constants or special cases for small~$n$, as we wish to avoid fiddly special cases when calculating the (non-optimal) constants in \cref{weakly-unitary-induction}.

\pagebreak 

A version of Minkowski's second theorem for the Euclidean norm also holds with the Hermite constant:

\begin{theorem} \label{minkowski-2nd} {\cite[Ch.~VIII, Theorem~1]{cassels:geom-of-numbers}}
For every lattice $L$ in $\bR^n$ with the Euclidean norm and volume form, there exist vectors $e_1, \dotsc, e_n \in L$ which form a basis for $\bR^n$ and which satisfy $\abs{e_1}\dotsm\abs{e_n} \leq \gamma_n^{n/2} \covol(L)$.
\end{theorem}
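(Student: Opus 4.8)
Since the statement is quoted verbatim from \cite[Ch.~VIII, Theorem~1]{cassels:geom-of-numbers}, the shortest ``proof'' is just that citation; but let me describe the shape of the classical argument and where the Hermite constant enters. The starting point is the successive minima $\lambda_1 \le \dots \le \lambda_n$ of $L$ with respect to the Euclidean unit ball, realised by linearly independent vectors $u_1, \dots, u_n \in L$ with $\abs{u_i} = \lambda_i$. Minkowski's theorem on convex bodies, applied to suitable linear compressions of the ball, bounds the product $\lambda_1 \dotsm \lambda_n$ above by $2^n \cV_n^{-1} \covol(L)$; this is already of the right shape.

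Two further ideas are needed to reach the precise statement. First, to replace $2^n\cV_n^{-1}$ by the sharp $\gamma_n^{n/2}$, one exploits that $\gamma_n$ is by definition the \emph{optimal} constant in Hermite's inequality $\lambda_1(M) \le \sqrt{\gamma_n}\,\covol(M)^{1/n}$: one argues by induction on the dimension, splitting off the primitive sublattice spanned by a minimal batch of the $u_i$ and passing to the orthogonal projection, where covolumes multiply and the remaining minima can be controlled. Second --- and this is the genuinely delicate point --- the vectors $u_1, \dots, u_n$ need not form a \emph{basis} of $L$, so one must produce instead a basis $e_1, \dots, e_n$ with controlled lengths (a classical bound being $\abs{e_i} \le \max(1, i/2)\lambda_i$) and then arrange the bookkeeping so that $\abs{e_1} \dotsm \abs{e_n}$ still obeys the sharp bound rather than a factorial multiple of it. The interplay between reduction theory (to obtain a basis) and the optimality of $\gamma_n$ (to obtain the sharp constant) is where the real content lies, and is carried out in \cite{cassels:geom-of-numbers}.

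I expect the basis-versus-minima step to be the main obstacle: a naive induction that projects out a shortest vector and lifts an inductively chosen basis of the projected lattice fails to close with the sharp constant $\gamma_n^{n/2}$, because the lifts can be appreciably longer than their projections. It is worth noting that for the applications in this paper the weaker constant $2^n\cV_n^{-1}$ would have been entirely adequate, since downstream only the crude estimate $\gamma_n \le n$ of \cref{hermite-constant-bound} is used; we have nonetheless recorded the sharp form, as it is the standard reference statement.
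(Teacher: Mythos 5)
Your essential move---the citation to \cite[Ch.~VIII, Theorem~1]{cassels:geom-of-numbers}---is exactly what the paper does: this theorem is quoted, not proved, so on that level the proposal matches the paper. One correction to your sketch, though: the statement only requires $e_1,\dotsc,e_n$ to form a basis of $\bR^n$, i.e.\ to be linearly independent lattice vectors, not a basis of the lattice $L$; the vectors realising the successive minima therefore already qualify, so the ``basis-versus-minima'' step you single out as the genuinely delicate point does not arise, and indeed the paper's only use of the theorem (\cref{D-minkowski}) needs nothing more than $\bQ$-linear independence of the $e_i$. The actual content is just the inequality $\lambda_1\dotsm\lambda_n \leq \gamma_n^{n/2}\covol(L)$ for the Euclidean norm, and your closing observation is accurate: since the paper only exploits $\gamma_n \leq n$, obtained from \eqref{eqn:minkowski-gamma}, the cruder Minkowski constant $2^n\cV_n^{-1}$ would have served the applications equally well.
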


With some book-keeping, we can obtain a version of \cref{minkowski-2nd} for vector spaces over a division $\bQ$-algebra.  This is the same method as the proof of a version of Minkowski's second theorem over number fields in \cite[C.2.18]{BG06}.

\begin{proposition} \label{D-minkowski}
Let $D$ be a division $\bQ$-algebra.
Let $V$ be a left $D$-vector space of dimension~$m$.
Let $L$ be a $\bZ$-lattice in~$V$.
Let $\abs{\cdot}$ be any norm on $V_\bR$ induced by an inner product, and use the associated volume form to define $\covol(L)$.

Then there exists a $D$-basis $w_1, \dotsc, w_m$ for $V$ such that:
\begin{enumerate}[(i)]
\item $w_1, \dotsc, w_m \in L$;
\item $\abs{w_1} \abs{w_2} \dotsm \abs{w_m} \leq \gamma_{[D:\bQ]m}^{m/2} \covol(L)^{1/[D:\bQ]}$.
\end{enumerate}
\end{proposition}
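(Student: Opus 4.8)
The plan is to reduce the statement over the division algebra $D$ to the classical Minkowski second theorem (\cref{minkowski-2nd}) applied to $V_\bR$ viewed as a real vector space of dimension $[D:\bQ]m$, and then to select, out of the resulting real basis, a sub-collection of $m$ vectors that form a $D$-basis for $V$. First I would set $n = [D:\bQ]$ and regard $L$ as a lattice of full rank in the real vector space $V_\bR \cong \bR^{nm}$ equipped with the given inner-product norm. Applying \cref{minkowski-2nd} yields vectors $e_1, \dotsc, e_{nm} \in L$ forming an $\bR$-basis of $V_\bR$ with $\abs{e_1}\dotsm\abs{e_{nm}} \leq \gamma_{nm}^{nm/2}\covol(L)$, and after permuting we may assume $\abs{e_1} \leq \abs{e_2} \leq \dotsb \leq \abs{e_{nm}}$.

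The next step is to extract a $D$-basis greedily from the ordered list $e_1, \dotsc, e_{nm}$: going through the $e_j$ in order of increasing norm, keep $e_j$ whenever it is not in the left $D$-span of the $e_i$'s already kept. Since the $e_j$ span $V_\bR$ over $\bR$, they certainly span $V$ over $D$, so this process selects exactly $m$ vectors; call them $w_1, \dotsc, w_m$, so they form a $D$-basis of $V$ lying in $L$, giving (i). The key point for the bound (ii) is a counting/comparison argument: if $w_k$ was chosen as the $j_k$-th vector $e_{j_k}$, then among $e_1, \dotsc, e_{j_k}$ at least $j_k - (k-1)$ of them lie outside the $D$-span of $w_1, \dotsc, w_{k-1}$; but that $D$-span is a $D$-subspace of dimension $k-1$, hence an $\bR$-subspace of dimension $n(k-1)$, so at most $n(k-1)$ of the $e_i$ (for any initial segment) can lie in it — no wait, one must argue the other direction. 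The clean way is: since $w_1,\dots,w_{k-1}$ span an $\bR$-subspace $U_{k-1}$ of dimension $n(k-1)$, and $e_{j_k}\notin U_{k-1}$, and the $e_i$ are $\bR$-independent, at most $n(k-1)$ indices $j$ can have $e_j \in U_{k-1}$; in particular the vector $e_{j_k}$ we pick satisfies $j_k \leq n(k-1) + k$, which is not quite tight enough. So instead I would run the standard argument: order everything by norm, and observe that $\abs{w_k} = \abs{e_{j_k}} \geq \abs{e_{n(k-1)+1}}$ is false in general; rather one shows $\abs{w_k}\le \abs{e_{nk}}$ does not directly hold either.

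Let me take the approach that genuinely works, as in \cite[C.2.18]{BG06}: after the greedy selection, reindex so that the \emph{unselected} real basis vectors interleave, and use that for each $k$ the $D$-span of $w_1,\dotsc,w_k$ has real dimension $nk$ and contains all $e_i$ with $i$ among the first few indices not chosen later; concretely one proves $\abs{w_1}\abs{w_2}\dotsm\abs{w_k} \le \abs{e_1}\abs{e_2}\dotsm\abs{e_{nk}}$ for each $k$, by showing that the multiset $\{w_1,\dotsc,w_k\}$ can be matched injectively into $\{e_1,\dotsc,e_{nk}\}$ with each $w$ having norm at most that of its partner — which follows because $w_1,\dotsc,w_k$ span an $nk$-dimensional real space, so at least one of them has norm $\ge \abs{e_{nk}}$ only if... . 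The correct and standard statement is simply: each $w_k$ equals some $e_{j_k}$ with $j_1 < j_2 < \dotsb < j_m$, and because the first $j_k$ vectors $e_1,\dotsc,e_{j_k}$ include $w_1,\dotsc,w_k$ which already span a real space of dimension $n k$, we get $j_k \ge$ — no. The actual inequality from the greedy procedure is $j_k \le n(k-1)+1$ is wrong; what is true is that $e_1, \dots, e_{n(k-1)+1}$ cannot all lie in an $n(k-1)$-dimensional space, so \emph{some} $e_i$ with $i \le n(k-1)+1$ is selected as one of $w_1, \dots, w_k$, giving $j_k \le n(k-1)+1$ — and this \emph{is} correct, since the selection is greedy by increasing norm. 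Hence $\abs{w_k} = \abs{e_{j_k}} \le \abs{e_{n(k-1)+1}} \le \abs{e_{nk}}$, and therefore
\[
\abs{w_1}\dotsm\abs{w_m} \le \prod_{k=1}^m \abs{e_{nk}} \le \prod_{k=1}^m \Bigl(\abs{e_{n(k-1)+1}}\abs{e_{n(k-1)+2}}\dotsm\abs{e_{nk}}\Bigr)^{1/1}
\]
is too lossy; instead use $\abs{w_k} \le \abs{e_{n(k-1)+1}}$ and the monotonicity $\abs{e_{n(k-1)+1}} \le \abs{e_{n(k-1)+j}}$ for $1 \le j \le n$ to get $\abs{w_k}^n \le \abs{e_{n(k-1)+1}}\dotsm\abs{e_{nk}}$, so $\prod_k \abs{w_k}^n \le \prod_{i=1}^{nm}\abs{e_i} \le \gamma_{nm}^{nm/2}\covol(L)$, i.e. $\abs{w_1}\dotsm\abs{w_m} \le \gamma_{nm}^{m/2}\covol(L)^{1/n}$, which is exactly (ii). I expect the main obstacle to be phrasing the greedy-selection bound $j_k \le n(k-1)+1$ cleanly and justifying the passage from the real Minkowski basis to a $D$-basis without sign or independence errors; the rest is bookkeeping with the norm ordering and the AM–GM-style grouping of the product into blocks of size $n$.
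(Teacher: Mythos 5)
Your final argument (after the false starts you correct along the way) is exactly the paper's proof: apply \cref{minkowski-2nd} to $L$ in $V_\bR$, order the $e_i$ by norm, greedily extract a $D$-basis $w_k = e_{j_k}$, use $\bQ$-linear independence to get $j_k \leq [D:\bQ](k-1)+1$, and then group the product into blocks of size $[D:\bQ]$ via monotonicity, exactly as in the paper's treatment (itself modelled on \cite[C.2.18]{BG06}). The proposal is correct; it would only benefit from stating the final clean chain once, without the abandoned intermediate attempts.
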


\begin{proof}
Let $n = \dim_\bQ(V) = [D:\bQ]m$.
Choose $e_1, \dotsc, e_n \in L$ as in \cref{minkowski-2nd}.
Order the $e_i$ so that $\abs{e_i} \leq \abs{e_{i+1}}$ for all $i = 1, \dotsc, n-1$.

For $i = 1, \dotsc, m$, let $q_i$ denote the smallest positive integer $q$ such that the $D$-span of $e_1, \dotsc, e_q$ has $D$-dimension equal to $i$.
Let $w_i = e_{q_i}$.
By construction, for each~$i$, the $D$-span of $w_1, \dotsc, w_i$ has $D$-dimension equal to $i$. Hence $w_1, \dotsc, w_m$ is a $D$-basis for $V$.

For $1 \leq i \leq m$, the vectors $e_1, \dotsc, e_{q_i-1}$ are contained in a $D$-vector space of $D$-dimension $i-1$, so they are contained in a $\bQ$-vector space of $\bQ$-dimension at most $[D:\bQ](i-1)$.  These vectors are $\bQ$-linearly independent, so
\[ q_i - 1 \leq [D:\bQ](i-1). \]
Since the lengths $\abs{e_i}$ are increasing, we deduce that
\[ \abs{w_i}^{[D:\bQ]} \leq \abs{e_{[D:\bQ](i-1) + 1}}^{[D:\bQ]} \leq \prod_{j=1}^{[D:\bQ]} \abs{e_{[D:\bQ](i-1) + j}}. \]
Hence by \cref{minkowski-2nd},
\[ \prod_{i=1}^m \abs{w_i}^{[D:\bQ]} \leq \prod_{i=1}^n \abs{e_i} \leq \gamma_n^{n/2} \covol(L).
\qedhere
\]
\end{proof}

Let $D$ be a division $\bQ$-algebra, $R$ an order in $D$ and $L$ a torsion-free $R$-module of rank~$m$.
Combining \cref{D-minkowski} with \cref{minkowski-2nd} applied to $R$ and Hadamard's inequality, we could prove that there exist $w_1, \dotsc, w_m \in L$ forming a $D$-basis for $D \otimes_R L$ and satisfying $[L : Rw_1 + \dotsb + Rw_m] \leq \newC{D-index-multiplier} \abs{\disc(R)}^{m/2}$.
However this method of proof gives a constant $\refC{D-index-multiplier} > 1$, so this is weaker than \cref{minkowski-general-index}.

\subsection{Discriminants of bilinear forms}

If $\Lambda$ is a $\ZZ$-module, we write $\Lambda_\QQ$ for $\Lambda\otimes_\ZZ\QQ$. If $\Lambda$ is free of finite rank and $\phi \colon \Lambda_\QQ \times \Lambda_\QQ \to \QQ$ is a bilinear form, we write $\disc(\Lambda,\phi)$ for the determinant of the matrix $(\phi(e_i,e_j))_{i,j}$ where $\{e_1,\ldots,e_n\}$ is a $\ZZ$-basis for $\Lambda$ (the determinant is independent of the choice of basis).

\begin{lemma} \label{disc-lattice-complement}
Let $L$ be a free $\bZ$-module of finite rank and let $\phi \colon L \times L \to \bZ$ be a non-degenerate bilinear form.
Let $M \subset L$ be a $\bZ$-submodule such that $\phi_{|M \times M}$ is non-degenerate.
Let
\[ M^\perp = \{ x \in L : \phi(x,y) = 0 \text{ for all } y \in M \}. \]
Then
\begin{enumerate}[(i)]
\item $[L : M + M^\perp] \leq \abs{\disc(M, \phi)}$; and
\item $\abs{\disc(M^\perp, \phi)} \leq \abs{\disc(L, \phi)} \abs{\disc(M, \phi)}$.
\end{enumerate}
\end{lemma}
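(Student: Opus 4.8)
The plan is to reduce both statements to a comparison of determinants of Gram matrices, using the fact that the non-degeneracy of $\phi$ on $M$ splits the $\bQ$-vector space $L_\bQ$ as $M_\bQ \oplus M^\perp_\bQ$. First I would fix a $\bZ$-basis $f_1, \dotsc, f_r$ of $M$ and a $\bZ$-basis $g_1, \dotsc, g_{s}$ of $M^\perp$, where $r + s = n = \rk(L)$. Since $\phi|_{M \times M}$ is non-degenerate, every $x \in L_\bQ$ decomposes uniquely as $x_M + x_\perp$ with $x_M \in M_\bQ$ and $x_\perp \in M^\perp_\bQ$; thus $f_1, \dotsc, f_r, g_1, \dotsc, g_s$ is a $\bQ$-basis of $L_\bQ$, and $M + M^\perp$ is exactly the $\bZ$-span of this basis. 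Hence $[L : M + M^\perp]$ equals the absolute value of the determinant of the change-of-basis matrix expressing a $\bZ$-basis of $L$ in terms of $f_i, g_j$ — equivalently, $[L : M + M^\perp]^2 = \abs{\disc(M + M^\perp, \phi)} / \abs{\disc(L, \phi)}$, provided $\phi$ is non-degenerate on $M + M^\perp$ (which it is, being non-degenerate on $L$ and hence on the finite-index sublattice $M + M^\perp$, as non-degeneracy over $\bQ$ is inherited by full-rank sublattices).

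For part (i): because $M$ and $M^\perp$ are $\phi$-orthogonal, the Gram matrix of $\phi$ on $M + M^\perp$ in the basis $f_1, \dotsc, f_r, g_1, \dotsc, g_s$ is block-diagonal, so $\disc(M + M^\perp, \phi) = \disc(M, \phi) \disc(M^\perp, \phi)$. Combining with the index formula above,
\[ [L : M + M^\perp]^2 = \frac{\abs{\disc(M,\phi)} \, \abs{\disc(M^\perp,\phi)}}{\abs{\disc(L,\phi)}}. \]
Now I need the arithmetic input that $\disc(L, \phi)$ divides $\disc(M^\perp, \phi)$ in $\bZ$: since $\phi$ takes integer values on $L$ and $M^\perp$ is a full-rank sublattice, $\disc(M^\perp, \phi) = [L : M^\perp]^2 \disc(L, \phi)$ up to the usual sublattice formula — wait, that is only literally true when $\phi$ is unimodular; in general one uses that the Gram matrix of $M^\perp$ is $P^t G P$ where $G$ is the Gram matrix of $L$ and $P$ is the integer matrix expressing the $g_j$ in the basis of $L$, so $\disc(M^\perp, \phi) = \det(P)^2 \disc(L, \phi)$ and in particular $\abs{\disc(L,\phi)} \leq \abs{\disc(M^\perp, \phi)}$. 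Substituting $\abs{\disc(M^\perp,\phi)} \geq \abs{\disc(L,\phi)}$ into the displayed identity gives $[L : M + M^\perp]^2 \leq \abs{\disc(M,\phi)}^2 \cdot \bigl(\abs{\disc(M^\perp,\phi)}/\abs{\disc(L,\phi)} \cdot \abs{\disc(L,\phi)}/\abs{\disc(M^\perp,\phi)}\bigr)$ — cleaner: from the identity, $[L:M+M^\perp]^2 \abs{\disc(L,\phi)} = \abs{\disc(M,\phi)}\abs{\disc(M^\perp,\phi)}$, and since $[L:M+M^\perp]$ divides $[L:M^\perp]$ and $\abs{\disc(M^\perp,\phi)} = [L:M^\perp]^2\abs{\disc(L,\phi)}$, we get $[L:M+M^\perp]^2 \leq \abs{\disc(M,\phi)} [L:M^\perp]^2$, hence $[L:M+M^\perp] \leq \abs{\disc(M,\phi)}^{1/2}[L:M^\perp]/[L:M+M^\perp]^{0}$... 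I would clean this up by instead observing directly that $M^\perp \supseteq$ the image of the orthogonal projection of $L$ scaled by $\disc(M,\phi)$, giving $[L : M + M^\perp] \mid \disc(M,\phi)$, which yields (i).

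For part (ii): apply the same projection argument with the roles reversed. The orthogonal projection $\pi_\perp \colon L_\bQ \to M^\perp_\bQ$ along $M_\bQ$ satisfies $\disc(M, \phi) \cdot \pi_\perp(L) \subseteq M^\perp$ (clearing denominators via Cramer's rule on the Gram matrix of $M$), so $M^\perp \subseteq \pi_\perp(L)$ and $[\pi_\perp(L) : M^\perp] \leq \abs{\disc(M,\phi)}^{s}$; combined with $\abs{\disc(\pi_\perp(L), \phi)} \leq \abs{\disc(L, \phi)}$ (the projection of $L$ along a $\phi$-isotropic-free complement does not increase the discriminant — this follows because $L \supseteq M \oplus \pi_\perp(L)$ orthogonally after extending $M$ appropriately, or more directly from $\disc(L,\phi) = \pm\disc(M,\phi)\disc(\pi_\perp(L),\phi) / [\text{something}]$), the bound $\abs{\disc(M^\perp,\phi)} = [\pi_\perp(L):M^\perp]^2 \abs{\disc(\pi_\perp(L),\phi)}$ gives (ii) after absorbing the power of $\disc(M,\phi)$. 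The main obstacle I anticipate is the book-keeping in part (ii) to get the clean bound $\abs{\disc(M^\perp,\phi)} \leq \abs{\disc(L,\phi)}\abs{\disc(M,\phi)}$ rather than a worse power of $\abs{\disc(M,\phi)}$; the trick is to use the block-diagonal identity $\disc(M + M^\perp, \phi) = \disc(M, \phi)\disc(M^\perp, \phi)$ together with $M + M^\perp \subseteq L$ giving $\abs{\disc(M,\phi)}\abs{\disc(M^\perp,\phi)} = [L:M+M^\perp]^2\abs{\disc(L,\phi)}$, and then (i) — namely $[L:M+M^\perp] \leq \abs{\disc(M,\phi)}$ — to conclude $\abs{\disc(M^\perp,\phi)} = [L:M+M^\perp]^2\abs{\disc(L,\phi)}/\abs{\disc(M,\phi)} \leq [L:M+M^\perp]\abs{\disc(L,\phi)} \leq \abs{\disc(L,\phi)}\abs{\disc(M,\phi)}$, so in fact (ii) follows formally from (i) and the block-diagonal determinant identity, which is the slick route I would take.
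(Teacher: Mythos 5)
Your final argument for part (ii) is exactly the paper's: combine the orthogonal block identity $\abs{\disc(M,\phi)}\abs{\disc(M^\perp,\phi)} = \abs{\disc(M+M^\perp,\phi)} = [L:M+M^\perp]^2\abs{\disc(L,\phi)}$ with the bound from (i) and divide by $\abs{\disc(M,\phi)}\neq 0$; that part is fine. The problem is that your only surviving argument for part (i) has a genuine gap. From $\disc(M,\phi)\,\pi_\perp(L)\subseteq M^\perp$ (equivalently, together with $\disc(M,\phi)\,\pi_M(L)\subseteq M$, from $\disc(M,\phi)\,L\subseteq M+M^\perp$) you conclude ``$[L:M+M^\perp]\mid \disc(M,\phi)$''. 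That inference is a non sequitur: annihilation of the finite group $L/(M+M^\perp)$ by the integer $N=\disc(M,\phi)$ only bounds its \emph{exponent}, not its order. Compare $N\bZ^2\subset\bZ^2$: the quotient is killed by $N$ but has index $N^2$. So your ``clean'' route proves only $[L:M+M^\perp]\leq \abs{\disc(M,\phi)}^{\rk}$, which is too weak, and since (ii) is derived from (i), both parts are left open. (The earlier abandoned computations also contain errors --- $[L:M^\perp]$ is infinite because $M^\perp$ does not have full rank, and $\disc(M^\perp,\phi)=\det(P)^2\disc(L,\phi)$ makes no sense for a non-square $P$ --- but since you discard them, the divisibility claim above is the essential issue.)

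The missing idea, which is how the paper proves (i), is to project onto $M_\bQ$ rather than onto $M_\bQ^\perp$ and to compare with the dual lattice of $M$ instead of clearing denominators. Let $\pi\colon L_\bQ\to M_\bQ$ be the projection with kernel $M_\bQ^\perp$ and set $M^*=\{x\in M_\bQ : \phi(x,y)\in\bZ \text{ for all } y\in M\}$. For $x\in L$ one has $\phi(\pi(x),y)=\phi(x,y)\in\bZ$ for all $y\in M$, so $\pi(L)\subseteq M^*$; and one checks directly that $\pi^{-1}(M)\cap L=M+M^\perp$. Hence $L/(M+M^\perp)\cong \pi(L)/M$ embeds into $M^*/M$, whose order is exactly $[M^*:M]=\abs{\disc(M,\phi)}$, and Lagrange's theorem then gives $[L:M+M^\perp]\leq\abs{\disc(M,\phi)}$ (indeed the divisibility you wanted). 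Note that your Cramer's-rule observation is really the statement $\disc(M,\phi)\,M^*\subseteq M$, which is strictly weaker than the counting statement $[M^*:M]=\abs{\disc(M,\phi)}$ that the argument actually needs; once (i) is repaired this way, your derivation of (ii) goes through verbatim.
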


\begin{proof}
Since $\phi_{|M \times M}$ is non-degenerate, $L_\bQ = M_\bQ \oplus M_\bQ^\perp$.
Let $\pi \colon L_\bQ \to M_\bQ$ denote the projection with kernel $M_\bQ^\perp$.

If $x \in L$ and $\pi(x) \in M$, then $x-\pi(x) \in \ker(\pi) \cap L = M^\perp$.
Hence $x \in M + M^\perp$.
Conversely, if $x \in M + M^\perp$, it is clear that $\pi(x) \in M$.
Thus $\pi^{-1}(M) = M + M^\perp$.

Let
\[ M^* = \{ x \in M_\bQ : \phi(x, y) \in \bZ \text{ for all } y \in M \}. \]
If $x \in L$, then $\phi(\pi(x),y) = \phi(x,y) \in \bZ$ for all $y \in M$ so $\pi(x) \in M^*$.
Thus $\pi(L) \subset M^*$.
Thus we obtain
\[ L/(M + M^\perp) = L/\pi^{-1}(M) \cong \pi(L)/M \subset M^*/M. \]
It is well-known that $[M^*:M] = \abs{\disc(M,\phi)}$, so this proves (i).

Since $M$ and $M^\perp$ are orthogonal with respect to $\phi$,
\begin{align*}
    \abs{\disc(M, \phi)} \abs{\disc(M^\perp, \phi)}
  & = \abs{\disc(M + M^\perp, \phi)}
\\& = [L:M+M^\perp]^2 \abs{\disc(L, \phi)}
    \leq \abs{\disc(M, \phi)}^2 \abs{\disc(L, \phi)}.
\end{align*}
Since $\abs{\disc(M, \phi)} \neq 0$, this proves~(ii).
\end{proof}

\subsection{Orders and discriminants}\label{sec:orders}

Let $k = \bQ$ or $\bR$.
If $V$ is a finite-dimensional $k$-vector space, then a \defterm{$\bZ$-lattice} in $V$ means a $\bZ$-submodule $L \subset V$ such that the natural map $L \otimes_\bZ k \to V$ is an isomorphism.

Let $D$ be a semisimple $\bQ$-algebra.
An \defterm{order} in $D$ is a $\bZ$-lattice in $D$ which is also a subring.
Note that if $V$ is a $D$-vector space and $L$ is a $\bZ$-lattice in $V$, then $\Stab_D(L) = \{ a \in D : aL \subset L \}$ is an order in~$D$.  (This is proved on \cite[p.~109]{Rei75} when $V=D$, and the proof generalises.)

If $R$ is an order in $D$, the \defterm{discriminant} $\disc(R)$ is defined to be the discriminant of the $k$-bilinear form $(a,b) \mapsto \Tr_{D/k}(ab)$ on $R$, where $\Tr_{D/\bQ}$ is the \emph{non-reduced} trace.
The trace form of a semisimple algebra is non-degenerate, so $\disc(R) \neq 0$.
Furthermore, $\Tr_{D/\bQ}(a) \in \bZ$ for all $a \in R$, so $\disc(R) \in \bZ$.

If $D$ is a simple $\bQ$-algebra, then $\Trd_{D/\bQ}(a) \in \bZ$ for all $a \in R$ \cite[Theorem~10.1]{Rei75}.
Since $\Tr_{D/\bQ} = d \Trd_{D/\bQ}$, it follows that $\disc(R) \in d^{d^2e} \bZ$ so
\begin{equation} \label{eqn:disc-lower-bound}
\abs{\disc(R)} \geq d^{d^2e}.
\end{equation}

Now suppose that $(D,\dag)$ is a simple $\bQ$-algebra with a positive involution.
According to \cite[Lemma~5.6]{QRTUI}, for any order $R \subset D$, $\abs{\disc(R)}$ is equal to the discriminant of the symmetric bilinear form $(a,b) \mapsto \Tr_{D/k}(ab^\dag)$.
Consequently, $\abs{\disc(R)}$ is equal to $d^{d^2e}$ multiplied by the discriminant on $R$ of the positive definite bilinear form which induces the norm $\abs{\cdot}_D$.
We conclude that
\begin{equation} \label{eqn:disc-covol}
\abs{\disc(R)} = d^{d^2e}\covol(R)^2.
\end{equation}

For an order $R$ in a simple $\bQ$-algebra $D$, let $R^*$ denote the dual lattice
\[ R^* = \{ a \in D : \Trd_{D/\bQ}(ab) \in \bZ \text{ for all } b \in R \}. \]

\begin{lemma} \label{index-RcapF}
Let $D$ be a semisimple $k$-algebra and let $R$ be an order in $D$.
Let $F$ be the centre of $D$ and let $\cO$ be an order in $F$ which contains $R \cap F$.
Then
\[ [\cO:R \cap F]^2 \, \abs{\disc(\cO R)} \leq \abs{\disc(R)}. \]
\end{lemma}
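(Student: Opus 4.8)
The plan is to sandwich $R$ inside the order $\cO R$ generated by $\cO$ and~$R$, compare discriminants via the lattice index $[\cO R:R]$, and then bound that index from below by $[\cO:R\cap F]$.

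First I would record that $\cO R$ is an order in $D$ containing~$R$. Since $\cO$ lies in the centre~$F$, the set $\cO R$ of finite sums $\sum_i \alpha_i r_i$ with $\alpha_i\in\cO$, $r_i\in R$ is closed under multiplication; it is a $\bZ$-submodule of $D$ spanned by the finitely many products of a $\bZ$-basis of $\cO$ with a $\bZ$-basis of~$R$, hence a $\bZ$-lattice; and it contains $R$, hence spans $D$ over~$k$. So $R\subseteq\cO R$ are two full $\bZ$-lattices in $D$, and for the trace form $(a,b)\mapsto\Tr_{D/k}(ab)$ the standard change-of-lattice identity (the same one used in the proof of \cref{disc-lattice-complement}) gives
\[ \abs{\disc(R)} = [\cO R:R]^2\,\abs{\disc(\cO R)}. \]

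The key step is then purely group-theoretic: the composite $\cO\hookrightarrow\cO R\twoheadrightarrow\cO R/R$ is a homomorphism of abelian groups with kernel $\cO\cap R$. Since $\cO\subseteq F$ we have $\cO\cap R\subseteq F\cap R=R\cap F$, and since $R\cap F\subseteq\cO$ by hypothesis the reverse inclusion holds too, so the kernel is exactly $R\cap F$. Hence $\cO/(R\cap F)$ embeds into the finite group $\cO R/R$, giving $[\cO:R\cap F]\le[\cO R:R]$. Substituting into the displayed identity and using that $\abs{\disc(\cO R)}$ is a nonzero integer (the trace form on an order in a semisimple algebra is non-degenerate) yields $[\cO:R\cap F]^2\,\abs{\disc(\cO R)}\le\abs{\disc(R)}$, as required.

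I do not anticipate a real obstacle: the content is just the two routine sublemmas that $\cO R$ is genuinely an order (finite generation and full rank, both immediate from $\cO$ being central and $R\cap F$ being a full order in~$F$) and that the kernel $\cO\cap R$ equals $R\cap F$. The only point to keep in mind is that $[\cO:R\cap F]$ is merely a lower bound for $[\cO R:R]$ — there is no reason to expect equality — but a lower bound is all the statement requires.
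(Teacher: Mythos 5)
Your proof is correct and follows essentially the same route as the paper's: the paper's one-line argument is exactly the combination of $\abs{\disc(R)}=[\cO R:R]^2\abs{\disc(\cO R)}$ with the second-isomorphism-theorem identification $(\cO+R)/R\cong\cO/(\cO\cap R)=\cO/(R\cap F)$ inside $\cO R/R$, which is what you spell out. You have simply made explicit the routine verifications (that $\cO R$ is an order and that the kernel of $\cO\to\cO R/R$ is $R\cap F$) that the paper leaves implicit.
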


\begin{proof}
This follows from the facts $\cO + R \subset \cO R$ and $[\cO + R : R] = [\cO : R \cap F]$.
\end{proof}

\begin{lemma} \label{dual-ideal}
Let $D$ be a simple $\bQ$-algebra.
Let $F$ be the centre of $D$ and let $\cO_F$ be the maximal order of~$F$.
Let $S$ be an order in $D$ which contains $\cO_F$.
Define $S^*$ analogously to $R^*$.
Then there exists an ideal $I \subset \cO_F$ such that $IS^* \subset S$ and
\[ \Nm(I) \leq d^{-d^2e} \abs{\disc(S)}. \]
\end{lemma}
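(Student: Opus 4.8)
The plan is to pass to the dual lattice and control how far $S^*$ sticks out of $S$ by a single $\cO_F$-ideal, then bound the norm of that ideal via the discriminant. First I would recall that $S^*$ is an $S$-bimodule containing $S$ (since $\Trd_{D/\bQ}(ab)\in\bZ$ for all $a,b\in S$, using \cite[Theorem~10.1]{Rei75} and the fact that $\cO_F\subset S$ forces $\Trd_{D/F}(S)\subset\cO_F$, hence $S\subset S^*$). In particular $S^*$ is a finitely generated $\cO_F$-module containing $S$, and the quotient $S^*/S$ is a finite $\cO_F$-module. The key point is that $\cO_F$ is a Dedekind domain, so the annihilator ideal $I = \operatorname{Ann}_{\cO_F}(S^*/S)\subseteq\cO_F$ is a nonzero ideal, and by definition $I\,S^*\subseteq S$. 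This is the ideal we want; it remains to bound $\Nm(I)$.

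For the norm bound I would argue that $\Nm(I)$ divides, or is at worst bounded by, $[S^*:S]$ as abelian groups, and then identify $[S^*:S]$ with a power of $\abs{\disc(S)}$. Concretely, since $S$ is free of rank $d^2e$ over $\bZ$ and $S^*$ is the dual of $S$ under the \emph{reduced} trace pairing, the index $[S^*:S]$ equals the absolute value of the determinant of the Gram matrix $(\Trd_{D/\bQ}(e_ie_j))_{i,j}$ for a $\bZ$-basis $\{e_i\}$ of $S$; since $\Tr_{D/\bQ}=d\,\Trd_{D/\bQ}$, this Gram matrix is $d^{-1}$ times the one defining $\disc(S)$, so $[S^*:S] = d^{-d^2e}\abs{\disc(S)}$. (This is consistent with \eqref{eqn:disc-lower-bound}, which guarantees the right-hand side is a positive integer.) Finally, $S^*/S$ is annihilated by $I$, so as a module over $\cO_F/I$ it has cardinality at most $\abs{\cO_F/I}^{r}$ for $r = \operatorname{rank}_{\cO_F} S^* = d^2$; but one can do better by using the structure theory over the Dedekind domain $\cO_F$: writing $S^*/S$ as a direct sum of cyclic modules $\cO_F/\fa_1 \oplus\cdots\oplus\cO_F/\fa_t$ with $\fa_1\subseteq\cdots\subseteq\fa_t$, we have $I=\fa_1$ and $\Nm(I)=\Nm(\fa_1)\leq \prod_j \Nm(\fa_j) = \abs{S^*/S} = d^{-d^2e}\abs{\disc(S)}$, which is exactly the claimed inequality.

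So the steps, in order, are: (1) show $\cO_F\subseteq S\subseteq S^*$ and that $S^*$ is a fractional $\cO_F$-ideal-module; (2) define $I$ as the $\cO_F$-annihilator of $S^*/S$, giving $IS^*\subseteq S$ for free; (3) compute $[S^*:S]=d^{-d^2e}\abs{\disc(S)}$ from the relation between the reduced and non-reduced trace pairings; (4) use the elementary-divisor decomposition of $S^*/S$ over the Dedekind domain $\cO_F$ to get $\Nm(I)\leq\abs{S^*/S}$. The main obstacle I anticipate is step (3)—being careful about which trace pairing defines $S^*$ versus $\disc(S)$, and making sure the index computation is exactly $d^{-d^2e}\abs{\disc(S)}$ rather than off by a power of $d$; the rest is standard commutative algebra over a Dedekind domain. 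A secondary subtlety is confirming that $S^*$ really is stable under left \emph{and} right multiplication by $\cO_F$ (it is, since $\cO_F$ is central and fixed by nothing exotic), so that $I$ genuinely lands inside $\cO_F$ and $IS^*$ makes sense as a two-sided object inside $S$.
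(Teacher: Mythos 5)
Your proposal is correct and follows essentially the same route as the paper: take $I$ to be the $\cO_F$-annihilator of the finite module $S^*/S$, bound $\Nm(I)$ by $[S^*:S]$ via the structure theorem for torsion modules over the Dedekind domain $\cO_F$, and identify $[S^*:S]$ with $d^{-d^2e}\abs{\disc(S)}$ using $\Tr_{D/\bQ} = d\,\Trd_{D/\bQ}$. The only cosmetic difference is that you invoke the invariant-factor (chain) form so that $I$ equals the smallest elementary divisor, whereas the paper uses an arbitrary cyclic decomposition and the inclusion $I = \bigcap I_j \supseteq \prod I_j$; both give the same bound.
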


\begin{proof}
Let $I = \{ x \in \cO_F : xS^* \subset S \}$, that is, the annihilator of the finite $\cO_F$-module $S^*/S$. 
By the structure theorem for finitely generated torsion modules over a Dedekind domain, there is an isomorphism of $\cO_F$-modules
\[ S^*/S \cong \cO_F/I_1 \oplus \cO_F/I_2 \oplus \dotsb \oplus \cO_F/I_r \]
for some $\cO_F$-ideals $I_1, I_2, \dotsc, I_r$.
We have $I = I_1 \cap I_2 \cap \dotsb \cap I_r \supset I_1 I_2 \dotsm I_r$ and so
\[ \Nm(I) \leq \Nm(I_1)\Nm(I_2) \dotsm \Nm(I_r) = [S^*:S]. \]

The index $[S^*:S]$ is equal to the absolute value of the discriminant of $S$ with respect to the reduced trace form. Thus $[S^*:S] = d^{-d^2e} \abs{\disc(S)}$.
\end{proof}

\begin{lemma} \label{conductor-S}
Let $D$ be a simple $\bQ$-algebra.
Let $F$ be the centre of $D$ and let $\cO_F$ be the maximal order of~$F$.
Let $R$ be an order in $D$.
Let $S = \cO_F R$.
Let $\fc$ be the conductor of $R \cap F$ (as an order in the number field~$F$).
Then
\[ \fc S \subset R \quad \text{ and } \quad \fc R^* \subset S^*. \]
\end{lemma}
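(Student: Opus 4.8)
The plan is to deduce both inclusions from the single identity $\fc S = \fc R$ together with the trivial containment $\fc \subseteq R$, exploiting throughout that $\cO_F$ — and hence $\fc$ — lies in the centre $F$ of $D$. Write $\cO = R \cap F$, so that by definition the conductor is $\fc = \{x \in \cO_F : x\cO_F \subseteq \cO\}$; this is simultaneously an ideal of $\cO_F$ and of $\cO$, and in particular $\fc \subseteq \cO \subseteq R$. Note also that $S = \cO_F R$ is an order in $D$ (it is a subring because $\cO_F$ is central and both $\cO_F$ and $R$ are rings, it is a finitely generated torsion-free $\bZ$-module, and it contains $R$, which spans $D$ over $\bQ$), and $S \supseteq \cO_F$, so the dual lattice $S^*$ is defined as in \cref{dual-ideal}.

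For the first inclusion $\fc S \subseteq R$: since $\cO_F$ is central we have $\fc S = \fc \cO_F R$, and because $\fc$ is an $\cO_F$-ideal, $\fc \cO_F = \fc$, whence $\fc S = \fc R$. As $\fc \subseteq R$ and $R$ is closed under multiplication, $\fc R \subseteq R$, giving $\fc S \subseteq R$.

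For the second inclusion $\fc R^* \subseteq S^*$, I would argue directly from the definition of $S^*$. Let $c \in \fc$, $a \in R^*$, and $b \in S$. Because $c$ is central, $cab = a(cb)$; and $cb \in \fc S \subseteq R$ by the first part, so $\Trd_{D/\bQ}(cab) = \Trd_{D/\bQ}\bigl(a \cdot (cb)\bigr) \in \bZ$ since $a \in R^*$. As $b \in S$ was arbitrary, this shows $ca \in S^*$, i.e.\ $\fc R^* \subseteq S^*$.

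I do not expect a genuine obstacle here: the proof is a short formal manipulation. The only points requiring a moment's care are that $S = \cO_F R$ really is an order (so that $S^*$ makes sense and the duality language of \cref{dual-ideal} applies) and the elementary identity $\fc \cO_F = \fc$; the centrality of $\cO_F$ in $D$ is what legitimises both the reduction $\fc S = \fc R$ and the rearrangement $cab = a(cb)$ inside the reduced trace.
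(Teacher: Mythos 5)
Your proof is correct and follows essentially the same route as the paper: the first inclusion via $\fc S = \fc\cO_F R \subseteq (R\cap F)R \subseteq R$ (you phrase it through $\fc\cO_F=\fc$ and $\fc\subseteq R$, which is the same observation), and the second by the identical manipulation $\Trd_{D/\bQ}((ca)b)=\Trd_{D/\bQ}(a(cb))\in\bZ$ using centrality of $c$ and $cb\in\fc S\subseteq R$. No gaps.
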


\begin{proof}
From the definitions of $S$ and $\fc$,
\[ \fc S = \fc \cO_F R \subset (R \cap F) R \subset R. \]
If $c \in \fc$ and $a \in R^*$, then for all $b \in S$ we have
\[ \Trd_{D/\bQ}((ca)b) = \Trd_{D/\bQ}(a(cb)) \in \bZ \]
because $c$ is in the centre of $D$ and $cb \in \fc S \subset R$.
Thus $ca \in S^*$.
\end{proof}

\begin{lemma} \label{disc-R-S}
Let $D$ be a division $\bQ$-algebra and let $V$ be a left $D$-vector space of dimension~$m$.
Let $L$ be a $\bZ$-lattice in $V$ and consider the order $R = \Stab_D(L)$ of~$D$.
Let $S = \End_R(L)$ denote the ring of endomorphisms of $L$ commuting with~$R$.
Then
\[ \abs{\disc(S)} \leq \abs{\disc(R)}^{(d^2em+1)m^2}. \]
\end{lemma}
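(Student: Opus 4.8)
The plan is to show that $S=\End_R(L)$ is commensurable with the matrix order $M_m(R^{\op})$ — realised as the endomorphism ring of a free left $R$-submodule $L_0\subseteq L$ of polynomially bounded index, produced by the Masser--W\"ustholz class index lemma — and then to read off the discriminant bound from how $\disc$ transforms under passage to a finite-index suborder.

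First I would apply \cref{minkowski-general-index} to the torsion-free rank-$m$ $R$-module $L$: it produces a left $D$-basis $v_1,\dots,v_m$ of $V$ with $v_1,\dots,v_m\in L$ and $N:=[L:L_0]\le\abs{\disc(R)}^{m/2}$, where $L_0:=Rv_1+\dots+Rv_m$. Since the $v_i$ form a $D$-basis, $L_0$ is free of rank $m$ as a left $R$-module with $L_0\otimes_\bZ\bQ=V$, so $S_0:=\End_R(L_0)$ is an order in $\End_D(V)$; identifying $L_0\cong R^m$ via the $v_i$ gives a ring isomorphism $S_0\cong M_m(R^{\op})$, and in particular $\dim_\bQ S_0=\dim_\bQ S=m^2d^2e$. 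Next I would pin down $\abs{\disc(S_0)}$. The trace form of $R^{\op}$ equals that of $R$ (as $\Tr_{D/\bQ}(ab)=\Tr_{D/\bQ}(ba)$), so writing $M_m(R^{\op})=M_m(\bZ)\otimes_\bZ R^{\op}$, the standard formula for the discriminant of a tensor product of orders gives $\abs{\disc(S_0)}=\abs{\disc(M_m(\bZ))}^{d^2e}\,\abs{\disc(R)}^{m^2}$; and $\abs{\disc(M_m(\bZ))}=m^{m^2}$, because $\Tr_{M_m(\bQ)/\bQ}(E_{ij}E_{kl})=m\,\delta_{il}\delta_{jk}$ makes the Gram matrix in the basis $\{E_{ij}\}$ equal to $m$ times a permutation matrix. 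Hence $\abs{\disc(S_0)}=m^{d^2em^2}\,\abs{\disc(R)}^{m^2}$.

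Then I would compare $S$ with $S_0$. From $NL\subseteq L_0\subseteq L$ one gets $NS_0\subseteq S$ and $NS\subseteq S_0$: if $f\in\End_D(V)$ preserves $L_0$, then $(Nf)(L)=f(NL)\subseteq f(L_0)\subseteq L_0\subseteq L$, so $Nf\in S$, and the reverse inclusion is symmetric. Consequently $NS_0\subseteq S\cap S_0\subseteq S_0$, so $[S_0:S\cap S_0]$ divides $[S_0:NS_0]=N^{m^2d^2e}$, and likewise $[S:S\cap S_0]$ divides $N^{m^2d^2e}$. Applying the identity $\abs{\disc(T')}=[T:T']^2\abs{\disc(T)}$ to the inclusions $S\cap S_0\subseteq S$ and $S\cap S_0\subseteq S_0$ and dividing gives $\abs{\disc(S)}=[S_0:S\cap S_0]^2[S:S\cap S_0]^{-2}\abs{\disc(S_0)}\le N^{2m^2d^2e}\,\abs{\disc(S_0)}$, and substituting the previous bounds yields
\[ \abs{\disc(S)}\le\abs{\disc(R)}^{m^3d^2e}\cdot m^{d^2em^2}\,\abs{\disc(R)}^{m^2}=m^{d^2em^2}\,\abs{\disc(R)}^{(d^2em+1)m^2}. \]

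The real content sits in Steps 1--2; Steps 3--4 are routine commensurability bookkeeping, and the one computation not entirely off the shelf is $\abs{\disc(M_m(\bZ))}=m^{m^2}$. The point that needs a moment of care at the end is the numerical factor $m^{d^2em^2}$: to reach the stated constant-free inequality one absorbs it into the power of $\abs{\disc(R)}$ — which is where a lower bound such as $\abs{\disc(R)}\ge d^{d^2e}$, cf.\ \eqref{eqn:disc-lower-bound}, would be invoked — or one simply carries a constant depending on $m$ and $\dim_\bQ D$, of the kind already present elsewhere in the paper. A less direct variant of the whole argument would instead enlarge $R$ to $\cO_FR$ and then to a maximal order, using \cref{index-RcapF,dual-ideal,conductor-S} to bound the intervening commensurability indices and reduce to the case where $\End_R(L)$ is maximal; but invoking \cref{minkowski-general-index} short-circuits this by handing over a free $R$-module of index bounded by a power of $\abs{\disc(R)}$ in one stroke.
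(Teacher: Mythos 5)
Your proposal follows essentially the same route as the paper's proof: apply the Masser--W\"ustholz class index lemma (\cref{minkowski-general-index}) to get a free submodule $L_0=Rv_1+\dotsb+Rv_m\subseteq L$ of index $N\le\abs{\disc(R)}^{m/2}$, identify $\End_R(L_0)$ with $\rM_m(R^\op)$, observe $N\rM_m(R^\op)\subseteq S$, and convert the index into a discriminant inequality via $\abs{\disc(T')}=[T:T']^2\abs{\disc(T)}$; your detour through $S\cap S_0$ (and the unused inclusion $NS\subseteq S_0$) is harmless bookkeeping. The one point of divergence is the factor $m^{d^2em^2}$: your tensor-product computation of $\abs{\disc(\rM_m(R^\op))}$ is the correct value under the paper's convention that discriminants are taken with respect to the \emph{non-reduced} trace form, whereas the paper's proof simply asserts $\abs{\disc(\rM_m(R^\op))}=\abs{\disc(R)}^{m^2}$, which is the identity one gets for reduced-trace discriminants. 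However, your suggested way of removing the factor does not work as stated: the lower bound \eqref{eqn:disc-lower-bound} is $d^{d^2e}$, which equals $1$ when $d=1$, and there is no spare room in the exponent $(d^2em+1)m^2$ to absorb $m^{d^2em^2}$ in general (e.g.\ $D=\bQ$, $R=\bZ$, $L=\bZ^m$ gives $S=\rM_m(\bZ)$ with non-reduced discriminant $m^{m^2}$). So what your argument actually establishes is the lemma with an extra multiplicative constant $m^{d^2em^2}$ depending only on $m$ and $\dim_\bQ(D)$ (equivalently, the stated clean inequality if one reads the discriminants via reduced traces). This weaker form is entirely adequate for the lemma's only application, \cref{length-wu}, where all that is used is that $\disc(S_u)$ is polynomially bounded in terms of $\disc(R_u)$.
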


\begin{proof}
By \cref{minkowski-general-index}, there
is a $D$-basis $v_1, \dotsc, v_m$ for $V$ such that $v_1, \dotsc, v_m \in L$ and
\begin{equation} \label{eqn:R-free-index}
[L:Rv_1 + \dotsb + Rv_m] \leq \abs{\disc(R)}^{m/2}.
\end{equation}
Let $N = [L:Rv_1 + \dotsb + Rv_m]$ and $s = \dim_\bQ(\End_D(V)) = d^2em^2$.

Using the $D$-basis $v_1, \dotsc, v_m$, we identify $\End_D(V)$ with $\rM_m(D^\op)$.
Note that $\End_R(L)$ and $\rM_m(R^\op)$ are both $\bZ$-lattices in $\End_D(V)$.

For every $a \in \rM_m(R^\op) \subset \End_D(V)$, we have
\[ aNL \subset a(Rv_1 + \dotsb + Rv_m) \subset Rv_1 + \dotsb + Rv_m \subset L. \]
Hence $Na \in \End_R(L)$.

Thus $N\rM_m(R^\op) \subset \End_R(L)$.
Therefore
\[ \abs{\disc(S)} \leq N^{2s} \abs{\disc(\rM_m(R^\op))}
   = N^{2s} \abs{\disc(R)}^{m^2}. \]
Combining this with the bound for~$N$ from \eqref{eqn:R-free-index} proves the lemma.
\end{proof}

\subsection{Anti-symmetric elements in division algebras of type~II}

If $(D, \dag)$ is a division $\bQ$-algebra with involution, we define
\[ D^- = \{ a \in D : a^\dag = -a \}. \]
If $\psi \colon V \times V \to D$ is a $(D,\dag)$-skew-Hermitian form on a $D$-vector space $V$ and $x \in V$, then $\psi(x,x) \in D^-$, so $D^-$ is important for the study of weakly unitary bases (see section \ref{subsec:skew-hermitian-forms} for the definition of $(D,\dag)$-skew-Hermitian forms).

Let $(D,\dag)$ be a division $\bQ$-algebra with a positive involution of Albert type~II.
Choose an isomorphism $D_\bR \cong \rM_2(\bR)^e$ (as always, we implicitly assume that $\dag$ corresponds to matrix transpose on each factor).
Then $D_\bR^-$ consists of those elements of $\rM_2(\bR)^e$ in which all matrices are anti-symmetric.
Hence $D_\bR^-$ is a free $F_\bR$-module of rank $1$, so $D^-$ is a $1$-dimensional $F$-vector space.  The following lemma can be proved by calculations in $D_\bR \cong \rM_2(\bR)^e$.

\pagebreak 

\begin{lemma} \label{action-on-antisymm}
Let $(D, \dag)$ be a division $\bQ$-algebra with a positive involution of type~II.
Let $F$ be the center of $D$.
\begin{enumerate}[(i)]
\item If $a, b \in D^-$, then $ab \in F$.
\item If $a \in D$ and $b \in D^-$, then $aba^\dag = \Nrd_{D/F}(a)b$.
\end{enumerate}
\end{lemma}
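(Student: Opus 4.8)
The plan is to prove both parts by an explicit computation after extending scalars to $\bR$, using the description of $D_\bR^-$ recorded just before the statement. Fix an isomorphism $D_\bR \cong \rM_2(\bR)^e$ under which the involution $\dag$ becomes matrix transpose on each factor (the implicit convention in the definition of $\abs{\cdot}_D$) and under which $F_\bR$ is identified with the diagonally embedded copies of $\bR$, i.e.\ the tuples of scalar matrices. As already noted, $D_\bR^-$ is then the set of tuples of anti-symmetric $2 \times 2$ real matrices, and since every anti-symmetric $2\times 2$ matrix is a scalar multiple of $J_2$, we have $D_\bR^- = F_\bR \cdot \mathbf{J}$ where $\mathbf{J} = (J_2, \dotsc, J_2)$. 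Two bookkeeping facts allow us to transfer computations in $D_\bR$ back to $D$: first, $D \cap F_\bR = F$, because $F_\bR$ is the centre of $D_\bR$ and $F$ is the centre of~$D$; second, the reduced norm is compatible with extension of scalars, so $\Nrd_{D/F}$ is the restriction to $D$ of $\Nrd_{D_\bR/F_\bR}$, and under our identification $\Nrd_{D_\bR/F_\bR}(a) = (\det a_1, \dotsc, \det a_e)$ for $a = (a_1,\dotsc,a_e)$.

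For part~(i), write $a = \alpha \mathbf{J}$ and $b = \beta \mathbf{J}$ with $\alpha, \beta \in F_\bR$. Since $J_2^2 = -I_2$, we get $ab = -\alpha\beta \cdot (I_2,\dotsc,I_2) \in F_\bR$. But $a, b \in D$, so $ab \in D \cap F_\bR = F$, as required.

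For part~(ii), the crux is the elementary identity $M J_2 M^t = (\det M)\, J_2$ for every $M \in \rM_2(\bR)$, which is a one-line direct multiplication. Writing $a = (a_1,\dotsc,a_e) \in D$ and $b = \beta \mathbf{J}$ with $\beta = (\beta_1,\dotsc,\beta_e) \in F_\bR$, in the $i$-th factor we get $a_i (\beta_i J_2) a_i^t = \beta_i (\det a_i)\, J_2$, hence
\[ a b a^\dag = \bigl( \beta_1 \det a_1, \dotsc, \beta_e \det a_e \bigr)\cdot \mathbf{J} = \Nrd_{D_\bR/F_\bR}(a)\, b . \]
Both $aba^\dag$ and $\Nrd_{D/F}(a)\,b$ lie in $D$, and they agree in $D_\bR$ by the displayed equation together with $\Nrd_{D_\bR/F_\bR}(a) = \Nrd_{D/F}(a)$; therefore they are equal in~$D$.

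I do not expect any genuine obstacle here: the content is the matrix identity $M J_2 M^t = (\det M) J_2$, and everything else is routine. The only points requiring a moment's care are the justifications that one may legitimately compute in $D_\bR$ — namely that $D \cap F_\bR = F$ and that the reduced norm commutes with $-\otimes_\bQ\bR$ — and the reminder that the chosen isomorphism $D_\bR \cong \rM_2(\bR)^e$ can be taken to intertwine $\dag$ with transpose, which is exactly our standing convention.
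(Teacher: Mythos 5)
Your proof is correct and follows exactly the route the paper indicates: the paper gives no written proof beyond remarking that the lemma "can be proved by calculations in $D_\bR \cong \rM_2(\bR)^e$", and your argument (the identity $MJ_2M^t=(\det M)J_2$, plus the base-change compatibility of $\Nrd$ and the observation $D\cap F_\bR=F$) is precisely that calculation, carried out with the right amount of care.
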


\begin{lemma} \label{small-antisymm-star}
Let $(D, \dag)$ be a division $\bQ$-algebra with a positive involution of type~II.
Let $R$ be an order in $D$ and let $\eta \in \bZ_{>0}$ be a positive integer such that $\eta R^\dag \subset R$.
Then there exists $\omega \in D$ such that:
\begin{enumerate}[(i)]
\item $\omega \in D^- \setminus \{0\}$;
\item $\omega R^* \subset R$ and $R^* \omega \subset R$;
\item $\abs{\omega}_D \leq 2^{-4} \gamma_e^{1/2} \eta^7 \abs{\disc(R)}^{2/e}$.
\end{enumerate}
\end{lemma}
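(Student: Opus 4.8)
The plan is to produce $\omega$ by combining an element of $D^-$ with a norm that clears denominators coming from $R^*$, all the while tracking sizes via the norm $\abs{\cdot}_D$ and the covolume formula \eqref{eqn:disc-covol}. First I would reduce to finding a small nonzero element $\mu\in D^-$ with $\mu$ integral enough to relate $R^*$ and $R$. Since $D^-$ is a $1$-dimensional $F$-vector space, $D^-\cap R$ is (up to finite index controlled by $\eta$ via the hypothesis $\eta R^\dag\subset R$) a rank-$e$ lattice over $\ZZ$ inside the $e$-dimensional $\RR$-vector space $D_\RR^-$. Applying Minkowski's first theorem (equivalently the Hermite-constant estimate behind \cref{minkowski-2nd}) to this lattice produces a nonzero $\mu\in D^-\cap\eta R$ with $\abs{\mu}_D\le\gamma_e^{1/2}\covol(D^-\cap\eta R)^{1/e}$, and I would bound that covolume by $\eta$ to some fixed power times $\covol(R)^{\text{power}}$, hence by a power of $\abs{\disc(R)}$ via \eqref{eqn:disc-covol}.

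Next I would arrange the divisibility $\omega R^*\subset R$ and $R^*\omega\subset R$. The key structural input is \cref{action-on-antisymm}: for $a,b\in D^-$ we have $ab\in F$, and $aba^\dag=\Nrd_{D/F}(a)b$. So if $\omega=\mu\cdot\nu$ where $\nu\in F$ is chosen to kill the denominators of $R^*$ relative to $R$, then $\omega\in D^-$ still (as $F$ is the centre). To control denominators I would pass to $S=\cO_F R$ and use \cref{dual-ideal} to get an $\cO_F$-ideal $I$ with $IS^*\subset S$ and $\Nm(I)\le d^{-d^2e}\abs{\disc(S)}=4^{-4e}\abs{\disc(S)}$ (here $d=2$), together with \cref{conductor-S} to pass between $R^*$ and $S^*$ at the cost of the conductor $\fc$ of $R\cap F$, whose norm is bounded by $\abs{\disc(R)}$ up to fixed powers. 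Picking a small nonzero element of $\fc^2 I$ (again by Minkowski over the number field $F$, as in \cite[C.2.18]{BG06}) gives a scalar $\nu\in\cO_F$ of controlled size with $\nu R^*\subset R$; multiplying by a suitable element of $D^-\cap R$ and using $ab\in F$ handles the two-sided statement. Finally I would assemble $\omega$ from $\mu$ and $\nu$, multiply the bounds, and (conservatively) absorb everything into the stated exponent $7$ on $\eta$ and $2/e$ on $\abs{\disc(R)}$, using \cref{disc-R-S} if the discriminant of $S$ needs to be bounded in terms of that of $R$, and \eqref{eqn:disc-lower-bound} to simplify.

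The main obstacle I expect is the bookkeeping to get the exponents down to the clean values claimed: there are several layers (passing $R\leadsto S=\cO_F R$, $R^*\leadsto S^*$ via the conductor, the ideal $I$ from \cref{dual-ideal}, and the two applications of Minkowski—one in $D_\RR^-$ over $\RR$ and one in $F_\RR$ over the number field) and each contributes a power of $\eta$ and of $\abs{\disc(R)}$. One has to be careful that the Minkowski step in $D_\RR^-$ uses the right covolume: $D^-\cap\eta R$ need not be all of $D^-\cap R$, but the hypothesis $\eta R^\dag\subset R$ guarantees $\eta R\cap D^-$ has index in $R\cap D^-$ bounded by a power of $\eta$, and $\covol(R\cap D^-)$ is bounded by $\covol(R)$ up to constants since $R\cap D^-$ sits inside $R$. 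Squaring the conductor (rather than using $\fc$ once) is what makes the two-sided divisibility work via \cref{conductor-S}, and is the source of part of the exponent $7$. Tracking constant factors like $2^{-4}$ and $\gamma_e^{1/2}$ is then routine, coming directly from the Minkowski/Hermite estimate and $d=2$.
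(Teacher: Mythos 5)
There is a genuine gap at the heart of your first step. You choose a short nonzero $\mu$ in the rank-$e$ lattice $D^-\cap\eta R$ via Minkowski, and you justify the needed covolume input by asserting that $\covol(R\cap D^-)$ is bounded by $\covol(R)$ up to constants ``since $R\cap D^-$ sits inside $R$.'' That is false in general: a lower-rank section of a lattice can have arbitrarily large covolume even when the ambient lattice has covolume $1$ (think of the lattice spanned by $(\eps,0)$ and $(0,\eps^{-1})$ in $\bR^2$ and its intersection with the $y$-axis). Controlling $\covol(S\cap D^-)$ is precisely the nontrivial point, and it is where the hypothesis $\eta R^\dag\subset R$ earns its keep: the paper decomposes $S=\cO_F R$ via $2\eta S\subset S^+\oplus S^-$, uses that this sum is orthogonal for the trace pairing to get $\covol(S^+)\covol(S^-)\le 2^{4e}\eta^{4e}\covol(S)$, and then, crucially, proves a \emph{lower} bound $\covol(S^+)\ge\eta^{-3e}$ from the fact that $\Trd_{D/\bQ}(ab^\dag)\in\eta^{-1}\bZ$ for $a,b\in S$. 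This is the source of the $\eta^{7}$ in (iii) (not the squaring of the conductor, as you suggest), and without some substitute for it your Minkowski step has no valid covolume bound, so the whole size estimate collapses.

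A secondary, lesser issue: your architecture (two Minkowski applications, one for $\mu\in D^-\cap\eta R$ and one for a central scalar $\nu\in\fc^2 I$, then $\omega=\mu\nu$) differs from the paper, which applies Minkowski once to the single rank-$e$ lattice $JS^-$ with $J=\fc^2I$, having first checked $JSR^*\subset R$ and $R^*JS\subset R$ so that any nonzero element of $JS^-$ satisfies (i) and (ii). Your two-sided divisibility does go through (with $\nu$ central and $\mu\in R$ one gets $\mu\nu R^*\subset\mu R\subset R$ and $R^*\mu\nu=(R^*\nu)\mu\subset R$), but multiplying two short-vector bounds would at best yield exponents and constants weaker than the specific values $2^{-4}\gamma_e^{1/2}\eta^{7}\abs{\disc(R)}^{2/e}$ asserted in the lemma, which are used verbatim in the explicit constants of \cref{weakly-unitary-induction}; ``absorbing everything into the stated exponents'' is not available here. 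The essential fix, though, is the covolume argument for $S^-$ described above.
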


\begin{proof}
Let $F$ be the centre of $D$ and let $\cO_F$ be the maximal order of~$F$.
Let $\fc=\{\alpha\in\cO_F:\alpha\cO_F\subset R\cap F\}$ be the conductor of the order $R \cap F$ in $\cO_F$.
By \cite[(2)]{DCD00}, we have the following inclusion of ideals in $\bZ$:
\[ \disc_{F/\bQ}(R \cap F) \subseteq \Nm_{F/\bQ}(\fc) \disc_{F/\bQ}(\cO_F). \]
This leads to the following inequality of integers:
\[ \Nm(\fc) \abs{\disc(\cO_F)} \leq \abs{\disc(R \cap F)}. \]
Since also $\abs{\disc(R \cap F)} = [\cO_F : R \cap F]^2 \abs{\disc(\cO_F)}$, we deduce that
\[ \Nm(\fc) \leq [\cO_F : R \cap F]^2. \]

Let $S = \cO_F R$ and $S^- = S \cap D^-$.
Let $I$ be the ideal of $\cO_F$ given by \cref{dual-ideal} applied to $S$.
Let $J = \fc^2 I$ (as a product of ideals of $\cO_F$).
Then by \cref{conductor-S},
\begin{gather*}
   JSR^* = \fc SI \fc R^* \subset \fc SIS^* \subset \fc SS \subset \fc S \subset R,
\\ R^*JS = \fc I \fc R^* S \subset \fc IS^*S \subset \fc SS \subset \fc S \subset R.
\end{gather*}
Hence if we choose $\omega \in JS \cap D^- \setminus \{0\} = JS^- \setminus \{ 0 \}$, then it will satisfy (i) and~(ii).

Since $S^-$ is a non-zero $\cO_F$-submodule of an $F$-vector space of dimension~$1$, we can write $S^- = I^-\alpha$ for some ideal $I^- \subset \cO_F$ and some $\alpha \in D^-$, then use the multiplicativity of ideal norms in $\cO_F$ to conclude that
\[ \covol(JS^-) = \Nm(J) \covol(S^-), \]
where we measure covolumes in $D_\bR^-$ by the volume form associated with the restriction of the inner product $\Trd_{D_\bR/\bR}(ab^\dag)$.

Let $S^+ = \{ a \in S : a^\dag = a \}$.
Then $S^+ \cap S^- = \{0\}$.
Thus the sum $S^+ + S^-$ is direct.  This sum is also orthogonal because, if $a \in S^+$ and $b \in S^-$, then
\[ \Trd_{D/\bQ}(ab^\dag) = \Trd_{D/\bQ}((ab^\dag)^\dag) = \Trd_{D/\bQ}(ba^\dag) = -\Trd_{D/\bQ}(ab^\dag) \]
so $\Trd_{D/\bQ}(ab^\dag) = 0$.

For every $a \in S$, we have $\eta a^\dag \in \eta (\cO_F R)^\dag = \cO_F \eta R^\dag \subset \cO_F R = S$.
Hence
\[ 2\eta a = (\eta a+\eta a^\dag) + (\eta a-\eta a^\dag) \in S^+ + S^-. \]
Thus $2\eta S \subset S^+\oplus S^-$, so
\begin{equation} \label{eqn:covolS+S-}
\covol(S^+) \covol(S^-) = \covol(S^+ \oplus S^-) \leq \covol(2\eta S) = 2^{4e} \eta^{4e} \covol(S).
\end{equation}
Here we measure covolumes in both $D_\bR^-$ and $S^+ \otimes_{\bZ} \bR$ by the volume forms associated with the restriction of the inner product $\Trd_{D_\bR/\bR}(ab^\dag)$.

For all $a,b \in S$, $\eta ab^\dag \in S$ and so $\Trd_{D/\bQ}(ab^\dag) \in \eta^{-1}\bZ$.
Consequently $\covol(S^+) \geq \eta^{-\rk_\bZ(S^+)} = \eta^{-3e}$ so by \eqref{eqn:disc-covol} applied to $S$ and \eqref{eqn:covolS+S-},
\[ \covol(S^-) \leq \eta^{3e} \cdot 2^{4e} \eta^{4e} \covol(S) = 2^{4e} \eta^{7e} \cdot 2^{-2e} \abs{\disc(S)}^{1/2}. \]
Therefore, using \cref{dual-ideal},
\begin{align*}
    \covol(JS^-)
  & = \Nm(\fc)^2 \Nm(I) \covol(S^-)
\\& \leq [\cO_F : R \cap F]^4 \cdot 2^{-4e} \abs{\disc(S)} \cdot 2^{2e} \eta^{7e} \abs{\disc(S)}^{1/2}
\\& = 2^{-2e} \eta^{7e} [\cO_F : R \cap F]^4 \abs{\disc(S)}^{3/2}.
\end{align*}
Applying \eqref{eqn:disc-lower-bound} to $S$, we see that $\abs{\disc(S)} \geq 2^{4e}$.
Using \cref{index-RcapF}, we deduce that
\begin{align*}
    \covol(JS^-)
  & \leq 2^{-2e} \abs{\disc(S)}^{-1/2} \eta^{7e} [\cO_F : R \cap F]^4 \abs{\disc(S)}^2
    = 2^{-4e} \eta^{7e} \abs{\disc(R)}^2.
\end{align*}

Since $JS^-$ is a free $\bZ$-module of rank $e$, there exists $\omega \in JS^- \setminus \{0\}$ with
\[ \abs{\omega}_D \leq \sqrt{\gamma_e} \covol(JS^-)^{1/e} \leq \sqrt{\gamma_e} \cdot 2^{-4} \eta^7  \abs{\disc(R)}^{2/e}.
\qedhere \]
\end{proof}

\section{Skew-Hermitian forms over division algebras} \label{sec:skew-hermitian}

In this section, we introduce the notion of a $(D,\dag)$-skew-Hermitian form on a vector space over a division algebra $D$ with an involution, and explain how this is related to skew-symmetric forms over the base field. We define several notions of good behaviour for bases relative to $(D,\dag)$-skew-Hermitian forms, such as symplectic and unitary bases and a weakened version of these notions.  Finally we prove the existence of norms on $D$-vector spaces, which we call $D$-norms, which behave well relative to the action of~$D$ and to a $(D,\dag)$-skew-Hermitian form.

As in section~\ref{sec:division-algebras}, we are interested in applying the results of this section when $(D, \dag)$ is either a division $\bQ$-algebra with a positive involution of type I or~II, or the semisimple $\bR$-algebra which arises from such a $\bQ$-algebra by extending scalars to~$\bR$, but we state the results in greater generality whenever it is convenient.

\subsection{Skew-Hermitian forms} \label{subsec:skew-hermitian-forms}

Let $k$ be any field.
Let $(D, \dag)$ be a semisimple $k$-algebra with an involution.
Let $V$ be a left $D$-module.

A \defterm{$(D,\dag)$-skew-Hermitian form} on $V$ is a $k$-bilinear map $\psi \colon V \times V \to D$ which satisfies
\[ \psi(y,x) = -\psi(x,y)^\dag \text{ and } \psi(ax, by) = a\psi(x,y)b^\dag  \]
for all $a, b \in D$ and $x,y \in V$.
We say that a $(D,\dag)$-skew-Hermitian form $\psi$ is \defterm{non-degenerate} if, for every $x \in V \setminus \{0\}$, there exists $y \in V$ such that $\psi(x,y) \neq 0$.

A \defterm{$(D,\dag)$-compatible skew-symmetric form} on $V$ is a skew-symmetric $k$-bilinear map $\phi \colon V \times V \to k$ which satisfies
\[ \phi(ax, y) = \phi(x, a^\dag y) \]
for all $a \in D$ and $x,y \in V$.
A pair $(V,\phi)$, where $\phi$ is a $(D,\dag)$-compatible skew-symmetric form, is called a symplectic $(D,\dag)$-module in \cite[section~8]{Mil05}.

\begin{lemma} \label{tr-skew-hermitian-form}
Let $(D, \dag)$ be a semisimple $k$-algebra with an involution.
Let $V$ be a left $D$-module.
Then the map $\psi \mapsto {\Trd_{D/k}} \circ \psi$ is a bijection between the set of $(D,\dag)$-skew-Hermitian forms on $V$ and the set of $(D,\dag)$-compatible skew-symmetric forms on~$V$.
\end{lemma}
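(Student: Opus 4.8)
The plan is to construct the inverse map explicitly and then check both composites are the identity. Given a $(D,\dag)$-skew-Hermitian form $\psi$, set $\phi = \Trd_{D/k} \circ \psi$. First I would verify that $\phi$ is a $(D,\dag)$-compatible skew-symmetric form: skew-symmetry of $\phi$ follows from $\psi(y,x) = -\psi(x,y)^\dag$ together with the fact (noted in section~\ref{subsec:albert}) that $\Trd_{D/k}(a^\dag) = \Trd_{D/k}(a)$; the compatibility $\phi(ax,y) = \phi(x, a^\dag y)$ follows from $\psi(ax,y) = a\psi(x,y)$ and $\psi(x, a^\dag y) = \psi(x,y)(a^\dag)^\dag = \psi(x,y)a$ together with $\Trd_{D/k}(a\beta) = \Trd_{D/k}(\beta a)$ for $\beta = \psi(x,y)$. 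This is where the $k$-linearity convention for involutions matters, as the paper already flags.

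The heart of the argument is constructing the inverse. Given a $(D,\dag)$-compatible skew-symmetric form $\phi \colon V \times V \to k$, I want to produce $\psi \colon V \times V \to D$ with $\Trd_{D/k} \circ \psi = \phi$ and the skew-Hermitian axioms. The standard device is to use the non-degeneracy of the trace pairing on $D$: the $k$-bilinear form $D \times D \to k$, $(a,b) \mapsto \Trd_{D/k}(ab)$, is non-degenerate because $D$ is semisimple, so for fixed $x,y \in V$ the $k$-linear functional $a \mapsto \phi(ax, y)$ on $D$ is represented by a unique element, which I define to be $\psi(x,y)$; that is, $\psi(x,y)$ is the unique element of $D$ with $\Trd_{D/k}\bigl(a\,\psi(x,y)\bigr) = \phi(ax,y)$ for all $a \in D$. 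Taking $a=1$ gives $\Trd_{D/k}(\psi(x,y)) = \phi(x,y)$, so this $\psi$ does lift $\phi$. One then checks $k$-bilinearity of $\psi$ (immediate from uniqueness and $k$-bilinearity of $\phi$), and the two skew-Hermitian identities: for $\psi(bx, y) = b\psi(x,y)$ and $\psi(x, cy) = \psi(x,y)c^\dag$ one compares the defining trace identities, using $\phi(a(bx), y) = \phi((ab)x, y)$ and $\phi(ax, cy)$ rewritten via compatibility as $\phi((c^\dag a)x, y)$ — wait, compatibility gives $\phi(ax, cy) = \phi(c^\dag a x, y)$, hmm, I need to be careful about the side; I would track through $\phi(a x, c y)$ using the axiom $\phi(dx,y)=\phi(x,d^\dag y)$ in the form $\phi(ax,cy)=\phi((\text{suitable }d)x,y)$ and match against $\Trd_{D/k}(a\cdot\psi(x,y)c^\dag) = \Trd_{D/k}((c^\dag a)\psi(x,y))$. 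Finally, the skew-symmetry $\psi(y,x) = -\psi(x,y)^\dag$ comes from $\Trd_{D/k}(a\psi(y,x)) = \phi(ay,x) = -\phi(x, ay) = -\phi(a^\dag x, y)$ hmm — rather $-\phi(x,ay)$ and then using skew-symmetry and compatibility of $\phi$ to move things around — and comparing with $\Trd_{D/k}(a\cdot(-\psi(x,y)^\dag)) = -\Trd_{D/k}((\psi(x,y)a^\dag)^\dag) = -\Trd_{D/k}(\psi(x,y)a^\dag) = -\Trd_{D/k}(a^\dag \psi(x,y))$; matching these for all $a$ and invoking non-degeneracy of the trace pairing (and that $a \mapsto a^\dag$ is a bijection of $D$) finishes it.

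Checking the two composites are inverse is then formal: starting from $\psi$, forming $\phi = \Trd_{D/k}\circ\psi$ and then recovering $\psi'$ by the construction above, the defining property of $\psi'$ reads $\Trd_{D/k}(a\psi'(x,y)) = \phi(ax,y) = \Trd_{D/k}(\psi(ax,y)) = \Trd_{D/k}(a\psi(x,y))$ for all $a$, so $\psi' = \psi$ by non-degeneracy; the other composite is immediate since the construction was rigged to lift $\phi$.

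The main obstacle I anticipate is purely bookkeeping: getting the sides right in the compatibility axiom $\phi(ax,y)=\phi(x,a^\dag y)$ versus the left/right placements $\psi(ax,by) = a\psi(x,y)b^\dag$, and making sure every appeal to $\Trd_{D/k}(\alpha\beta) = \Trd_{D/k}(\beta\alpha)$ is used on the correct pair. There is no conceptual difficulty — non-degeneracy of the reduced-trace pairing on a semisimple algebra (see \cite[sec.~9]{Rei75}) does all the real work — but the identities must be chased carefully so that each of the two skew-Hermitian axioms and the skew-symmetry drop out cleanly from uniqueness in the trace pairing.
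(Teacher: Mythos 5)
Your proposal is correct and follows essentially the same route as the paper: define $\psi(x,y)$ as the unique element of $D$ representing the functional $a \mapsto \phi(ax,y)$ via the non-degenerate reduced-trace pairing, then chase the identities to get $D$-linearity, the relation $\psi(y,x) = -\psi(x,y)^\dag$, and injectivity/surjectivity of $\psi \mapsto \Trd_{D/k}\circ\psi$. The only cosmetic difference is that you verify anti-linearity in the second variable directly, whereas the paper deduces it from first-variable linearity together with $\psi(x,y) = -\psi(y,x)^\dag$; the side-tracking worries you flag all resolve exactly as you indicate.
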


\begin{proof}
It is clear that, if $\psi$ is a $(D,\dag)$-skew-Hermitian form on $V$, then $\Trd_{D/k} \psi$ is a $(D,\dag)$-compatible skew-symmetric form.

Let $\phi$ be a $(D,\dag)$-compatible skew-symmetric form.
We shall show that there is a unique $(D,\dag)$-skew-Hermitian form on $V$ such that $\phi = \Trd_{D/k} \psi$.

For each $x,y \in V$, define a $k$-linear map $\alpha_{x,y} \colon D \to k$ by $\alpha_{x,y}(a) = \phi(ax, y)$.
Because $D$ is a semisimple $k$-algebra, $(a,b) \mapsto \Trd_{D/k}(ab)$ is a non-degenerate bilinear form $D \times D \to k$ \cite[Theorem~9.26]{Rei75}.
Hence there exists a unique element $\beta_{x,y} \in D$ such that
\[ \alpha_{x,y}(a) = \Trd_{D/k}(a\beta_{x,y}) \text{ for all } a \in D. \]
Define $\psi(x,y) = \beta_{x,y}$.
Using the uniqueness of the elements $\beta_{x,y}$, it is clear that the resulting function $\psi \colon V \times V \to D$ is $k$-bilinear.

If $a, b \in D$ and $x,y \in V$, then
\[ \Trd_{D/k}(ab\beta_{x,y}) = \alpha_{x,y}(ab) = \phi(abx, y) = \alpha_{bx,y}(a) = \Trd_{D/k}(a\beta_{bx,y}). \]
By uniqueness of $\beta_{bx,y}$, we deduce that $\psi$ is $D$-linear in the first variable.

If $a \in D$ and $x, y \in V$, then
\begin{align*}
    \Trd_{D/k}(a\beta_{x,y}) 
  & = \phi(ax, y) = -\phi(a^\dag y, x)
    = -\Trd_{D/k}(a^\dag \beta_{y, x}) = -\Trd_{D/k}(a\beta_{y,x}^\dag).
\end{align*}
Again by uniqueness of $\beta_{bx,y}$, $\psi(x,y) = -\psi(y,x)^\dag$.

Since $\psi$ is $D$-linear in the first variable and satisfies $\psi(x,y) = -\psi(y,x)^\dag$, it is also $(D,\dag)$-anti-linear in the second variable.
Thus it is $(D,\dag)$-skew-Hermitian.
\end{proof}

\begin{lemma} \label{orthog-complements}
Let $(D, \dag)$ be a semisimple $k$-algebra with an involution.
Let $V$ be a left $D$-module.
Let $\psi \colon V \times V \to k$ be a $(D,\dag)$-skew-Hermitian form and let $\phi = \Trd_{D/k} \psi \colon V \times V \to k$.

Let $W \subset V$ be a left $D$-submodule and define
\begin{gather*}
   W_\psi^\perp = \{ x \in V : \psi(w,x) = 0 \text{ for all } w \in W \},
\\ W_\phi^\perp = \{ x \in V : \phi(w,x) = 0 \text{ for all } w \in W \}.
\end{gather*}
Then $W_\psi^\perp = W_\phi^\perp$.

In particular, $W_\phi^\perp$ is a left $D$-submodule of $V$.
\end{lemma}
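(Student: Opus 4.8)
The plan is to prove the two inclusions separately, the non-trivial one relying on the non-degeneracy of the reduced trace form on a semisimple algebra, exactly as in the proof of \cref{tr-skew-hermitian-form}.

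First I would dispatch the inclusion $W_\psi^\perp \subseteq W_\phi^\perp$, which is immediate: if $x \in W_\psi^\perp$ then $\psi(w,x) = 0$ for every $w \in W$, so $\phi(w,x) = \Trd_{D/k}(\psi(w,x)) = 0$, whence $x \in W_\phi^\perp$. No hypotheses beyond the definitions are needed here.

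For the reverse inclusion $W_\phi^\perp \subseteq W_\psi^\perp$, let $x \in W_\phi^\perp$ and fix $w \in W$; the goal is $\psi(w,x) = 0$. The point is to exploit that $W$ is a \emph{left $D$-submodule}: for every $a \in D$ we have $aw \in W$, hence $0 = \phi(aw, x) = \Trd_{D/k}(\psi(aw,x)) = \Trd_{D/k}(a\,\psi(w,x))$, using $D$-linearity of $\psi$ in the first variable. Thus $\Trd_{D/k}(a\,\psi(w,x)) = 0$ for all $a \in D$, and since $(a,b) \mapsto \Trd_{D/k}(ab)$ is a non-degenerate $k$-bilinear form on $D$ (as $D$ is semisimple, \cite[Theorem~9.26]{Rei75}), we conclude $\psi(w,x) = 0$. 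As $w \in W$ was arbitrary, $x \in W_\psi^\perp$.

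Finally, for the ``in particular'' clause I would observe that $W_\psi^\perp$ is visibly closed under addition and $k$-scaling, and is stable under the left $D$-action because $\psi$ is $(D,\dag)$-anti-linear in the second variable: if $\psi(w,x) = 0$ for all $w \in W$ and $a \in D$, then $\psi(w,ax) = \psi(w,x)a^\dag = 0$, so $ax \in W_\psi^\perp$. Since $W_\phi^\perp = W_\psi^\perp$ by the first part, $W_\phi^\perp$ is a left $D$-submodule of $V$. I do not anticipate a genuine obstacle here; the only thing to be careful about is invoking the $D$-submodule hypothesis on $W$ at the right moment (to replace $w$ by $aw$) together with $D$-linearity in the first slot — without that, the trace form argument has nothing to act on.
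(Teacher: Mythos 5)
Your proof is correct and follows essentially the same route as the paper: the easy inclusion is immediate, and the reverse inclusion uses the $D$-submodule hypothesis to replace $w$ by $aw$ and then the non-degeneracy of the reduced trace form on a semisimple algebra. Your extra remark justifying the ``in particular'' clause via anti-linearity of $\psi$ in the second variable is a fine (if unstated in the paper) addition.
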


\begin{proof}
It is clear that $W_\psi^\perp \subset W_\phi^\perp$.

If $x \in W_\phi^\perp$ and $w \in W$ then, for all $a \in D$, we have $aw \in W$ and so
\[ \Trd_{D/\bQ}(a\psi(w,x)) = \Trd_{D/\bQ}(\psi(aw,x)) = \phi(aw,x) = 0. \]
By the non-degeneracy of the reduced trace form, it follows that $\psi(w,x) = 0$, that is, $x \in W_\psi^\perp$.
Thus $W_\phi^\perp \subset W_\psi^\perp$.
\end{proof}

\begin{corollary} \label{tr-non-deg}
Let $(D, \dag)$ be a semisimple $k$-algebra with an involution.
Let $V$ be a left $D$-module.
Let $\psi \colon V \times V \to k$ be a $(D,\dag)$-skew-Hermitian form and let $\phi = \Trd_{D/k} \psi \colon V \times V \to k$.
Then $\psi$ is non-degenerate if and only if $\phi$ is non-degenerate.
\end{corollary}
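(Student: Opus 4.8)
The plan is to deduce this immediately from \cref{orthog-complements}. First I would apply that lemma with $W = V$, which yields $V_\psi^\perp = V_\phi^\perp$, where by definition $V_\psi^\perp = \{ x \in V : \psi(w,x) = 0 \text{ for all } w \in V \}$ and $V_\phi^\perp = \{ x \in V : \phi(w,x) = 0 \text{ for all } w \in V \}$. Only the equality of these two submodules is needed; the ``in particular'' clause of \cref{orthog-complements} plays no role here.

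Next I would match these radicals with the definition of non-degeneracy. By the skew-Hermitian identity $\psi(w,x) = -\psi(x,w)^\dag$, we have $\psi(x,y) = 0$ if and only if $\psi(y,x) = 0$; hence the one-sided condition defining $V_\psi^\perp$ (that $x$ is killed on the right by every vector) is equivalent to the condition that $x$ is killed on the left by every vector, which is exactly the negation of the non-degeneracy hypothesis applied to~$x$. Therefore $\psi$ is non-degenerate if and only if $V_\psi^\perp = \{0\}$. The identical reasoning, using skew-symmetry of $\phi$ in place of the skew-Hermitian identity, shows that $\phi$ is non-degenerate if and only if $V_\phi^\perp = \{0\}$. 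Combining these two equivalences with $V_\psi^\perp = V_\phi^\perp$ gives the corollary.

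There is no genuine obstacle: the mathematical content is entirely subsumed by \cref{orthog-complements}, and the only point requiring a little care is aligning the paper's (one-sided) definition of non-degeneracy with the two-sided flavour of the radicals, which is handled by the symmetry of the vanishing relation noted above. Should one prefer a self-contained argument, one can run the special case $W = V$ of \cref{orthog-complements} directly: the forward implication is trivial since $\phi = \Trd_{D/k} \circ \psi$, and for the converse, if $\phi(x,-) \equiv 0$ then $\Trd_{D/k}(a\,\psi(y,x)) = \Trd_{D/k}(\psi(ay,x)) = \phi(ay,x) = 0$ for all $a \in D$ and $y \in V$, so $\psi(y,x) = 0$ for all $y$ by non-degeneracy of the reduced trace form, whence $\psi$ is degenerate. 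But this merely re-derives the cited lemma, so I would simply invoke \cref{orthog-complements}.
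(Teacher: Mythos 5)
Your proof is correct and is essentially the paper's own argument: the paper's entire proof is to apply \cref{orthog-complements} with $W = V$. Your extra remark reconciling the one-sided definition of non-degeneracy with the radical via $\psi(y,x) = -\psi(x,y)^\dag$ is a valid (and slightly more careful) elaboration of a point the paper leaves implicit.
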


\begin{proof}
Apply \cref{orthog-complements} to $W = V$.
\end{proof}

\subsection{Weakly symplectic and weakly unitary bases} \label{subsec:unitary-bases}

Let $k$ be a field satisfying $\characteristic(k) \neq 2$ and let $(D,\dag)$ be a semisimple $k$-algebra with an involution.
Let $V$ be a free left $D$-module and let $\psi \colon V \times V \to D$ be a $(D,\dag)$-skew-Hermitian form.

We will now define special properties relative to $\psi$ which may be possessed by a basis of $V$.  The notion of (weakly) symplectic basis is useful when $D$ a division $\bQ$-algebra of type~I or $k^e$, and the notion of (weakly) unitary basis is useful when $D$ is a division $\bQ$-algebra of type~II or $\rM_2(k)^e$.

We say that a $D$-basis $v_1, \dotsc, v_m$ for $V$ is \defterm{weakly symplectic} if $\psi(v_i, v_j) = 0$ for all $i, j$ except when $\{i,j\} = \{2k-1,2k\}$ for some $k \in \bZ$.
If $\psi$ is non-degenerate, then this implies that $\psi(v_{2k-1}, v_{2k}) \neq 0$ for all~$k$.

We say that a $D$-basis $v_1, \dotsc, v_m$ is \defterm{symplectic} if $\psi$ is non-degenerate, the basis is weakly symplectic and furthermore, $\psi(v_{2k-1}, v_{2k}) = 1$ for all~$k$.
When $D$ is a field and $\dag=\id$, a $(D,\dag)$-skew-Hermitian form is the same thing as a symplectic form and this definition agrees with the usual definition of symplectic basis.

We say that a $D$-basis $v_1, \dotsc, v_m$ is \defterm{weakly unitary} if $\psi(v_i, v_j) = 0$ for all $i, j \in \{ 1, \dotsc, m \}$ such that $i \neq j$.
If $\psi$ is non-degenerate, then this implies that $\psi(v_i, v_i) \neq 0$ for all~$i$.

For a general division algebra with involution~$(D,\dag)$, there is no canonical choice of a non-zero element of $D^-$, so there is no natural definition of ``unitary basis'' with respect to a $(D,\dag)$-skew-Hermitian form.
In the special case $D_0 = \rM_d(k)^e$ with $d$ even, let us define
\[ \omega_0 = ( J_d, \dotsc, J_d ) \in D_0^- \]
where $J_d \in \rM_d(k)$ was defined in section~\ref{subsec:notation}.
If $V$ is a free left $D_0$-module equipped with a $(D_0,t)$-skew-Hermitian form $\psi_0$, then we say that a left $D_0$-basis $v_1, \dotsc, v_m$ of $V$ is \defterm{unitary} if it is weakly unitary and $\psi(v_i, v_i) = \omega_0$ for all $i = 1, \dotsc, m$.

\pagebreak 

If $(D,\dag)$ is a division $\bQ$-algebra with positive involution of type~II, $\alpha \colon (D_{0,\bR},\dag) \to (D_\bR,t)$ is an isomorphism of $\bR$-algebras with involution, and $V$ is a left $D$-vector space equipped with a $(D,\dag)$-skew-Hermitian form $\psi$, then we say that a left $D_\bR$-basis for $V_\bR$ is \defterm{$\alpha$-unitary} if it forms a unitary $D_{0,\bR}$-basis for $V_\bR$ viewed as a $D_{0,\bR}$-module via $\alpha$, with respect to the $(D_{0,\bR},t)$-skew-Hermitian form $\alpha^{-1} \circ \psi \colon V_\bR \times V_\bR \to D_{0,\bR}$.
The elements $v_i$ of an $\alpha$-unitary basis satisfy $\psi(v_i, v_i) = \alpha(\omega_0)$.

As an aside, which will be used in later calculations, we remark that, for any $a \in D_0$, the entries of the matrices which make up $a \omega_0$ are (up to signs) a permutation of the matrix entries making up $a$.  Hence
\begin{equation} \label{eqn:a-omega0}
\abs{a\omega_0}_{D_0} = \abs{a}_{D_0}.
\end{equation}

The following lemma shows how we can adjust a weakly symplectic or weakly unitary basis to become symplectic or $\alpha$-unitary.  Note that it works only over $D_\bR$, not over~$D$, because it requires taking square roots.

\begin{lemma} \label{semi-orthogonal-normalise}
Let $(D,\dag)$ be a division $\bQ$-algebra with a positive involution of type I or~II.
Let $\alpha \colon (\rM_d(\bR)^e,t) \to (D_\bR,\dag)$ be an isomorphism of $\bR$-algebras with involution.
Let $V$ be a left $D$-vector space equipped with a $(D,\dag)$-skew-Hermitian form $\psi \colon V \times V \to D$.
Let $v_1, \dotsc, v_m$ be a left $D$-basis for $V$ which is weakly symplectic (when $D$ has type~I) or weakly unitary (when $D$ has type~II).

Then there exist $s_1, \dotsc, s_m \in D_\bR^\times$
such that $s_1^{-1} v_1, \dotsc, s_m^{-1} v_m$ form a symplectic or $\alpha$-unitary $D_\bR$-basis for $V_\bR$ (according to the type of~$D$) and, for all $i$,
\[ \abs{s_i}_D \leq (de)^{1/4} \abs{\psi(v_i, v_j)}_D^{1/2} \]
where $j$ is the unique index such that $\psi(v_i, v_j) \neq 0$.
\end{lemma}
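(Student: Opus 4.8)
The plan is to transport the whole problem to $D_{0,\bR} := \rM_d(\bR)^e$ through $\alpha$ and solve it one simple factor at a time. Since $\alpha$ is an isomorphism of $\bR$-algebras it commutes with reduced traces, hence is an isometry for $\abs{\cdot}_D$, and since it also respects the involutions it carries $(D_{0,\bR},t)$-skew-Hermitian forms to $(D_\bR,\dag)$-skew-Hermitian forms. So, writing $\psi_0 = \alpha^{-1}\circ\psi$ (and viewing $V_\bR$ as a $D_{0,\bR}$-module via $\alpha$), it suffices to find $t_1,\dotsc,t_m \in D_{0,\bR}^\times$ such that $t_1^{-1}v_1,\dotsc,t_m^{-1}v_m$ is a symplectic (type~I) or unitary (type~II) $D_{0,\bR}$-basis for $(V_\bR,\psi_0)$ with $\abs{t_i}_{D_0} \le (de)^{1/4}\abs{\psi_0(v_i,v_j)}_{D_0}^{1/2}$; then $s_i := \alpha(t_i)$ works. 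Under rescaling, $\psi_0(t_i^{-1}v_i,t_j^{-1}v_j) = t_i^{-1}\psi_0(v_i,v_j)(t_j^\dag)^{-1}$, so vanishing of off-diagonal entries is preserved automatically, and I only need to normalise the single non-zero value attached to each $v_i$. Note that $\psi$ is non-degenerate (as is forced by the existence of a symplectic or $\alpha$-unitary basis), so the partner value $\psi(v_i,v_j)$ lies in $D\setminus\{0\}$ and is therefore invertible in $D$, hence in $D_\bR$; concretely, every coordinate of $\psi_0(v_i,v_j)\in\rM_d(\bR)^e$ is an invertible matrix.

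\emph{Type I} ($d=1$, $D_{0,\bR}=\bR^e$, $\dag=\id$). Write $c = \psi_0(v_{2k-1},v_{2k}) = (c_1,\dotsc,c_e)$ with all $c_\ell\ne 0$. Put $t_{2k-1} = (\abs{c_\ell}^{1/2})_\ell$ and $t_{2k} = (c_\ell\abs{c_\ell}^{-1/2})_\ell$, so that $t_{2k-1}t_{2k} = c$ and hence $\psi_0(t_{2k-1}^{-1}v_{2k-1}, t_{2k}^{-1}v_{2k}) = (t_{2k-1}t_{2k})^{-1}c = 1$; the resulting basis is symplectic. Both $\abs{t_{2k-1}}_{D_0}^2$ and $\abs{t_{2k}}_{D_0}^2$ equal $\sum_\ell\abs{c_\ell}$, and by Cauchy--Schwarz $\sum_\ell\abs{c_\ell}\le \sqrt{e}\,(\sum_\ell c_\ell^2)^{1/2} = \sqrt{e}\,\abs{c}_{D_0}$, so $\abs{t_{2k-1}}_{D_0},\abs{t_{2k}}_{D_0}\le e^{1/4}\abs{c}_{D_0}^{1/2}$; this is the claimed bound, since $\psi_0(v_{2k},v_{2k-1}) = -c$ has the same norm as~$c$.

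\emph{Type II} ($d=2$, $D_{0,\bR}=\rM_2(\bR)^e$, $\dag=t$, $\omega_0 = (J_2,\dotsc,J_2)$). For each $i$, $\psi_0(v_i,v_i)\in D_{0,\bR}^-$ equals $(\lambda_1 J_2,\dotsc,\lambda_e J_2)$ with all $\lambda_\ell\ne 0$. Using the identity $AJ_2A^t = \det(A)J_2$ for $A\in\rM_2(\bR)$, I take $t_i = (t_i^{(\ell)})_\ell$ with $t_i^{(\ell)} = \abs{\lambda_\ell}^{1/2}I_2$ if $\lambda_\ell>0$ and $t_i^{(\ell)} = \abs{\lambda_\ell}^{1/2}\diag(1,-1)$ if $\lambda_\ell<0$, so that $\det(t_i^{(\ell)}) = \lambda_\ell$, whence $t_i^{(\ell)}J_2(t_i^{(\ell)})^t = \lambda_\ell J_2$; equivalently $t_i^{-1}\psi_0(v_i,v_i)(t_i^t)^{-1} = \omega_0$, and the resulting basis is unitary. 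Here $\abs{t_i^{(\ell)}}_F^2 = 2\abs{\lambda_\ell}$, so $\abs{t_i}_{D_0}^2 = 2\sum_\ell\abs{\lambda_\ell}\le 2\sqrt{e}\,(\sum_\ell\lambda_\ell^2)^{1/2}$; since $\abs{\psi_0(v_i,v_i)}_{D_0}^2 = \sum_\ell\abs{\lambda_\ell J_2}_F^2 = 2\sum_\ell\lambda_\ell^2$, this gives $\abs{t_i}_{D_0}^2\le\sqrt{2e}\,\abs{\psi_0(v_i,v_i)}_{D_0}$, i.e.\ $\abs{t_i}_{D_0}\le(2e)^{1/4}\abs{\psi_0(v_i,v_i)}_{D_0}^{1/2}$, which is the $(de)^{1/4}$-bound.

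There is no serious obstacle: the mathematical content is just the two elementary ``square-root splitting'' constructions above, together with the identity $AJ_2A^t=\det(A)J_2$, and in each case the factor $(de)^{1/4}$ is produced by a single application of Cauchy--Schwarz in the form $\sum_\ell x_\ell\le\sqrt e\,(\sum_\ell x_\ell^2)^{1/2}$. The only points needing a little care are (a) confirming that $\alpha$ is an isometry for $\abs{\cdot}_D$ and intertwines the two notions of skew-Hermitian form, so that the reduction to $D_{0,\bR}$ is legitimate, and (b) recording that $\psi(v_i,v_j)$ is invertible in $D_\bR$ (using that $D$ is a division algebra and $\psi$ is non-degenerate), so that all the scalars $c_\ell,\lambda_\ell$ above are non-zero and the elements $t_i$ written down are genuinely invertible.
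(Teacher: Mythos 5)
Your proof is correct and follows essentially the same route as the paper's: reduce via $\alpha$ to $\rM_d(\bR)^e$, rescale each basis vector by explicit diagonal square-root elements (your type II construction is the paper's verbatim, via $AJ_2A^t=\det(A)J_2$, i.e.\ \cref{action-on-antisymm}(ii)), and extract the factor $(de)^{1/4}$ from the Cauchy--Schwarz inequality $\sum_\ell \abs{x_\ell}\le \sqrt{e}\,(\sum_\ell x_\ell^2)^{1/2}$. The only cosmetic difference is in type I, where the paper takes $s_i=t_k^{-1/2}\psi(v_i,v_j)$ and $s_j=t_k^{1/2}$ with a single real scalar $t_k$ rather than your componentwise $\abs{c_\ell}^{1/2}$ splitting; both give the same bound.
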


\begin{proof}
The proof is in two parts, depending on the type of~$D$.

\subsubsection*{Type~I case}
For each $k = 1, \dotsc, m/2$, $i=2k-1$ and $j=2k$, let
\[ t_k = (de)^{-1/2} \abs{\psi(v_i, v_j)}_D \in \bR_{>0}. \]
Let $s_i = t_k^{-1/2} \psi(v_i, v_j)$ and $s_j = t_k^{1/2}$.
Then
\[ \psi(s_i^{-1} v_i, s_j^{-1} v_j) = s_i^{-1} \psi(v_i, v_j) (s_j^{-1})^\dag = 1 \]
since $s_j^\dag = s_j$ and $t_k \in \bR$ is in the centre of $D_\bR$.

Furthermore
\[ \abs{s_i}_D = t_k^{-1/2} \abs{\psi(v_i, v_j)}_D = (de)^{1/4} \abs{\psi(v_i, v_j)}_D^{1/2} \]
while
\[ \abs{s_j}_D = t_k^{1/2} \abs{1}_D = (de)^{1/2} t_k^{1/2} = (de)^{1/4} \abs{\psi(v_i, v_j)}_D^{1/2}.  \]

\subsubsection*{Type~II case}
For each~$i$, $\psi(v_i, v_i) \in D^- \setminus\{0\} \subset F_\bR^\times \alpha(\omega_0)$.
Thus $\psi(v_i, v_i) = t_i\alpha(\omega_0)$ for some $t_i \in F_\bR^\times$.
Write $\alpha^{-1}(t_i) = (t_{i1}, \dotsc, t_{ie}) \in (\bR^\times)^e$.
Let $s_i = \alpha(s_{i1}, \dotsc, s_{ie}) \in D_\bR^\times$ where $s_{ij} \in \GL_2(\bR)$ are defined as follows:
\begin{align*}
   s_{ij} &= \fullmatrix{\sqrt{t_{ij}}}{0}{0}{\sqrt{t_{ij}}} \text{ if } t_{ij} \geq 0,
\\ s_{ij} &= \fullmatrix{\sqrt{-t_{ij}}}{0}{0}{-\sqrt{-t_{ij}}} \text{ if } t_{ij} < 0.
\end{align*}

Then
\[ \Nrd_{D_\bR/F_\bR}(s_i) = \alpha(\det(s_{i1}), \dotsc, \det(s_{ie})) = \alpha(t_{i1}, \dotsc, t_{ie}) = t_i. \]
Hence by \cref{action-on-antisymm},
\[ \psi(s_i^{-1} v_i, s_i^{-1} v_i) = s_i^{-1} \psi(v_i, v_i) (s_i^{-1})^\dag = \Nrd_{D_\bR/F_\bR}(s_i^{-1}) \psi(v_i, v_i) = \alpha(\omega_0). \]

Furthermore,
\begin{align*}
    \abs{s_i}_D^2
  & = \sum_{j=1}^e \Tr(s_{ij} s_{ij}^t)
    = \sum_{j=1}^e 2\abs{t_{ij}}
\\& \leq \sqrt{4e \sum_{j=1}^e \abs{t_{ij}}^2}
    = \sqrt{2e \Trd_{D_\bR/\bR}(t_i t_i^\dag)}
    = (2e)^{1/2} \abs{t_i}_D.
\end{align*}
By \eqref{eqn:a-omega0}, this implies that
\[ \abs{s_i}_D^2 \leq (2e)^{1/2} \abs{t_i\alpha(\omega_0)}_D = (de)^{1/2} \abs{\psi(v_i, v_i)}_D.
\qedhere
\]
\end{proof}

\begin{lemma} \label{D0-basis} \leavevmode
Let $D_0 = \rM_d(k)^e$ where $d = 1$ or $2$ and let $t$ denote the involution of $D_0$ which is transpose on each factor.
Let $V$ be a free left $D_0$-module and let $\psi_0$ be a non-degenerate $(D_0,t)$-skew-Hermitian form $V \times V \to D_0$.

Then there exists a $D_0$-basis $v_1, \dotsc, v_m$ for $V$ and a $k$-basis $a_1, \dotsc, a_{d^2e}$ for $D_0$ with the following properties:
\begin{enumerate}[(i)]
\item $\{ v_1, \dotsc, v_m \}$ is symplectic with respect to $\psi_0$ if $d=1$ and unitary if $d=2$.
\item $\{ a_1, \dotsc, a_{d^2e} \}$ is an orthonormal basis for $D_0$ with respect to $\abs{\cdot}_D$.
\item $\{ a_r v_j : 1 \leq r \leq d^2e, 1 \leq j \leq m \}$ is a symplectic $k$-basis for $V$ with respect to $\Trd_{D_0/k} \psi_0$.
\end{enumerate}
\end{lemma}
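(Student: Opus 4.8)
The plan is to reduce to the case $e = 1$ by splitting along the central idempotents $e_1, \dots, e_e$ of $D_0 = \rM_d(k)^e$: we have $V = \bigoplus_\ell e_\ell V$, each $e_\ell V$ is a free module of rank~$m$ over the $\ell$-th factor $\rM_d(k)$, and since $e_\ell\,\psi_0(x,y)\,e_{\ell'} = 0$ for $\ell \ne \ell'$ the summands are pairwise $\psi_0$-orthogonal, with $\psi_0$ restricting to a non-degenerate $(\rM_d(k),t)$-skew-Hermitian form on each. A basis with the required properties on each factor then assembles coordinate-wise to one on~$V$. So I would first construct the $D_0$-basis $v_1, \dots, v_m$ of part~(i), factor by factor, and only afterwards choose the $k$-basis $a_1, \dots, a_{d^2e}$ of~$D_0$, showing that a single such choice makes (ii) and~(iii) hold simultaneously.

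For $d = 1$ the involution is trivial, so $\characteristic(k) \neq 2$ forces $D_0^- = 0$; hence $\psi_0(x,x) = 0$ identically and $\psi_0$ restricts to a non-degenerate alternating form on each factor $e_\ell V$, and the classical structure theorem for alternating forms over a field gives a symplectic $k$-basis of each (in particular $m$ is even), hence a symplectic $D_0$-basis of~$V$. For $d = 2$ I would induct on~$m$, the base case $m=0$ being vacuous. The crucial step is that there is some $v \in V$ with $\psi_0(v,v) \neq 0$: if not, polarisation gives $\psi_0(y,x) = -\psi_0(x,y)$, which together with $\psi_0(y,x) = -\psi_0(x,y)^t$ shows that every value of $\psi_0$ is a symmetric matrix; replacing $x$ by $ax$ then shows $a\,\psi_0(x,y)$ is symmetric for all $a \in \rM_2(k)$, and a short computation with elementary matrices forces $\psi_0(x,y) = 0$, contradicting non-degeneracy for $m \ge 1$. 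Given such~$v$, write $\psi_0(v,v) = cJ_2$ with $c \in k^\times$ (since $D_0^- = kJ_2$) and set $v_1 = \diag(c^{-1},1)\,v$; the identity $aJ_2a^t = \det(a)J_2$ gives $\psi_0(v_1,v_1) = c^{-1}\cdot cJ_2 = J_2 = \omega_0$. As $\omega_0 \in D_0^\times$, the restriction of $\psi_0$ to $D_0 v_1$ is non-degenerate, so $V = D_0 v_1 \oplus (D_0 v_1)^\perp$; by semisimplicity of~$D_0$ the complement is free of rank $m-1$, carries a non-degenerate $(\rM_2(k),t)$-skew-Hermitian form, and the induction hypothesis supplies a unitary basis of it, which together with $v_1$ yields a unitary $D_0$-basis of~$V$.

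For the remaining parts, take $\{a_1, \dots, a_{d^2e}\}$ to be the matrix units $E^{(\ell)}_{pq}$ ($1 \le p,q \le d$, $1 \le \ell \le e$) placed in the individual factors of $D_0 = \rM_d(k)^e$ (for $d = 1$ these are the idempotents $e_\ell$). Because $\Trd_{D_0/k}(ab^t) = \sum_\ell \tr(a_\ell b_\ell^t)$ is the sum of entrywise products, these elements are orthonormal for the inner product inducing $\abs{\cdot}_D$, which is~(ii). Since $V = \bigoplus_{j=1}^m D_0 v_j$ with each $D_0 v_j$ free of rank~$1$, the family $\{a_r v_j\}$ is a $k$-basis of~$V$ of the correct size $d^2em = \dim_k V$. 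For~(iii) I would compute $\phi_0(a_r v_i, a_s v_j) = \Trd_{D_0/k}\bigl(a_r\,\psi_0(v_i,v_j)\,a_s^t\bigr)$, which vanishes whenever $\psi_0(v_i,v_j) = 0$, that is, for all $(i,j)$ except the pairs $\{i,j\} = \{2p-1,2p\}$ when $d=1$ and except $i = j$ when $d=2$. For the remaining pairs one substitutes the known value of $\psi_0(v_i,v_j)$ (a unit of $F$, respectively $\omega_0$) and uses $E_{pq}J_2(E_{p'q'})^t = (J_2)_{qq'}E_{pp'}$ within each factor (respectively $e_\ell e_{\ell'} = \delta_{\ell\ell'}e_\ell$) together with the fact that $\Trd_{D_0/k}$ of a matrix unit is its ordinary trace, to check that the only non-zero pairings among the $a_r v_j$ are the hyperbolic pairs $\phi_0(E^{(\ell)}_{p1}v_j, E^{(\ell)}_{p2}v_j) = 1$ when $d=2$ (respectively $\phi_0(e_\ell v_{2p-1}, e_\ell v_{2p}) = 1$ when $d=1$), and that distinct such pairs are mutually orthogonal. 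After reordering the basis accordingly, this is a symplectic $k$-basis for~$\phi_0$, giving~(iii).

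The main obstacle is part~(i) in the $d = 2$ case, and within it the sublemma that $\psi_0(v,v)$ is non-zero for some~$v$, which underlies the existence of an orthogonal (hence unitary) basis; the reduction to $e = 1$, the choice and orthonormality of the $a_r$, and the Gram-matrix computation in~(iii) are all routine.
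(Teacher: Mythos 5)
Your proposal is correct and follows essentially the same route as the paper's proof: reduction to $e=1$ via the central idempotents, the classical theory of symplectic forms for $d=1$, Gram--Schmidt induction for $d=2$ after normalising an anisotropic vector by $\diag(c^{-1},1)$, and matrix units as the $k$-basis of $D_0$ with the same Gram-matrix computation for (ii)--(iii). The only (inessential) difference is how the vector with $\psi_0(v,v)\neq 0$ is produced: you argue contrapositively that $\psi_0(v,v)\equiv 0$ would make every value of $\psi_0$ symmetric and hence zero, whereas the paper picks $x,y$ with $\psi_0(x,y)$ not skew-symmetric (possible since the image of $\psi_0$ is a two-sided ideal of the simple algebra $\rM_2(k)$) and takes $z$ among $x$, $y$, $x+y$; both arguments are equally valid.
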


\begin{proof}
Write $B_0 = \rM_d(k)$.
Write $F_0$ for the centre of $D_0$, namely $k^e$.
Let $u_1, \dotsc, u_e$ denote the standard $k$-basis of $F_0 = k^e$.

Let $V_i = u_i V$.
Then $V = \bigoplus_{i=1}^e V_i$ and each $V_i$ is a free left $B_0$-module.
Because $V_0$ is a free left $D_0$-module, $\rk_{B_0}(V_1) = \dotsb = \rk_{B_0}(V_e)$.
Let $m$ denote this rank.

Because $\psi_0 : V \times V \to D_0$ is $F_0$-bilinear, it takes the form
\[ \psi_0((x_1, \dotsc, x_e), (y_1, \dotsc, y_e)) = (\psi_1(x_1, y_1), \dotsc, \psi_e(x_e, y_e)) \text{ for all } x_i, y_i \in V_i, \]
where $\psi_i \colon V_i \times V_i \to B_0$ are some non-degenerate $(B_0, t)$-skew-Hermitian forms.

Below, we shall prove the lemma with $(D_0, V, \psi_0)$ replaced by $(B_0, V_i, \psi_i)$, yielding a $B_0$-basis $v_{i1}, \dotsc, v_{im}$ for $(V_i, \psi_i)$ and a $k$-basis $b_1, \dotsc, b_{d^2}$ for $B_0$.
Then letting $v_j = (v_{1j}, \dotsc, v_{ej})$, we obtain a symplectic or unitary $D_0$-basis for $V$.
Furthermore $\{ v_i b_j : 1 \leq i \leq e, 1 \leq j \leq d^2 \}$ forms a $k$-basis for $D_0$ which satisfies (ii) and (iii).

Now we prove the lemma for $(B_0, V_i, \psi_i)$, breaking into two cases depending on~$d$.

\subsubsection*{Case $d=1$}
When $d=1$, $B_0=k$.
Each $V_i$ is a $k$-vector space of dimension $m$ and $\psi_i$ is a non-degenerate symplectic form $V_i \times V_i \to k$.
By the theory of symplectic forms, there exists a symplectic $k$-basis $\{ v_{i1}, \dotsc, v_{im} \}$ for~$V_i$, proving (i).

Choosing $b_1 = 1$ gives an orthonormal $k$-basis of $B_0$ with respect to $\abs{\cdot}_{B_0}$.
Since $\Trd_{B_0/k} \psi = \psi$, the bases $v_{i1}, \dotsc, v_{im}$ and $b_1$ satisfy (iii).

\subsubsection*{Case $d=2$, part~(i)}
We prove by induction on $m$ that there is a unitary $B_0$-basis $v_{i1}, \dotsc, v_{im}$ using the Gram--Schmidt method.

First we claim that there exists $z \in V_i$ such that $\psi_i(z,z) \neq 0$.
The image of $\psi_i \colon V_i \times V_i \to B_0$ is a two-sided ideal in $B_0$, which is a simple algebra, so this image is all of $B_0$.
In particular, we can choose $x,y \in V_i$ such that $\psi_i(x,y)$ is not skew-symmetric, that is, $\psi_i(x,y) + \psi_i(y,x) = \psi_i(x,y) + \psi_i(x,y)^t \neq 0$.
Then $\psi_i(x,x)$, $\psi_i(y,y)$ and $\psi_i(x+y,x+y)$ are not all zero.
Choosing $z$ to be one of $x$, $y$ and $x+y$, we obtain $\psi_i(z,z) \neq 0$.

Then $\psi_i(z,z) \in B_0^- = k J_d$ so $\psi_i(z,z) = s J_d$ for some $s \in k^\times$.
Letting $v_{i1} = \fullsmallmatrix{s^{-1}}{0}{0}{1}z$, we obtain that $\psi_i(v_{i1}, v_{i1}) = J_d$.

Let $V_i' = \{ v \in V_i : \psi_i(v_{i1}, v) = 0 \} = \{ v \in V_i : \psi_i(v, v_{i1}) = 0 \}$, which is a left $B_0$-submodule of $V_i$.
For every $b \in B_0 \setminus \{0\}$, we have
\begin{equation} \label{eqn:aJ}
\psi_i(bv_{i1}, v_{i1}) = b\psi_i(v_{i1}, v_{i1}) = bJ_d \neq 0
\end{equation}
and so $B_0v_{i1} \cap V_i' = \{ 0 \}$.
For every $v \in V_i$, we have
\[ v - \psi_i(v, v_{i1}) J_d^{-1} v_{i1} \in V_i'. \]
Hence $V_i = B_0v_{i1} \oplus V_i'$ as a direct sum of left $B_0$-modules.

By \eqref{eqn:aJ}, $bv_{i1} \neq 0$ for all $b \in B_0 \setminus \{0\}$.
Hence $\dim_k(B_0v_{i1}) = 4$ and so $\dim_k(V_0') = 4(m-1)$.
Every $B_0$-module whose $k$-dimension is a multiple of 4 is a free $B_0$-module,
so $B_0v_{i1}$ and $V_0'$ are free left $B_0$-modules.
By induction, there is a unitary $B_0$-basis $v_{i2}, \dotsc, v_{im}$ for $V_i'$.
Then $v_{i1}, v_{i2}, \dotsc, v_{im}$ is a unitary $B_0$-basis for $V_i$.

\subsubsection*{Case $d=2$, part (ii) and~(iii)}
Let
\[ b_1 = \fullsmallmatrix{1}{0}{0}{0},
\quad  b_2 = \fullsmallmatrix{0}{1}{0}{0}, 
\quad  b_3 = \fullsmallmatrix{0}{0}{1}{0}, 
\quad  b_4 = \fullsmallmatrix{0}{0}{0}{1} \in B_0 = \rM_2(k). \]
These form an orthonormal $k$-basis for $B_0$ with respect to $\abs{\cdot}_{B_0}$.

Since $\psi_i$ is $(B_0,t)$-skew-Hermitian,
\[ \psi_i(b_rv_{ij}, b_{r'}v_{ij'}) = b_r\psi_i(v_{ij}, v_{ij'})b_{r'}^\dag. \]
Thus if $j \neq j'$, we obtain $\psi_i(b_rv_{ij}, b_{r'}v_{ij'}) = 0$.
If $j = j'$, then $\psi_i(v_{ij}, v_{ij}) = J_2$, so we can calculate
\[ \Trd_{B_0/k} \psi(b_rv_{ij}, b_{r'}v_{ij}) = \Trd_{\rM_2(k)/k}(b_rJ_2b_{r'}^t)
= \begin{cases}
   1 &\text{if } (r,r') = (1,2) \text{ or } (3,4),
\\ -1 &\text{if } (r,r') = (2,1) \text{ or } (4,3),
\\ 0 &\text{otherwise}.
\end{cases}
\]
Thus the bases $v_{i1}, \dotsc, v_{im}$ and $b_1, \dotsc, b_4$ satisfy (iii) for $(B_0, V_i, \psi_i)$.
\end{proof}

\subsection{Discriminants and skew-Hermitian forms}

The following lemmas are useful for calculating discriminants of skew-Hermitian forms.

\begin{lemma} \label{disc-a-form}
Let $(D,\dag)$ be a division $\bQ$-algebra with an involution and let $R$ be an order in $D$.
Let $r_1, \dotsc, r_{d^2e}$ be a $\bZ$-basis for $R$.
For $a \in D$, let $T_a \in \rM_{d^2e}(\bQ)$ be the matrix with entries $(T_a)_{ij} = \Trd_{D/\bQ}(r_i a r_j^\dag)$.
Then
\[ \det(T_a) = \pm d^{-d^2e} \disc(R) \Nm_{D/\bQ}(a). \]
\end{lemma}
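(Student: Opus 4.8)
The plan is to recognise $T_a$ as the matrix of a bilinear form on $R$ and compute its determinant by a change of basis, tracking the scalar factors that appear. Concretely, consider the $\bQ$-bilinear form $\beta_a \colon D \times D \to \bQ$ defined by $\beta_a(x,y) = \Trd_{D/\bQ}(x a y^\dag)$. Then $T_a$ is exactly the Gram matrix of $\beta_a$ with respect to the $\bZ$-basis $r_1, \dotsc, r_{d^2e}$ of $R$, so $\det(T_a)$ is the discriminant of $\beta_a$ on $R$, which is well-defined up to sign (it only depends on $R$, not on the choice of basis, by the usual base-change formula $\det((M^t G M)) = \det(M)^2 \det(G)$ for $M \in \GL_{d^2e}(\bZ)$). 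Hence it suffices to compute $\det(T_a)$ for one convenient $\bQ$-basis of $D$ and then note that switching to a $\bZ$-basis of $R$ multiplies it by a square of a rational, which must in fact be a unit up to the integrality considerations — more carefully, I will compute the ratio $\det(T_a)/\disc(R)$ directly, since both are discriminants of bilinear forms with respect to the same lattice.

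The cleanest approach is to factor $\beta_a$ through multiplication. Write $L_a \colon D \to D$ for left multiplication by $a$ and $\iota \colon D \to D$ for the $\dag$-conjugation $y \mapsto y^\dag$ (a $\bQ$-linear involution). Then $\beta_a(x,y) = \Trd_{D/\bQ}(x \cdot a y^\dag) = \tau(x, a y^\dag)$, where $\tau(x,z) = \Trd_{D/\bQ}(xz)$ is the reduced-trace form. Thus, as bilinear forms, $\beta_a(x,y) = \tau(x, (L_a \circ \iota)(y))$, so in any basis the Gram matrix of $\beta_a$ equals $G_\tau \cdot M$, where $G_\tau$ is the Gram matrix of $\tau$ and $M$ is the matrix of the $\bQ$-linear map $L_a \circ \iota$. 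Therefore
\[
\det(T_a) = \det(G_\tau) \cdot \det(L_a \circ \iota) = \det(G_\tau) \cdot \det(\iota) \cdot \det(L_a),
\]
computing all three determinants in the basis $r_1, \dotsc, r_{d^2e}$ (equivalently in any $\bQ$-basis, since these are basis-independent up to the relevant normalisations). Now I identify the three factors. First, $\det(G_\tau)$ is the discriminant of the \emph{reduced} trace form on $R$; since $\Tr_{D/\bQ} = d\,\Trd_{D/\bQ}$ and $\disc(R)$ was defined using the non-reduced trace, we get $\det(G_\tau) = d^{-d^2 e}\disc(R)$ (the form scales by $d$ in each of $d^2e$ coordinates). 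Second, $\det(L_a) = \Nm_{D/\bQ}(a)$: this is the definition of the (non-reduced) norm as the determinant of the left regular representation, and it is a standard fact that it agrees with $\prod_i \Nm_{F_i/k}\circ\Nrd_{D_i/F_i}$ raised to the appropriate power, i.e.\ the $\Nm_{D/\bQ}$ of section \ref{subsec:semisimple-algebras}. Third, $\det(\iota) = \pm 1$ because $\iota$ is an involution of the $\bQ$-vector space $D$ (its eigenvalues are $\pm 1$). Multiplying, $\det(T_a) = \pm d^{-d^2 e}\disc(R)\,\Nm_{D/\bQ}(a)$, as claimed.

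The main point requiring care — the potential obstacle — is the bookkeeping of \emph{which} lattice/basis each determinant is taken with respect to and making sure no spurious index factor sneaks in. The resolution is that $\det(T_a)$, $\det(G_\tau)$ are both discriminants of bilinear forms relative to the \emph{same} $\bZ$-basis $r_1,\dots,r_{d^2e}$ of $R$, while $\det(L_a)$ and $\det(\iota)$ are determinants of $\bQ$-linear endomorphisms of $D$, which are independent of basis entirely; so the factorisation $T_a = G_\tau M$ holds literally as an identity of $d^2e \times d^2e$ matrices once a single basis is fixed, and taking determinants is immediate with no index corrections. One small verification worth spelling out is that $L_a \circ \iota$ really does have matrix $M$ with $\tau(r_i, (L_a\circ\iota)(r_j)) = (T_a)_{ij}$, which is just unwinding $(T_a)_{ij} = \Trd_{D/\bQ}(r_i a r_j^\dag) = \tau(r_i, a r_j^\dag) = \tau(r_i, (L_a\circ\iota)(r_j))$ — immediate. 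Everything else is routine linear algebra plus the standard identities $\Tr_{D/\bQ} = d\,\Trd_{D/\bQ}$ and $\Nm_{D/\bQ} = \det \circ (\text{left regular rep})$.
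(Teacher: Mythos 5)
Your proof is correct and takes essentially the same route as the paper: both factor the Gram matrix $T_a$ as (Gram matrix of a trace form) times (matrix of a multiplication operator) and identify the determinant of the latter with the non-reduced norm $\Nm_{D/\bQ}(a)$. The only difference is cosmetic — the paper writes $T_a = M_a^t T_1$ using right multiplication and the $\dag$-twisted trace form, citing \cite[Lemma~5.6]{QRTUI} for $\det(T_1) = \pm d^{-d^2e}\disc(R)$, whereas you use $L_a \circ \iota$ against the untwisted reduced trace form and obtain the sign directly from $\det(\iota) = \pm 1$, which makes your version slightly more self-contained.
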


\begin{proof}
Let $M_a \in \rM_{d^2e}(\bZ)$ denote the matrix which represents ``multiplication by $a$ on the right'' with respect to the basis $r_1, \dotsc, r_{d^2e}$.
Using the facts that $\Trd_{D/\bQ}(xy) = \Trd_{D/\bQ}(yx)$ for all $x,y \in D$ and that $\Trd_{D/\bQ}$ is $\bQ$-linear,
\begin{align*}
    (T_a)_{ij}
    = \Trd_{D/\bQ}(r_i a r_j^\dag)
  & = \Trd_{D/\bQ}(r_j^\dag r_i a)
    = \Trd_{D/\bQ} \bigl( r_j^\dag \sum_{k=1}^{d^2e} (M_a)_{ki} r_k \bigr)
\\& = (M_a)_{ki} \sum_{k=1}^{d^2e} \Trd_{D/\bQ}(r_k r_j^\dag)
    = \sum_{k=1}^{d^2e} (M_a)_{ki} (T_1)_{kj}.
\end{align*}
Thus $T_a = M_a^t T_1$ so
\[ \det(T_a) = \det(M_a) \det(T_1) = \Nm_{D/\bQ}(a) \det(T_1). \]

Now $T_1$ is the Gram matrix of the bilinear form $(x,y) \mapsto d^{-1}\Trd_{D/\bQ}(xy^\dag)$ with respect to $r_1, \dotsc, r_{d^2e}$.
Hence by \cite[Lemma~5.6]{QRTUI}, $\det(T_1) = \pm d^{-d^2e} \disc(R)$.
\end{proof}

The following lemma allows us to calculate the discriminant of $\Trd_{D/\bQ} \psi$ on a free $R$-module generated by a weakly symplectic or weakly unitary basis (weakly symplectic or weakly unitary bases with respect to a non-degenerate form automatically satisfy the condition about uniqueness of a permutation~$\sigma$).
We have stated the lemma more generally because we shall require it in one additional case: when $m=2$ and the matrix with entries $\psi(v_i, v_j)$ has the form $\fullsmallmatrix{0}{*}{*}{*}$.

\pagebreak 

\begin{lemma} \label{disc-triangular}
Let $(D, \dag)$ be a division $\bQ$-algebra with a positive involution of type I or~II.
Let $V$ be a left $D$-vector space with a non-degenerate $(D,\dag)$-skew-Hermitian form $\psi \colon V \times V \to D$.
Let $R$ be an order in $D$.

Let $v_1, \dotsc, v_m$ be a $D$-basis for $V$.
Suppose that there is exactly one permutation $\sigma \in S_m$ for which $\psi(v_i, v_{\sigma(i)}) \neq 0$ for all $i = 1, \dotsc, m$.
Then
\[ \abs{\disc(Rv_1 + \dotsb + Rv_m, \Trd_{D/\bQ} \psi)} = d^{-d^2em} \abs{\disc(R)}^m \prod_{i=1}^m \abs{\Nm_{D/\bQ}(\psi(v_i, v_{\sigma(i)}))}. \]
\end{lemma}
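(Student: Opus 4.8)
The plan is to compute the determinant of the Gram matrix of $\phi := \Trd_{D/\bQ}\psi$ on the lattice $\Lambda := Rv_1 + \dots + Rv_m$ by choosing a convenient $\bZ$-basis: if $r_1, \dots, r_{d^2e}$ is a $\bZ$-basis for $R$, then $\{ r_k v_i : 1 \le k \le d^2e,\ 1 \le i \le m \}$ is a $\bZ$-basis for $\Lambda$. The Gram matrix with respect to this basis has a block structure indexed by the pairs $(i,j) \in \{1,\dots,m\}^2$, where the $(i,j)$ block is the $d^2e \times d^2e$ matrix whose $(k,l)$ entry is $\phi(r_k v_i, r_l v_j) = \Trd_{D/\bQ}\bigl(r_k \psi(v_i,v_j) r_l^\dag\bigr)$. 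In the notation of \cref{disc-a-form}, this block is exactly $T_{\psi(v_i,v_j)}$.

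The key point is that, by hypothesis, there is exactly one permutation $\sigma \in S_m$ with $\psi(v_i, v_{\sigma(i)}) \ne 0$ for all $i$, so the block at position $(i,j)$ is zero unless $j = \sigma(i)$. After simultaneously permuting the block-columns by $\sigma$, the Gram matrix becomes block diagonal with diagonal blocks $T_{\psi(v_i, v_{\sigma(i)})}$, and the permutation contributes only a sign $\sgn(\sigma)^{d^2e} = \pm 1$ to the determinant. Hence
\[
\abs{\disc(\Lambda, \phi)} = \prod_{i=1}^m \abs{\det\bigl(T_{\psi(v_i, v_{\sigma(i)})}\bigr)}.
\]
Now I apply \cref{disc-a-form} to each factor: $\abs{\det(T_a)} = d^{-d^2e} \abs{\disc(R)} \, \abs{\Nm_{D/\bQ}(a)}$ for $a = \psi(v_i, v_{\sigma(i)})$. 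Multiplying over $i = 1, \dots, m$ produces the factor $d^{-d^2em} \abs{\disc(R)}^m \prod_i \abs{\Nm_{D/\bQ}(\psi(v_i,v_{\sigma(i)}))}$, which is the claimed formula.

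There is no serious obstacle here: the statement is essentially a bookkeeping consequence of \cref{disc-a-form} together with the block-permutation argument. The one point requiring a line of care is the uniqueness of $\sigma$: I should note that because $\psi$ is non-degenerate, the matrix $\bigl(\psi(v_i,v_j)\bigr)_{i,j}$ over $D$ is invertible, so its "determinant" (in an appropriate sense, or just the fact that the associated $\phi$-Gram matrix over $\bQ$ is invertible) is nonzero, guaranteeing that at least one such $\sigma$ exists; uniqueness is then the stated hypothesis, and it is exactly what forces all off-$\sigma$ blocks to vanish, so that the block-diagonalisation goes through. I should also remark that the determinant of the $\bZ$-Gram matrix is independent of the chosen $\bZ$-basis of $R$ (as already used implicitly in \cref{disc-a-form}), so the final expression is well defined.
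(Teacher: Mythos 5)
Your setup (the block Gram matrix with blocks $T_{\psi(v_i,v_j)}$ and the appeal to \cref{disc-a-form}) matches the paper's, but the key step is wrong: you claim that ``the block at position $(i,j)$ is zero unless $j=\sigma(i)$''. The hypothesis does not say this. It says only that $\sigma$ is the \emph{unique} permutation with $\psi(v_i,v_{\sigma(i)})\neq 0$ for \emph{all} $i$; individual off-$\sigma$ entries $\psi(v_i,v_j)$ may well be non-zero. This is not a pedantic point: the paper states the lemma in this generality precisely because it is applied (in case~(c) of \cref{pre-induction}) with $m=2$ and the matrix $\bigl(\psi(v_i,v_j)\bigr)$ of the form $\fullsmallmatrix{0}{*}{*}{*}$, where the unique $\sigma$ is the transposition but the $(2,2)$ block $T_{\psi(v_2,v_2)}$ is non-zero. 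In that situation your block-permutation argument, which rearranges a genuinely block-diagonal-after-permutation matrix, does not apply, so your proof only establishes the weaker statement in which all off-$\sigma$ blocks vanish — insufficient for the lemma as stated and for its use in the paper.

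The repair is to work with the full permutation expansion of $\det(A)$ over $S_n$, $n=d^2em$, rather than permuting block columns. For a non-zero term, record for each pair $(i,j)$ the number $c_{ij}$ of row indices in block row $i$ sent into block column $j$; then $(c_{ij})$ has all row and column sums equal to $d^2e$, and $c_{ij}>0$ forces $\psi(v_i,v_j)\neq 0$. By Hall's theorem (or Birkhoff's decomposition of the doubly stochastic matrix $c/d^2e$), the support of $(c_{ij})$ is a union of graphs of permutations $\tau$ with $\psi(v_i,v_{\tau(i)})\neq 0$ for all $i$; uniqueness forces every such $\tau$ to equal $\sigma$, hence the support is exactly the graph of $\sigma$ and every entry of the term lies in a block $B_{i\sigma(i)}$. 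Summing these terms gives $\det(A)=\pm\prod_{i=1}^m\det(B_{i\sigma(i)})$, after which your application of \cref{disc-a-form} and the basis-independence remark are fine. (This is what the paper means by ``the blocks $B_{i\sigma(i)}$ are the only blocks that contribute to $\det(A)$, although they need not be the only non-zero blocks of $A$''.) Your aside that non-degeneracy guarantees the existence of some such $\sigma$ is correct but not needed here, since existence is part of the hypothesis.
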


\begin{proof}
Choose a $\bZ$-basis $r_1, \dotsc, r_{d^2e}$ for $R$.
Let $A \in \rM_n(\bQ)$ be the Gram matrix of the bilinear form $\Trd_{D/\bQ} \psi \colon V \times V \to \bQ$ with respect to the $\bQ$-basis $r_1 v_1, r_2 v_1, \dotsc, r_{d^2e} v_1, r_1 v_2, \dotsc, r_{d^2e} v_m$ for $V$.
Then $A$ is made up of square blocks $B_{ij} \in \rM_{d^2e}(\bQ)$ where $B_{ij}$ is the matrix with entries
\[ (B_{ij})_{k\ell} = \Trd_{D/\bQ} \psi(r_k v_i, r_\ell v_j) = \Trd_{D/\bQ}(r_k\psi(v_i, v_j)r_\ell^\dag). \]
In other words, $B_{ij}$ is equal to the matrix $T_{\psi(v_i, v_j)}$ as defined in \cref{disc-a-form}.

Let $\sigma \in S_m$ be the permutation from the hypothesis of the lemma.
By the permutation formula for determinants, the blocks $B_{i\sigma(i)}$ are the only blocks that
contribute to $\det(A)$ (although they need not be the only non-zero blocks of~$A$).
Indeed, we have
\begin{align*}
    \disc(Rv_1 + \dotsb + Rv_m, \Trd_{D/\bQ} \psi)
  & = \det(A)
    = \pm \prod_{i=1}^m \det(B_{i\sigma(i)}),
\end{align*}
which, by \cref{disc-a-form}, is equal to
\[
\pm \prod_{i=1}^m d^{-d^2e} \disc(R) \Nm_{D/\bQ}(\psi(v_i, v_{\sigma(i)})).
\qedhere
\]
\end{proof}

\subsection{\texorpdfstring{$D$}{D}-norms}

Let $k$ be a subfield of $\bR$.
Let $(D,\dag)$ be a semisimple $k$-algebra with a positive involution.
Let $V$ be a left $D$-module.
We say that a function $\abs{\cdot} \colon V_\bR \to \bR$ is a \defterm{$D$-norm} if it is a norm induced by a positive definite inner product on $V_\bR$ and it satisfies the inequality
\[ \abs{av} \leq \abs{a}_D \abs{v} \text{ for all } a \in D_\bR, x \in V_\bR. \]
Note that $\abs{\cdot}_D$ is itself a $D$-norm on $D_\bR$ thanks to \cref{length-submult}.

Let $\psi \colon V \times V \to D$ be a non-degenerate $(D,\dag)$-skew-Hermitian form.
We say that a $D$-norm $\abs{\cdot}$ is \defterm{adapted to $\psi$} if it satisfies the following two conditions:
\begin{enumerate}
\item $\covol(L_1) = 1$ where $L_1 \subset V$ is the $\bZ$-module generated by a symplectic $k$-basis for $V$ with respect to $\Trd_{D/k} \psi$.
(Note that a symplectic basis always exists since $\Trd_{D/k} \psi$ is a symplectic form over a field.  Furthermore, this condition is independent of the choice of symplectic $k$-basis, because the matrix transforming one symplectic basis into another has determinant~$1$.)
\item $\abs{\psi(x, y)}_D \leq \abs{x} \abs{y}$ for all $x, y \in V_\bR$.
\end{enumerate}

The following two lemmas demonstrate the significance of condition~(1) and establish the existence of a $D$-norm adapted to~$\psi$.

\begin{lemma} \label{covol-disc-lattice}
Let $(D,\dag)$ be a division $\bQ$-algebra with a positive involution of type I or~II.
Let $V$ be a left $D$-vector space with a non-degenerate $(D,\dag)$-skew-Hermitian form $\psi \colon V \times V \to D$.
Let $\abs{\cdot}$ be a $D$-norm on $V_\bR$ which satisfies condition~(1) from the definition of ``adapted to $\psi$.''
Let $L$ be a $\bZ$-lattice in $V$.

Then $\covol(L) = \abs{\disc(L)}^{1/2}$, where we use the volume form associated with $\abs{\cdot}$.
\end{lemma}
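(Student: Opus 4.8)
The plan is to compare $L$ with the distinguished lattice $L_1$ appearing in condition~(1), on which both the volume form and the symplectic form are simultaneously normalised. Write $\phi = \Trd_{D/\bQ} \psi \colon V \times V \to \bQ$; by \cref{tr-non-deg} this is a non-degenerate skew-symmetric $\bQ$-bilinear form, and throughout $\disc(L)$ is shorthand for $\disc(L,\phi)$. In particular $n = \dim_\bQ V$ is even.

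First I would fix a symplectic $\bQ$-basis $e_1, \dotsc, e_n$ for $(V,\phi)$ (which exists since $\phi$ is a non-degenerate skew-symmetric form over a field, as already noted in the definition of ``adapted to $\psi$'') and set $L_1 = \bZ e_1 + \dotsb + \bZ e_n$. The Gram matrix of $\phi$ in this basis is $J_n$, so $\disc(L_1, \phi) = \det(J_n) = 1$ because $\det(J_2) = 1$; and $\covol(L_1) = 1$ for the volume form attached to $\abs{\cdot}$, by condition~(1).

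Next I would take any $\bZ$-basis $f_1, \dotsc, f_n$ for $L$ and let $T \in \GL_n(\bQ)$ be the matrix with $f_i = \sum_j T_{ij} e_j$. The Gram matrix of $\phi$ relative to $f_1, \dotsc, f_n$ is then $T J_n T^t$, so $\disc(L,\phi) = \det(T)^2$; in particular $\disc(L,\phi) \geq 0$. On the other hand, writing the two bases in terms of an orthonormal $\bR$-basis of $V_\bR$, the usual change-of-basis formula for covolumes gives $\covol(L) = \abs{\det T}\,\covol(L_1) = \abs{\det T}$. Combining the two identities yields $\covol(L)^2 = \det(T)^2 = \disc(L,\phi) = \abs{\disc(L)}$, and taking square roots (both sides being non-negative) gives $\covol(L) = \abs{\disc(L)}^{1/2}$.

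There is no genuine obstacle here; the one point deserving attention is that $L_1$ is at the same time unimodular for $\phi$ — because a symplectic basis has Gram matrix $J_n$ of determinant~$1$ — and of covolume~$1$ for the $D$-norm by condition~(1), which is precisely what makes both the discriminant and the square of the covolume of an arbitrary lattice reduce to $\det(T)^2$.
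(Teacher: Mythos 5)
Your proof is correct and follows essentially the same route as the paper: choose a symplectic $\bQ$-basis for $(V, \Trd_{D/\bQ}\psi)$, note its $\bZ$-span has covolume~$1$ by condition~(1), and compare with a $\bZ$-basis of $L$ via the change-of-basis matrix, so that $\disc(L) = \det(T J_n T^t) = \det(T)^2$ while $\covol(L) = \abs{\det T}$. The only cosmetic difference is that you make the determinant of $J_n$ and the unimodularity of the symplectic lattice explicit, which the paper leaves implicit.
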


\begin{proof}
Choose a symplectic $\bQ$-basis $e_1, \dotsc, e_n$ for $V$ with respect to $\Trd_{D/\bQ} \psi$ and a $\bZ$-basis $f_1, \dotsc, f_n$ for $L$.
Let $M$ be the matrix which maps $e_1, \dotsc, e_n$ to $f_1, \dotsc, f_n$.
The $\bZ$-module generated by $e_1, \dotsc, e_n$ has covolume~$1$ by condition~(1).
Hence $\covol(L) = \abs{\det(M)}$.
The matrix with entries $\psi(f_i, f_j)$ is equal to $MJ_nM^t$.  So
\[ \disc(L) = \det(MJ_nM^t) = \det(M)^2.
\qedhere
\]
\end{proof}

\begin{lemma} \label{good-norm}
Let $(D,\dag)$ be a division $\bQ$-algebra with a positive involution of type I or~II.
Let $V$ be a left $D$-vector space of dimension $m$, equipped with a non-degenerate $(D,\dag)$-skew-Hermitian form $\psi \colon V \times V \to D$.
Then there exists a $D$-norm $\abs{\cdot}$ on $V_\bR$ which is adapted to $\psi$.
\end{lemma}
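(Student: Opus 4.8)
The plan is to build the adapted $D$-norm by first constructing a $D$-norm $\abs{\cdot}_0$ on $V_\bR$ satisfying only condition~(2) of the definition (compatibility with $\psi$), and then rescaling it by a single positive constant to also satisfy condition~(1) (normalisation of the covolume of a symplectic lattice). Rescaling by a scalar $c>0$ multiplies $\covol(L_1)$ by $c^{\dim_\bQ V}$ and multiplies $\abs{\psi(x,y)}_D \le \abs{x}\,\abs{y}$ by $c^2$ on the right, so after replacing $\abs{\cdot}_0$ by $c\abs{\cdot}_0$ for an appropriate $c \le 1$, both conditions hold simultaneously (condition~(2) is preserved because shrinking the norm only strengthens the inequality once it holds; more precisely, we first arrange (1) exactly and note that $c\le1$ then keeps (2)). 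So the real content is producing \emph{some} $D$-norm compatible with $\psi$.

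To produce such a norm, I would first reduce to the split real algebra $D_0 = \rM_d(\bR)^e$ with $d \in \{1,2\}$ and the transpose involution $t$, via an isomorphism $\alpha \colon (D_0, t) \to (D_\bR, \dag)$ of $\bR$-algebras with involution (which exists since $D$ has Albert type I or~II). Transporting $\psi$ through $\alpha$ gives a non-degenerate $(D_0,t)$-skew-Hermitian form $\psi_0$ on $V_\bR$ viewed as a $D_0$-module. Now apply \cref{D0-basis} to $(D_0, V_\bR, \psi_0)$: this yields a $D_0$-basis $v_1, \dotsc, v_m$ of $V_\bR$ that is symplectic (if $d=1$) or unitary (if $d=2$) with respect to $\psi_0$, together with an orthonormal $\bR$-basis $a_1, \dotsc, a_{d^2e}$ of $D_0$ with respect to $\abs{\cdot}_D$, such that $\{a_r v_j\}$ is a symplectic $\bR$-basis of $V_\bR$ for $\Trd_{D_0/\bR}\psi_0$. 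Define $\abs{\cdot}$ to be the inner-product norm on $V_\bR$ making $\{a_r v_j\}$ an orthonormal basis. This is manifestly induced by a positive definite inner product.

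Two things then need checking. First, $\abs{\cdot}$ is a $D$-norm, i.e.\ $\abs{bw} \le \abs{b}_D\abs{w}$ for $b \in D_\bR$, $w \in V_\bR$: writing $w = \sum_{r,j} c_{rj}\, a_r v_j$ and using that left multiplication by $b$ acts blockwise on the $D_0$-factors $V_j' := D_0 v_j \cong D_0$ (with $\abs{v_j}$-weights all equal to $1$ by the orthonormality of $\{a_r v_j\}$ within each block), the estimate reduces to the submultiplicativity $\abs{ba_r}_D \le \abs{b}_D\abs{a_r}_D$ of \cref{length-submult} on each block $D_0$, combined with $\sum_r \abs{c_{rj}}^2 = \abs{\,\sum_r c_{rj} a_r v_j}^2$ and $\abs{a_r}_D = 1$; summing over $j$ and applying Cauchy--Schwarz gives the claim. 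Second, $\abs{\psi(x,y)}_D \le \abs{x}\,\abs{y}$: since $\{v_j\}$ is symplectic or unitary, $\psi_0(v_i,v_j)$ is $0$ unless $i,j$ are a symplectic pair (resp.\ $i=j$), in which case it equals $1$ (resp.\ $\omega_0$), and $\abs{1}_D = \abs{\omega_0}_D$ is a fixed constant (for $d=2$, $\abs{\omega_0}_{D_0} = \abs{J_d^{\oplus e}}_{D_0} = \sqrt{2e}$; for $d=1$ it is $\sqrt e$); expanding $x = \sum b_i v_i$, $y = \sum c_j v_j$ in the $D_0$-basis, bilinearity and the skew-Hermitian relations give $\psi_0(x,y) = \sum_{i} b_i \,\psi_0(v_i, v_{\sigma(i)})\, c_{\sigma(i)}^t$ over the relevant pairing $\sigma$, and \cref{length-submult} together with Cauchy--Schwarz in the index and the fact that $\abs{b_i v_i}$-blocks have unit weight bounds this by (a constant times) $\abs{x}\,\abs{y}$. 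The constant can be absorbed by the final rescaling step, since we are free to shrink $\abs{\cdot}$.

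The main obstacle is the bookkeeping in the $d=2$ (type~II) case: one must keep track of the factor $\abs{\omega_0}_{D_0} = \sqrt{2e}$ (rather than $1$) coming from $\psi_0(v_i,v_i) = \omega_0$, and ensure the $D$-norm inequality $\abs{bw}\le\abs{b}_D\abs{w}$ really does decouple across the $m$ copies of $D_0$ — this uses crucially that \cref{D0-basis}(iii) delivers an \emph{orthonormal} (not merely orthogonal) system $\{a_r v_j\}$, so that each block $V_j'$ inherits exactly the norm $\abs{\cdot}_{D_0}$ under the identification $D_0 \xrightarrow{\sim} V_j'$, $a \mapsto a v_j$. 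Once that identification is in place, everything reduces to \cref{length-submult} and elementary Cauchy--Schwarz, and the final scalar normalisation to enforce condition~(1) is immediate from \cref{covol-disc-lattice}'s computation that $\covol$ scales as a power of the scalar.
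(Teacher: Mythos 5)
Your construction of the norm is the same as the paper's (take the symplectic/unitary $D_{0,\bR}$-basis $v_1,\dotsc,v_m$ and the orthonormal system $\{a_rv_j\}$ from \cref{D0-basis} and declare $\{a_rv_j\}$ orthonormal), and your verification that it is a $D$-norm is fine. The gap is in how you handle condition~(2) together with condition~(1). You estimate $\abs{x_i\,\psi_0(v_i,v_{\sigma(i)})\,y_{\sigma(i)}^t}_D$ by submultiplicativity, picking up the factor $\abs{\omega_0}_{D_0}=\sqrt{2e}$ (resp.\ $\abs{1}_{D_0}=\sqrt{e}$ for $d=1$), and then propose to absorb this constant by a final rescaling. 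That cannot work: condition~(1) is an exact normalisation ($\covol(L_1)=1$), so once it is imposed the norm has no remaining scaling freedom — if $\covol_0(L_1)=1$ and $\abs{\psi(x,y)}_D\le K\abs{x}_0\abs{y}_0$ with $K>1$, the only rescaling compatible with (1) is $c=1$, and the constant stays. Your monotonicity claim is also reversed: replacing $\abs{\cdot}$ by $c\abs{\cdot}$ with $c\le 1$ \emph{shrinks} the right-hand side $c^2\abs{x}\abs{y}$, so it weakens rather than preserves inequality~(2); it is enlarging the norm that preserves (2), and enlarging destroys (1).

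The fix is to prove (2) with constant exactly $1$, which is what the paper does via \eqref{eqn:a-omega0}: right multiplication by $\omega_0$ permutes the matrix entries up to sign, hence is an isometry for $\abs{\cdot}_D$, so $\abs{x_i\psi_0(v_i,v_j)y_j^\dag}_D\le\abs{x_i\psi_0(v_i,v_j)}_D\abs{y_j}_D=\abs{x_i}_D\abs{y_j}_D$ (and in the type~I case $\psi_0(v_i,v_j)=\pm 1$ acts as the identity, not as an element of norm $\sqrt{e}$). With this sharp bound, Cauchy--Schwarz gives (2) with constant $1$, and no rescaling is needed at all, since your own observation that $\{a_rv_j\}$ is simultaneously orthonormal and a symplectic basis for $\Trd_{D_{0,\bR}/\bR}\psi_0$ already gives $\covol(L_1)=1$ on the nose, i.e.\ condition~(1) holds exactly for the norm as constructed.
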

\begin{proof}
Identify $D_\bR$ with $\rM_d(\bR)^e$ where $d = 1$ or $2$.
By \cref{D0-basis}(i), there exists a symplectic or unitary $D_\bR$-basis $v_1, \dotsc, v_m$ for $V_\bR$, according to the type of $(D,\dag)$.
Define the following norm on $V_\bR$:
\[ \Bigabs{\sum_{i=1}^m x_i v_i} = \sqrt{\sum_{i=1}^m \abs{x_i}_D^2}. \]
This is induced by the inner product $\langle \sum_{i=1}^m x_i v_i, \sum_{j=1}^m y_j v_i \rangle = \Trd_{D_\bR/\bR} \left(\sum_{i=1}^m x_i y_i^\dag\right)$.
It is a $D$-norm by \cref{length-submult}.

Let $a_1, \dotsc, a_{d^2e}$ be the $\bR$-basis for $D_\bR$ given by \cref{D0-basis}.
Since $a_1, \dotsc, a_{d^2e}$ is an orthonormal $\bR$-basis for $D_\bR$ with respect to $\abs{\cdot}_D$,
$\{ a_j v_i \}$ is an orthonormal basis for $V_\bR$ with respect to $\abs{\cdot}$.
Therefore the lattice generated by $\{ a_j v_i \}$ has covolume~$1$.
According to \cref{D0-basis}(iii), $\{ a_j v_i \}$ is a symplectic basis for $V_\bR$ with respect to $\Trd_{D_\bR/\bR} \psi$.
Thus the norm $\abs{\cdot}$ satisfies condition~(1).

By the triangle inequality for $\abs{\cdot}_D$, we have
\begin{equation} \label{eqn:psi-triangle-ineq}
\Bigabs{\psi \bigl( \sum_{i=1}^m x_i v_i, \sum_{j=1}^m y_j v_j \bigr)}_D
\leq \sum_{i=1}^m \sum_{j=1}^m \abs{x_i \psi(v_i, v_j) y_j^\dag}_D.
\end{equation}
If $\psi(v_i, v_j) \neq 0$, then $\psi(v_i, v_j) = \pm 1$ or $\omega_0$ for all $i$, $j$ and so by \eqref{eqn:a-omega0},
$\abs{x_i}_D = \abs{x_i \psi(v_i, v_j)}_D$.  Hence
\begin{equation} \label{eqn:psi-vivj}
\abs{x_i \psi(v_i, v_j) y_j^\dag}_D \leq \abs{x_i \psi(v_i, v_j)}_D \abs{y_j^\dag}_D = \abs{x_i}_D \abs{y_j}_D.
\end{equation}

\pagebreak 

Let $\sigma \in S_m$ be the permutation such that $\psi(v_i, v_{\sigma(i)}) \neq 0$ (thus if $(D,\dag)$ has type~I, then $\sigma = (1,2)(3,4)(5,6)\dotsm$, while if $(D,\dag)$ has type~II, then $\sigma=\id$).
From \eqref{eqn:psi-triangle-ineq} and~\eqref{eqn:psi-vivj}, we obtain
\[ \Bigabs{\psi \bigl( \sum_{i=1}^m x_i v_i, \sum_{j=1}^m y_j v_j \bigr)}_D
\leq \sum_{i=1}^m \abs{x_i}_D \abs{y_{\sigma(i)}}_D. \]
By the Cauchy--Schwarz inequality, we get
\[ \Bigabs{\psi \bigl( \sum_{i=1}^m x_i v_i, \sum_{j=1}^m y_j v_j \bigr)}_D
   \leq \Bigl( \sum_{i=1}^m \abs{x_i}_D^2 \Bigr)^{1/2} \Bigl( \sum_{j=1}^m \abs{y_j}_D^2 \Bigr)^{1/2}
   = \Bigabs{\sum_{i=1}^m x_iv_i}_D \, \Bigabs{\sum_{j=1}^m y_jv_j}_D. \]
Thus the norm $\abs{\cdot}$ satisfies condition~(2).
\end{proof}

\section{Proof of Theorem~\ref{minkowski-hermitian-perfect}} \label{sec:minkowski-proof}

In this section we prove our main theorem on weakly unitary or symplectic bases with respect to skew-Hermitian forms.
The proof is based on the Gram--Schmidt process, following an inductive structure.
For technical reasons we may construct either one or two basis vectors at each step of the induction.
\Cref{pre-induction} constructs the new basis vector(s) for each induction step, and then \cref{weakly-unitary-induction} consists of calculations to keep track of the bounds during this induction.

\subsection{Initial vectors of a weakly symplectic or unitary basis}

We would like to begin by choosing $v_1$ to be the shortest non-zero vector in $V$ (with respect to a suitable $D$-norm), then inductively choosing a basis for $V^\perp$, the orthogonal complement of $Dv_1$.
However if we do this, $\psi(v_1, v_1)$ might be zero (indeed, if $D$ has type~I, then it must be zero) and then $Dv_1 + V^\perp$ is not a direct sum.

We will therefore instead choose either
\begin{enumerate}[(1)]
\item one short vector $v_1 \in V$ such that $\psi(v_1, v_1) \neq 0$; or
\item two short vectors $v_1, v_2 \in V$ such that the restriction of $\psi$ to $Dv_1 + Dv_2$ is non-degenerate, and $v_1, v_2$ form a weakly symplectic or weakly unitary basis for $Dv_1 + Dv_2$.
\end{enumerate}

Let $V^\perp$ denote the orthogonal complement of $v_1$ (in case~(1))  or of $Dv_1 + Dv_2$ (in case~(2)).
We will bound the discriminant of $\Trd_{D/\bQ} \psi$ restricted to $V^\perp$, and then inductively obtain a weakly symplectic or weakly unitary basis for $V^\perp$.
Combining this with $v_1$ and perhaps~$v_2$ gives the basis for~$V$ required to prove \cref{minkowski-hermitian-perfect}.

The following lemmas choose $v_1$ and perhaps~$v_2$ satisfying (1) or~(2) above.

\begin{lemma} \label{non-zero-permutation}
Let $(D,\dag)$ be a division $\bQ$-algebra with an involution.
Let $V$ be a left $D$-vector space, equipped with a non-degenerate $(D,\dag)$-skew-Hermitian form $\psi \colon V \times V \to D$.
Let $w_1, \dotsc, w_m$ be a $D$-basis for $V$.
Then there exists a permutation $\sigma \in S_m$ such that
$\psi(w_i, w_{\sigma(i)}) \neq 0$
for all $i = 1, \dotsc, m$.
\end{lemma}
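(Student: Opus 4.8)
The statement is essentially a combinatorial consequence of non-degeneracy, and the natural tool is the permutation formula for the determinant. Consider the matrix $A = (\psi(w_i, w_j))_{i,j}$ with entries in $D$. I would like to say ``$\det A \neq 0$ so some permutation term is non-zero'', but $D$ is non-commutative, so $\det A$ is not well-defined directly. The fix is to pass to the bilinear form $\phi = \Trd_{D/k} \psi \colon V \times V \to k$, which is non-degenerate by \cref{tr-non-deg}. Now pick a $k$-basis $r_1, \dotsc, r_N$ of $D$ (where $N = \dim_k D$); then $\{ r_\ell w_i : 1 \le \ell \le N,\ 1 \le i \le m \}$ is a $k$-basis of $V$, and the Gram matrix $G$ of $\phi$ with respect to this basis has non-zero determinant. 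The matrix $G$ decomposes into $m \times m$ blocks $B_{ij}$ of size $N \times N$, where $(B_{ij})_{k\ell} = \phi(r_k w_i, r_\ell w_j) = \Trd_{D/k}(r_k \psi(w_i, w_j) r_\ell^\dag)$; in the notation of \cref{disc-a-form}, $B_{ij} = T_{\psi(w_i,w_j)}$. The key observation is that if $\psi(w_i, w_j) = 0$ then $B_{ij} = 0$, and conversely by \cref{disc-a-form} (the determinant formula $\det T_a = \pm d^{-d^2e}\disc(R)\Nm_{D/\bQ}(a)$, or rather its analogue over $k$ — note this requires $D$ to be a division algebra so that $\Nm(a) \neq 0$ whenever $a \neq 0$) if $\psi(w_i,w_j) \neq 0$ then $\det B_{ij} \neq 0$.

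From here the plan is: expand $\det G$ using the block structure. Since each block has size $N \times N$, the generalised Laplace/permutation expansion gives $\det G$ as a sum over ways of choosing, for each row-block $i$, a column-block $\sigma(i)$ — i.e.\ over $\sigma \in S_m$ — of terms each of which is (up to sign) $\prod_{i=1}^m \det B_{i\sigma(i)}$, possibly with cross terms. More cleanly, I would argue directly: suppose for contradiction that for every $\sigma \in S_m$ there is some $i$ with $\psi(w_i, w_{\sigma(i)}) = 0$, hence $B_{i\sigma(i)} = 0$. Then in the full permutation expansion of $\det G$ (as a sum over permutations of $\{1,\dots,mN\}$), group the $mN$ indices into the $m$ blocks; any permutation $\tau$ of $\{1,\dots,mN\}$ induces, by counting how many indices in block $i$ are sent into block $j$, a ``block incidence'' structure, and a standard argument shows the only permutations contributing non-zero terms are those that send each block bijectively onto a single block, i.e.\ those refining some $\sigma \in S_m$. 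For each such $\tau$, refining $\sigma$, the corresponding term of $\det G$ contains the factor $(B_{i\sigma(i)})_{\bullet\bullet}$ for the $i$ with $B_{i\sigma(i)} = 0$, hence vanishes. Therefore $\det G = 0$, contradicting non-degeneracy of $\phi$.

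The main obstacle is making the block-expansion argument rigorous without drowning in index bookkeeping. The cleanest route is probably to avoid the explicit permutation expansion entirely and instead use a rank/linear-algebra argument: if $B_{i\sigma(i)} = 0$ for some $i$ for every $\sigma$, I would like to conclude directly that $G$ is singular. One clean way: interpret $G$ as the matrix of a map and show that the pattern of zero blocks forces, via Hall's theorem (König's theorem / the Frobenius–König theorem on matrices with a zero block pattern), that $G$ has a zero submatrix of size $pN \times qN$ with $p + q > m$, hence $\det G = 0$. Indeed, the Frobenius–König theorem states precisely that the permanent-type sum $\sum_\sigma \prod_i \det B_{i\sigma(i)}$ vanishes identically for the block structure iff there is such a large zero block rectangle; but what we actually need is just that $G$ itself is singular, which follows because a $k$-matrix with a zero $a \times b$ submatrix with $a + b > mN$ is singular, and $pN + qN > mN$. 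So the real content is: (zero block pattern with no system of distinct representatives) $\Rightarrow$ (large zero rectangle, by König) $\Rightarrow$ ($G$ singular) $\Rightarrow$ contradiction with non-degeneracy of $\Trd_{D/k}\psi$. The only genuinely non-formal inputs are \cref{tr-non-deg} and the fact (from \cref{disc-a-form}, using that $D$ is a division algebra) that $\psi(w_i,w_j) \neq 0 \iff \det B_{ij} \neq 0$; everything else is the Frobenius–König theorem applied to block matrices.
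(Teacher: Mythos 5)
Your final (K\"onig) argument is correct, and it reaches the same combinatorial core as the paper from the opposite direction. The paper also reduces the lemma to Hall-type combinatorics: it sets $T_i = \{\, j : \psi(w_i,w_j) \neq 0 \,\}$, verifies Hall's condition directly over $D$ --- if $\abs{T_{i_1}\cup\dotsb\cup T_{i_k}} < k$, then the fewer-than-$k$ left $D$-linear conditions $\psi(w,w_j)=0$, $j \in T_{i_1}\cup\dotsb\cup T_{i_k}$, admit a nonzero solution $w$ in the $k$-dimensional $D$-span of $w_{i_1},\dotsc,w_{i_k}$, and such a $w$ is then orthogonal to every $w_j$, contradicting non-degeneracy of $\psi$ --- and then invokes Hall's theorem (\cref{hall-marriage-thm}) to produce $\sigma$. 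You argue contrapositively: no system of distinct representatives yields, by K\"onig/Frobenius--K\"onig, a $p\times q$ zero rectangle of blocks with $p+q=m+1$, hence a $pN\times qN$ zero submatrix (where $N=\dim_\bQ(D)$) of the Gram matrix of $\Trd_{D/\bQ}\psi$ with $pN+qN>mN$, so that matrix is singular, contradicting \cref{tr-non-deg}. The two dimension counts are essentially equivalent; the paper's stays over $D$ and never forms the scalar Gram matrix, while yours passes to $\bQ$ but is arguably more mechanical. A small simplification available to you: your route only uses the trivial implication $\psi(w_i,w_j)=0 \Rightarrow B_{ij}=0$, so you do not need \cref{disc-a-form} or the division-algebra hypothesis at that step and can drop them from your list of inputs. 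One caution: the intermediate claim in your sketch that only block-respecting permutations contribute nonzero terms to $\det G$ is false in general (nonzero off-diagonal blocks produce non-block-respecting contributions); fortunately the K\"onig route you settle on does not rely on it.
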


\begin{proof}
If $D$ is a field, then the non-degeneracy of $\psi$ implies that the matrix with entries $\psi(w_i,w_j)$ has non-zero determinant.
Then the result is immediate by expressing the determinant as an alternating sum over permutations in $S_m$.
When $D$ is non-commutative, we cannot use determinants so we instead use a combinatorial argument (which is also valid in the commutative case).

The argument is based on Hall's theorem on distinct representatives of subsets:
\begin{theorem}[\cite{Hal35}] \label{hall-marriage-thm}
Let $T$ be a set and let $T_1, \dotsc, T_m$ be subsets of~$T$.
Then there exist pairwise distinct elements $a_1, \dotsc, a_m$ satisfying $a_i \in T_i$ if and only if, for every $k = 1, \dotsc, m$ and every choice of $k$ distinct indices $i_1, \dotsc, i_k$, we have
\begin{equation} \label{eqn:hall-marriage-condition}
\abs{T_{i_1} \cup \dotsb \cup T_{i_k}} \geq k.
\end{equation}
\end{theorem}

We shall apply \cref{hall-marriage-thm} with $T = \{ 1, \dotsc, m \}$ and
\[ T_i = \{ j : 1 \leq j \leq m, \, \psi(w_i, w_j) \neq 0 \}. \]
We claim that these sets $T_i$ satisfy the condition~\eqref{eqn:hall-marriage-condition} in \cref{hall-marriage-thm}.

Indeed, suppose that \eqref{eqn:hall-marriage-condition} is not satisfied for some distinct $i_1, \dotsc, i_k$.
Let $W$ denote the left $D$-vector space spanned by $w_{i_1}, \dotsc, w_{i_k}$.
Consider the vectors $w \in W$ satisfying
\begin{equation} \label{eqn:perp-neighbours}
\psi(w, w_j) = 0 \text{ for all } j \in T_{i_1} \cup \dotsb \cup T_{i_k}.
\end{equation}
Since \eqref{eqn:hall-marriage-condition} is not satisfied, \eqref{eqn:perp-neighbours} imposes $\abs{T_{i_1} \cup \dotsb \cup T_{i_k}} < k = \dim_D(W)$ left $D$-linear conditions on $w$.
Hence, there exists a non-zero $w \in W$ which satisfies~\eqref{eqn:perp-neighbours}.
By the definition of the sets $T_i$ and of~$W$, $w$ is also orthogonal to $w_j$ for every $j \not\in T_{i_1} \cup \dotsb \cup T_{i_k}$.
Thus $w$ is orthogonal to all of~$V$.  This contradicts the non-degeneracy of~$\psi$.

Hence by \cref{hall-marriage-thm}, there exist pairwise distinct $a_1, \dotsc, a_m$ such that $a_i \in T_i$.  Since $a_1, \dotsc, a_m$ are $m$ distinct elements of $\{1, \dotsc, m \}$, the function $\sigma(i) = a_i$ is a permutation of $\{ 1, \dotsc, m \}$.
By the definition of $T_i$, we have $\psi(w_i, w_{\sigma(i)}) \neq 0$ for all~$i$.
\end{proof}

\begin{lemma} \label{short-non-degenerate-vectors}
Let $(D,\dag)$ be a division $\bQ$-algebra with a positive involution.
Let $V$ be a left $D$-vector space of dimension $m$, equipped with a non-degenerate $(D,\dag)$-skew-Hermitian form $\psi \colon V \times V \to D$.
Let $\abs{\cdot}$ be a $D$-norm on $V_\bR$.
Let $w_1, \dotsc, w_m$ be a $D$-basis for $V$.

Then there exist $i,j \in \{ 1, \dotsc, m \}$ satisfying the following conditions:
\begin{enumerate}[(i)]
\item $\abs{w_i}\abs{w_j} \leq \bigl( \abs{w_1} \abs{w_2} \dotsm \abs{w_m} \bigr)^{2/m}$;
\item $\psi(w_i, w_j) \neq 0$;
\item if $i \neq j$, then $\psi(w_i,w_i) = 0$.
\end{enumerate}
\end{lemma}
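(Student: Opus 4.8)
The plan is to deduce the statement from \cref{non-zero-permutation} by a simple averaging argument. First I would apply \cref{non-zero-permutation} to the basis $w_1, \dotsc, w_m$ to obtain a permutation $\sigma \in S_m$ with $\psi(w_i, w_{\sigma(i)}) \neq 0$ for every $i$. The crucial remark is that, since $\sigma$ merely permutes the factors of the second product,
\[ \prod_{i=1}^m \abs{w_i}\abs{w_{\sigma(i)}} = \Bigl( \prod_{i=1}^m \abs{w_i} \Bigr) \Bigl( \prod_{i=1}^m \abs{w_{\sigma(i)}} \Bigr) = \Bigl( \prod_{i=1}^m \abs{w_i} \Bigr)^2 . \]
Writing $P = \bigl( \prod_{i=1}^m \abs{w_i} \bigr)^{1/m}$ (each $\abs{w_i}$ being positive, as the norm of a non-zero vector), this says that the geometric mean of the $m$ positive numbers $\abs{w_i}\abs{w_{\sigma(i)}}$ equals $P^2$; hence at least one index $i_0$ satisfies $\abs{w_{i_0}}\abs{w_{\sigma(i_0)}} \leq P^2$. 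Put $j_0 = \sigma(i_0)$, so $\psi(w_{i_0}, w_{j_0}) \neq 0$, and hence also $\psi(w_{j_0}, w_{i_0}) = -\psi(w_{i_0}, w_{j_0})^\dag \neq 0$.

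Next I would rename the pair so as to handle the asymmetry of condition~(iii): write $\{i_0, j_0\} = \{a, b\}$ with $\abs{w_a} \leq \abs{w_b}$ (if $i_0 = j_0$, take $a = b = i_0$). Then $\abs{w_a}^2 \leq \abs{w_a}\abs{w_b} = \abs{w_{i_0}}\abs{w_{j_0}} \leq P^2$, while still $\psi(w_a, w_b) \neq 0$. Now split into two cases. If $\psi(w_a, w_a) \neq 0$, the pair $(i, j) = (a, a)$ works: (i) holds because $\abs{w_a}^2 \leq P^2$, (ii) is the case hypothesis, and (iii) is vacuous. If $\psi(w_a, w_a) = 0$, then $a \neq b$ — otherwise $i_0 = j_0 = a$ and $\psi(w_a, w_a) = \psi(w_{i_0}, w_{\sigma(i_0)}) \neq 0$, a contradiction — and the pair $(i, j) = (a, b)$ works: (i) holds because $\abs{w_a}\abs{w_b} \leq P^2$, (ii) because $\psi(w_a, w_b) \neq 0$, and (iii) because $\psi(w_a, w_a) = 0$. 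This exhausts all cases.

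I do not anticipate any real obstacle: once one thinks of combining \cref{non-zero-permutation} with the product identity above, the remainder is routine. The only point requiring a little care is the choice in the renaming step to let $a$ be the shorter vector of the pair, which is exactly what makes the single-vector solution $(a,a)$ — when that is the one we output — automatically satisfy the length bound~(i). (Ordering the whole basis by length and attempting to extract a short non-degenerate pair directly is more delicate, as the smallest index at which $\psi$ is non-trivial on an initial segment of the ordered basis can be as large as $\lfloor m/2 \rfloor + 1$, so I would not pursue that approach.)
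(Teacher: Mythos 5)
Your proof is correct and is essentially the paper's own argument: both apply \cref{non-zero-permutation}, use the identity $\prod_i \abs{w_i}\abs{w_{\sigma(i)}} = \bigl(\prod_i \abs{w_i}\bigr)^2$ to extract a pair $\{i_0,\sigma(i_0)\}$ with product of lengths at most the geometric mean, take the shorter vector of the pair, and split according to whether $\psi$ vanishes on it. Your explicit remark that $\psi(w_{j_0},w_{i_0}) = -\psi(w_{i_0},w_{j_0})^\dag \neq 0$ after swapping is a detail the paper leaves implicit, but the two proofs coincide in substance.
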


\begin{proof}
Let $\sigma$ be a permutation as in \cref{non-zero-permutation}.

Choose $k \in \{ 1, \dotsc, m \}$ so that $\abs{w_k} \abs{w_{\sigma(k)}}$ is minimal.
Then
\[ \abs{w_k} \abs{w_{\sigma(k)}}
   \leq \bigl( \prod_{i=1}^m \abs{w_i} \abs{w_{\sigma(i)}} \bigr)^{1/m}
   = \bigl( \prod_{i=1}^m \abs{w_i} \cdot \prod_{j=1}^m \abs{w_j} \bigr)^{1/m}
   = \bigl( \prod_{i=1}^m \abs{w_i} \bigr)^{2/m}. \]
By the choice of $\sigma$, we have $\psi(w_k, w_{\sigma(k)}) \neq 0$.

If $\sigma(k) = k$, then $i=j=k$ satisfies the conditions of the lemma.

Otherwise choose $i \in \{ k, \sigma(k) \}$ so that $\abs{w_i}$ is minimal.

If $\psi(w_i, w_i) \neq 0$, then choosing $j=i$ satisfies the required conditions.

If $\psi(w_i, w_i) = 0$, then choose $j$ to be the element of $\{ k, \sigma(k) \}$ which is different from $i$.
This $i$ and $j$ satisfy the required conditions.
\end{proof}

In the remainder of this section, whenever we refer to a discriminant other than $\disc(R)$, we mean the discriminant of $\Trd_{D/\bQ} \psi$ restricted to the specified $\bZ$-module.

\begin{lemma} \label{pre-induction}
Let $(D,\dag)$ be a division $\bQ$-algebra with a positive involution of type I or~II.
Let $V$ be a left $D$-vector space with a non-degenerate $(D,\dag)$-skew-Hermitian form $\psi \colon V \times V \to D$.
Let $L$ be a $\bZ$-lattice in $V$ such that $\Trd_{D/\bQ} \psi(L \times L) \subset \bZ$.
Let $R$ be an order which is contained in $\Stab_D(L)$ and let $\eta \in \bZ_{>0}$ be a positive integer such that $\eta R^\dag \subset R$.

Then there exists an $R$-submodule $M \subset L$ with the following properties:
\begin{enumerate}[(i)]
\item $r := \dim_D(D \otimes_R M) = 1$ or $2$;
\item the restriction of $\psi$ to $M$ is non-degenerate;
\item $\abs{\disc(M)} \leq (\gamma_{d^2em}^2/d^3e)^{d^2er/2} \abs{\disc(R)}^r \abs{\disc(L)}^{r/m}$;
\item one of the following occurs:
\begin{enumerate}[(a)]
\item $D$ has type~I, $r=2$ and $M = Rv_1 + Rv_2$ for some $v_1, v_2$ such that
\[ \abs{\psi(v_1, v_2)}_D \leq \gamma_{em} \abs{\disc(L)}^{1/em}; \]

\item $D$ has type~II, $r=1$ and $M = Rv_1$ for some $v_1$ such that
\[ \abs{\psi(v_1, v_1)}_D \leq \gamma_{4em} \abs{\disc(L)}^{1/4em}; \]
\item $D$ has type~II, $r=2$ and there exist $D$-linearly independent vectors $v_1, v_2 \in M$ such that $\psi(v_1, v_2) = 0$,
\[ \abs{\psi(v_1, v_1)}_D, \abs{\psi(v_2, v_2)}_D \leq 2^{-5/2} \gamma_e^{1/2} \gamma_{4em}^2 \eta^7 \abs{\disc(R)}^{2/e} \abs{\disc(L)}^{1/2em}, \]
and
\[ [M:Rv_1 + Rv_2] \leq (\gamma_e/8e)^{2e} (\gamma_{4em}^2/8e)^{2e} \eta^{28e} \abs{\disc(R)}^8 \abs{\disc(L)}^{1/m}. \]
\end{enumerate}
\end{enumerate}
\end{lemma}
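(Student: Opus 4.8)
The plan is to combine a Minkowski‑style reduction with \cref{short-non-degenerate-vectors} and then treat the three possibilities for $M$. Fix a $D$‑norm $\abs{\cdot}$ on $V_\bR$ adapted to $\psi$ (\cref{good-norm}). By \cref{D-minkowski} there is a $D$‑basis $w_1, \dotsc, w_m$ of $V$ with all $w_k \in L$ and $\abs{w_1}\dotsm\abs{w_m} \leq \gamma_{d^2em}^{m/2}\covol(L)^{1/d^2e}$; condition~(1) of ``adapted to $\psi$'' lets \cref{covol-disc-lattice} identify $\covol(L)$ with $\abs{\disc(L)}^{1/2}$, so $\prod_k\abs{w_k} \leq \gamma_{d^2em}^{m/2}\abs{\disc(L)}^{1/(2d^2e)}$. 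Now \cref{short-non-degenerate-vectors} produces indices $i, j$ with $\abs{w_i}\abs{w_j} \leq \gamma_{d^2em}\abs{\disc(L)}^{1/(d^2em)}$, with $\psi(w_i, w_j) \neq 0$, and with $\psi(w_i, w_i) = 0$ whenever $i \neq j$. Set $W = Dw_i + Dw_j$; then $\psi|_W$ is non-degenerate, and $W$ is a hyperbolic plane over $D$ when $i \neq j$ (note $i = j$ is impossible in type~I, since there $D^- = 0$).

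In each case $M$ will contain $Rw_i + Rw_j$ (with $M = Rw_i$ in case~(b)), so~(i) and~(ii) hold with $r \in \{1,2\}$. For~(iii) I use $\abs{\disc(M)} \leq \abs{\disc(Rw_i + Rw_j, \Trd_{D/\bQ}\psi)}$ (resp.\ $\abs{\disc(Rw_i, \Trd_{D/\bQ}\psi)}$): the matrix of $\psi$ on the basis $w_i,w_j$ (resp.\ $w_i$) has a single non‑vanishing generalized diagonal, so \cref{disc-triangular} expresses this discriminant through $d$, $\disc(R)$, and the reduced norms of the non‑zero entries; bounding those by $(de)^{-de/2}\abs{\cdot}_D^{de}$ (\cref{Nrd-length-bound}(ii)) and then using condition~(2) of the adapted norm, $\abs{\psi(x,y)}_D \leq \abs{x}\abs{y}$, together with the bound on $\abs{w_i}\abs{w_j}$, reproduces exactly the constant $(\gamma_{d^2em}^2/d^3e)^{d^2er/2}$ of~(iii) ($d=1$ giving the type~I exponents, $d=2$ the type~II ones). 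The same estimates settle~(iv)(a) — in type~I with $i \neq j$ take $M = Rw_i + Rw_j$, where $\psi(w_i, w_i) = \psi(w_j, w_j) = 0$ and $\abs{\psi(w_i, w_j)}_D \leq \abs{w_i}\abs{w_j}$ — and~(iv)(b) — in type~II with $i = j$ take $M = Rw_i$, where $\abs{\psi(w_i, w_i)}_D \leq \abs{w_i}^2 \leq \abs{w_i}\abs{w_j}$.

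The remaining case~(iv)(c) is $D$ of type~II with $i \neq j$: then $w_i$ is isotropic, $W$ is a hyperbolic plane, and we put $M = L \cap W$. Apply \cref{small-antisymm-star} with the given $\eta$ to get $\omega \in D^- \setminus \{0\}$ with $\omega R^* \subseteq R$, $R^*\omega \subseteq R$, and $\abs{\omega}_D \leq 2^{-4}\gamma_e^{1/2}\eta^7\abs{\disc(R)}^{2/e}$; since $R \subseteq R^*$ also $\omega \in R$. As $\Trd_{D/\bQ}\psi(L \times L) \subseteq \bZ$ and $R \subseteq \Stab_D(L)$, the elements $b := \psi(w_i, w_j)$, $b^\dag$, and $c := \psi(w_j, w_j)$ lie in $R^*$, so $\omega R^*$ and $R^*\omega$ land in $R$. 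Among the two vectors $k\,b^\dag\omega\, w_i + w_j \in M$ ($k \in \{1,2\}$), whose $\psi$‑self‑pairings $2k\,\Nrd_{D/F}(b)\,\omega + c$ — computed using \cref{action-on-antisymm} — cannot both vanish, pick one, call it $v_1''$, spanning a $\psi$‑non‑degenerate line of $W$; let $v_2''$ span $(Dv_1'')_\psi^\perp \cap W$. Replacing $v_1'', v_2''$ by short non‑zero vectors $v_1 \in M \cap Dv_1''$, $v_2 \in M \cap Dv_2''$ (\cref{D-minkowski}) gives $D$‑linearly independent $v_1, v_2 \in M$ with $\psi(v_1, v_2) = 0$ and $\psi(v_i, v_i) \neq 0$; the bounds on $\abs{\psi(v_i, v_i)}_D$ and on $[M : Rv_1 + Rv_2]$ then follow from $\abs{\psi(v_i, v_i)}_D \leq \abs{v_i}^2$, $\covol(M) = \abs{\disc(M)}^{1/2}$ (\cref{covol-disc-lattice}), \cref{disc-lattice-complement} applied to $W = Dv_1'' \oplus Dv_2''$, \cref{minkowski-general-index}, \cref{Nrd-length-bound}, and the estimate on $\abs{\omega}_D$, after comparing the covolumes of the line‑lattices $M \cap Dv_i''$ with that of $M$.

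The main obstacle is case~(iv)(c): all the ingredients are available individually, but one must split the hyperbolic plane $W$ by vectors that simultaneously lie in $L$ — which is exactly what the auxiliary antisymmetric element $\omega$ of \cref{small-antisymm-star} achieves, via $\omega R^* \subseteq R$ and $R^*\omega \subseteq R$ — and are short enough, and then carry the explicit constants through so that the powers of $\abs{\disc(R)}$, $\abs{\disc(L)}$, and $\eta$ come out exactly as stated. That bookkeeping, and verifying the covolume comparisons for the line‑lattices, is the bulk of the proof.
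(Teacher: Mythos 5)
Your set-up (adapted $D$-norm from \cref{good-norm}, \cref{D-minkowski}, \cref{short-non-degenerate-vectors}, the module $M$ built from $w_i,w_j$) and your treatment of cases (a) and (b) and of parts (i)--(iii) coincide with the paper's argument. The gap is in case (c). Your candidate vectors $k\,b^\dag\omega w_i + w_j$ have self-pairing $2k\,\Nrd_{D/F}(b)\,\omega + c$ with $c = \psi(w_j,w_j)$, and nothing controls $c$: \cref{short-non-degenerate-vectors} only bounds the product $\abs{w_i}\abs{w_j}$, so $\abs{c}_D \leq \abs{w_j}^2$ is not bounded by your estimates (the short vector $w_i$ can be very short and $w_j$ correspondingly long). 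Your fallback --- replacing $v_1'',v_2''$ by short vectors of the line-lattices $M\cap Dv_i''$ via \cref{D-minkowski} --- needs an \emph{upper} bound on the covolumes (equivalently discriminants) of these rank-one lattices, and the only available route is through $\abs{\disc(Rv_1'')} = d^{-d^2e}\abs{\disc(R)}\,\abs{\Nm_{D/\bQ}(\psi(v_1'',v_1''))}$ (\cref{disc-a-form}), which again contains $c$; note also that the identity $\covol = \abs{\disc}^{1/2}$ of \cref{covol-disc-lattice} holds only for full-rank lattices in $V$, not for lattices inside a $D$-line, so ``comparing the covolumes of the line-lattices with that of $M$'' does not close this. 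Even granting some bound, you would not recover the exact constants demanded in (iv)(c), and your choice $M = L\cap W$ inflates the index bound by the further factor $[L\cap W : Rw_i + Rw_j]$, which the stated constants do not accommodate.

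The paper's proof is engineered precisely to avoid $\psi(w_j,w_j)$: with the same $\omega$ from \cref{small-antisymm-star} it sets $w_j' = 2\psi(w_i,w_j)\omega\, w_j - \omega\psi(w_j,w_j)\, w_i \in Rw_i + Rw_j$, verifies $\psi(w_j',w_j') = 0$ and $\psi(w_j',w_i) = -2\Nrd_{D/F}(\psi(w_i,w_j))\omega$, and then takes $v_1 = w_i - w_j'$, $v_2 = w_i + w_j'$ inside $M = Rw_i + Rw_j$; these satisfy $\psi(v_1,v_2)=0$ and $\psi(v_i,v_i) = \pm 4\Nrd_{D/F}(\psi(w_i,w_j))\omega$, so every quantity entering the bounds is one of the controlled ones ($\psi(w_i,w_j)$ and $\omega$), and the index $[M:Rv_1+Rv_2]$ is computed exactly from \cref{disc-triangular}. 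To repair your case (c), replace your Gram--Schmidt step by such an isotropic modification of $w_j$ whose pairings do not involve $\psi(w_j,w_j)$, and keep $M = Rw_i + Rw_j$.
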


\begin{proof}
By \cref{good-norm}, there is a $D$-norm $\abs{\cdot}$ on $V_\bR$ adapted to $\psi$.
By \cref{D-minkowski}, there exists a $D$-basis $w_1, \dotsc, w_m$ for $V$ satisfying $w_1, \dotsc, w_m \in L$ and
\[ \abs{w_1} \dotsm \abs{w_m} \leq \gamma_{d^2em}^{m/2} \covol(L)^{1/d^2e} \leq \gamma_{d^2em}^{m/2} \abs{\disc(L)}^{1/2d^2e}, \]
where the second inequality comes from \cref{covol-disc-lattice}.

Choose $i, j$ as in \cref{short-non-degenerate-vectors}.
Since $\abs{\cdot}$ is adapted to~$\psi$, we have
\begin{equation} \label{eqn:psi-fi-fj-bound}
\abs{\psi(w_i, w_j)}_D \leq \abs{w_i}\abs{w_j} \leq \gamma_{d^2em} \abs{\disc(L)}^{1/d^2em}.
\end{equation}

\subsubsection*{Proof of (i)--(iii)}

Let $M = Rw_i + Rw_j$, so that $r=1$ if $i=j$ and $r=2$ if $i \neq j$.

If $i=j$, then by \cref{short-non-degenerate-vectors}, $\psi(w_i, w_i) \neq 0$, so the restriction of $\psi$ to $M$ is non-degenerate.

If $i \neq j$, then by \cref{short-non-degenerate-vectors}, $\psi(w_i, w_i) = 0$ and $\psi(w_i, w_j) \neq 0$.  Consequently for any vector $x \in M$, if $x \in Dw_i \setminus \{0\}$ then $\psi(x, w_j) \neq 0$ while if $x \not\in Dw_i$ then $\psi(x, w_i) \neq 0$.
Thus the restriction of $\psi$ to $M$ is non-degenerate.

By \cref{disc-triangular}, \cref{Nrd-length-bound} and \eqref{eqn:psi-fi-fj-bound}, we obtain that in both cases $i=j$ or $i \neq j$,
\begin{align*}
    \abs{\disc(M)}
  & = d^{-d^2er} \abs{\disc(R)}^r \abs{\Nm_{D/\bQ}(\psi(w_i, w_j))}^r
\\& = d^{-d^2er} \abs{\disc(R)}^r \abs{\Nrd_{D/\bQ}(\psi(w_i, w_j))}^{dr}
\\& \leq d^{-d^2er} \abs{\disc(R)}^r (de)^{-d^2er/2} \abs{\psi(w_i, w_j)}_D^{d^2er}
\\& \leq (d^3e)^{-d^2er/2} \abs{\disc(R)}^r \cdot \gamma_{d^2em}^{d^2er} \abs{\disc(L)}^{r/m}.
\end{align*}

\medskip

For the proof of (iv), we split into cases depending on the type of~$D$ and on whether $i=j$ or $i \neq j$.

\subsubsection*{Case~(a)}

If $D$ has type~I, then $D$ is a field and $\psi$ is a symplectic form.
Hence $\psi(v,v) = 0$ for all $v \in V$, so we must have $i \neq j$.

Let $v_1 = w_i$ and $v_2 = w_j$.
The bound in (iv)(a) is~\eqref{eqn:psi-fi-fj-bound}.

\subsubsection*{Case~(b)}

If $D$ has type~II and $i=j$, then let $v_1 = w_i$.
Then (iv)(b) holds thanks to \eqref{eqn:psi-fi-fj-bound}.

\subsubsection*{Case~(c)}

If $D$ has type~II and $i \neq j$, then choose $\omega \in D^-$ as in \cref{small-antisymm-star}.
Let
\[ w_j' = 2\psi(w_i, w_j)\omega w_j - \omega\psi(w_j, w_j) w_i. \]
Since $\Trd_{D/\bQ} \psi(L \times L) \subset \bZ$, $\psi(L \times L) \subset R^*$.
Hence $\psi(w_i, w_j)\omega$ and $\omega\psi(w_j, w_j) \in R$, so $w_j' \in Rw_i + Rw_j = M$.
Furthermore $w_j'$ and $w_i$ are $D$-linearly independent because $\psi(w_i, w_j)\omega \neq 0$.

By \cref{action-on-antisymm}(i), $\omega\psi(w_j, w_j), \psi(w_j, w_j)\omega \in F$.
Using this, along with the facts that $\psi(w_i,w_i) = 0$ and $(\omega \psi(w_i, w_j))^\dag = \psi(w_j, w_i) \omega$, we can calculate
\begin{align*}
    \psi(w_j', w_j')
  & = 2\psi(w_i, w_j) \omega \, \psi(w_j, w_j) \, (2\psi(w_i, w_j) \omega)^\dag
\\& \qquad - 2\psi(w_i, w_j) \omega \, \psi(w_j, w_i) \, (\omega \psi(w_j, w_j))^\dag
\\& \qquad - \omega \psi(w_j, w_j) \, \psi(w_i, w_j) \, (2\psi(w_i, w_j) \omega)^\dag
      + 0
\\& = (4-2-2) \psi(w_i, w_j) \omega \psi(w_j, w_j) \omega \psi(w_j, w_i)
\\& = 0.
\end{align*}

Using \cref{action-on-antisymm}(ii) and the fact that $\psi(w_i, w_i) = 0$, we can calculate
\begin{align*}
    \psi(w_j', w_i)
  & = 2 \psi(w_i, w_j) \omega \, \psi(w_j, w_i) - 0
    = -2 \Nrd_{D/F}(\psi(w_i, w_j))\omega.
\end{align*}
Thus $\psi(w_j', w_i) \in F\omega = D^-$, so $\psi(w_i, w_j') = -\psi(w_j', w_i)^\dag = \psi(w_j', w_i)$.

Now let
\[ v_1 = w_i - w_j', \qquad v_2 = w_i + w_j'. \]
Clearly $v_1, v_2 \in Rw_i + Rw_j' \subset M$.
Since $w_i = \frac{1}{2}(v_1 + v_2)$ and $w_j' = \frac{1}{2}(v_2 - v_1)$, the vectors $v_1$ and $v_2$ are $D$-linearly independent.

Since $\psi(w_j', w_i) = \psi(w_i, w_j')$ we can calculate
\begin{align*}
    \psi(v_1, v_2)
  & = \psi(w_i, w_i) + \psi(w_i, w_j') - \psi(w_j', w_i) - \psi(w_j', w_j')
    = 0,
\\ \psi(v_1, v_1)
  & = \psi(w_i, w_i) - \psi(w_i, w_j') - \psi(w_j', w_i) + \psi(w_j', w_j')
    = -2\psi(w_j', w_i),
\\ \psi(v_2, v_2)
  & = \psi(w_i, w_i) + \psi(w_i, w_j') + \psi(w_j', w_i) + \psi(w_j', w_j')
    = 2\psi(w_j', w_i).
\end{align*}
Consequently using \cref{length-submult,small-antisymm-star,Nrd-length-bound} and \eqref{eqn:psi-fi-fj-bound},
\begin{align*}
    \abs{\psi(v_1, v_1)}_D = \abs{\psi(v_2, v_2)}_D
  & = 2\abs{\psi(w_j', w_i)}_D
    \leq 4 \abs{\Nrd_{D/F}(\psi(w_i, w_j))}_D \abs{\omega}_D
\\& \leq 4 \cdot 2^{-1/2} \abs{\psi(w_i, w_j)}_D^2 \cdot 2^{-4} \eta^7 \sqrt{\gamma_e} \abs{\disc(R)}^{2/e}
\\& = 2^{-5/2} \sqrt{\gamma_e} \eta^7 \abs{\disc(R)}^{2/e} \cdot \gamma_{4em}^2 \abs{\disc(L)}^{2/4em}.
\end{align*}
This proves the first inequality in (iv)(c).

Using \cref{disc-triangular}, we have
\begin{align*}
    [M : Rv_1 + Rv_2]
  & = \frac{\abs{\disc(Rv_1 + Rv_2)}^{1/2}}{\abs{\disc(M)}^{1/2}}
\\& = \frac{\abs{\Nm_{D/\bQ}(\psi(v_1, v_1))}^{1/2} \abs{\Nm_{D/\bQ}(\psi(v_2, v_2))}^{1/2}}{\abs{\Nm_{D/\bQ}(\psi(w_i, w_j))}^{1/2} \abs{\Nm_{D/\bQ}(\psi(w_j, w_i))}^{1/2}}
\\& = \frac{\abs{\Nrd_{D/\bQ}(\psi(v_1, v_1))} \abs{\Nrd_{D/\bQ}(\psi(v_2, v_2))}}{\abs{\Nrd_{D/\bQ}(\psi(w_i, w_j))}^2}.
\end{align*}
Now by \cref{Nrd-length-bound} and the fact that if $a \in F$, then $\Nrd_{D/\bQ}(a) = \Nm_{F/\bQ}(a)^2$,
\begin{align*}
    \abs{\Nrd_{D/\bQ}(\psi(v_1, v_1))}
\\  = \abs{\Nrd_{D/\bQ}(\psi(v_2, v_2))}
  & = \abs{\Nrd_{D/\bQ}(4 \Nrd_{D/F}(\psi(w_i, w_j)) \omega)}
\\& = 4^{2e} \abs{\Nm_{F/\bQ}(\Nrd_{D/F}(\psi(w_i, w_j))}^2 \abs{\Nrd_{D/\bQ}(\omega)}
\\& = 4^{2e} \abs{\Nrd_{D/\bQ}(\omega)} \abs{\Nrd_{D/\bQ}(\psi(w_i, w_j))}^2.
\end{align*}
Therefore by \cref{Nrd-length-bound,small-antisymm-star} and \eqref{eqn:psi-fi-fj-bound},
\begin{align*}
    [M : Rv_1 + Rv_2]
  & = \frac{4^{4e} \abs{\Nrd_{D/\bQ}(\omega)}^2 \abs{\Nrd_{D/\bQ}(\psi(w_i, w_j))}^4}{\abs{\Nrd_{D/\bQ}(\psi(w_i, w_j))}^2}
\\& = 4^{4e} \abs{\Nrd_{D/\bQ}(\omega)}^2 \abs{\Nrd_{D/\bQ}(\psi(w_i, w_j))}^2
\\& \leq 4^{4e} \cdot (2e)^{-2e} \abs{\omega}_D^{4e} \cdot (2e)^{-2e} \abs{\psi(w_i, w_j)}_D^{4e}
\\& \leq 2^{4e} e^{-4e} \cdot 2^{-16e} \eta^{28e} \gamma_e^{2e} \abs{\disc(R)}^8 \cdot \gamma_{4em}^{4e} \abs{\disc(L)}^{4e/4em}.
\qedhere
\end{align*}
\end{proof}

\subsection{Inductive construction of weakly symplectic or unitary basis}

The following theorem is a slight generalisation of \cref{minkowski-hermitian-perfect}, together with explicit values for the constants.
Compared to \cref{minkowski-hermitian-perfect}, we only require $R \subset \Stab_D(L)$ (allowing $R \subsetneqq \Stab_D(L)$ is needed for the induction) and we add an additional parameter~$\eta$.
When $R = \Stab_D(L)$, the parameter~$\eta$ is controlled by \cref{R-cap-Rdag}.

\begin{proposition} \label{weakly-unitary-induction}
Let $(D,\dag)$ be a division $\bQ$-algebra with a positive involution of type I or~II.
Let $V$ be a left $D$-vector space with a non-degenerate $(D,\dag)$-skew-Hermitian form $\psi \colon V \times V \to D$.
Let $L$ be a $\bZ$-lattice in $V$ such that $\Trd_{D/\bQ} \psi(L \times L) \subset \bZ$.
Let $R$ be an order which is contained in $\Stab_D(L)$ and let $\eta \in \bZ_{>0}$ be a positive integer such that $\eta R^\dag \subset R$.

Then there exists a $D$-basis $v_1, \dotsc, v_m$ for $V$ such that:
\begin{enumerate}[(i)]
\item $v_1, \dotsc, v_m \in L$;
\item the basis is weakly symplectic (when $D$ has type~I) or weakly unitary (when $D$ has type~II) with respect to $\psi$;
\item the index of $Rv_1 + \dotsb + Rv_m$ in $L$ is bounded as follows:
\createC{so-index-mult} \createC{so-index-eta} \createC{so-index-R} \createC{so-index-L}
\[ [L : Rv_1 + \dotsb + Rv_m] \leq \refC{so-index-mult}(d,e,m) \eta^{\refC{so-index-eta}(d,e,m)} \abs{\disc(R)}^{\refC{so-index-R}(d,e,m)} \abs{\disc(L)}^{\refC{so-index-L}(d,e,m)}; \]
\item for all $i, j \in \{ 1, \dotsc, m \}$ such that $\psi(v_i, v_j) \neq 0$,
\createC{so-psi-mult} \createC{so-psi-eta} \createC{so-psi-R} \createC{so-psi-L}
\[ \abs{\psi(v_i, v_j)}_D \leq \refC{so-psi-mult}(d, e, m) \eta^{\refC{so-psi-eta}(d,e,m)}  \abs{\disc(R)}^{\refC{so-psi-R}(d,e,m)} \abs{\disc(L)}^{\refC{so-psi-L}(d,e,m)}. \]
\end{enumerate}

The inequalities (iii) and (iv) hold with the following values of the constants:
\begin{center}
\bgroup
\renewcommand{\arraystretch}{1.4}
\begin{tabular}{c|c|c}
  & $d=1$
  & $d=2$
\\ \hline
    $\refC{so-index-mult}(d,e,m)$
  & $(em^2)^{em(m+2)/16}$
  & $(2em^2)^{em(m+2)/2}$
\\  $\refC{so-index-eta}(d,e,m)$
  & $0$
  & $14em$
\\  $\refC{so-index-R}(d,e,m)$
  & $m(m+2)/8$
  & $m(m+16)/4$
\\  $\refC{so-index-L}(d,e,m)$
  & $(m-2)/4$
  & $(m-1)/2$
\\  $\refC{so-psi-mult}(d,e,m)$
  & $(em^2)^{(m(m+2)+24)/32}$
  & $(2em^2)^{(m(m+1)+14)/8}$
\\  $\refC{so-psi-eta}(d,e,m)$
  & $0$
  & $7$
\\  $\refC{so-psi-R}(d,e,m)$
  & $\bigl( m(m+2)-8 \bigr)/16e$
  & $\bigl( m(m+1)+26 \bigr)/16e$
\\  $\refC{so-psi-L}(d,e,m)$
  & $(m+2)/8e$
  & $(m+1)/8e$
\end{tabular}
\egroup
\end{center}
\end{proposition}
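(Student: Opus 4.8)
\subsection*{Proof strategy}

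The plan is to prove the proposition by induction on $m$. When $D$ has type~I the form $\psi$ is a non-degenerate alternating form over the field $D$, so $m$ is automatically even; in all cases the base case $m=0$ is trivial, with the empty basis and both displayed inequalities vacuous. The cases $r=1$ and $r=2$ of \cref{pre-induction} below will reduce the problem in dimension $m$ to dimension $m-1$ and $m-2$ respectively, so we induct with the hypothesis available in all smaller dimensions.

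For the inductive step, first apply \cref{pre-induction} to $(V,\psi,L,R,\eta)$. This produces an $R$-submodule $M\subset L$ with $r:=\dim_D(D\otimes_R M)\in\{1,2\}$ on which $\psi$ is non-degenerate, together with the bound on $\abs{\disc(M)}$ from part~(iii) and, according to the type of~$D$ and the value of~$r$, the distinguished vectors of case (a), (b) or~(c) of part~(iv) with the stated bounds on their $\psi$-values. Let $V^\perp$ be the $\psi$-orthogonal complement of $M_\QQ$ in~$V$; by \cref{orthog-complements} it coincides with the orthogonal complement for $\phi:=\Trd_{D/\QQ}\psi$ and is a $D$-submodule, and since $\psi$ is non-degenerate on $M_\QQ$ we have $V=M_\QQ\oplus V^\perp$ with $\psi$ non-degenerate on $V^\perp$. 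Set $L^\perp=L\cap V^\perp$; then $\phi(L^\perp\times L^\perp)\subset\ZZ$ and $R\subset\Stab_D(L^\perp)$, because $L^\perp$ is $R$-stable. Since $\phi$ is non-degenerate on $M_\QQ$ (by \cref{tr-non-deg}), we may apply \cref{disc-lattice-complement} to $\phi$ on $L$ with the submodule~$M$, noting that the complement $M^\perp$ appearing there is exactly $L^\perp$; this gives $[L:M\oplus L^\perp]\le\abs{\disc(M)}$ and $\abs{\disc(L^\perp)}\le\abs{\disc(L)}\abs{\disc(M)}$. Finally, apply the proposition inductively to $(V^\perp,\psi|_{V^\perp},L^\perp,R,\eta)$ --- with the \emph{same} order $R$ and integer~$\eta$, which is the reason the proposition is stated for a general $R\subset\Stab_D(L)$ --- obtaining a weakly symplectic (type~I) or weakly unitary (type~II) $D$-basis $v_{r+1},\dots,v_m$ of $V^\perp$ satisfying the analogues of (i)--(iv) in dimension $m-r$.

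Now assemble $v_1,\dots,v_m$, where $v_1,\dots,v_r$ are the distinguished vectors of $M$ from \cref{pre-induction}(iv). Since $V=M_\QQ\oplus V^\perp$ these form a $D$-basis of $V$ lying in $L$, and it is weakly symplectic or weakly unitary: within $\{v_1,\dots,v_r\}$ by \cref{pre-induction}(iv), within $\{v_{r+1},\dots,v_m\}$ by the inductive hypothesis, and the remaining cross terms vanish because $M_\QQ\perp V^\perp$. Writing $\Lambda=Rv_1+\dots+Rv_m=(Rv_1+\dots+Rv_r)\oplus(Rv_{r+1}+\dots+Rv_m)$ with the two summands contained in $M$ and in $L^\perp$ respectively, we factor
\[ [L:\Lambda]=[L:M\oplus L^\perp]\cdot[M:Rv_1+\dots+Rv_r]\cdot[L^\perp:Rv_{r+1}+\dots+Rv_m]. \]
The first factor is at most $\abs{\disc(M)}$; the middle factor equals $1$ in cases~(a) and~(b) and is bounded by \cref{pre-induction}(iv)(c) in case~(c); the last factor is bounded by the inductive form of~(iii). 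For part~(iv), $\abs{\psi(v_i,v_j)}_D$ is bounded by \cref{pre-induction}(iv) when $i,j\le r$, by the inductive form of~(iv) when $i,j>r$ (after substituting $\abs{\disc(L^\perp)}\le\abs{\disc(L)}\abs{\disc(M)}$), and is zero otherwise. In every inequality one also substitutes the bound for $\abs{\disc(M)}$ from \cref{pre-induction}(iii).

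It remains to check that the values in the table make these recursions close. After bounding every Hermite constant by \cref{hermite-constant-bound} and using $(m-r)^2\le m^2$, each quantity above is a $(d,e,m)$-dependent constant times a monomial in $\abs{\disc(R)}$, $\abs{\disc(L)}$ and~$\eta$; since all three are positive integers, it suffices to compare, separately on the right of (iii) and of~(iv), the leading constant and each of the three exponents with those produced by the induction. The tabulated exponents are precisely the fixed points of the resulting recursions in~$m$ --- for instance, in the index bound for $d=1$ the $\abs{\disc(L)}$-exponent satisfies $\tfrac{2}{m}+\tfrac{m+2}{m}\cdot\tfrac{m-4}{4}=\tfrac{m-2}{4}$ and the $\abs{\disc(R)}$-exponent satisfies $2+\tfrac{(m-2)m}{8}+\tfrac{m-4}{2}=\tfrac{m(m+2)}{8}$ --- and the multipliers match after the crude bound $(m-r)^2\le m^2$. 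One carries this out separately for $d=1$ (where only case~(a) occurs and $\eta$ never enters) and for $d=2$ (where cases~(b) and~(c) must both be treated, case~(c), with its extra factors $\eta^{28e}$ and $\abs{\disc(R)}^8$, being the dominant one). This last step involves no new idea, but is the longest and most error-prone part of the argument, which is why the constants have been chosen generously rather than optimally.
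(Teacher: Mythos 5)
Your strategy is the paper's own: induct on $m$, feed $(V,\psi,L,R,\eta)$ into \cref{pre-induction}, split off $M$ and its orthogonal complement (same for $\psi$ and for $\Trd_{D/\QQ}\psi$ by \cref{orthog-complements}), control $[L:M\oplus M^\perp]$ and $\abs{\disc(M^\perp)}$ via \cref{disc-lattice-complement}, apply the inductive hypothesis to $M^\perp$ with the same $R$ and $\eta$, and close the recursions case by case; your two sample exponent identities are exactly the checks appearing in the paper's inductive step.

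There is, however, a genuine gap in how you anchor the induction for part (iii). You take $m=0$ as the only base case and use the crude bound $[L:M\oplus L^\perp]\le\abs{\disc(M)}$ uniformly, including when $m=r$. At $m=r$ the complement is zero and your chain gives $[L:Rv_1+\dotsb+Rv_r]\le\abs{\disc(M)}\cdot[M:Rv_1+\dotsb+Rv_r]$, and substituting \cref{pre-induction}(iii) with $r/m=1$ yields (up to constants) $\abs{\disc(R)}^{r}\abs{\disc(L)}^{1}$ in cases (a) and (b), and $\abs{\disc(R)}^{10}\abs{\disc(L)}^{3/2}$ in case (c). These exceed the tabulated values at the bottom of the induction ($\abs{\disc(R)}^{1}\abs{\disc(L)}^{0}$ for $d=1$, $m=2$; $\abs{\disc(L)}^{0}$ for $d=2$, $m=1$; $\abs{\disc(R)}^{9}\abs{\disc(L)}^{1/2}$ for $d=2$, $m=2$), and since the recursion only inflates exponents as $m$ grows, the table cannot be recovered this way: for instance in type I the $\abs{\disc(L)}$-exponent recursion $c(m)=\tfrac{2}{m}+\tfrac{m+2}{m}c(m-2)$ started from $c(2)=1$ gives $c(4)=2$, not the tabulated $1/2$. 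The paper avoids this by treating $m=r$ as the base case of (iii) and using there the \emph{exact} relation for a full-rank sublattice, $[L:M]=\abs{\disc(M)}^{1/2}/\abs{\disc(L)}^{1/2}\le\abs{\disc(M)}^{1/2}$ (legitimate because $\disc(L)$ is a non-zero integer), which gains a square root and removes the $\abs{\disc(L)}$ factor; only with this sharper base step do the tabulated exponents become consistent initial conditions for the recursions you set up. (Your argument does prove the qualitative statement (i)--(iv) with some, larger, constants; but the proposition asserts the specific table, so the base case must be repaired as above. A second, minor, imprecision: for $d=2$ the tabulated exponents are not exact fixed points of both the case (b) and case (c) recursions simultaneously, only supersolutions — e.g.\ case (b) produces $(m^2+16m-15)/4<m(m+16)/4$ — so "fixed point" should be read as "value dominating both recursions and both base cases".)
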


\begin{proof}
The proof is by induction on~$m = \dim_D(V)$.

Let $M$ be an $R$-submodule of $L$ as in \cref{pre-induction}.
Let $r = \dim_D(D \otimes_R M) = 1$ or~$2$.
Choose $v_1$ and perhaps $v_2$ as in \cref{pre-induction}(iv).

For part~(iii), the base case of the induction will be when $m=r$, and this is dealt with in the three cases below.
For part~(iv), the base case is when $m=0$, in which case (iv) is vacuously true.

Let $M^\perp$ be the orthogonal complement of $M$ in $L$ with respect to $\psi$.
By \cref{orthog-complements}, $M^\perp$ is also the orthogonal complement of $M$ in $L$ with respect to $\Trd_{D/\bQ} \psi$.
By \cref{disc-lattice-complement} and \cref{pre-induction}(iii),
\begin{equation} \label{eqn:disc-Mperp}
\abs{\disc(M^\perp)} \leq \abs{\disc(L)} \cdot \abs{\disc(M)} \leq (\gamma_{d^2em}^2/d^3e)^{d^2er/2} \abs{\disc(R)}^r \abs{\disc(L)}^{(m+r)/m}.
\end{equation}

Now $\psi$ restricted to $M^\perp$ is non-degenerate, $\dim_D(D \otimes_R M^\perp) = m-r < m$ and $R \subset \Stab_D(M^\perp)$ so we can apply the lemma inductively to $M^\perp$.
We obtain a $D$-basis $v_{r+1}, \dotsc, v_m$ for $D \otimes_R M^\perp$ whose elements lie in $M^\perp \subset L$.

Now $v_1, \dotsc, v_r \in M$ are orthogonal to $v_{r+1}, \dotsc, v_m$ and $v_1, \dotsc, v_r$ form a weakly symplectic or weakly unitary $D$-basis for $D \otimes_R M$.
Hence by induction $v_1, \dotsc, v_m$ form a weakly symplectic or weakly unitary $D$-basis for~$V$.

Thus (i) and~(ii) are satisfied.

By induction,
\begin{align}
  & \phantom{{}\leq{}} [M^\perp:Rv_{r+1} + \dotsb + Rv_m]
\notag
\\& \leq \refC{so-index-mult}(d,e,m-r) \eta^{\refC{so-index-eta}(d,e,m-r)} \abs{\disc(R)}^{\refC{so-index-R}(d,e,m-r)} \abs{\disc(M^\perp)}^{\refC{so-index-L}(d,e,m-r)}
\notag
\\& \leq \refC{so-index-mult}(d,e,m-r) \eta^{\refC{so-index-eta}(d,e,m-r)} \abs{\disc(R)}^{\refC{so-index-R}(d,e,m-r)}
\notag
\\& \qquad \cdot (\gamma_{d^2em}^2/d^3e)^{d^2er/2 \cdot \refC{so-index-L}(d,e,m-r)} \abs{\disc(R)}^{r\refC{so-index-L}(d,e,m-r)} \abs{\disc(L)}^{(m+r)/m \cdot \refC{so-index-L}(d,e,m-r)}.
\label{eqn:Mperp-index}
\end{align}

We now split into cases depending on the type of~$D$ and on whether $r = 1$ or~$2$, as in \cref{pre-induction}(iv).
The proofs in the three cases are very similar, with just the details of the calculations varying.
For each case, the proofs of (iii) and~(iv) are independent of each other.

\medskip

\subsubsection*{Case~(a), part~(iii)}
This is the case when $D$ has type~I and $r=2$.

When $m=r=2$, from \cref{pre-induction}(iii) and (iv)(a), we have
\[ [L:Rv_1 + Rv_2] = [L:M] = \frac{\abs{\disc(M)}^{1/2}}{\abs{\disc(L)}^{1/2}}
\leq (\gamma_{2e}^2/e)^{e/2} \abs{\disc(R)}. \]
This establishes (iii) when $m=2$ because
\begin{gather*}
      (\gamma_{2e}^2/e)^{e/2} \leq (4e)^{e/2} = \refC{so-index-mult}(1,e,2),
\\    \refC{so-index-eta}(1,e,2)  = 0,
\quad \refC{so-index-R}(1,e,2)    = 1,
\quad \refC{so-index-L}(1,e,2)    = 0.
\end{gather*}

When $m \geq 3$, we have, using the fact that $M = Rv_1 + Rv_2$, \cref{disc-lattice-complement}, \cref{pre-induction}(iii) and \eqref{eqn:Mperp-index},
\begin{align*}
  & \phantom{{} = {}}  [L : Rv_1 + \dotsb + Rv_m]
    = [L : M + M^\perp] [M^\perp : Rv_3 + \dotsb + Rv_m]
\\& \leq \abs{\disc(M)} [M^\perp : Rv_3 + \dotsb + Rv_m]
\\& \leq (\gamma_{em}^2/e)^{e} \abs{\disc(R)}^2 \abs{\disc(L)}^{2/m}
  \cdot \refC{so-index-mult}(1,e,m-2) \, (\gamma_{em}^2/e)^{e\refC{so-index-L}(1,e,m-2)}
\\& \qquad \cdot \abs{\disc(R)}^{\refC{so-index-R}(1,e,m-2) + 2\refC{so-index-L}(1,e,m-2)} \abs{\disc(L)}^{(m+2)/m \cdot \refC{so-index-L}(1,e,m-2)}.
\end{align*}

Now we can calculate:
for the multiplicative constant:
\begin{align*}
  & \phantom{{} = {}} \refC{so-index-mult}(1,e,m-2) \, (\gamma_{em}^2/e)^{e(1+\refC{so-index-L}(1,e,m-2))}
\\& = (e(m-2)^2)^{e(m-2)m/16} (\gamma_{em}^2/e)^{em/4}
\\& \leq (em^2)^{e(m-2)m/16} \cdot (em^2)^{m/4}
\\& = (em^2)^{e(m^2-2m + 4m)/16}
\\& = \refC{so-index-mult}(1,e,m),
\end{align*}
for the exponent of $\abs{\disc(R)}$:
\begin{align*}
    2 + \refC{so-index-R}(1,e,m-2) + 2\refC{so-index-L}(1,e,m-2)
  & = 2 + \frac{(m-2)m}{8} + 2 \cdot \frac{m-4}{4}
\\& = \refC{so-index-R}(1,e,m),
\end{align*}
for the exponent of $\abs{\disc(L)}$:
\begin{align*}
    \frac{2}{m} + \frac{(m+2)}{m} \cdot \refC{so-index-L}(1,e,m-2)
  & = \frac{2}{m} + \frac{(m+2)(m-4)}{4m}
\\& = \frac{8+(m^2-2m-8)}{4m} = \frac{(m-2)m}{4m} = \refC{so-index-L}(1,e,m).
\end{align*}

\subsubsection*{Case~(a), part~(iv)}
For $i=1$, $j=2$, \cref{pre-induction}(iv)(a) gives
\begin{equation} \label{eqn:case-a-iv-base}
\abs{\psi(v_1, v_2)}_D \leq \gamma_{em} \abs{\disc(L)}^{1/em}.
\end{equation}

This establishes (iv) when $i=1$, $j=2$ because,
using \cref{hermite-constant-bound} and the fact that $m \geq 2$ so $1 \leq (m(m+2)+24)/32$ and $1 \leq (m(m+2)+8)/16$,
\begin{align*}
\gamma_{em} & \leq em \leq \refC{so-psi-mult}(1,e,m),
\\ 0 & \leq \frac{m(m+2)-8}{16e} = \refC{so-psi-R}(1,e,m),
\\ \frac{1}{em} & \leq \frac{2 \cdot 4}{8em} \leq \frac{m(m+2)}{8em} = \refC{so-psi-L}(1,e,m).
\end{align*}

For $i, j \geq 3$, induction gives
\begin{align*}
    \abs{\psi(v_i, v_j)}_D
  & \leq \refC{so-psi-mult}(1,e,m-2) \abs{\disc(R)}^{\refC{so-psi-R}(1,e,m-2)} \disc(M^\perp)^{\refC{so-psi-L}(1,e,m)}
\\& \leq \refC{so-psi-mult}(1,e,m-2) \abs{\disc(R)}^{\refC{so-psi-R}(1,e,m-2)}
\\& \qquad \cdot \bigl( (\gamma_{em}^2/e)^{e}  \abs{\disc(R)}^2 \abs{\disc(L)}^{(m+2)/m} \bigr)^{\refC{so-psi-L}(1,e,m-2)}.
\end{align*}
Now we can calculate:
for the multiplicative constant (using \cref{hermite-constant-bound}):
\begin{align*}
  & \phantom{{} = {}} \refC{so-psi-mult}(1,e,m-2) \, (\gamma_{em}^2/e)^{e\refC{so-psi-L}(1,e,m-2)}
\\& = (e(m-2)^2)^{((m-2)m+24)/32} \, (\gamma_{em}^2/e)^{m/8}
\\& \leq (em^2)^{((m-2)m+24)/32} \cdot (em^2)^{m/8}
\\& = (em^2)^{(m^2-2m+24 + 4m)/32}
\\& = \refC{so-psi-mult}(1,e,m),
\end{align*}
for the exponent of $\abs{\disc(R)}$:
\begin{align*}
    \refC{so-psi-R}(1,e,m-2) + 2\refC{so-psi-L}(1,e,m-2)
  & = \frac{(m-2)m-8}{16e} + 2 \cdot \frac{m}{8e}
    = \refC{so-psi-R}(1,e,m),
\end{align*}
for the exponent of $\abs{\disc(L)}$:
\begin{align*}
    \frac{m+2}{m} \refC{so-psi-L}(1,e,m-2)
  & = \frac{(m+2)}{m} \cdot \frac{m}{8e}
    = \refC{so-psi-L}(1,e,m).
\end{align*}

\medskip

\subsubsection*{Case~(b), part~(iii)}
In this case, $D$ has type~II and $r=1$.

When $m=r=1$, from \cref{pre-induction}(iii) and (iv)(b), we have
\[ [L:Rv_1] = [L:M] = \frac{\abs{\disc(M)}^{1/2}}{\abs{\disc(L)}^{1/2}} \leq (\gamma_{4e}^2/8e)^{e} \abs{\disc(R)}^{1/2}. \]
This establishes (iii) when $m=1$ because
\begin{gather*}
      (\gamma_{4e}^2/8e)^e \leq (2e)^e \leq (2e)^{3e/2} = \refC{so-index-mult}(2,e,1),
\\    \refC{so-index-eta}(2,e,1) = 14e > 0,
\quad \refC{so-index-R}(2,e,1) = 17/4 > 1/2,
\quad \refC{so-index-L}(2,e,1) = 0.
\end{gather*}

When $m \geq 2$, we have (using $M = Rv_1$, \cref{disc-lattice-complement}, \cref{pre-induction}(iii) and \eqref{eqn:Mperp-index})
\begin{align*}
  & \phantom{{} = {}}  [L : Rv_1 + \dotsb + Rv_m]
    = [L : M + M^\perp] [M^\perp : Rv_2 + \dotsb + Rv_m]
\\& \leq \abs{\disc(M)} [M^\perp : Rv_2 + \dotsb + Rv_m]
\\& \leq (\gamma_{4em}^2/8e)^{2e} \abs{\disc(R)} \abs{\disc(L)}^{1/m}
\\& \qquad \cdot \refC{so-index-mult}(2,e,m-1) \eta^{\refC{so-index-eta}(2,e,m-1)} (\gamma_{4em}^2/8e)^{2e\refC{so-index-L}(2,e,m-1)}
\\& \qquad \cdot \abs{\disc(R)}^{\refC{so-index-R}(2,e,m-1) + \refC{so-index-L}(2,e,m-1)} \abs{\disc(L)}^{(m+1)/m \cdot \refC{so-index-L}(2,e,m-1)}.
\end{align*}

Now we can calculate:
for the multiplicative constant:
\begin{align*}
  & \phantom{{} = {}} \refC{so-index-mult}(2,e,m-1) (\gamma_{4em}^2/8e)^{2e(1+\refC{so-index-L}(2,e,m-1))}
\\& = (2e(m-1)^2)^{e(m-1)(m+1)/2} (\gamma_{4em}^2/8e)^{em}
\\& \leq (2em^2)^{e(m-1)(m+1)/2} \cdot (2em^2)^{em}
\\& = (2em^2)^{e(m^2-1 + 2m)/2}
\\& \leq \refC{so-index-mult}(2,e,m),
\end{align*}
for the exponent of $\eta$:
\[ \refC{so-index-eta}(2,e,m-1) = 14e(m-1) < \refC{so-index-eta}(2,e,m), \]
for the exponent of $\abs{\disc(R)}$:
\begin{align*}
    1 + \refC{so-index-R}(2,e,m-1) + \refC{so-index-L}(2,e,m-1)
  & = 1 + \frac{(m-1)(m+15)}{4} + \frac{m-2}{2}
\\& = \frac{m^2+16m-15}{4} < \refC{so-index-R}(2,e,m),
\end{align*}
for the exponent of $\abs{\disc(L)}$:
\begin{align*}
    \frac{1}{m} + \frac{(m+1)}{m} \cdot \refC{so-index-L}(2,e,m-1)
  & = \frac{1}{m} + \frac{(m+1)(m-2)}{2m}
\\& = \frac{2+(m^2-m-2)}{2m} = \frac{(m-1)m}{2m} = \refC{so-index-L}(2,e,m).
\end{align*}

\subsubsection*{Case~(b), part~(iv)}
For $i=j=1$, \cref{pre-induction}(iv)(b) gives
\begin{equation} \label{eqn:case-b-iv-base}
\abs{\psi(v_1, v_1)}_D \leq \gamma_{4em} \abs{\disc(L)}^{1/4em}.
\end{equation}
This establishes (iv) for $i=j=1$ because, using \cref{hermite-constant-bound} and the fact that $m \geq 1$ so $(m(m+1)+14)/8 \geq 2$,
\begin{align*}
    \gamma_{4em}
  & \leq 4em \leq (2em^2)^2 \leq \refC{so-psi-mult}(2,e,m),
\\[3pt] 0 & < 7 = \refC{so-psi-eta}(2,e,m),
\\ 0 & < \frac{1 \cdot 2 + 26}{16e} \leq \frac{m(m+1)+26}{16e} = \refC{so-psi-R}(2,e,m),
\\ \frac{1}{4em} & \leq \frac{2}{8e} \leq \frac{m+1}{8e} = \refC{so-psi-L}(2,e,m).
\end{align*}

For $i=j \geq 2$, induction gives
\begin{align*}
    \abs{\psi(v_j, v_j)}_D
  & \leq \refC{so-psi-mult}(2,e,m-1) \eta^{\refC{so-psi-eta}(2,e,m-1)} \abs{\disc(R)}^{\refC{so-psi-R}(2,e,m-1)} \abs{\disc(M^\perp)}^{\refC{so-psi-L}(2,e,m-1)}
\\& \leq \refC{so-psi-mult}(2,e,m-1) \eta^{\refC{so-psi-eta}(2,e,m-1)} \abs{\disc(R)}^{\refC{so-psi-R}(2,e,m-1)}
\\& \qquad \cdot \bigl( (\gamma_{4em}^2/8e)^{2e} \abs{\disc(R)} \abs{\disc(L)}^{(m+1)/m} \bigr)^{\refC{so-psi-L}(2,e,m-1)}.
\end{align*}
Now we can calculate:
for the multiplicative constant:
\begin{align*}
  & \phantom{{}={}} \refC{so-psi-mult}(2,e,m-1) (\gamma_{4em}^2/8e)^{2e\refC{so-psi-L}(2,e,m-1)}
\\& = (2e(m-1)^2)^{((m-1)m+14)/8} \cdot (\gamma_{4em}^2/8e)^{m/4}
\\& \leq (2em^2)^{(m^2-m+14)/8} \cdot (2em^2)^{2m/8}
\\& = (2em^2)^{(m^2+m+14)/8}
\\& = \refC{so-psi-mult}(2,e,m),
\end{align*}
for the exponent of $\eta$:
\[ \refC{so-psi-eta}(2,e,m-1) = 7 = \refC{so-psi-eta}(2,e,m), \]
for the exponent of $\abs{\disc(R)}$:
\begin{align*}
    \phantom{{} = {}} \refC{so-psi-R}(2,e,m-1) + \refC{so-psi-L}(2,e,m-1)
  & = \frac{(m-1)m+26}{16e} + \frac{m}{8e}
    = \refC{so-psi-R}(2, e, m),
\end{align*}
for the exponent of $\abs{\disc(L)}$:
\begin{align*}
    \frac{m+1}{m} \refC{so-psi-L}(2,e,m-1)
  & = \frac{(m+1)}{m} \cdot \frac{m}{8e}
    = \frac{m+1}{8e}
    = \refC{so-psi-L}(2,e,m).
\end{align*}

\medskip

\subsubsection*{Case~(c), part~(iii)}
This is the case where $D$ has type~II and $r=2$.

When $m=r=2$, from \cref{pre-induction}(iii) and (iv)(c), we have
\begin{align*}
  & \phantom{{} = {}}  [L:Rv_1 + Rv_2]
    = [L:M][M:Rv_1 + Rv_2]
\\& = \frac{\abs{\disc(M)}^{1/2}}{\abs{\disc(L)}^{1/2}} [M:Rv_1 + Rv_2]
\\& \leq \frac{(\gamma_{8e}^2/8e)^{2e} \abs{\disc(R)} \abs{\disc(L)}^{1/2}}{\abs{\disc(L)}^{1/2}} (\gamma_e/8e)^{2e} (\gamma_{8e}^2/8e)^{2e} \eta^{28e} \abs{\disc(R)}^8 \abs{\disc(L)}^{1/2}
\\& = (\gamma_e/8e)^{2e} (\gamma_{8e}^2/8e)^{4e} \eta^{28e} \abs{\disc(R)}^9 \abs{\disc(L)}^{1/2}.
\end{align*}
This establishes (iii) when $m=2$ because
\begin{gather*}
      (\gamma_e/8e)^{2e} (\gamma_{8e}^2/8e)^{4e}
      \leq 1 \cdot (8e)^{4e}
      = \refC{so-index-mult}(2,e,2),
\\    \refC{so-index-eta}(2,e,2) = 28e,
\quad \refC{so-index-R}(2,e,2) = 9,
\quad \refC{so-index-L}(2,e,2) = 1/2.
\end{gather*}

When $m \geq 3$, we have (using \cref{disc-lattice-complement}, \cref{pre-induction}(iv)(c) and \eqref{eqn:Mperp-index})
\begin{align*}
  & \phantom{{} = {}}  [L : Rv_1 + \dotsb + Rv_m]
\\& = [L : M + M^\perp] [M : Rv_1 + Rv_2] [M^\perp : Rv_3 + \dotsb + Rv_m]
\\& \leq \abs{\disc(M)} [M : Rv_1 + Rv_2] [M^\perp : Rv_3 + \dotsb + Rv_m]
\\& \leq (\gamma_{4em}^2/8e)^{4e} \abs{\disc(R)}^2 \abs{\disc(L)}^{2/m}
\\& \qquad \cdot (\gamma_e/8e)^{2e} (\gamma_{4em}^2/8e)^{2e} \eta^{28e} \abs{\disc(R)}^8 \abs{\disc(L)}^{1/m}
\\& \qquad \cdot \refC{so-index-mult}(2,e,m-2) \eta^{\refC{so-index-eta}(2,e,m-2)} (\gamma_{4em}^2/8e)^{4e\refC{so-index-L}(2,e,m-2)}
\\& \qquad \cdot \abs{\disc(R)}^{\refC{so-index-R}(2,e,m-2) + 2\refC{so-index-L}(2,e,m-2)} \abs{\disc(L)}^{(m+2)/m \cdot \refC{so-index-L}(2,e,m-2)}.
\end{align*}

Now we can calculate:
for the multiplicative constant:
\begin{align*}
  & \phantom{{} = {}} \refC{so-index-mult}(2,e,m-2) (\gamma_e/8e)^{2e} (\gamma_{4em}^2/8e)^{e(6+4\refC{so-index-L}(2,e,m-2))}
\\& = (2e(m-2)^2)^{e(m-2)m/2} (\gamma_e/8e)^{2e} \, (\gamma_{4em}^2/8e)^{2em}
\\& \leq (2em^2)^{e(m-2)m/2} \cdot 1 \cdot (2em^2)^{2em}
\\& = (2em^2)^{e(m^2-2m + 4m)/2}
\\& = \refC{so-index-mult}(2,e,m),
\end{align*}

for the exponent of $\eta$:
\[ 28e + \refC{so-index-eta}(2,e,m-2) = 28e + 14e(m-2) = \refC{so-index-eta}(2,e,m), \]
for the exponent of $\abs{\disc(R)}$:
\begin{align*}
  & \phantom{{} = {}} 2 + 8 + \refC{so-index-R}(2,e,m-2) + 2\refC{so-index-L}(2,e,m-2)
\\& = 10 + \frac{(m-2)(m+14)}{4} + (m-3)
    = \frac{m^2+16m}{4} = \refC{so-index-R}(2,e,m),
\end{align*}
for the exponent of $\abs{\disc(L)}$:
\begin{align*}
  & \phantom{{}={}} \frac{2}{m} + \frac{1}{m} + \frac{m+2}{m} \cdot \refC{so-index-L}(2,e,m-2)
\\& = \frac{3}{m} + \frac{(m+2)(m-3)}{2m}
\\& = \frac{6+(m^2-m-6)}{2m} = \frac{(m-1)m}{2m} = \refC{so-index-L}(2,e,m).
\end{align*}

\subsubsection*{Case~(c), part~(iv)}
For $i=j=1$ or $2$, \cref{pre-induction}(iv)(c) gives
\begin{equation} \label{eqn:case-c-iv-base}
\abs{\psi(v_i, v_i)}_D \leq 2^{-5/2} \gamma_e^{1/2} \gamma_{4em}^2 \eta^7 \abs{\disc(R)}^{2/e} \abs{\disc(L)}^{1/2em}.
\end{equation}
This establishes (iv) for $i=j=1$ or~$2$ because, using \cref{hermite-constant-bound} and the fact that $m \geq 2$ so $(m(m+1)+14)/8 \geq 5/2$,
\begin{align*}
    2^{-5/2} \gamma_e^{1/2} \gamma_{4em}^2
  & \leq 2^{-5/2} e^{1/2} (4em)^2
    = 2^{3/2} e^{5/2} m^2
    \leq (2em^2)^{5/2}
    \leq \refC{so-psi-mult}(2,e,m),
\\[3pt]
   7 & = \refC{so-psi-eta}(2,e,m),
\\ \frac{2}{e} & = \frac{2 \cdot 3 + 26}{16e} \leq \frac{m(m+1) + 26}{16e} = \refC{so-psi-R}(2,e,m),
\\ \frac{1}{2em} & \leq \frac{2}{8e} \leq \frac{m+1}{8e} \leq \refC{so-psi-L}(d,e,m).
\end{align*}

For $i=j \geq 3$, induction gives
\begin{align*}
    \abs{\psi(v_j, v_j)}_D
  & \leq \refC{so-psi-mult}(2,e,m-2) \eta^{\refC{so-psi-eta}(2,e,m-2)} \abs{\disc(R)}^{\refC{so-psi-R}(2,e,m-2)} \abs{\disc(M^\perp)}^{\refC{so-psi-L}(2,e,m-2)}
\\& \leq \refC{so-psi-mult}(2,e,m-2) \eta^{\refC{so-psi-eta}(2,e,m-2)} \abs{\disc(R)}^{\refC{so-psi-R}(2,e,m-2)}
\\& \qquad \cdot \bigl( (\gamma_{4em}^2/8e)^{4e} \abs{\disc(R)}^2 \abs{\disc(L)}^{(m+2)/m} \bigr)^{\refC{so-psi-L}(2,e,m-2)}.
\end{align*}
Now we can calculate:
for the multiplicative constant:
\begin{align*}
  & \phantom{{}={}} \refC{so-psi-mult}(2,e,m-2) (\gamma_{4em}^2/8e)^{4e\refC{so-psi-L}(2,e,m-2)}
\\& = (2e(m-2)^2)^{((m-2)(m-1)+14)/8} \cdot (\gamma_{4em}^2/8e)^{(m-1)/2}
\\& \leq (2em^2)^{(m^2-3m+16)/8} \cdot (2em^2)^{(4m-4)/8}
\\& = (2em^2)^{(m^2+m+12)/8}
\\& \leq \refC{so-psi-mult}(2,e,m),
\end{align*}
for the exponent of $\eta$:
\[ \refC{so-psi-eta}(2,e,m-2) = 7 = \refC{so-psi-eta}(2,e,m), \]
for the exponent of $\abs{\disc(R)}$:
\begin{align*}
  & \phantom{{}={}} \refC{so-psi-R}(2,e,m-2) + 2\refC{so-psi-L}(2,e,m-2)
\\& = \frac{(m-2)(m-1)+26}{16e} + 2 \cdot \frac{m-1}{8e}
\\& = \frac{m^2+m+24}{16e}
    \leq \frac{m(m+1)+26}{16e}
    = \refC{so-psi-R}(2,e,m),
\end{align*}
for the exponent of $\abs{\disc(L)}$:
\begin{align*}
    \frac{m+2}{m} \refC{so-psi-L}(2,e,m-2)
  & = \frac{m+2}{m} \cdot \frac{m-1}{8e}
\\& = \frac{m^2+m-2}{8em}
    \leq \frac{m(m+1)}{8em}
    = \refC{so-psi-L}(2,e,m).
\qedhere
\end{align*}
\end{proof}

\begin{lemma} \label{R-cap-Rdag}
Let $(D,\dag)$ be a division $\bQ$-algebra with a positive involution.
Let $V$ be a left $D$-vector space with a non-degenerate $(D,\dag)$-skew-Hermitian form $\psi \colon V \times V \to D$.
Let $L$ be a $\bZ$-lattice in $V$ such that $\Trd_{D/\bQ} \psi(L \times L) \subset \bZ$.
Let $R = \Stab_D(L)$.
Then $\disc(L) R^\dag \subset R$.
\end{lemma}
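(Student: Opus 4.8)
The plan is to pass to the dual lattice with respect to the skew-symmetric form $\phi = \Trd_{D/\bQ} \psi \colon V \times V \to \bQ$. First I would record, using \cref{tr-skew-hermitian-form} and \cref{tr-non-deg}, that $\phi$ is a non-degenerate $(D,\dag)$-compatible skew-symmetric form on $V$, and that the hypothesis $\Trd_{D/\bQ} \psi(L \times L) \subset \bZ$ says precisely that $\phi(L \times L) \subset \bZ$. Then introduce the dual lattice
\[ L^\vee = \{ x \in V : \phi(x, y) \in \bZ \text{ for all } y \in L \}. \]
Integrality of $\phi$ on $L$ gives $L \subseteq L^\vee$; non-degeneracy of $\phi$ makes $L^\vee$ a full $\bZ$-lattice in $V$; and, exactly as in the proof of \cref{disc-lattice-complement} (where this is the fact $[M^*:M] = \abs{\disc(M,\phi)}$ applied with $M = L$), we have $[L^\vee : L] = \abs{\disc(L)}$. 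Consequently $\abs{\disc(L)}\, L^\vee \subseteq L$.

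The key step is to show that $R^\dag$ carries $L$ into $L^\vee$. Given $a \in R = \Stab_D(L)$ and $x, y \in L$, the $(D,\dag)$-compatibility of $\phi$ (with $\dag \circ \dag = \id$) yields $\phi(a^\dag x, y) = \phi(x, ay)$; since $ay \in L$ and $\phi(L \times L) \subset \bZ$, this lies in $\bZ$, so $a^\dag x \in L^\vee$. Hence $a^\dag L \subseteq L^\vee$, and therefore $\abs{\disc(L)}\, a^\dag L \subseteq \abs{\disc(L)}\, L^\vee \subseteq L$, i.e.\ $\abs{\disc(L)}\, a^\dag \in \Stab_D(L) = R$. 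Because $R$ is an additive group, this is equivalent to $\disc(L)\, a^\dag \in R$; letting $a$ range over $R$ gives $\disc(L)\, R^\dag \subseteq R$.

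I do not expect a genuine obstacle: the argument is essentially just the identity $\phi(a^\dag x, y) = \phi(x, ay)$ combined with the index formula for the dual lattice. The only points needing care are that $L^\vee$ really is a lattice (which uses non-degeneracy of $\phi$, guaranteed by \cref{tr-non-deg}) and that its index over $L$ equals $\abs{\disc(L)}$ on the nose — both already used in \cref{disc-lattice-complement}, so they may simply be cited.
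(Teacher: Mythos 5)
Your proof is correct and follows essentially the same route as the paper: both show $a^\dag L$ lands in the dual lattice of $L$ with respect to $\Trd_{D/\bQ}\psi$ (you via $(D,\dag)$-compatibility of $\phi$, the paper via the same trace identity written out directly), then use $[L^*:L]=\abs{\disc(L)}$ to scale back into $L$ and conclude $\disc(L)a^\dag\in\Stab_D(L)=R$. No gap; the citations to \cref{tr-non-deg} and the index fact from \cref{disc-lattice-complement} are exactly what is needed.
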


\begin{proof}
Let $a \in R$ and $x, y \in L$.
Then
\[ \Trd_{D/\bQ} \psi(a^\dag x, y) 
   = \Trd_{D/\bQ} \bigl( a^\dag \psi(x,y) \bigr)
   = \Trd_{D/\bQ} \bigl( \psi(x,y) a^\dag \bigr)
   = \Trd_{D/\bQ} \psi(x,ay). \]
Since $x, ay \in L$, we conclude that $\Trd_{D/\bQ} \psi(a^\dag x, y) \in \bZ$.

Since this holds for all $y \in L$, we have $a^\dag x \in L^*$.
Consequently, 
\[ \disc(L)a^\dag x = [L^*:L]a^\dag x \in L. \]
This holds for all $x \in L$, so $\disc(L)a^\dag \in \Stab_D(L) = R$.
\end{proof}

To complete the proof of \cref{minkowski-hermitian-perfect}, we combine \cref{weakly-unitary-induction} and \cref{R-cap-Rdag}.  The resulting exponent of $\abs{\disc(L)}$ in (iii) is $\refC{so-index-eta}(d,e,m)+\refC{so-index-L}(d,e,m)$ and the exponent of $\abs{\disc(L)}$ in (iv) is $\refC{so-psi-eta}(d,e,m)+\refC{so-psi-L}(d,e,m)$, while the other constants in \cref{minkowski-hermitian-perfect} are the same as the corresponding constants in \cref{weakly-unitary-induction}.

\section{Application to the Zilber--Pink conjecture} \label{sec:ZP-high-level}

In this section we study special subvarieties of PEL type from the point of view of Shimura data.
The main result of the section is that Shimura datum components of simple PEL type I and~II lie in a single $\gGSp_{2g}(\bR)$-conjugacy class, which we describe explicitly.  We also establish a bound on the dimension of all special subvarieties of PEL type in $\Ag$, demonstrating that \cref{main-theorem-zp} is indeed a consequence of the Zilber--Pink conjecture.  We end the section by outlining the strategy of the proof of \cref{main-theorem-zp} carried out in the subsequent sections.

For our notation and terminology around Shimura datum components, see \cite[sec.\ 2A and~2B]{ExCM}.

\subsection{Shimura data} \label{subsec:shimura-data}

Let $L=\ZZ^{2g}$, let $V = L_\bQ$ and let $\phi:L\times L\to\ZZ$ be the symplectic form represented, in the standard basis, by the matrix $J_{2g}$. Let $\gG=\gGSp(V, \phi)=\gGSp_{2g}$ and let $\Gamma=\gSp_{2g}(\ZZ)$.
Let $X^+$ denote the $\gG(\bR)^+$-conjugacy class of the morphism $h_0 \colon \mathbb{S}\to\gG_\RR$ given by
\begin{equation} \label{eqn:h0}
h_0(a+ib) \mapsto \fullsmallmatrix{a}{b}{-b}{a}^{\oplus g}.
\end{equation}
Then $(\gG, X^+)$ is a Shimura datum component and there is a $\gG(\bR)^+$-equivariant bijection $X^+ \cong \Hg$, where $\Hg$ is the Siegel upper half-space.
The moduli space of principally polarised abelian varieties of dimension~$g$, denoted $\Ag$, is the Shimura variety whose complex points are $\Gamma \bs X^+$.

Let $S$ be a special subvariety of PEL type of $\Ag$, as defined in section~\ref{subsec:intro-zp-context}, and let $R$ be its generic endomorphism ring.
Choose a point $x \in X^+$ whose image $s \in \Ag$ is an endomorphism generic point in $S(\bC)$.
Then $x$ induces an isomorphism $H_1(A_s, \bZ) \cong L$ and hence the action of $R$ on $A_s$ induces an action of $R$ on $L$.

Let $\gH$ denote the centraliser in $\gG$ of the action of $R$ on~$L$, which is a reductive $\bQ$-algebraic group.
We call $\gH$ the \defterm{general Lefschetz group} of $S$.
Note that $\gH$ is only defined up to conjugation by $\Gamma$, because different choices of $x$ may lead to isomorphisms $H_1(A_s, \bZ) \cong L$ which differ by $\Gamma$.
(The group $\gH$ is isomorphic to the Lefschetz group of an endomorphism generic abelian variety parameterised by $S$, as defined in \cite{Mil99}, thanks to \cite[Theorem~4.4]{Mil99}.
However it seems to be more common to call $\gH \cap \gSp$ or $(\gH \cap \gSp)^\circ$ the Lefschetz group, so we have added the adjective ``general'' by analogy with the general symplectic and general orthogonal groups.)

The special subvariety of PEL type $S$ is a Shimura subvariety component of $\Ag$ associated with a Shimura subdatum component of the form $(\gH^\circ, X_\gH^+) \subset (\gG, X^+)$, where $\gH$ is the general Lefschetz group of~$S$ (see \cite[paragraph above Theorem~8.17]{Mil05}).

We say that $(\gH, X_\gH^+) \subset (\gG, X^+)$ is a \defterm{Shimura subdatum component of simple PEL type I or~II} if it is a Shimura subdatum component associated with a special subvariety of PEL type, where $\gH$ is the general Lefschetz group, and its generic endomorphism algebra is a division algebra with positive involution of type I or~II.
Note that in the simple type I or~II case, $\gH = \gH^\circ$.

\subsection{Representatives of conjugacy classes of Shimura data of simple PEL type I or~II}
\label{subsec:shimura-representatives}

The Shimura subdatum components of $(\gG, X^+)$ of simple PEL type I or~II lie in only finitely many $\gG(\bR)^+$-conjugacy classes.
Indeed, we shall now explicitly describe finitely many Shimura subdatum components which represent these $\gG(\bR)^+$-conjugacy classes.
Note that, for convenience, these representative subdatum components are not of simple PEL type, although they are of PEL type.
This generalises \cite[Lemma~6.1]{QRTUI}, which is the case $g=2$, $d=2$, $e=m=1$.

Let $d$, $e$, $m$ be positive integers such that $d^2em = 2g$, $d=1$ or $2$ and $dm$ is even.
For fixed $g$, there are only finitely many integers $d,e,m$ satisfying these conditions.
As we shall show, each triple $d,e,m$ corresponds to a single $\gG(\bR)^+$-conjugacy class of Shimura subdatum components of simple PEL type I or~II.

Let $D_0 = \rM_d(\bQ)^e$.
Define a $\bQ$-algebra homomorphism $\iota_0 \colon D_0 \to \rM_{2g}(\bQ)$ as follows:
\begin{itemize}
\item when $d=1$:
$\iota_0(a_1, \dotsc, a_e) = a_1 I_m \oplus \dotsb \oplus a_e I_m$.
\item when $d=2$:
\[ \iota_0\Bigl( \fullmatrix{a_1}{b_1}{c_1}{d_1}, \dotsc, \fullmatrix{a_e}{b_e}{c_e}{d_e} \Bigr)
= \fullmatrix{a_1I_{2m}}{b_1I_{2m}}{c_1I_{2m}}{d_1I_{2m}} \oplus \dotsb \oplus \fullmatrix{a_eI_{2m}}{b_eI_{2m}}{c_eI_{2m}}{d_eI_{2m}}. \]
\end{itemize}
We view $V$ as a left $D_0$-module via $\iota_0$.

Let $t$ denote the involution of $D_0$ which is transpose on each factor.
Since $dm$ is even, $\iota_0(D_0)$ commutes with $J_{2g}$ and so, for all $a \in D_0$ and $x, y \in V$, we have
\[ \phi(ax, y) = x^t \iota(a)^t J_{2g} y = x^t J_{2g} \iota(a)^t y = \phi(x, a^t y). \]
Thus $\phi \colon V \times V \to \bQ$ is a $(D_0,t)$-compatible symplectic form.
By \cref{tr-skew-hermitian-form,tr-non-deg}, there is a unique non-degenerate $(D_0,t)$-skew-Hermitian form $\psi_0 \colon V \times V \to D_0$ such that $\phi = \Trd_{D_0/\bQ} \psi_0$.

Let $\gH_0$ denote the centraliser of $\iota_0(D_0)$ in $\gG$.
In other words,
\begin{equation} \label{eqn:H0}
\gH_0 =  \{ g_1^{\oplus d} \oplus g_2^{\oplus d} \oplus \dotsb \oplus g_e^{\oplus d} :
g_1, \dotsc, g_e \in \gGSp_{dm},
\, \nu(g_1) = \dotsb = \nu(g_e) \},
\end{equation}
where $\nu \colon \gGSp_{dm} \to \bG_m$ denotes the symplectic multiplier character.
This is a connected $\bQ$-algebraic group, and it is equal to the general Lefschetz group of a special subvariety of PEL type in which endomorphism generic points correspond to abelian varieties isogenous to a product of the form $A_1^d \times \dotsb \times A_e^d$ where $A_1, \dotsc, A_e$ are pairwise non-isogenous simple abelian varieties of dimension $dm/2$ with $\End(A_1) = \dotsb = \End(A_e) = \bZ$.

\pagebreak 

\begin{lemma} \label{conj-class-mt}
Let $(\gH, X_\gH^+) \subset (\gGSp_{2g}, \Hg)$ be a Shimura subdatum component of simple PEL type I or II.
Let $D$ be the generic endomorphism algebra of $(\gH, X_\gH^+)$ and let $F$ be the centre of $D$.
Then $\gH_\bR$ is a $\gG(\bR)^+$-conjugate of the group $\gH_0$ constructed above for the parameters
\[ d = \sqrt{\dim_F(D)} = 1 \text{ or } 2, \quad e = [F:\bQ], \quad m = 2g/d^2e. \]
\end{lemma}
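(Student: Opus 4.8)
The plan is to prove the statement after extension of scalars to~$\bR$, by reducing it to \cref{D0-basis} applied to a skew-Hermitian form over $D_{0,\bR} = \rM_d(\bR)^e$. First I would unwind the hypotheses using section~\ref{subsec:shimura-data}: choosing an endomorphism generic point $s$ of the special subvariety attached to $(\gH,X_\gH^+)$ and a point above it in $X_\gH^+$ identifies $H_1(A_s,\bZ)$ with $L = \bZ^{2g}$, so that $V = \bQ^{2g}$ becomes a left $D$-vector space (with $D$ the generic endomorphism algebra), the principal polarisation makes $\phi$ (represented by $J_{2g}$) a $(D,\dag)$-compatible skew-symmetric form with $\dag$ the Rosati involution --- a positive involution --- and $\gH$ is the centraliser of $D$ in $\gG = \gGSp(V,\phi)$. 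Since $D$ is a division algebra, $V$ is a free left $D$-module; comparing $\bQ$-dimensions shows its rank is $m$ with $d^2em = 2g$. By \cref{tr-skew-hermitian-form} there is a unique $(D,\dag)$-skew-Hermitian $\psi$ with $\phi = \Trd_{D/\bQ}\psi$, and $\psi$ is non-degenerate by \cref{tr-non-deg}.

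Next I would pass to~$\bR$. As $D$ has Albert type I or~II it is $\bR$-split, and (as recalled in section~\ref{subsec:norm-D}) we may fix an isomorphism $(D_\bR,\dag) \cong (D_{0,\bR},t)$ with $D_0 = \rM_d(\bQ)^e$ and $t$ transpose on each factor. Transporting along it makes $V_\bR$ a free left $D_{0,\bR}$-module of rank~$m$, and $\psi_\bR$ a non-degenerate $(D_{0,\bR},t)$-skew-Hermitian form $\psi'$ with $\Trd_{D_{0,\bR}/\bR}\psi' = \phi_\bR$; by construction $\gH_\bR$ is the centraliser of this $D_{0,\bR}$-action in $\gGSp_{2g}(\bR)$. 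I would also note here that $dm$ is even --- automatic when $d=2$, and when $d=1$ it holds because $\dag$ is trivial, so $\psi$ is an alternating $F$-bilinear form on $V$ and $m = \dim_F V$ must be even --- so that the group $\gH_0$ and the standard form $\psi_0$ of section~\ref{subsec:shimura-representatives} (with $\Trd_{D_0/\bQ}\psi_0 = \phi$) are defined.

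Now I would apply \cref{D0-basis} with $k = \bR$ twice: to $(D_{0,\bR},V_\bR,\psi')$ and to $(D_{0,\bR},V_\bR,\psi_{0,\bR})$ arising from the standard embedding $\iota_{0,\bR}$. Both applications use the same fixed $\bR$-basis $a_1,\dotsc,a_{d^2e}$ of $D_{0,\bR}$ (the $\bQ$-basis of $D_0$ built from the $u_i$ and the $b_r$ in the proof of \cref{D0-basis}), and produce $D_{0,\bR}$-bases $v_1,\dotsc,v_m$ respectively $v_1^0,\dotsc,v_m^0$ that are symplectic ($d=1$) or unitary ($d=2$) for the respective form, with $\{a_rv_j\}$ and $\{a_rv_j^0\}$ each a symplectic $\bR$-basis of $V_\bR$ for $\phi_\bR$. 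Let $g \in \GL(V_\bR)$ send $a_rv_j^0 \mapsto a_rv_j$. Carrying one symplectic $\bR$-basis of $\phi_\bR$ to another, $g$ lies in $\Sp(V_\bR,\phi_\bR) = \Sp_{2g}(\bR)$, hence in $\gGSp_{2g}(\bR)^+$ since $\Sp_{2g}(\bR)$ is connected. For each $j$, left multiplication by any $a \in D_{0,\bR}$ on the span of $a_1v_j,\dotsc,a_{d^2e}v_j$ is given, in that ordered basis, by the matrix of the left regular representation of $D_{0,\bR}$ in the basis $a_1,\dotsc,a_{d^2e}$, i.e.\ by the same matrix as left multiplication by $a$ on the span of $a_1v_j^0,\dotsc,a_{d^2e}v_j^0$; hence $g\,\iota_{0,\bR}(a)\,g^{-1}$ is the transported action of~$a$. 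Therefore $g\gH_0g^{-1}$, being the centraliser in $\gGSp_{2g}(\bR)$ of $g\,\iota_{0,\bR}(D_{0,\bR})\,g^{-1} = D_\bR$, equals $\gH_\bR$, which is the assertion with the stated $d,e,m$.

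I expect the only subtle point to be the simultaneity of the two requirements on $g$ --- being symplectic and intertwining the two $D_{0,\bR}$-actions --- and this is precisely what \cref{D0-basis} is designed to handle: using the \emph{same} distinguished $\bR$-basis $a_1,\dotsc,a_{d^2e}$ of $D_{0,\bR}$ on both sides makes $g$ both a symplectic map and an isomorphism of $D_{0,\bR}$-modules at once. The remaining verifications (that $V$ is free over $D$, that the transported form is again skew-Hermitian with the right reduced trace, that $dm$ is even, and the comparison of regular representations) are routine.
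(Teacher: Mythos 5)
Your proposal is correct and follows essentially the same route as the paper: transport the $D$-action to a $(D_{0,\bR},t)$-action via an isomorphism of $\bR$-algebras with involution, apply \cref{D0-basis} to both the standard form $\psi_0$ (via $\iota_0$) and the transported skew-Hermitian form, and use the resulting bases to build an intertwiner lying in $\gSp_{2g}(\bR)\subset\gG(\bR)^+$ that conjugates $\gH_0$ to $\gH_\bR$. The only cosmetic difference is that the paper defines the conjugating element directly on the $D_{0,\bR}$-bases and deduces symplecticity by comparing the two skew-Hermitian forms and taking reduced traces, whereas you route through the auxiliary symplectic $\bR$-bases of \cref{D0-basis}(iii); your explicit check that $dm$ is even (forced by non-degeneracy of the alternating form when $d=1$) is a nice point the paper leaves implicit.
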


\begin{proof}
The tautological family of principally polarised abelian varieties on $X^+$ restricts to a family of principally polarised abelian varieties on $X_\gH^+$.
The polarisation induces a Rosati involution $\dag$ of the endomorphism algebra of this family, namely~$D$.
As we saw in the construction of the general Lefschetz group, $D$ acts on $V$.
Via this action, the symplectic form $\phi \colon V \times V \to \bQ$ is $(D,\dag)$-compatible.

Since $(D,\dag)$ is a simple $\bQ$-algebra with a positive involution of type I or~II, there is an isomorphism $\alpha \colon (D_{0,\bR},t) \to (D_\bR,\dag)$ of $\bR$-algebras with involution (where $D_0 = \rM_d(\bQ)^e$ for the parameters $d$ and $e$ specified in the lemma).
We obtain an action of $D_{0,\bR}$ on $V_\bR$ by composing the action of $D_\bR$ with $\alpha$.

Since $\phi$ is $(D,\dag)$-compatible, it is also $(D_{0,\bR},t)$-compatible under the via~$\alpha$.
Hence there is a unique non-degenerate $(D_{0,\bR},t)$-skew-Hermitian form $\psi_\alpha \colon V_\bR \times V_\bR \to D_{0,\bR}$ such that $\phi = \Trd_{D_{0,\bR}/\bR} \psi_\alpha$, where ``$(D_{0,\bR},t)$-skew-Hermitian'' refers to the action via~$\alpha$.
(Note that $\psi_\alpha$ is in general different from $\psi_0$ because $\psi_0$ is $(D_{0,\bR},t)$-skew-Hermitian with respect to the action via $\iota_0$.)

By \cref{D0-basis}, there exists a $D_{0,\bR}$-basis $v_1, \dotsc, v_m$ for $V_\bR$ with respect to the action via~$\iota_0$ which is symplectic (if $d=1$) or unitary (if $d=2$) with respect to $\psi_0$.
There likewise exists a $D_{0,\bR}$-basis $w_1, \dotsc, w_m$ for $V_\bR$ with respect to the action via $\alpha$ which is symplectic or unitary with respect to $\psi_\alpha$.

Define $\gamma \in \gGL(V_\bR)$ by
\[ \gamma(\iota_0(a_1)v_1 + \dotsb + \iota_0(a_m)v_m) = \alpha(a_1)w_1 + \dotsb + \alpha(a_m)w_m \]
for all $a_1, \dotsc, a_m \in D_{0,\bR}$.
Because $v_1, \dotsc, v_m$ and $w_1, \dotsc, w_m$ are symplectic or unitary bases (depending on $d$) with respect to $\psi_0$ and $\psi_\alpha$ respectively, we have
\[ \psi_\alpha(\gamma(v_i), \gamma(v_j)) = \psi_\alpha(w_i, w_j) = \psi_0(v_j, v_j) \]
for all $i, j$.
Because $\psi_0$ and $\psi_\alpha$ are $(D_{0,\bR},t)$-skew-Hermitian with respect to the actions via $\iota_0$ and $\alpha$ respectively, it follows that
\[ \psi_\alpha(\gamma(v), \gamma(w)) = \psi_0(v, w) \]
for all $v, w \in V_\bR$.
Taking the reduced trace, we obtain $\phi(\gamma(v), \gamma(w)) = \phi(v, w)$ for all $v, w \in V_\bR$.
In other words, $\gamma \in \gSp(V_\bR, \phi) \subset \gG(\bR)^+$.

Since $\gamma$ is an isomorphism between the representations of $D_{0,\bR}$ given by $\alpha$ and~$\iota_0$, $\gamma \gH_0 \gamma^{-1}$ is the centraliser in $\gG$ of the action of $D_{0,\bR}$ via $\alpha$.
In other words, $\gamma \gH_0 \gamma^{-1}$ is the centraliser in $\gG$ of the action of $D_\bR$, which is the general Lefschetz group~$\gH$.
\end{proof}

\begin{lemma}\label{unique-datum}
For each triple of positive integers $d,e,m$ satisfying $d^2em=2g$, $d=1$ or $2$ and $dm$ even, there exists a unique Shimura subdatum component $(\gH_0,X^+_0)$ of $(\gG,X^+)$ with group~$\gH_0$. Furthermore, the Hodge parameter $h_0$ from \eqref{eqn:h0} is in~$X_0^+$.
\end{lemma}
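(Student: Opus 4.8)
The plan is to let $X_0^+$ be the $\gH_0(\bR)^+$-conjugacy class of the morphism $h_0$ of \eqref{eqn:h0}, to verify that $(\gH_0,X_0^+)$ is a Shimura subdatum component of $(\gG,X^+)$, and then to prove it is the only one with group $\gH_0$. The first point is to check that $h_0$ factors through $\gH_{0,\bR}$: since $dm$ is even and $d^2em=2g$, one regroups the $g$ blocks $\fullsmallmatrix{a}{b}{-b}{a}$ of $h_0(a+ib)$ into $e$ blocks of size $d^2m$, each of the form $\bigl(\fullsmallmatrix{a}{b}{-b}{a}^{\oplus dm/2}\bigr)^{\oplus d}$ with $\fullsmallmatrix{a}{b}{-b}{a}^{\oplus dm/2}\in\gGSp_{dm}(\bR)$ of multiplier $a^2+b^2$; comparison with \eqref{eqn:H0} gives $h_0(\bS)\subseteq\gH_{0,\bR}$ (equivalently, $h_0(\bS)$ centralises $\iota_0(D_0)$). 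As $h_0\in X^+$ and $\gH_0(\bR)^+\subseteq\gG(\bR)^+$, this yields $X_0^+:=\gH_0(\bR)^+\cdot h_0\subseteq X^+$.

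That $(\gH_0,X_0^+)$ is a Shimura subdatum component --- essentially the statement that $(D_0,t,V,\psi_0)$ defines a PEL Shimura datum --- I would check via the axioms. Axiom SV1 is inherited from $(\gG,X^+)$ since $\Lie\gH_{0,\bR}$ is an $\bS$-submodule of $\Lie\gG_\bR$ under $\ad\circ h_0$. Axiom SV2 holds because $\ad(h_0(i))=\ad(J_{2g})$ restricts to a Cartan involution of the adjoint group of $\gH_{0,\bR}$: one has $\gH_0^{\der}\cong\gSp_{dm}^{\times e}$, on each factor of which $\ad(J_{2g})$ is the standard Cartan involution; more abstractly, a Cartan involution of a real reductive group restricts to a Cartan involution of any reductive subgroup it stabilises. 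Axiom SV3 holds since $h_0$ projects non-trivially to each of the $e$ $\bQ$-simple factors of $\gH_0^{\ad}$. This gives existence, and since $h_0\in X_0^+$ by construction it also gives the final assertion of the lemma.

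For uniqueness, let $(\gH_0,X_1^+)$ be any Shimura subdatum component of $(\gG,X^+)$ with group $\gH_0$; then $X_1^+$ is a $\gH_0(\bR)^+$-orbit lying inside $Y:=\{\,h\in X^+: h(\bS)\subseteq\gH_{0,\bR}\,\}$, so it is enough to show $Y$ is connected and is a single $\gH_0(\bR)^+$-orbit, forcing $Y=X_0^+=X_1^+$. I would decompose $V_\bR=\bigoplus_{i=1}^e V_i$ into the isotypic components for the centre $F_{0,\bR}=\bR^e$ of $D_{0,\bR}$; as the idempotents $u_i$ are $t$-fixed, $\phi=\bigoplus_i\phi_i$ is block-diagonal, and an $h\in X^+$ lies in $Y$ exactly when its complex structure $J=h(i)$ commutes with $\iota_0(D_{0,\bR})$ and satisfies $\phi(\cdot,J\,\cdot)\succ0$, i.e. $J=\bigoplus_i J_i$ with each $J_i$ a $D_{0,\bR}$-linear complex structure on $V_i$ with $\phi_i(\cdot,J_i\,\cdot)\succ0$. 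When $d=1$ the set of such $J_i$ is the Siegel upper half-space $\mathcal{H}_{m/2}$; when $d=2$, Morita equivalence identifies the centraliser of $\rM_2(\bR)$ in the relevant symplectic similitude group with a symplectic similitude group on a $2m$-dimensional space, so again the set of such $J_i$ is a Siegel upper half-space. Hence $Y$, a product of these, is connected; since $\gH_0(\bR)^+$ acts on $Y$ with open orbits (by a tangent-space comparison), $Y$ is a single orbit.

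The step I expect to be the main obstacle is this last one: to identify $Y$ as a product of Siegel upper half-spaces one needs, when $d=2$, the Morita-theoretic reduction of the centraliser of $\rM_2(\bR)$ to an honest symplectic similitude group, and in all cases one must track the sign of the polarisation carefully, so as to conclude that $Y$ --- rather than the larger, and possibly disconnected, space of all $\iota_0(D_{0,\bR})$-linear complex structures on $V_\bR$ --- is connected. The verification of SV2 for $\gH_0$ is routine given the explicit form of $\gH_0^{\der}$.
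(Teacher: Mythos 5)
Your proposal is correct, and its existence half is essentially the paper's (the paper simply notes that $h_0\in X^+$ factors through $\gH_{0,\bR}$ and takes the $\gH_0(\bR)^+$-class, whereas you verify SV1--SV3 explicitly; your Cartan-involution and block-regrouping checks are fine). The uniqueness half, however, follows a genuinely different route. The paper passes to the adjoint group $\gH_0^{\ad}\cong\gPGSp_{dm}^e$, uses the bijection $X_0^+\to(X_0^+)^{\ad}$ and the fact that $\gPGSp_{dm}$ has a unique conjugacy class of admissible $h$ with two components, so that any candidate datum comes from a tuple $\bigl((h_2^{\pm})^{\oplus dm/2},\dotsc\bigr)$ with a sign per factor; mixed-sign tuples are then killed by SV2 (the centraliser of $h_2^+\oplus h_2^-$ in $\gGSp_4(\bR)$ is non-compact modulo centre) and the all-minus tuple is killed because its image does not lie in $X^+$. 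You instead analyse the whole locus $Y$ of points of $X^+$ whose complex structure commutes with $\iota_0(D_{0,\bR})$: the isotypic decomposition under the centre, plus Morita equivalence when $d=2$, identifies $Y$ with a product of $e$ Siegel upper half-spaces on which $\gSp_{dm}(\bR)^e\subset\gH_0(\bR)^+$ acts transitively, so $Y$ is a single orbit and every subdatum component with group $\gH_0$ equals it. The trade-off: the paper's argument avoids the Morita reduction, the transitivity verification and the polarisation sign-tracking you flag as the delicate point (positivity is handled there by SV2 together with $h^-\notin X^+$), while your argument avoids the adjoint-group bookkeeping and gives more, namely an explicit equivariant identification of $X_0^+$ with $\prod_{i=1}^e\cH_{dm/2}$; note also that once transitivity on each Siegel factor is in hand your ``open orbits'' remark is superfluous, and that with the paper's conventions $\phi(x,h_0(i)x)$ has a fixed (negative) sign on all of $X^+$, so your positivity condition should just be read as ``the sign matching $h_0$'' --- membership in $X^+$ then automatically excludes both the wrong-sign and the mixed-sign configurations that the paper has to rule out by hand.
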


\begin{proof}
First note that $h_0 \in X^+$ and $h_0$ factors through $\gH_{0,\bR}$.
Hence if $X_0^+$ denotes the $\gH_0(\bR)^+$-conjugacy class of $h_0$ in $\Hom(\bS, \gH_{0,\bR})$, then $(\gH_0, X_0^+)$ is a Shimura subdatum component of $(\gG, X^+)$.

To establish the uniqueness, let $X_0^+$ now denote any subset of $X^+$ such that $(\gH_0, X_0^+)$ is a Shimura datum component.
Let $\gH_0^\ad$ denote the quotient of $\gH_0$ by its centre.
By \cite[Proposition 5.7 (a)]{Mil05}, $X_0^+$ is in bijection with its image $(X^+_0)^{\ad}\subset\Hom(\mathbb{S},\gH^\ad_{0,\RR})$ under composition with the natural map $\gH_{0,\RR}\to\gH^{\ad}_{0,\RR}$.

Observe that $\gH^\ad_0\cong\gPGSp_{md}^e$. Therefore, $(X^+_0)^{\ad}$ is a product of $\gPGSp_{md}(\RR)^+$-conjugacy classes of morphisms $\mathbb{S}\to\gPGSp_{md,\RR}$ satisfying conditions SV1--SV3 from \cite[section~4]{Mil05}. From \cite[Prop 1.24]{Mil05}, and the following paragraphs, there is only one $\gPGSp_{md}(\RR)$-conjugacy class $X_{md}$ of morphisms $\mathbb{S}\to\gPGSp_{md,\RR}$ satisfying SV1--SV3. It has two connected components $X^+_{md}$ and $X^-_{md}$ corresponding to the connected components of $\gPGSp_{md}(\RR)$. In other words, $(X^+_0)^{\ad}$ is equal to a direct product of copies of the spaces $X^+_{md}$ and $X^-_{md}$. 

Consider the morphisms $h^+_2,h^-_2:\mathbb{S}\to\gGL_{2,\RR}$ defined by 
\[h^+_2:a+ib\mapsto\fullsmallmatrix{a}{b}{-b}{a}\text{ and } h^-_2:a+ib\mapsto\fullsmallmatrix{a}{-b}{b}{a}.\]
Then $(h^+_2)^{\oplus md/2}$ and $(h^-_2)^{\oplus md/2}$ are non-$\gGSp_{md}(\RR)^+$-conjugate  morphisms $\mathbb{S}\to\gGSp_{md,\RR}$ satisfying SV1--SV3. Therefore, the images of their $\gGSp_{md}(\RR)^+$-conjugacy classes in $\Hom(\mathbb{S},\gPGSp_{md,\RR})$ are precisely $X^+_{md}$ and $X^-_{md}$.
 
It follows that $(X^+_0)^{\ad}$ is the image in $\Hom(\mathbb{S},\gPGSp^e_{md,\RR})$ of the $\gGSp_{md}^e(\RR)^+$-conjugacy class of an element $h\in\Hom(\mathbb{S},\gGSp^e_{md,\RR})$ of the form
\[\bigl( (h^{\pm}_2)^{\oplus md/2}, \dotsc, (h^{\pm}_2)^{\oplus md/2} \bigr),\]
for some sequence of signs in $\{\pm\}^e$.
Since the image of $h$ in $\Hom(\mathbb{S},\gGSp_{md^2e,\RR})$ (obtained by repeating each component $d$ times block-diagonally) lies in a Shimura datum, it satisfies condition~SV2 of \cite{Mil05}, that is, the stabiliser of $h$ in $\gGSp_{md^2e}(\RR)$ is compact modulo the centre. This only holds when
\[ h=h^+=:((h^+_2)^{\oplus md/2}, \dotsc, (h^+_2)^{\oplus md/2}) \text{ or } h=h^-:=((h^-_2)^{\oplus md/2}, \dotsc, (h^-_2)^{\oplus md/2}).\]
This can be checked by observing that the centraliser of $h^+_2 \oplus h^-_2$ in $\gGSp_4(\RR)$ is non-compact modulo the centre.

Note that the image of $h^+$ in $\Hom(\mathbb{S},\gGSp_{md^2e,\RR})$ is equal to $h_0$.
Since $h_0 \in X^+$ while the image of $h^-$ is not in $X^+$, we conclude that $X_0^+$ must be equal to the $\gH_0(\RR)$-conjugacy class of $h_0$.
\end{proof}

\begin{corollary} \label{conj-class-datum}
If $(\gH, X_\gH^+) \subset (\gG, X^+)$ is a Shimura subdatum component of simple PEL type I or II and $\gH = g\gH_0 g^{-1}$ for $g \in \gG(\bR)^+$, then $X_\gH^+ = gX_0^+$ where $(\gH_0, X_0^+) \subset (\gG, X^+)$ is the unique Shimura subdatum component given by \cref{unique-datum}.
\end{corollary}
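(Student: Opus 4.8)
The plan is to deduce this formally from the uniqueness part of \cref{unique-datum} by transporting the given Shimura subdatum component along conjugation by~$g$. The key point is that conjugation by any element $g \in \gG(\bR)^+$ is an automorphism of the Shimura datum component $(\gG, X^+)$: it sends a morphism $h \colon \bS \to \gG_\bR$ to $\Inn(g) \circ h$, this operation preserves the Shimura datum axioms SV1--SV3, and since $g$ lies in the identity component $\gG(\bR)^+$, it preserves the particular connected component $X^+$ of the ambient conjugacy class.

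Granting this, I would argue as follows. Suppose $(\gH, X_\gH^+) \subset (\gG, X^+)$ is a Shimura subdatum component of simple PEL type I or~II with $\gH = g\gH_0 g^{-1}$ for some $g \in \gG(\bR)^+$; note $\gH = \gH^\circ$ in this case, as recorded after \cref{conj-class-mt}. Applying $\Inn(g^{-1})$ carries $\Hom(\bS, \gH_\bR)$ onto $\Hom(\bS, \gH_{0,\bR})$ (using $g^{-1}\gH g = \gH_0$), carries $\gH(\bR)^+$-conjugacy classes onto $\gH_0(\bR)^+$-conjugacy classes, and, as noted above, preserves membership in $X^+$. Hence $(\gH_0, \Inn(g^{-1})(X_\gH^+))$ is again a Shimura subdatum component of $(\gG, X^+)$, now with underlying group $\gH_0$. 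By the uniqueness assertion of \cref{unique-datum}, this forces $\Inn(g^{-1})(X_\gH^+) = X_0^+$, equivalently $X_\gH^+ = \Inn(g)(X_0^+) = gX_0^+$, as claimed.

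The only step requiring any care is the verification that $\Inn(g)$ with $g \in \gG(\bR)^+$ preserves the notion of a Shimura subdatum component of $(\gG, X^+)$ — in particular that it respects the choice of connected component $X^+$ — and this is standard. Everything else is a direct consequence of \cref{unique-datum}, so there is no substantial obstacle: the corollary is essentially a restatement of that lemma up to conjugation.
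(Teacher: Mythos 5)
Your argument is correct and is essentially the paper's own (implicit) justification of this corollary: transport $(\gH, X_\gH^+)$ by $\Inn(g^{-1})$ to obtain a Shimura subdatum component of $(\gG, X^+)$ with underlying group $\gH_0$ (which is defined over $\bQ$ by hypothesis), then apply the uniqueness assertion of \cref{unique-datum} to conclude $g^{-1}X_\gH^+ = X_0^+$. This is exactly how the result is used and justified in the paper, so there is nothing further to add.
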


\subsection{Dimension of special subvarieties of PEL type}\label{sec:dims}

In this section we prove \cref{codim-pel-intro}: \cref{codim-pel-simple} is \cref{codim-pel-intro}(i), while \cref{codim-pel} is \cref{codim-pel-intro}(ii).

\begin{proposition}\label{codim-pel-simple}
Let $S \subset \Ag$ be a special subvariety, not equal to~$\Ag$.
If $S$ is of simple PEL type, then $\dim(S) \leq \dim(\Ag) - g^2/4$.
\end{proposition}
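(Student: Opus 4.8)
The plan is to compute $\dim S$ in terms of the Albert invariants $d$, $e$, $m$ of the generic endomorphism algebra $D$ of $S$, and then minimise the resulting codimension subject to the numerical constraint $d^2 e m = 2g$. By \cref{conj-class-mt}, if $S$ is of simple PEL type I or~II then its general Lefschetz group $\gH$ is $\gGSp_{2g}(\bR)^+$-conjugate to the explicit group $\gH_0$ of \eqref{eqn:H0}, so $\dim S = \dim X_0^+$, and from the description $\gH_0 \cong \{ g_1^{\oplus d}\oplus\dots\oplus g_e^{\oplus d} : g_i \in \gGSp_{dm}, \nu(g_1)=\dots=\nu(g_e)\}$ one reads off that $\gH_0^{\mathrm{ad}} \cong \gPGSp_{dm}^e$, so the associated symmetric space is a product of $e$ copies of the Siegel space $\cH_{dm/2}$ (which is also what one sees in the proof of \cref{unique-datum}). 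Hence $\dim S = e\binom{dm/2 + 1}{2} = \tfrac{e}{8}dm(dm+2)$. I would also need to cover the simple PEL types III and~IV (since \cref{codim-pel-simple} is stated for all simple PEL type, not just I and~II); for those one uses the analogous classification of the Lefschetz group — type III gives $\gH^{\mathrm{ad}}$ a product of special orthogonal groups $\gSO_{m}^e$ and type IV a product of unitary groups — and records the dimension of the corresponding Hermitian symmetric domain. In every case $\dim \Ag = g(g+1)/2$ with $2g = d^2 e m$ (type IV: $2g = 2d^2em$), so the codimension is a fixed quadratic expression in the invariants.

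The core of the argument is then elementary optimisation. For types I and~II, write $n = dm$, so $2g = den$ and $\dim S = \tfrac{e}{8}n(n+2)$, while $\dim\Ag = \tfrac{1}{2}\cdot\tfrac{den}{2}\cdot(\tfrac{den}{2}+1)=\tfrac{den(den+2)}{8}$. Thus
\[ \dim\Ag - \dim S = \frac{e n}{8}\bigl( den(den+2) - ... \bigr) \]
— more cleanly, $\dim\Ag - \dim S = \tfrac{1}{8}\bigl(den(den+2) - en(n+2)\bigr) = \tfrac{en}{8}\bigl(den\cdot de - \dots\bigr)$; carrying this out, $\dim\Ag-\dim S = \tfrac{en}{8}\bigl((den)(de-1) + 2(de-1)\bigr)\cdot\tfrac{1}{?}$ — I will simply expand directly: $den(den+2) - en(n+2) = en\bigl(d^2en + 2d - n - 2\bigr)$, and since $S\ne\Ag$ we have $de \geq 2$, so $d^2 e n - n = n(d^2e - 1) \geq n$ and the bracket is at least $n + 2(d-1) \geq n$, giving $\dim\Ag - \dim S \geq \tfrac{en^2}{8} = \tfrac{e(dm)^2}{8} \geq \tfrac{(dm)^2}{8}$. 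Now $2g = d^2 e m \geq \tfrac{d^2 m}{1}\cdot\tfrac{?}$ — one wants $(dm)^2/8 \geq g^2/4$, i.e. $(dm)^2 \geq 2g^2$, which need not hold. So the crude bound is insufficient and I must instead treat the two Albert types separately and keep track of which of $d,e$ is large. The right move: among types I and~II, codimension is minimised when the abelian varieties are "as close to $\cA_g$ as possible", i.e. $m$ large, $e=1$; for type~I ($d=1$), $\Sigma$ excludes $D=\bQ$ so $e\geq 2$, forcing $2g = em$ with $e\geq 2$, hence $\dim S = \tfrac{e}{8}m(m+2)$ with $m = 2g/e \leq g$, and codimension $= \tfrac{g(2g+2)}{4} - \tfrac{e}{8}\cdot\tfrac{2g}{e}(\tfrac{2g}{e}+2) = \tfrac{g(g+1)}{2} - \tfrac{g}{4}(\tfrac{2g}{e}+2) \geq \tfrac{g(g+1)}{2} - \tfrac{g}{4}(g+2) = \tfrac{g^2}{4}$, the minimum occurring at $e=2$. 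For type~II ($d=2$): $\dim S = \tfrac{e}{8}(2m)(2m+2) = \tfrac{em(2m+1)}{2}$ with $4em = 2g$, so $m = g/(2e) \leq g/2$, and codimension $= \tfrac{g(g+1)}{2} - \tfrac{g(2m+1)}{4} = \tfrac{g(g+1)}{2} - \tfrac{g}{4}(\tfrac{g}{e}+1) \geq \tfrac{g(g+1)}{2} - \tfrac{g}{4}(g+1) = \tfrac{g(g+1)}{4} > \tfrac{g^2}{4}$. So in both cases the codimension is at least $g^2/4$, with equality possible only in type~I with $e=2$ (the case $\End^0 = $ real quadratic field, $m = g$).

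For simple PEL types III and~IV I would argue similarly: in each case $\dim S$ is the dimension of a product of $e$ smaller Hermitian symmetric domains (quaternionic-orthogonal for type~III, giving $\dim = e\binom{m}{2}$ with $4em = 2g$; unitary $\gSU_{p,q}$ for type~IV, giving $\dim = e\cdot pq$ with $2d^2em = 2g$ and $p+q = dm$, maximised at $p = q = dm/2$ giving $\dim = e(dm)^2/4$), and the same optimisation — the proper-subvariety constraint forces the relevant invariant to be bounded, and the codimension works out to be $\geq g^2/4$ comfortably (types III and IV are "further" from $\cA_g$ than type~I, so the bound is not tight there). The main obstacle is purely bookkeeping: getting the dimension formula for the Hermitian symmetric domain of $\gH_0$ exactly right in each Albert type — in particular handling the signature $(p,q)$ in type~IV, and correctly identifying $e = \dim_\bQ(\text{centre})$ versus $\dim_\bQ(\text{centre over its totally real subfield})$ — and then checking that the discrete optimisation over the (finitely many) admissible triples $(d,e,m)$ really does bottom out at exactly $g^2/4$. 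Since this proposition is not needed for \cref{main-theorem-zp}, I would keep the exposition terse, possibly tabulating $\dim S$ by Albert type and invoking the relevant entries of the standard tables (e.g. Milne's or Birkenhake–Lange's list of endomorphism structures and the dimensions of the corresponding sub-Shimura data).
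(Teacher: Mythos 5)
Your proposal takes essentially the same route as the paper: the paper also records $\dim S$ by Albert type via Shimura's formulas \cite[4.1]{Shi63} (equivalently, the Hermitian symmetric domain of the general Lefschetz group, as you derive for types I and~II from \eqref{eqn:H0}) and then applies the same elementary inequalities from $2g=d^2em$, with the extremal case being type~I, $e=2$, $m=g$, where $\dim S = \tfrac{1}{4}g^2+\tfrac{1}{2}g = \dim(\Ag)-\tfrac{1}{4}g^2$. The only blemishes are a harmless algebra slip in type~II (the dimension is $\tfrac{1}{2}em(m+1)$, not $\tfrac{1}{2}em(2m+1)$; your overestimate still gives codimension $\geq \tfrac{1}{4}g(g+1) > \tfrac{1}{4}g^2$) and the fact that types III and~IV are only sketched, but the paper completes them exactly as you indicate, via $\dim S = \tfrac{1}{2}em(m-1)$ for type~III and $\dim S = \sum_i r_is_i \leq \tfrac{1}{4}gm \leq \tfrac{1}{4}g^2$ for type~IV.
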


\begin{proof}
Let $D$ be the generic endomorphism algebra of $S$.  Following our usual notation, let $F$ be the centre of $D$ and let
\[ d = \sqrt{\dim_F(D)}, \quad e = [F:\bQ], \quad m = 2g/d^2e. \]

When $D$ has Albert type~IV, we need some additional notation.
Let $s \in S(\bC)$.
Then $D_\bR \cong \rM_d(\bC)^{e/2}$ acts $\bR$-linearly on the tangent space $T_0(A_s(\bC))$.
For each $i = 1, \dotsc, e/2$, let $r_i$ denote the multiplicity in $T_0(A_s(\bC))$ of the standard representation of the $i$-th factor $\rM_d(\bC)$ of $D_\bR$.
Similarly let $s_i$ denote the multiplicity of the complex conjugate of the standard representation of the $i$-th factor of $D_\bR$.
The values $r_i$ and $s_i$ are independent of the choice of $s \in S(\bC)$, and satisfy $r_i + s_i = dm$.

The dimension of special subvarieties of simple PEL type was determined by Shimura \cite[4.1]{Shi63}.
Note that our $m$ is the same as $m$ in \cite{Shi63}, while our $e$ is called $g$ in \cite{Shi63}.
For a more modern account of this theory, see \cite[chapter~9]{BL04}.
For each type of endomorphism algebra~$D$, we quote the dimension of the special subvariety from \cite[4.1]{Shi63} and use some elementary inequalities.

When $D$ has type~I, $d=1$, $em = 2g$ and $e \geq 2$ since $S \neq \Ag$, so $m \leq g$.
Hence
\[ \dim(S) = \tfrac{1}{2} \tfrac{m}{2} \bigl( \tfrac{m}{2} + 1 \bigr)e
\leq \tfrac{1}{2} g \bigl( \tfrac{1}{2}g + 1 \bigr) = \tfrac{1}{4}g^2 + \tfrac{1}{2}g. \]

When $D$ has type~II, $d=2$, $em = g/2$ and $m \leq g/2$ so
\[ \dim(S) = \tfrac{1}{2} m(m+1)e
\leq \tfrac{1}{4}g \bigl( \tfrac{1}{2}g + 1 \bigr) = \tfrac{1}{8}g^2 + \tfrac{1}{4}g. \]

When $D$ has type~III, $d=2$, $em = g/2$ and $m \leq g/2$ so
\[ \dim(S) = \tfrac{1}{2} m(m-1)e
\leq \tfrac{1}{4}g \bigl( \tfrac{1}{2}g - 1 \bigr) = \tfrac{1}{8}g^2 - \tfrac{1}{4}g. \]

When $D$ has type~IV, $2g = d^2em$ and $e \geq 2$ since $F$ is a CM field, so $m \leq g$.
Furthermore $r_i + s_i = dm$ so $r_is_i \leq d^2m^2/4$ for each~$i$.
Hence,
\[ \dim(S) = \sum_{i=1}^{e/2} r_i s_i \leq \tfrac{1}{2}e \cdot \tfrac{1}{4}d^2m^2 = \tfrac{1}{4}gm \leq \tfrac{1}{4}g^2. \]

Hence in all cases,
\[ \dim(S) \leq \tfrac{1}{4}g^2 + \tfrac{1}{2}g = \tfrac{1}{2}g(g+1) - \tfrac{1}{4}g^2 = \dim(\Ag) - \tfrac{1}{4}g^2.
\qedhere
\]
\end{proof}

\pagebreak 

\begin{proposition}\label{codim-pel}
Let $S \subset \Ag$ be a special subvariety, not equal to~$\Ag$.
If $S$ is of PEL type, then $\dim(S) \leq \dim(\Ag) - g + 1$.
\end{proposition}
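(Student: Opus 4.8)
The plan is to reduce the general PEL case to the simple PEL case already handled in \cref{codim-pel-simple}, by decomposing a general abelian variety parametrised by $S$ into isotypic factors and bounding the dimension of each factor separately. First I would take an endomorphism generic point $s \in S(\bC)$ and write the abelian variety $A_s$ (up to isogeny) as a product $\prod_{j=1}^{k} B_j^{n_j}$, where the $B_j$ are pairwise non-isogenous simple abelian varieties, each with endomorphism algebra $D_j$ a division $\bQ$-algebra with positive involution of one of the four Albert types. Writing $2 g_j = n_j \cdot 2\dim(B_j)$ for the contribution of the $j$-th isotypic block, so that $\sum_j g_j = g$, the general Lefschetz group $\gH$ of $S$ is (up to conjugation) a product $\prod_j \gH_j$ amalgamated over the common symplectic multiplier $\bG_m$, where $\gH_j$ is the general Lefschetz group attached to the $j$-th block acting on a symplectic space of dimension $2g_j$. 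Correspondingly the Shimura datum component $(\gH^\circ, X_\gH^+)$ factors, and $\dim(S) = \dim X_\gH^+ = \sum_{j=1}^{k} \dim X_{\gH_j}^+$, where $\dim X_{\gH_j}^+$ is exactly the dimension of the corresponding (simple or trivial) PEL special subvariety inside $\cA_{g_j}$.

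The next step is a case analysis on each block. If the block is ``trivial'' in the sense that $D_j = \bQ$ and $n_j = 1$ (i.e.\ $B_j$ is a simple abelian variety of dimension $g_j$ with $\End(B_j) = \bZ$), then the block contributes a full $\cA_{g_j}$, of dimension $\tfrac12 g_j(g_j+1)$. If the block has $D_j \neq \bQ$, then by \cref{codim-pel-simple} applied within $\cA_{g_j}$ — more precisely by the inequalities extracted from Shimura's dimension formula in its proof — one has $\dim X_{\gH_j}^+ \leq \tfrac12 g_j(g_j+1) - \tfrac14 g_j^2 \leq \tfrac12 g_j(g_j+1) - (g_j - 1)$, using $\tfrac14 g_j^2 \geq g_j - 1$ for all positive integers $g_j$ (equivalently $(g_j-2)^2 \geq 0$). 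The remaining case is $D_j = \bQ$ but $n_j \geq 2$: then $B_j^{n_j}$ has dimension $g_j$ with $g_j = n_j \dim(B_j)$, and the Lefschetz group $\gH_j$ is $\gGSp_{2\dim(B_j)}$ embedded diagonally $n_j$ times, so $\dim X_{\gH_j}^+ = \tfrac12 \dim(B_j)(\dim(B_j)+1)$; since $\dim(B_j) \leq g_j/2$ one checks $\tfrac12 \dim(B_j)(\dim(B_j)+1) \leq \tfrac12 g_j(g_j+1) - (g_j-1)$ by the same elementary estimate.

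Finally I would assemble the blocks. Since $S \neq \Ag$, at least one block is non-trivial, say the block $j=1$; for that block $\dim X_{\gH_1}^+ \leq \tfrac12 g_1(g_1+1) - (g_1 - 1)$, while for every other block $\dim X_{\gH_j}^+ \leq \tfrac12 g_j(g_j+1)$. Summing and using $\sum_j g_j = g$ together with the superadditivity inequality $\sum_j \tfrac12 g_j(g_j+1) \leq \tfrac12 g(g+1) - \sum_{i<j} g_i g_j \leq \tfrac12 g(g+1)$ (in fact one only needs $\sum_j \tfrac12 g_j(g_j+1) \le \tfrac12 g(g+1) - (\text{number of non-trivial blocks beyond the first})\cdot 0$, i.e.\ just $\sum g_j^2 \le g^2$ rearranged), one obtains
\[
\dim(S) = \sum_{j=1}^{k} \dim X_{\gH_j}^+ \leq \tfrac12 g(g+1) - (g_1 - 1) \leq \tfrac12 g(g+1) - (g - 1) = \dim(\Ag) - g + 1,
\]
where the last inequality uses $g_1 \leq g$. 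I expect the main obstacle to be making the factorisation $\gH \cong (\prod_j \gH_j) \times_{\bG_m} \cdots$ and the resulting identity $\dim X_\gH^+ = \sum_j \dim X_{\gH_j}^+$ fully rigorous — in particular keeping track of the common symplectic multiplier so that the dimension count is not off by the rank of a central torus, and confirming that Shimura's formula is being invoked with the right normalisation of $g_j$ (the same point already flagged in the proof of \cref{codim-pel-simple}, that ``our $m$'' agrees with Shimura's $m$). The arithmetic with the quadratic inequalities is routine and I would not belabour it.
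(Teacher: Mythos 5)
Your block-decomposition strategy is genuinely different from the paper's (which, for the non-simple case, picks an isogeny splitting $A \sim A_1 \times A_2$ and bounds $\dim(S)$ by $\dim(\cA_{g_1} \times \cA_{g_2})$ via a countable-fibre argument, then uses $g_1g_2 \ge g-1$), and it can be made to work, but the final assembly as you wrote it contains two genuine errors. First, the claim ``since $S \neq \Ag$, at least one block is non-trivial'' is false: a point with $A_s \sim B_1 \times B_2$, the $B_i$ non-isogenous simple with $\End(B_i)=\bZ$, lies on a proper special subvariety of PEL type all of whose blocks are trivial in your sense. Second, your last inequality $\tfrac12 g(g+1) - (g_1-1) \leq \tfrac12 g(g+1) - (g-1)$ ``using $g_1 \leq g$'' runs in the wrong direction: $g_1 \le g$ gives $-(g_1-1) \ge -(g-1)$, so the chain only closes when $g_1 = g$, i.e.\ when there is a single block. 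The cure is exactly the cross terms you mention and then discard: $\sum_j \tfrac12 g_j(g_j+1) = \tfrac12 g(g+1) - \sum_{i<j} g_i g_j$, and when $k \ge 2$ one has $\sum_{i<j} g_i g_j \ge g_1(g-g_1) \ge g-1$, which already yields the bound even if every block is estimated by the full $\tfrac12 g_j(g_j+1)$ (this is precisely the paper's inequality $g_1g_2 \ge g-1$); when $k=1$ the unique block is non-trivial and your per-block bound with $g_1 = g$ suffices. So the correct case split is on the number of blocks, not on the existence of a non-trivial block.

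A smaller but real gap: for a block with $D_j \neq \bQ$ and $n_j \ge 2$ you invoke \cref{codim-pel-simple} ``within $\cA_{g_j}$'', but that proposition applies to special subvarieties of \emph{simple} PEL type, whereas such a block has generic endomorphism algebra $\rM_{n_j}(D_j)$, which is not a division algebra. You need the same Morita/deformation reduction you carry out in the $D_j=\bQ$, $n_j \ge 2$ case: deformations of $B_j^{n_j}$ preserving $\rM_{n_j}(D_j)$ agree with deformations of $B_j$ preserving $D_j$, so the block dimension is that of a (simple or full) PEL locus in $\cA_{\dim B_j}$ with $\dim B_j \le g_j/2$, after which the elementary estimates still close. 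You should also record why the isotypic decomposition of $V$ is orthogonal for $\phi$ (positivity of the Rosati involution forces it to preserve each isotypic idempotent), which is what makes $\gH$ the fibre product over the multiplier and gives $\dim X_\gH^+ = \sum_j \dim X_{\gH_j}^+$. With these repairs your argument is a valid, more group-theoretic alternative to the paper's moduli-counting proof; as written, the concluding inequality does not hold.
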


\begin{proof}
Note that $g^2/4 \geq g-1$ for all real numbers~$g$, so \cref{codim-pel-simple} implies the claim for special subvarieties of simple PEL type.

Let $S \subset \Ag$ be a special subvariety of non-simple PEL type.
By adding level structure, we may obtain a finite cover $S' \to S$ which is a fine moduli space of abelian varieties with PEL structure.
Then there is a universal abelian scheme with PEL structure $A \to S'$.
Since $S'$ is of non-simple PEL type, $A$ is a non-simple abelian scheme.
Thus there exist non-trivial abelian schemes $A_1, A_2 \to S'$ such that $A$ is isogenous to $A_1 \times A_2$. (There may be multiple choices of isogeny decompositions of $A$.  Choose any such decomposition.)
Let $g_1$, $g_2$ denote the relative dimensions of $A_1$ and $A_2$ respectively.

Let
\[ T = \{ (s, s_1, s_2) \in S' \times \cA_{g_1} \times \cA_{g_2} : A_s \text{ is isogenous to } A_{s_1} \times A_{s_2} \}. \]
Since isogenies $A_s \to A_{s_1} \times A_{s_2}$ give rise to Hodge classes on $A_s \times A_{s_1} \times A_{s_2}$, the locus~$T$ is a countable union of special subvarieties of $S' \times \cA_{g_1} \times \cA_{g_2}$.

By construction, the projection $T \to S'$ is surjective on $\bC$-points.
An irreducible complex algebraic variety cannot be contained in the union of countably many proper closed subvarieties.
Hence there exists an irreducible component $T^+ \subset T$ such that the image of $T^+$ is dense in $S'$.
Hence $\dim(T^+) \geq \dim(S') = \dim(S)$.

Given any two abelian varieties $A_{s_1}$ and $A_{s_2}$ over $\bC$, there are only countably many isomorphism classes of abelian varieties which are isogenous to $A_{s_1} \times A_{s_2}$.
Furthermore each abelian variety of dimension~$g$ carries only finitely many PEL structures parameterised by $S'$ (the natural morphism $S' \to \Ag$ is finite).
Hence the projection $T \to \cA_{g_1} \times \cA_{g_2}$ has countable fibres.
Therefore
\[ \dim(T^+) \leq \dim(\cA_{g_1} \times \cA_{g_2}) = \frac{g_1(g_1+1)}{2} + \frac{g_2(g_2+1)}{2}. \]

Since $g_1+g_2=g$, we obtain
\[ \tfrac{1}{2}g_1(g_1+1) + \tfrac{1}{2}g_2(g_2+1)
= \tfrac{1}{2}\bigl( (g_1+g_2)^2 - 2g_1g_2 + (g_1+g_2) \bigr)
= \tfrac{1}{2}(g^2+g) - g_1g_2. \]
Therefore
\[ \dim(S) \leq \dim(T^+) \leq \dim(\Ag) - g_1g_2. \]
Now $g_1g_2 = g_1(g-g_1)$ is a quadratic function of $g_1$ with a maximum at $g_1=g/2$.
Hence, for $1 \leq g_1 \leq g-1$, $g_1g_2$ is minimised when $g_1 = 1$ or $g-1$.
Thus $g_1g_2 \geq g-1$.
\end{proof}

\section{Construction of representation and closed orbit} \label{sec:representation}

This section constructs the representation required for the strategy outlined in section~\ref{subsec:parameter-space} and proves that it satisfies conditions (i) and~(ii) of \cite[Theorem~1.2]{QRTUI}.
These conditions are algebraic and geometric in nature.
We also prove a small piece of arithmetic information about the representation, namely \cref{reps-closed-orbits}(v), which will be used to obtain more substantial arithmetic properties in section~\ref{sec:rep-bound}.
This section generalises \cite[sections 5.2 and~5.3]{QRTUI}.

We will actually construct two representations $\rho_L, \rho_R \colon \gG \to \gGL(W)$, which are induced by left and right multiplication respectively in $\End(V)$.
The representation to which we shall apply \cite[Theorem~1.2]{QRTUI} is $\rho_L$, while $\rho_R$ is an auxiliary object required at the end of section~\ref{sec:rep-bound}.

\begin{proposition} \label{reps-closed-orbits}
Let $d$, $e$ and $m$ be positive integers such that $dm$ is even.
Let $n = d^2em$.
Let $L = \bZ^n$ and let $\phi \colon L \times L \to \bZ$ be the standard symplectic form as in section~\ref{subsec:shimura-data}.
Let $V = L_\bQ$, let $\gG = \gGSp(V, \phi) = \gGSp_{n,\bQ}$ and let $\Gamma = \gSp_n(\bZ)$.

Let $E_0$ be a $\bQ$-subalgebra of $\End(V) = \rM_n(\bQ)$ such that $E_{0,\bC} \cong \rM_{dm}(\bC)^e$ and the resulting $E_{0,\bC}$-module structure on $V_\bC$ is isomorphic to the direct sum of $d$ copies of each of the $e$ irreducible representations of $E_{0,\bC}$.
Let $\gH_0$ be the $\bQ$-algebraic subgroup of $\gG$ whose $k$-points are
\[ \gH_0(k) = (E_0 \otimes_\bQ k) \cap \gG(k) \]
for each field extension $k$ of~$\bQ$.

Then there exists a $\bQ$-vector space $W$, a $\bZ$-lattice $\Lambda \subset W$, $\bQ$-algebraic representations $\rho_L, \rho_R \colon \gG \to \gGL(W)$, a vector $w_0 \in \Lambda$ and a constant $\newC{du-multiplier}$ such that:
\begin{enumerate}[(i)]
\item $\Stab_{\gG,\rho_L}(w_0) = \Stab_{\gG,\rho_R}(w_0) = \gH_0$;

\item the orbit $\rho_L(\gG(\bR))w_0$ is closed in $W_\bR$;

\item $\rho_L$ and $\rho_R$ commute with each other;

\item $\rho_L(\Gamma)$ and $\rho_R(\Gamma)$ stabilise $\Lambda$;

\item for each $u \in \gG(\bR)$, if the group $\gH_u = u \gH_{0,\bR} u^{-1}$ is defined over~$\bQ$, then there exists $d_u \in \bR_{>0}$ such that
\[ d_u \rho_L(u) \rho_R(u) w_0 \in \Lambda  \quad  \text{and}  \quad  d_u \leq \refC{du-multiplier} \abs{\disc(S_u)}^{1/2} \]
where $S_u$ denotes the ring $u E_{0,\bR} u^{-1} \cap \rM_n(\bZ)$.
\end{enumerate}
\end{proposition}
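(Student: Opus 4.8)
The plan is to build $\rho_L$ and $\rho_R$ on a top exterior power of $\End(V)=\rM_n(\bQ)$. Put $k=\dim_\bQ E_0=(dm)^2e$ and $N=k/2$ (an integer, since $dm$ is even). Let $W=\extpower^{k}\End(V)$, let $\Lambda=\extpower^{k}(\rM_n(\bZ))$, fix a $\bZ$-basis $e_1,\dotsc,e_k$ of $E_0\cap\rM_n(\bZ)$, and set $w_0=e_1\wedge\dotsb\wedge e_k\in\Lambda$. Writing $\nu\colon\gG\to\bG_m$ for the symplectic multiplier, let $\rho_L$ be the $k$-th exterior power of the action $g\cdot X=gX$, twisted by the character $\nu^{-N}$, and let $\rho_R$ be the $k$-th exterior power of $g\cdot X=Xg^{-1}$, twisted by $\nu^{N}$.

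\textbf{The easy properties.} Properties (iii) and~(iv) are immediate: left and right multiplication commute and both twists are central, giving~(iii); while $\nu$ is trivial on $\Gamma=\gSp_n(\bZ)$ and multiplication on either side by an element of $\rM_n(\bZ)$ preserves $\rM_n(\bZ)$, giving~(iv) (and $w_0\in\Lambda$ by construction). For~(i): if $\rho_L(g)w_0=w_0$ then $ge_1,\dotsc,ge_k$ span $E_0$, hence $g=g\cdot 1\in E_0^\times\cap\gG=\gH_0$; conversely, for $g\in\gH_0$ the operator $X\mapsto gX$ preserves $E_0$ and, computing on $E_{0,\bR}\cong\rM_{dm}(\bR)^e$ and using that $\gH_0$ is connected, has determinant $\nu(g)^{N}$, which the twist by $\nu^{-N}$ cancels. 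The same argument applies to $\rho_R$ (now the determinant of $X\mapsto Xg^{-1}$ on $E_0$ is $\nu(g)^{-N}$).

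\textbf{The closed orbit (the main obstacle).} Part~(ii) is the substantive point, and I would deduce it from the real Kempf--Ness theorem (Richardson--Slodowy) by exhibiting $w_0$ as a minimal vector. Choose a complex structure $J\in\gH_0(\bR)$ with $\phi(v,Jv)>0$ for all $v\neq 0$ --- equivalently, a point of $X_0^+$, which exists by \cref{unique-datum}. Then $\innerprod{v}{w}_V=\phi(v,Jw)$ is a positive-definite symmetric form on $V_\bR$; let $\sharp$ be the associated adjoint involution of $\End(V)_\bR$ and put $\innerprod{X}{Y}_0=\Tr(X^\sharp Y)$, a positive-definite inner product on $\End(V)_\bR$, extended to $W_\bR$. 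This inner product is invariant under the maximal compact $K\subset\gG(\bR)$ fixing $J$, and makes $\rho_L$ self-adjoint for the Cartan involution $\Ad(J)$ of $\gG_\bR$. The corresponding moment map at $w_0$ sends $Y$ in the $(-1)$-eigenspace of $\Ad(J)$ to a scalar multiple of $\Tr(YP)-m\Tr(Y)$, where $P=\sum_i e_i e_i^\sharp$ for a $\innerprod{\cdot}{\cdot}_0$-orthonormal basis $\{e_i\}$ of $E_{0,\bR}$. Since $\innerprod{\cdot}{\cdot}_0$ is positive definite, $\sharp$ restricts to a \emph{positive} involution of $E_{0,\bR}$ --- so a good choice of $J$ "corrects" the symplectic-type involution $E_{0,\bR}$ inherits from $\phi$ --- and a factor-by-factor computation on $E_{0,\bR}\cong\rM_{dm}(\bR)^e$ then gives $P=m\cdot I_V$. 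Hence the moment map vanishes at $w_0$, so $\rho_L(\gG(\bR))w_0$ is closed. Verifying the self-adjointness of $\rho_L$ and the identity $P=m I_V$, and treating the passage from $\gGSp_n$ to its identity component, is the bulk of the work; the rest is routine.

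\textbf{The arithmetic bound.} Finally, for~(v): since the twists of $\rho_L$ and $\rho_R$ are mutually inverse, $\rho_L(u)\rho_R(u)w_0=(ue_1u^{-1})\wedge\dotsb\wedge(ue_ku^{-1})$, a nonzero vector spanning $\extpower^{k}(B_{u,\bR})$, where $B_u=uE_{0,\bR}u^{-1}\cap\rM_n(\bQ)$ is a $\bQ$-form of $uE_{0,\bR}u^{-1}$ (because $\gH_u$, hence the algebra it spans, is defined over~$\bQ$) and $S_u=B_u\cap\rM_n(\bZ)$. As $S_u$ is saturated in $\rM_n(\bZ)$, the rank-one lattice $\Lambda\cap\extpower^{k}(B_{u,\bR})$ is generated by $f_1\wedge\dotsb\wedge f_k$ for a $\bZ$-basis $f_1,\dotsc,f_k$ of $S_u$; writing $ue_iu^{-1}=\sum_j c_{ij}f_j$ and $C=(c_{ij})$, the least $d_u>0$ with $d_u\rho_L(u)\rho_R(u)w_0\in\Lambda$ is $1/\abs{\det C}$. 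Because conjugation by $u$ preserves $\Tr$, the pairing $(a,b)\mapsto\Tr(ab)$ yields $\det(\Tr(e_ie_j))=(\det C)^2\det(\Tr(f_if_j))$; and on both $E_0$ and $B_u$ this pairing is $1/m$ times the non-reduced trace form of the algebra, so $\det(\Tr(e_ie_j))=m^{-k}\disc(E_0\cap\rM_n(\bZ))$ and $\det(\Tr(f_if_j))=m^{-k}\disc(S_u)$. Therefore $d_u=\abs{\disc(S_u)}^{1/2}\big/\abs{\disc(E_0\cap\rM_n(\bZ))}^{1/2}$, which establishes~(v) with $\refC{du-multiplier}=\abs{\disc(E_0\cap\rM_n(\bZ))}^{-1/2}$, a constant depending only on $E_0$.
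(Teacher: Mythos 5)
Your parts (i), (iii), (iv) and (v) are essentially the paper's own argument (for (v) the paper phrases the same computation as a ratio of covolumes of $S_u$ and $uS_0u^{-1}$ with respect to the non-reduced trace form rather than via Gram determinants), but your proof of (ii) — the substantive point, as you say — has a genuine gap. Your Kempf--Ness/Richardson--Slodowy argument needs a complex structure $J \in \gH_0(\bR)$ with $\phi(v,Jv)>0$, and needs the associated adjoint involution $\sharp$ to restrict to a positive involution of $E_{0,\bR}\cong\rM_{dm}(\bR)^e$. None of this is implied by the hypotheses of the proposition: no compatibility whatsoever between $E_0$ and $\phi$ is assumed (in particular $E_0$ need not be stable under the adjoint involution of $\phi$, so $\sharp$ need not preserve $E_{0,\bR}$), and only $E_{0,\bC}\cong\rM_{dm}(\bC)^e$ is assumed, so $E_{0,\bR}$ may be non-split (the paper deliberately keeps this generality with Albert types III and IV in mind). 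Concretely, take $n=4$, $d=2$, $e=m=1$, $E_0=\{a\oplus a: a\in\rM_2(\bQ)\}$ and $\phi$ the symplectic form pairing the two blocks, i.e.\ with Gram matrix $\fullsmallmatrix{0}{I_2}{-I_2}{0}$ in that block decomposition: then a compatible $J=a\oplus a$ forces $a\in O(2)$ with $a^2=-I$, so $a=\pm J_2$, and $\phi(v,Jv)=\pm 2x_1^tJ_2x_2$ is indefinite. So $\gH_0(\bR)\;(\cong\mathrm{GO}_2(\bR))$ contains no positive complex structure at all, yet the hypotheses of the proposition are satisfied and the conclusion (ii) still holds. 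Your appeal to \cref{unique-datum} does not repair this: that lemma concerns only the specific group $\gH_0$ of \eqref{eqn:H0} with $d\le 2$, not a general $E_0$ as in the proposition. (A smaller repairable slip in the same spirit occurs in your proof of (i), where you compute on $E_{0,\bR}\cong\rM_{dm}(\bR)^e$ and invoke connectedness of $\gH_0$ — in the example above $\gH_0$ is disconnected; the determinant identity should be, and in the paper is, proved over $\bC$, where it needs neither splitness over $\bR$ nor connectedness.)

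Even in the special case actually used later (where $E_0$ is the centraliser \eqref{eqn:E_0}, is $\bR$-split, is stable under the adjoint involution, and $h_0$ gives the required $J$), your moment-map computation is plausible (your formula $\Tr(YP)-m\Tr(Y)$ and $P=mI_V$ do check out there), but the compatibility verifications you defer ($K$-invariance, self-adjointness of $\rho_L(\exp\mathfrak{p})$, the similitude/centre issues) are exactly the content, so the argument is at best a proof of a weaker statement with extra hypotheses. The paper avoids all of this: it reduces (ii) to Zariski-closedness of $\rho_L(\gG(\bC))w_0$ via Borel--Harish-Chandra, and proves that directly by showing the image of the degenerate-similitude monoid $Q\subset\End(V_\bC)$ under $\extpower^{mn}\sigma_L$ applied to $w_0$ is closed and cutting it down by an equivariant linear functional $\zeta$ scaling by $\det(g)^m$; that route uses only the $E_{0,\bC}$-module hypothesis and no interaction between $E_0$ and $\phi$. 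To salvage your approach you would either have to add hypotheses to the proposition (defeating its intended generality) or find a minimal vector elsewhere on the orbit, which is tantamount to proving closedness by other means.
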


In our application to \cref{main-theorem-zp}, $\gH_0$ shall be equal to the group $\gH_0$ defined in~\eqref{eqn:H0}.
To achieve this, let $d=1$ or~$2$ and define $D_0$ and $\iota_0 \colon D_0 \to \rM_n(\bQ)$ as in section~\ref{subsec:shimura-representatives} (with $n=2g$).
Let $E_0$ be the centraliser of $\iota_0(D_0)$ in $\rM_n(\bQ)$, that is,
\begin{equation} \label{eqn:E_0}
E_0 = \{ f_1^{\oplus d} \oplus f_2^{\oplus d} \oplus \dotsb \oplus f_e^{\oplus d} \in \rM_n(\bQ) :
f_1, \dotsc, f_e \in \rM_{dm}(\bQ) \}.
\end{equation}
It is immediate that intersecting this algebra $E_0$ with $\gG$ yields the same group $\gH_0$ as in \eqref{eqn:H0}.
Furthermore, the map $(f_1, \dotsc, f_e) \mapsto f_1^{\oplus d} \oplus f_2^{\oplus d} \oplus \dotsb \oplus f_e^{\oplus d}$ is an isomorphism of $\bQ$-algebras $\rM_{dm}(\bQ)^e \to E_0$.
By decomposing $V$ as a direct sum of $dm$-dimensional subspaces, matching the block diagonal decomposition of elements of $E_0$, we see that $V$ is isomorphic to the sum of $d$ copies of each of the $e$ irreducible representations of $E_0$.
After extending scalars to~$\bC$, we conclude that $E_0$ as defined by \eqref{eqn:E_0} satisfies the conditions of \cref{reps-closed-orbits}.

Allowing more general choices of $E_0$ in \cref{reps-closed-orbits} than simply \eqref{eqn:E_0}, and only imposing conditions on $E_0$ after extending scalars to~$\bC$, ensures that the proposition could be used as part of a similar strategy for proving the Zilber--Pink conjecture for special subvarieties of simple PEL type III and~IV, as well as types I and~II.

\subsection{Construction of the representation}

Let $\sigma_L, \sigma_R \colon \gG \to \gGL(\End(V))$ denote the left and right multiplication representations of~$\gG$:
\begin{equation*}
\sigma_L(g)f = gf, \quad \sigma_R(g)f = fg^{-1}.
\end{equation*}
Note that $\sigma_R(g)f = fg^{-1}$ rather than $fg$ so that $\sigma_R$ is a group representation.
The representations $\rho_L$ and $\rho_R$ in \cref{reps-closed-orbits} are induced by $\sigma_L$ and $\sigma_R$ via a linear algebra construction which we shall now explain, and hence one may think of $\rho_L(u)\rho_R(u)$ in \cref{reps-closed-orbits}(v) as being induced by conjugation by $u \in \gG(\bR)$.

Let $\nu \colon \gG = \gGSp_n \to \bG_m$ denote the symplectic multiplier character.
Let $W = \extpower^{mn} \End(V)$, which is a $\bQ$-vector space of dimension $\binom{n^2}{mn}$.
The representations required by \cref{reps-closed-orbits} are defined as
\[ \rho_L = \extpower^{mn} \sigma_L \otimes \nu^{-mn/2},  \;  \rho_R = \extpower^{mn} \sigma_R \otimes \nu^{mn/2} \; \colon \gG \to \gGL(W). \]
The powers of $\nu$ are chosen so that both $\rho_L$ and $\rho_R$ restrict to the trivial representation on the scalars $\bG_m \subset \gGSp_n$.

Next we construct a vector $w_0 \in W$ satisfying \cref{reps-closed-orbits}(i).
Observe that $\dim_\bQ(E_0) = e(dm)^2 = mn$ so $\extpower^{mn} E_0$ is a 1-dimensional subspace of $W$.
This was the reason we used the $mn$-th exterior power to define $W$.

Because $E_0$ is a subring of $\End(V)$, 
for any field extension $k$ of $\bQ$,
\[ \Stab_{\gG(k),\sigma_L}(E_0) = \gG(k) \cap (E_0 \otimes_\bQ k) = \gH_0(k). \]
Similarly $\Stab_{\gG(k),\sigma_R}(E_0) = \gH_{d,e,m}(k)$.
Consequently,
\[ \Stab_{\gG,\rho_L}(\extpower^{mn} E_0) = \Stab_{\gG,\rho_R}(\extpower^{mn} E_0) = \gH_0. \]

The action of $E_0$ on $\extpower^{mn} E_0$ via the $mn$-th exterior power of the left regular representation is multiplication by the non-reduced norm $\Nm_{E_0/\bQ}$.
Choose an isomorphism $\eta \colon E_{0,\bC} \to \rM_{dm}(\bC)$.
Let $f \in E_{0,\bC}$ and $\eta(f) = (f_1, \dotsc, f_e) \in \rM_{dm}(\bC)^e$.
Since the irreducible representations of $E_{0,\bC}$ are projections onto the simple factors of $\rM_{dm}(\bC)^e$, and each irreducible representation appears $d$ times in $V_\bC$, we have
\[ \det(f) = \prod_{i=1}^e \det(f_i)^d. \]
Hence
\[ \Nm_{E_{0,\bC}/\bC}(f) = \prod_{i=1}^e \Nm_{\rM_{dm}(\bC)/\bC}(f_i) = \prod_{i=1}^e \det(f_i)^{dm} = \det(f)^m. \]
If $f \in \gH_0(\bQ) \subset \gG(\bQ)$, then $\det(f) = \nu(f)^{n/2}$ so
\[ \Nm_{E_0/\bQ}(f) = \nu(f)^{mn/2}. \]
Hence the action of $\gH_0$ on $\extpower^{mn} E_0$ via $\rho_L$ is multiplication by $\Nm_{E_0/\bQ} \otimes \nu^{-mn/2} = 1$.
Thus for any $w \in \extpower^{mn} E_0$, we have $\rho_L(\gH_0)w = w$, while 
\[ \Stab_{\gG,\rho_L}(w) \subset \Stab_{\gG,\rho_L}(\extpower^{mn} E_0) = \gH_0. \]
Thus $\Stab_{\gG,\rho_L}(w) = \gH_0$.

For similar reasons, the action of $\gH_0$ on $\extpower^{mn} E_0$ via $\extpower^{mn} \sigma_R$ is multiplication by $\Nm_{E_0/\bQ}^{-1}$, and hence the action of $\gH_0$ on $\extpower^{mn} E_0$ via $\rho_R$ is trivial.
It follows that for any $w \in \extpower^{mn} E_0$, $\Stab_{\gG,\rho_R}(w) = \gH_0$.

Let $\Lambda = \extpower^{mn} \rM_n(\bZ)$, which is a $\bZ$-lattice in $W$.
Let $S_0 = E_0 \cap \rM_n(\bZ)$, which is an order in $E_0$.
Then $\extpower^{mn} S_0$ is a free $\bZ$-module of rank 1 contained in $\Lambda$.
Choose $w_0$ to be a generator of $\extpower^{mn} S_0$ (it does not matter which generator we choose).

Since $w_0 \in \extpower^{mn} E_0$, the argument above shows that $w_0$ satisfies \cref{reps-closed-orbits}(i).
It is clear that $\rho_L$ and $\rho_R$ commute, so \cref{reps-closed-orbits}(iii) holds.
It is also immediate that \cref{reps-closed-orbits}(iv) holds.
Most of this section will be devoted to proving \cref{reps-closed-orbits}(ii).
Since the proof of \ref{reps-closed-orbits}(v) is short, let us first include it here.

\begin{proof}[Proof of \cref{reps-closed-orbits}(v)]
By definition,
\[ \rho_L(u) \rho_R(u) = \extpower^{mn} \sigma_L(u)\sigma_R(u) \in \gGL(\extpower^{mn} \End(V)), \]
where $\sigma_L(u)\sigma_R(u) \in \gGL(\End(V))$ is conjugation by $u$.
Hence $\rho_L(u)\rho_R(u)w_0$ is a generator of the $\bZ$-module $\bigwedge^{mn} uS_0u^{-1}$.

Let $d_u = \covol(S_u)/\covol(uS_0u^{-1})$ with respect to the volume form induced by the non-reduced trace form on $S_{u,\bR}$.
Then $d_u\rho_L(u)\rho_R(u)w_0$ is a generator for $\bigwedge^{mn} S_u$ and therefore is in $\Lambda$.

Conjugation by $u$ pulls back $\Tr_{S_{u,\bR}/\bR}$ to $\Tr_{S_{0,\bR}/\bR}$.
Hence
\[ d_u = \covol(S, \Tr_{S_{u,\bR}/\bR})/\covol(S_0, \Tr_{S_{0,\bR}/\bR}) = \sqrt{\abs{\disc(S_u)}/\abs{\disc(S_0)}}.
\qedhere \]
\end{proof}

\subsection{Proof of closed orbit}

According to \cite[Prop.~6.3]{BHC62}, in order to show that $\rho_L(\gG(\bR))w_0$ is closed in $W_\bR$ (in the real topology), it suffices to prove that $\rho_L(\gG(\bC))w_0$ is Zariski closed in $W_\bC$.
Therefore, for the rest of this section, we shall deal entirely with linear algebra and algebraic geometry over $\bC$.

Let
\[ Q = \{ g \in \End(V_\bC) : \exists s \in \bC \text{ s.t.\ for all } v, v' \in V_\bC, \phi(gv, gv') = s \phi(v,v') \}. \]
Note that $Q$ is equal to the union of $\gG(\bC)$ with the set of elements of $\End(V_\bC)$ whose image is contained in a $\phi$-isotropic subspace of $V_\bC$.
In particular
\[ \gG(\bC) = \{ g \in Q : \det(g) \neq 0 \}. \]

Let $e_1, \dotsc, e_n$ be a symplectic basis for $(V_\bC, \phi)$.
Then $Q$ is a Zariski closed subset of $\End(V_\bC)$ because it is defined by the polynomial equations
\begin{gather*}
   \phi(ge_i, ge_j) = 0 \text{ for each } i, j \text{ except when } \{ i,j \} = \{ 2k-1,2k \} \text{ for some } k,
\\ \phi(ge_1, ge_2) = \phi(ge_3, ge_4) = \dotsb = \phi(ge_{n-1}, ge_n).
\end{gather*}
Furthermore, $Q$ is a homogeneous subset of $\End(V_\bC)$, that is, it is closed under multiplication by scalars.

Consequently, for any map from $\End(V_\bC)$ to another vector space whose coordinates are given by homogeneous polynomials of the same positive degree, the image of $Q$ is homogeneous and Zariski closed.
(This is because such a map induces a morphism of varieties between the associated projective spaces, and the image of the projective algebraic set $(Q \setminus \{0\})/\bG_m$ under such a morphism will again be a projective algebraic set.)

Note that $\sigma_L : \gG(\bC) \to \gGL(\End(V_\bC))$ extends to a $\bC$-algebra homomorphism $\End(V_\bC) \to \End(\End(V_\bC)) \cong \rM_{n^2}(\bC)$ defined by the formula $\sigma_L(g)f = gf$.  Considering $\sigma_L$ as a representation of the multiplicative monoid $\End(V_\bC)$, it induces a monoid representation
\[ \extpower^{mn} \sigma_L \colon\End(V_\bC) \to \End(\extpower^{mn} \End(V_\bC)). \]
Here $\extpower^{mn} \sigma_L$ is a homogeneous morphism of degree $mn$, so the set $(\extpower^{mn} \sigma_L)(Q)w_0$ is a homogeneous Zariski closed subset of $W_\bC$.

\begin{lemma} \label{exist-u}
There exist vectors $u_1, \dotsc, u_m \in V$ such that the map $\delta \colon \End(V) \to V^m$ defined by $\delta(f) = (f(u_1), \dotsc, f(u_m))$ restricts to an isomorphism of $\bQ$-vector spaces $E_0 \to V^m$.
\end{lemma}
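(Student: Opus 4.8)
The plan is to recognise that $\delta|_{E_0}$ is intrinsically a homomorphism of left $E_0$-modules and reduce the lemma to a module-isomorphism statement. First I would record the dimension count: $\dim_\bQ E_0 = \dim_\bC E_{0,\bC} = e(dm)^2 = (d^2em)m = nm = \dim_\bQ(V^m)$, using $n = d^2em$. Next, for any $u_1,\dotsc,u_m \in V$, the restriction $\delta|_{E_0}\colon E_0 \to V^m$ satisfies $\delta(gf) = (gfu_1,\dotsc,gfu_m) = g\cdot\delta(f)$, so it is $E_0$-linear for the diagonal action of $E_0$ on $V^m$. Conversely, since $E_0$ is free of rank~$1$ as a left module over itself, every $E_0$-linear map $\Phi\colon E_0 \to V^m$ has the form $\Phi(f) = f\cdot\Phi(1)$, i.e.\ $\Phi = \delta|_{E_0}$ with $(u_1,\dotsc,u_m) = \Phi(1)$. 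Hence the lemma is equivalent to the statement that $E_0 \cong V^m$ as left $E_0$-modules; given such an isomorphism $\Phi$, the vectors $\Phi(1) = (u_1,\dotsc,u_m)$ lie in $V^m$ (as $\Phi$ is a $\bQ$-linear map) and do the job.

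To prove $E_0 \cong V^m$ as left $E_0$-modules, I would argue after extending scalars. Since $E_{0,\bC} \cong \rM_{dm}(\bC)^e$ is semisimple and $\bQ$ has characteristic zero, $E_0$ is a semisimple $\bQ$-algebra; by the Noether--Deuring theorem two finitely generated left $E_0$-modules are isomorphic as soon as they become isomorphic after $\otimes_\bQ\bC$. Over $\bC$, letting $T_1,\dotsc,T_e$ denote the pairwise non-isomorphic simple left modules of the $e$ matrix factors of $E_{0,\bC} \cong \rM_{dm}(\bC)^e$, one has $E_{0,\bC} \cong \bigoplus_{j=1}^e T_j^{\oplus dm}$ as a left module over itself, while the hypothesis on $V$ gives $V_\bC \cong \bigoplus_{j=1}^e T_j^{\oplus d}$, hence $V_\bC^m \cong \bigoplus_{j=1}^e T_j^{\oplus dm} \cong E_{0,\bC}$. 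This yields the desired isomorphism over $\bQ$ and completes the proof.

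The step I expect to need the most care is the descent from $\bC$ to $\bQ$, i.e.\ the invocation of Noether--Deuring. An alternative that sidesteps it is a Zariski-density argument: after fixing $\bQ$-bases of $E_0$ and of $V^m$ (both of dimension $nm$), the matrix of $\delta|_{E_0}$ has entries that are $\bQ$-linear in the coordinates of $(u_1,\dotsc,u_m)$, so $\delta|_{E_0}$ fails to be an isomorphism precisely on the zero locus of one polynomial defined over $\bQ$; the computation over $\bC$ above exhibits a point outside this locus, so the locus is proper, and since $\bQ$ is infinite there is a $\bQ$-rational point avoiding it. Either way, the numerical content is simply that multiplying the multiplicity $d$ of each of the $e$ irreducibles in $V$ by $m$ reproduces the multiplicity $dm$ of each simple in the regular representation of $E_0$ — which is forced by $n = d^2em$.
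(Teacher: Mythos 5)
Your proof is correct, but it takes a genuinely different route from the paper's. The paper proves \cref{exist-u} by explicit construction: it decomposes $V_\bC$ into irreducible $E_{0,\bC}$-modules $V_{ij}$, fixes module isomorphisms $\theta_{ij} \colon \bC^{dm} \to V_{ij}$, sets $u_\ell = \sum_{i,j} \theta_{ij}(e_{j\ell})$, and verifies by a direct column computation that any $f \in E_{0,\bC}$ killed by $\delta$ has all its matrix columns equal to zero, so $\delta|_{E_{0,\bC}}$ is injective; the dimension count $\dim_\bQ(E_0) = ed^2m^2 = \dim_\bQ(V^m)$ then finishes the argument. You instead observe that $\delta|_{E_0}$ is precisely the general left $E_0$-linear map $E_0 \to V^m$ (evaluation at $\Phi(1)$), reduce the lemma to the module isomorphism $E_0 \cong V^m$, verify this after $\otimes_\bQ \bC$ by the same multiplicity count ($d$ copies of each simple in $V_\bC$, hence $dm$ copies in $V_\bC^m$, matching the regular module of $\rM_{dm}(\bC)^e$), and descend to $\bQ$ via Noether--Deuring or, equivalently, the determinant/Zariski-density argument; semisimplicity is not actually needed for Noether--Deuring, but invoking it does no harm. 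Both proofs rest on the same numerics, but your packaging has a concrete payoff: it directly produces $u_1, \dotsc, u_m \in V$, as the statement requires, whereas the paper's explicit vectors $u_\ell$ lie only in $V_\bC$ and the passage from injectivity of $\delta|_{E_{0,\bC}}$ for those complex vectors to an isomorphism $E_0 \to V^m$ given by rational vectors is left implicit -- your Zariski-density remark is exactly the genericity statement needed to make that step explicit. What the paper's approach buys in exchange is a self-contained, hands-on construction that avoids citing any descent theorem.
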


\begin{proof}
By the hypothesis of \cref{reps-closed-orbits}, we can decompose $V_\bC$ as a direct sum of irreducible $E_{0,\bC}$-modules
\begin{equation} \label{eqn:VC-decomposition}
V_\bC = \bigoplus_{i=1}^e \bigoplus_{j=1}^d V_{ij}
\end{equation}
such that the action of $E_{0,\bC} \cong \rM_{dm}(\bC)^e$ on $V_{ij}$ factors through the $i$-th copy of $\rM_{dm}(\bC)$.
Since $\rM_{dm}(\bC)$ is a simple algebra, it has a unique irreducible representation (up to isomorphism), so we may choose an isomorphism of $\rM_{dm}(\bC)$-modules $\theta_{ij} \colon \bC^{dm} \to V_{ij}$.

Label the standard basis of $\bC^{dm}$ as $e_{k\ell}$ for $1 \leq k \leq d$, $1 \leq \ell \leq m$.

Given $f \in E_{0,\bC}$, write $\eta(f) = (f_1, \dotsc, f_e) \in \rM_{dm}(\bC)^e$.
For $i = 1, \dotsc, e$, $k = 1, \dotsc, d$ and $\ell = 1, \dotsc, m$, let $f_{i,k\ell} \in \bC^{dm}$ denote the column of $f_i$ indexed by $k$ and~$\ell$ (ordered to match the basis vectors $w_{k\ell}$).
The action of $E_{0,\bC}$ on $V_{ij}$ factors through the $i$-th copy of $\rM_{dm}(\bC)$ and $\theta_{ij}$ is an $\rM_{dm}(\bC)$-module homomorphism, so
\[ f(\theta_{ij}(e_{k\ell})) = \theta_{ij}(f_i(e_{k\ell})) = \theta_{ij}(f_{i,k\ell}). \]

For $\ell = 1, \dotsc, m$, let
\[ u_\ell = \sum_{i=1}^e \sum_{j=1}^d \theta_{ij}(e_{j\ell}) \in V_\bC. \]
(Note that the index $j$ is used twice in this expression.)
Then
\begin{equation} \label{eqn:f-ul}
f(u_\ell) = \sum_{i=1}^e \sum_{j=1}^d \theta_{ij}(f_{i,j\ell}).
\end{equation}

If $f \in \ker(\delta) \cap E_{0,\bC}$, then $f(u_\ell) = 0$ for $\ell = 1, \dotsc, m$.
Since \eqref{eqn:VC-decomposition} is a direct sum and the~$\theta_{ij}$ are injective, it follows from \eqref{eqn:f-ul} that $f_{i,j\ell} = 0$ for all $i$, $j$ and $\ell$.  In other words $f=0$.

Thus $\delta|_{E_{0,\bC}}$ is injective.  In particular $\delta|_{E_0}$ is injective.
Since $\dim_\bQ(E_0) = \dim_\bC(E_{0,\bC}) = ed^2m^2 = \dim_\bQ(V^m)$ and $\delta$ is a linear map, it follows that $\delta|_{E_0}$ is an isomorphism $E_0 \to V^m$.
\end{proof}

\begin{lemma}
There exists a linear function $\zeta \colon W \to \bQ$ such that $\zeta(w_0) \neq 0$ and
\[ \zeta \bigl( (\extpower^{mn} \sigma_L)(g)w \bigr) = \det(g)^m \zeta(w) \]
for all $g \in \End(V)$ and all $w \in W$.
\end{lemma}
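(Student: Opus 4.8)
The plan is to build $\zeta$ directly from the one-dimensional subspace $\extpower^{mn}E_0 \subset W$ on which $\rho_L$ acts nicely. First I would use \cref{exist-u} to fix vectors $u_1,\dotsc,u_m \in V$ such that $\delta\colon E_0 \to V^m$, $\delta(f) = (f(u_1),\dotsc,f(u_m))$, is a $\bQ$-linear isomorphism. Taking the $mn$-th exterior power, $\extpower^{mn}\delta$ is an isomorphism $\extpower^{mn}E_0 \to \extpower^{mn}(V^m) = \extpower^{mn}(V^{\oplus m})$. Since $\dim_\bQ V^m = mn$, the target $\extpower^{mn}(V^m)$ is also one-dimensional, so this simply identifies the line $\extpower^{mn}E_0$ with $\extpower^{mn}(V^m)$, a ``top'' exterior power which carries a natural determinant-like functional.

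The key point is to extend $\extpower^{mn}\delta$ to all of $W = \extpower^{mn}\End(V)$. Define a linear map $\Delta\colon \End(V) \to V^m$ by the same formula $\Delta(f) = (f(u_1),\dotsc,f(u_m))$; this is defined on all of $\End(V)$, not just $E_0$, and agrees with $\delta$ there. Then $\extpower^{mn}\Delta\colon W \to \extpower^{mn}(V^m)$ is a linear map whose restriction to the line $\extpower^{mn}E_0$ is the isomorphism above, hence is nonzero on $w_0$. Now fix any linear isomorphism $\lambda\colon \extpower^{mn}(V^m) \to \bQ$ and set $\zeta = \lambda \circ \extpower^{mn}\Delta$. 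Then $\zeta(w_0) \neq 0$ since $w_0$ generates $\extpower^{mn}E_0$ and $\extpower^{mn}\Delta$ is injective on that line.

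It remains to verify the transformation law $\zeta\bigl((\extpower^{mn}\sigma_L)(g)w\bigr) = \det(g)^m\,\zeta(w)$ for $g \in \End(V)$, $w \in W$. By linearity of $\zeta$ and multilinearity of the exterior power it suffices to check this on decomposable elements $w = f_1 \wedge \dotsb \wedge f_{mn}$ with $f_i \in \End(V)$. We compute
\[
\extpower^{mn}\Delta\bigl((\extpower^{mn}\sigma_L)(g)(f_1\wedge\dotsb\wedge f_{mn})\bigr)
= \Delta(gf_1)\wedge\dotsb\wedge\Delta(gf_{mn}).
\]
Now $\Delta(gf)=(gf(u_1),\dotsc,gf(u_m))=(g\oplus\dotsb\oplus g)\,\Delta(f)$, where $g\oplus\dotsb\oplus g$ (with $m$ summands) acts on $V^m$. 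Taking the $mn$-th exterior power — which is the top exterior power of the $mn$-dimensional space $V^m$ — this multiplies by $\det(g^{\oplus m}) = \det(g)^m$. Hence $\extpower^{mn}\Delta \circ (\extpower^{mn}\sigma_L)(g) = \det(g)^m \cdot \extpower^{mn}\Delta$ on decomposables, and therefore everywhere; composing with $\lambda$ gives the claim. Everything is done over $\bQ$, and the only input is \cref{exist-u}, so there is no real obstacle here — the one mild subtlety to get right is that $\Delta$ must be defined on all of $\End(V)$ (not merely $E_0$) for the identity to make sense for arbitrary $g\in\End(V)$, $w\in W$, and that $\extpower^{mn}\Delta$ need not be injective on $W$, only nonzero on the line containing $w_0$, which is all we need.
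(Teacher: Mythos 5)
Your proof is correct and is essentially the same as the paper's: the paper likewise takes $\zeta$ to be the map induced on $mn$-th exterior powers by $\delta$ (which in \cref{exist-u} is already defined on all of $\End(V)$, so your map $\Delta$ coincides with it), identifies $\extpower^{mn}V^m$ with $\bQ$, and derives the transformation law from $\delta(\sigma_L(g)f) = g^{\oplus m}\delta(f)$ and $\det(g^{\oplus m}) = \det(g)^m$.
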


\begin{proof}
Define $\zeta$ to be the linear map on $mn$-th exterior powers induced by $\delta$ from \cref{exist-u}.
Then $\zeta$ is a linear map $W = \extpower^{mn} \End(V) \to \extpower^{mn} V^m \cong \bQ$.
We identify $\extpower^{mn} V^m$ with $\bQ$ (the choice of isomorphism $\extpower^{mn} V^m \cong \bQ$ is not important).

Since $\delta|_{E_0}$ is an isomorphism $E_0 \to V^m$ and $w_0$ is a generator of $\extpower^{mn} E_0$, we deduce that $s(w_0)$ is a generator of $\extpower^{mn} V^m$.
In particular $\zeta(w_0) \neq 0$.

Let $\tau_L \colon \End(V) \to \End(V^m)$ denote the direct sum of $m$ copies of the tautological representation of $\End(V)$ on $V$.
Then
\[ \delta(\sigma_L(g)f) = \tau_L(g)\delta(f) \]
for all $f, g \in \End(V)$.
Taking the $mn$-th exterior power, we get
\[ \zeta \bigl( (\extpower^{mn} \sigma_L)(g)w \bigr) = \det(\tau_L(g)) \zeta(w) = \det(g)^m \zeta(w) \]
for all $g \in \End(V)$ and $w \in W$.
\end{proof}

\begin{lemma}
$\rho_L(\gG(\bC))w_0 = \{ w \in (\extpower^{mn} \sigma_L)(Q)w_0 : \zeta(w) = \zeta(w_0) \}$.
\end{lemma}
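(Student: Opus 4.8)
The plan is to prove the two inclusions separately, the common tool being the identity
$\rho_L(g) = \det(g)^{-m}\,(\extpower^{mn}\sigma_L)(g)$ valid for every $g \in \gG(\bC)$. This follows from the definition $\rho_L = \extpower^{mn}\sigma_L \otimes \nu^{-mn/2}$ together with the relation $\det(g) = \nu(g)^{n/2}$ on $\gGSp_n$, so that $\nu(g)^{-mn/2} = \det(g)^{-m}$; there is no square-root ambiguity here because $mn = (dm)^2e$ is divisible by $4$ when $dm$ is even, so $\tfrac{mn}{2}$ is an honest integer and $\nu^{-mn/2}$ is a genuine character.

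For the inclusion $\subseteq$, fix $g \in \gG(\bC)$. Combining the identity above with the preceding lemma's formula $\zeta\bigl((\extpower^{mn}\sigma_L)(g)w\bigr) = \det(g)^m\,\zeta(w)$ gives $\zeta(\rho_L(g)w_0) = \det(g)^{-m}\det(g)^m\zeta(w_0) = \zeta(w_0)$. It remains to check $\rho_L(g)w_0 \in (\extpower^{mn}\sigma_L)(Q)w_0$: since $\gG(\bC) \subset Q$ and $Q$ is homogeneous, choose any $\lambda \in \bC$ with $\lambda^{mn} = \det(g)^{-m}$; then $\lambda g \in Q$ and, as $\extpower^{mn}\sigma_L$ is homogeneous of degree $mn$, $(\extpower^{mn}\sigma_L)(\lambda g)w_0 = \lambda^{mn}(\extpower^{mn}\sigma_L)(g)w_0 = \det(g)^{-m}(\extpower^{mn}\sigma_L)(g)w_0 = \rho_L(g)w_0$. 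For the inclusion $\supseteq$, take $w = (\extpower^{mn}\sigma_L)(g)w_0$ with $g \in Q$ and $\zeta(w) = \zeta(w_0)$. The same formula for $\zeta$ gives $\det(g)^m\zeta(w_0) = \zeta(w) = \zeta(w_0)$, and since $\zeta(w_0)\neq 0$ (by the lemma constructing $\zeta$) this forces $\det(g)^m = 1$, in particular $\det(g)\neq 0$. As $\gG(\bC) = \{g \in Q : \det(g)\neq 0\}$, we get $g \in \gG(\bC)$, whence $\rho_L(g)w_0 = \det(g)^{-m}(\extpower^{mn}\sigma_L)(g)w_0 = (\extpower^{mn}\sigma_L)(g)w_0 = w$, so $w \in \rho_L(\gG(\bC))w_0$.

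I do not expect a serious obstacle: the only delicate point is bookkeeping with the twist $\nu^{-mn/2}$ and its identification with $\det^{-m}$ on $\gGSp_n$, plus the elementary homogeneity argument that absorbs the scalar $\det(g)^{-m}$ back into an element of $Q$; after that both inclusions are one-line computations with $\zeta$. With this lemma proved, the closedness of $\rho_L(\gG(\bR))w_0$ in $W_\bR$ (which is \cref{reps-closed-orbits}(ii)) follows via \cite[Prop.~6.3]{BHC62}: it suffices that $\rho_L(\gG(\bC))w_0$ be Zariski closed in $W_\bC$, and by this lemma it equals the intersection of the already-established Zariski-closed homogeneous set $(\extpower^{mn}\sigma_L)(Q)w_0$ with the affine hyperplane $\{\,w : \zeta(w) = \zeta(w_0)\,\}$.
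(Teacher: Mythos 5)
Your proof is correct and follows essentially the same route as the paper: the reverse inclusion is identical, and the forward inclusion differs only in how the scalar twist is absorbed (you rescale $g$ by an $mn$-th root of $\det(g)^{-m}$ using homogeneity of $Q$ and of $\extpower^{mn}\sigma_L$, whereas the paper writes $g = sg'$ with $s^2 = \nu(g)$ and $g' \in \gSp_n(\bC) \subset Q$ so that $\rho_L(g) = (\extpower^{mn}\sigma_L)(g')$ directly). Both variants rest on the same ingredients -- the transformation formula for $\zeta$, $\zeta(w_0)\neq 0$, and the identification $\gG(\bC) = \{g \in Q : \det(g)\neq 0\}$ -- so there is nothing to correct.
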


\begin{proof}
If $g \in \gG(\bC)$, then we can write $g = s g'$ where $s \in \bC^\times$ and $g' \in \gSp_n(\bC)$.  (Choose $s$ to be a square root of $\nu(g)$.)
Then $g' \in Q$, $\rho_L(g) = (\extpower^{mn} \sigma_L)(g')$ and
\[ \zeta(\rho_L(g)w_0) = \det((\extpower^{mn} \sigma_L)(g'))^m \zeta(w_0) = \zeta(w_0) \]
so $\rho_L(g)w_0$ is in $\{ w \in (\extpower^{mn} \sigma_L)(Q)w_0 : \zeta(w) = \zeta(w_0) \}$.

Conversely, if $w = (\extpower^{mn} \sigma_L)(g)w_0$ for some $g \in Q$ and $\zeta(w) = \zeta(w_0)$, then
\[ \det(g)^m \zeta(w_0) = \zeta \bigl( (\extpower^{mn} \sigma_L)(g)w_0 \bigr) = \zeta(w) = \zeta(w_0). \]
Since $\zeta(w_0) \neq 0$, we deduce that $\det(g)^m = 1$.
In particular $\det(g) \neq 0$.
Together with $g \in Q$, this implies that $g \in \gGSp_n(\bC)$.
Furthermore,
\[ \rho_L(g) = (\extpower^{mn} \sigma_L)(g) \otimes \det(g)^m = (\extpower^{mn} \sigma_L)(g) \]
so $\rho_L(g)w_0 = w$.
Thus $w \in \rho_L(\gG(\bC))w_0$.
\end{proof}

Thus $\rho_L(\gG(\bC))w_0$ is Zariski closed in $W_\bC$, so by \cite[Prop.~6.3]{BHC62} $\rho_L(\gG(\bR))w_0$ is closed in $W_\bR$ in the real topology.

\section{Arithmetic bound for the representation} \label{sec:rep-bound}

In this section, we bound the lengths of the vectors $v_u$ of \cite[Theorem~1.2]{QRTUI} (here renamed~$w_u$), when applied to the representation~$\rho_L$ defined in section~\ref{sec:representation}.  This bound is arithmetic in nature, being in terms of discriminants of orders in $\bQ$-division algebras.  The argument generalises \cite[section~5.5]{QRTUI} and  \cref{minkowski-hermitian-perfect} plays the role of \cite[Lemma~5.7]{QRTUI}.

\begin{proposition} \label{rep-bound-arithmetic}
Let $d$, $e$ and $m$ be positive integers such that $dm$ is even.
Let $n = d^2em$.
Let $L = \bZ^n$ and let $\phi \colon L \times L \to \bZ$ be the standard symplectic form as in section~\ref{subsec:shimura-data}.
Let $\gG = \gGSp(L_\bQ, \phi) = \gGSp_{n,\bQ}$ and let $\Gamma = \gSp_n(\bZ)$.
Let $\gH_0$ be the subgroup of $\gG$ defined in~\eqref{eqn:H0}.
Let $W$, $\Lambda \subset W$, $\rho_L, \rho_R \colon \gG \to \gGL(W)$ and $w_0 \in \Lambda$ be as in \cref{reps-closed-orbits}.

Then there exist positive constants $\newC{rep-multiplier}$, $\newC{rep-exponent}$, $\newC{guh-multiplier}$ and $\newC{guh-exponent}$ such that,
for each $u \in \gG(\bR)$, if the group $\gH_u = u \gH_{0,\bR} u^{-1}$ is defined over~$\bQ$ and $L_\bQ$ is irreducible as a representation of $\gH_u$ over $\bQ$, then
\begin{enumerate}[(a)]
\item there exists $w_u \in \Aut_{\rho_L(\gG)}(\Lambda_\bR) w_0$ such that $\rho_L(u) w_u \in \Lambda$ and
\[ \abs{w_u} \leq \refC{rep-multiplier} \abs{\disc(R_u)}^{\refC{rep-exponent}}; \]
\item there exists $\gamma \in \Gamma$ and $h \in \gH_0(\bR)$ such that
\[ \length{\gamma uh} \leq  \refC{guh-multiplier} \abs{\disc(R_u)}^{\refC{guh-exponent}}, \]
\end{enumerate}
where $R_u$ denotes the ring $\End_{\gH_u}(L) \subset \rM_n(\bZ)$.
\end{proposition}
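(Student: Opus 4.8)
The plan is to bootstrap from \cref{reps-closed-orbits}, the application of \cref{minkowski-hermitian-perfect}, and the reduction-theory machinery of \cite[Theorem~1.2]{QRTUI}, following the outline in section~\ref{subsec:parameter-space}. Fix $u\in\gG(\bR)$ with $\gH_u$ defined over $\bQ$ and $L_\bQ$ irreducible over $\gH_u$. Write $D=\End_{\gH_u}(L)_\bQ$; since $L_\bQ$ is $\gH_u$-irreducible and $\gH_u$ is the centraliser of a simple PEL algebra, $D$ is a division algebra of Albert type I or~II with positive Rosati involution $\dag$, and the symplectic form $\phi$ becomes a $(D,\dag)$-skew-Hermitian form $\psi$ on $L_\bQ$ with $\Trd_{D/\bQ}\psi=\phi$, so $\Trd_{D/\bQ}\psi(L\times L)\subset\bZ$. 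Set $R_u=\Stab_D(L)=\End_{\gH_u}(L)$. Apply \cref{minkowski-hermitian-perfect} to $(D,\dag,L_\bQ,\psi,L)$ to obtain a weakly symplectic or weakly unitary $D$-basis $v_1,\dotsc,v_m$ of $L_\bQ$ contained in $L$, with $[L:\sum Rv_i]$ and $\abs{\psi(v_i,v_j)}_D$ bounded polynomially in $\abs{\disc(R_u)}$ and $\abs{\disc(L)}$. Since $\abs{\disc(L)}$ is a fixed integer depending only on $\phi$ (in fact $L$ is unimodular, $\disc(L)=\pm1$), these become bounds purely in $\abs{\disc(R_u)}$.

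Next I would convert the weak basis into a genuine symplectic or $\alpha$-unitary $D_\bR$-basis. By \cref{semi-orthogonal-normalise} there are $s_i\in D_\bR^\times$ with $\abs{s_i}_D$ bounded by $\abs{\psi(v_i,v_j)}_D^{1/2}$ up to a constant, such that $s_i^{-1}v_i$ form a symplectic (type~I) or $\alpha$-unitary (type~II) $D_\bR$-basis for $L_\bR$. The choice of such a basis is, as explained in section~\ref{subsec:parameter-space}, equivalent to the choice of $u'=uh\in\gSp_n(\bR)$ (with $h\in\gH_0(\bR)$) carrying the standard basis to $(s_i^{-1}v_i)$; concretely one builds the matrix $\theta^{-1}=uh$ out of the $s_i^{-1}v_i$ and a fixed orthonormal $\bQ$-basis of $D_0$ as in \cref{D0-basis}. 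The key point is that the entries of $uh$ are controlled: each $v_i$ lies in $L=\bZ^n$ so has integer entries bounded by $\abs{v_i}\le(\text{constant})\abs{\psi(v_i,v_j)}_D^{?}$ via a $D$-norm argument using \cref{good-norm} and \cref{covol-disc-lattice} (covolume $=\abs{\disc(L)}^{1/2}$), and $\abs{s_i^{-1}}_D$ is bounded because $\abs{\Nrd}$ of $s_i$ is controlled. Multiplying $uh$ on the left by a suitable $\gamma\in\Gamma=\gSp_n(\bZ)$ (Minkowski reduction of the lattice $\sum\bZ v_i\subset L$, or rather clearing denominators using $[L:\sum Rv_i]$) yields $\length{\gamma uh}$ and $\length{(\gamma uh)^{-1}}$ polynomially bounded in $\abs{\disc(R_u)}$; this is part~(b). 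For part~(a), set $w_u=\rho_L(b^{-1}u)w_0$ for the appropriate $b$ — equivalently $w_u=\rho_L(\gamma^{-1})\rho_L(\gamma u)w_0$ — so that $\rho_L(u)w_u$ lands in $\Lambda$ by \cref{reps-closed-orbits}(v) applied to $\gamma u$ (whose associated ring $S_{\gamma u}=\gamma u E_{0,\bR}u^{-1}\gamma^{-1}\cap\rM_n(\bZ)$ has $\abs{\disc}$ bounded via \cref{disc-R-S}), and $w_u\in\Aut_{\rho_L(\gG)}(\Lambda_\bR)w_0$ because $\gamma\in\Gamma$ stabilises $\Lambda$; its length $\abs{w_u}$ is a polynomial in the entries of $\gamma u$, hence polynomially bounded in $\abs{\disc(R_u)}$.

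The main obstacle is the bookkeeping of \cref{reps-closed-orbits}(v) together with \cref{disc-R-S}: the vector $\rho_L(u)\rho_R(u)w_0$ is a generator of $\bigwedge^{mn}(uS_0u^{-1})$, and one needs to relate $\abs{\disc(S_u)}$ (where $S_u=uE_{0,\bR}u^{-1}\cap\rM_n(\bZ)$ is the relevant order in the \emph{matrix algebra} $E_0\cong\rM_{dm}(\bQ)^e$, i.e.\ the endomorphism ring of $L$ commuting with nothing, the full centraliser of $D$) back to $\abs{\disc(R_u)}$, where $R_u$ is an order in $D$ itself. This is exactly the content of \cref{disc-R-S}, which bounds $\abs{\disc(S)}$ polynomially in $\abs{\disc(R)}$ for $S=\End_R(L)$; the exponent is large but fixed. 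One must also be careful that the $\rho_R(u)$ factor, which is needed to make \cref{reps-closed-orbits}(v) produce a lattice point, gets absorbed correctly: it is cancelled at the end against the $\rho_R$-invariance of $w_0$ under $\gH_0$, or rather one works with $w_u\in\rho_L(\gG(\bR))w_0$ directly and uses that $\rho_L(\gamma u)w_0\in\Lambda$ is bounded, sidestepping $\rho_R$. A secondary nuisance is checking that all the constants in \cref{minkowski-hermitian-perfect}, \cref{semi-orthogonal-normalise} and \cite[Theorem~1.2]{QRTUI} depend only on $d,e,m$ (equivalently on $g$ and the conjugacy class), which they do since $\dim_\bQ D=d^2e$ and $m$ are determined; and that the parameter $\eta$ appearing in \cref{weakly-unitary-induction} is controlled by \cref{R-cap-Rdag}, giving $\eta\le\abs{\disc(L)}$, a fixed constant. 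Once these are in hand, combining the length bound on $\gamma uh$ with the functorial formula for $\rho_L$ gives (a), and the construction itself gives (b).
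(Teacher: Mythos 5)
Your high-level skeleton (identify $D$ and the skew-Hermitian form $\psi$ with $\Trd\psi=\phi$, apply \cref{minkowski-hermitian-perfect}, normalise via \cref{semi-orthogonal-normalise}, produce $uh=\theta^{-1}\in\gSp_n(\bR)$, correct by some $\gamma\in\Gamma$, then feed \cref{reps-closed-orbits}(v) and \cref{disc-R-S} into part (a)) matches the paper, but the two steps that carry the actual content have gaps. For part (b), you assert a coordinate bound $\abs{v_i}\le \mathrm{const}\cdot\abs{\psi(v_i,v_j)}_D^{?}$ for the basis from \cref{minkowski-hermitian-perfect}. No such bound is available: the theorem controls only the index $[L:\sum Rv_i]$ and the values $\abs{\psi(v_i,v_j)}_D$, and these do not bound lengths (already for $D=\bQ$, $L=\bZ^2$, the basis $v_1=(1,0)$, $v_2=(N,1)$ has index $1$ and $\psi(v_1,v_2)=1$ with $\abs{v_2}$ arbitrary); the paper explicitly declines to prove a length-bounded version because the constants become exponential in $m$. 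Hence the entries of $uh$ itself need not be bounded, and ``Minkowski reduction of $\sum\bZ v_i$'' with respect to the Euclidean form does not obviously fix this. The paper's mechanism is different: it introduces the auxiliary positive definite form $\sigma$ built from the $v_i$ and the trace form of $R_u$, which is integral with controlled discriminant on $\sum R_uv_i$ (\cref{sigma-integral}); it reduces $L$ with respect to $\sigma$ using Minkowski's second theorem and Weyl's theorem (\cref{z-basis-bound}); and it bounds $\theta$ as an operator from $\sigma$ to the fixed form $\sigma_0$ using the $\abs{s_i}_D$ bounds (\cref{theta-def}(iii)). This bounds $(\gamma'uh)^{-1}$ for some $\gamma'\in\gGL_n(\bZ)$, and a separate step (\cite[Lemma~4.3]{Orr15}, \cref{entries-bound}) is then needed to replace $\gamma'$ by an element of $\Gamma=\gSp_n(\bZ)$ --- a point your sketch also skips, since generic lattice reduction only yields a $\gGL_n(\bZ)$ change of basis, not a symplectic one.

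For part (a), the statement requires $w_u\in\Aut_{\rho_L(\gG)}(\Lambda_\bR)w_0$, i.e.\ $w_u$ in the orbit of $w_0$ under the \emph{commutant} of $\rho_L(\gG)$. Your candidate $w_u=\rho_L(b^{-1}u)w_0$ (which, after the cancellation you write, is just $\rho_L(u)w_0$), and more generally any vector of $\rho_L(\gG(\bR))w_0$, does not lie in that orbit; moreover $\rho_L(\gamma u)w_0$ generates $\extpower^{mn}(\gamma uS_0)$, and the subspace $\gamma uE_{0,\bR}$ (left multiplication, no conjugation) is not defined over $\bQ$, so this vector has no reason to lie in, or near, $\Lambda$ --- the rational object is the conjugate $uE_{0,\bR}u^{-1}$. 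The representation $\rho_R$ therefore cannot be ``sidestepped'': it is exactly what places the construction in the commutant (it commutes with $\rho_L$) and what converts left multiplication into conjugation. The paper takes $w_u=d_u\rho_R(\gamma u)w_0$ with $d_u$ as in \cref{reps-closed-orbits}(v); then $\rho_L(u)w_u=\rho_R(\gamma)\bigl(d_u\rho_R(u)\rho_L(u)w_0\bigr)\in\Lambda$ by (iv) and (v), $d_u\rho_R(\gamma u)\in\Aut_{\rho_L(\gG)}(\Lambda_\bR)$ by (iii), and the length bound follows from part (b) because $\rho_R(\gamma u)w_0=\rho_R(\gamma uh)w_0$, with $d_u$ controlled via \cref{disc-R-S} --- the one piece of bookkeeping your proposal did have right.
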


Note that $L_\bQ$ is irreducible as a representation of $\gH_u$ if and only if $R_{u,\bQ}$ is a division algebra.
Because $R_{u,\bR}$ is $\gG(\bR)$-conjugate to $\End_{\gH_0}(L_\bR)$, $R_{u,\bQ}$ is an $\bR$-split algebra with positive involution.  Hence whenever $R_{u,\bQ}$ is a division algebra, it must be of type I or~II in the Albert classification, and $d$ must equal $1$ or~$2$ for \cref{rep-bound-arithmetic} to be non-vacuous.

\medskip

Let $V = L_\bQ = \bQ^{2g}$.
Define $D_0 = \rM_d(\bQ)^e$, $\iota_0 \colon D_0 \to \rM_n(\bQ)$, $t \colon D_0 \to D_0$ and $\psi_0 \colon V \times V \to D_0$ as in \cref{subsec:shimura-representatives}.

By \cref{D0-basis}, we can choose a $D_0$-basis $w_1, \dotsc, w_m$ for $V$ which is either symplectic or unitary depending on the type of~$D_0$.

Define a symmetric $\bQ$-bilinear form $\sigma_0 \colon V \times V \to \bQ$ by
\[ \sigma_0(x_1w_1 + \dotsb + x_mw_m, y_1w_1 + \dotsb + y_mw_m)) = \Trd_{D_0/\bQ} \left(\sum_{i=1}^m x_i y_i^t\right) \]
for all $x_1, \dotsc, x_m, y_1, \dotsc, y_m \in D_0$.
This bilinear form is positive definite because $t$ is a positive involution.
In fact, a lengthy calculation shows that $\sigma_0$ is the standard Euclidean inner product on $V = \bQ^n$, but we shall not need this fact.

\medskip

As in the statement of \cref{rep-bound-arithmetic}, let $u \in \gG(\bR)$ be such that $\gH_u = u\gH_{0,\bR}u^{-1}$ is defined over $\bQ$ and $V$ is irreducible as a representation of $\gH_u$.
Let $D = \End_{\gH_u}(V)$, which is a division algebra of type~I or~II depending on whether $d=1$ or~$2$.
By construction, $V$ is a left $D$-vector space of dimension~$m$.

Because $\iota_0(D_0) = \End_{\gH_0}(V)$ and $\gH_u = u\gH_{0,\bR}u^{-1}$, we have
\[ D = u\iota_0(D_{0,\bR})u^{-1} \cap \rM_n(\bQ). \]
Let $\alpha \colon D_{0,\bR} \to D_\bR$ be the isomorphism of $\bR$-algebras
\[ \alpha(d) = u \iota_0(d) u^{-1}. \]

Let $\dag = \alpha \circ t \circ \alpha^{-1}$, which is a positive involution of $D_\bR$.
A calculation using the fact that $u \in \gG(\bR) = \gGSp_n(\bR)$ shows that $\phi$ is $(D_\bR,\dag)$-compatible, that is, $\dag$ is the adjoint involution of $D_\bR$ with respect to~$\phi$.
This has two consequences:
\begin{enumerate}
\item $\dag$ is defined over $\bQ$, that is, $\dag$ is an involution of $D$ and not just of $D_\bR$.
\item There is a non-degenerate $(D,\dag)$-skew-Hermitian form $\psi \colon V \times V \to D$ such that $\phi = \Trd_{D/\bQ} \psi$, thanks to \cref{tr-skew-hermitian-form}.
\end{enumerate}

We are thus in a position to apply \cref{minkowski-hermitian-perfect} (with $R=R_u=\Stab_D(L)$).
Let $v_1, \dotsc, v_m$ be the resulting weakly symplectic or weakly unitary $D$-basis for $V$.

Define a $\bQ$-bilinear form $\sigma \colon V \times V \to \bQ$ by
\[ \sigma \Bigl( \sum_{i=1}^m x_i v_i, \sum_{i=1}^m y_i v_i \Bigr) = \Trd_{D/\bQ} \left(\sum_{i=1}^m x_i y_i^\dag\right) \]
for all $x_1, \dotsc, x_m, y_1, \dotsc, y_m \in D$.

\begin{lemma} \label{sigma-integral}
The bilinear form $\sigma$ is symmetric and positive definite.
It takes integer values on $R_uv_1 + \dotsb + R_uv_m$ and it satisfies
\[ \abs{\disc(R_uv_1 + \dotsb + R_uv_m, \sigma)} = d^{-d^2em} \abs{\disc(R_u)}^m. \]
\end{lemma}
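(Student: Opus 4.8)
The plan is to dispatch the three assertions separately. Symmetry and positive-definiteness are formal consequences of $\dag$ being a positive involution; the discriminant formula will be a direct application of \cref{disc-a-form}; and the one point that needs care is integrality.

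For symmetry and positivity I would first note that, relative to the decomposition $V = Dv_1 \oplus \dotsb \oplus Dv_m$, the form $\sigma$ is the orthogonal direct sum of $m$ copies of the $\bQ$-bilinear form $q(a,b) = \Trd_{D/\bQ}(ab^\dag)$ on $D$. Since $\Trd_{D/\bQ}$ is $\dag$-invariant and $(ab^\dag)^\dag = ba^\dag$, we get $q(a,b) = q(b,a)$, so $\sigma$ is symmetric. Positive-definiteness is checked after base change to $\bR$: there $\sigma_\bR(\sum_i x_iv_i, \sum_i x_iv_i) = \sum_i \Trd_{D_\bR/\bR}(x_ix_i^\dag) \geq 0$, with equality only if every $x_i = 0$, because $\dag$ is a positive involution of $D_\bR$ (as recorded above, $\dag = \alpha \circ t \circ \alpha^{-1}$, which is positive on $D_\bR$ and defined over $\bQ$).

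For integrality, the key observation is that $\dag$ stabilises $R_u$. Indeed $\dag$ is the restriction to $D$ of the adjoint involution $a \mapsto J_n^{-1}a^tJ_n$ of $\rM_n(\bQ)$ with respect to $\phi$; since $J_n$ and $J_n^{-1} = -J_n$ lie in $\rM_n(\bZ)$, this adjoint preserves $\rM_n(\bZ)$, hence preserves $R_u = \Stab_D(L) = D \cap \rM_n(\bZ)$. Fix a $\bZ$-basis $r_1, \dotsc, r_{d^2e}$ of $R_u$. As $v_1, \dotsc, v_m$ are $D$-linearly independent, $R_uv_1 + \dotsb + R_uv_m = \bigoplus_i R_uv_i$, so the elements $r_kv_i$ ($1 \leq k \leq d^2e$, $1 \leq i \leq m$) form a $\bZ$-basis of this lattice. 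For these basis vectors $\sigma(r_kv_i, r_\ell v_j) = \delta_{ij}\Trd_{D/\bQ}(r_kr_\ell^\dag)$, and $r_kr_\ell^\dag \in R_u$, whose reduced trace lies in $\bZ$ by \cite[Theorem~10.1]{Rei75} (cf.\ section~\ref{sec:orders}); bilinearity then gives $\sigma(R_uv_1 + \dotsb + R_uv_m, R_uv_1 + \dotsb + R_uv_m) \subset \bZ$.

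Finally I would compute the discriminant: with the ordering $r_1v_1, \dotsc, r_{d^2e}v_1, r_1v_2, \dotsc, r_{d^2e}v_m$ of the basis, the Gram matrix of $\sigma$ on $R_uv_1 + \dotsb + R_uv_m$ is block-diagonal with $m$ blocks each equal to the matrix $T_1 = \bigl(\Trd_{D/\bQ}(r_kr_\ell^\dag)\bigr)_{k,\ell}$ of \cref{disc-a-form} (taken at $a = 1$). Hence $\disc(R_uv_1 + \dotsb + R_uv_m, \sigma) = \det(T_1)^m$, and \cref{disc-a-form} yields $\det(T_1) = \pm d^{-d^2e}\disc(R_u)$, so $\abs{\disc(R_uv_1 + \dotsb + R_uv_m, \sigma)} = d^{-d^2em}\abs{\disc(R_u)}^m$. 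The only genuinely delicate ingredient is that $\dag$ coincides with the adjoint involution of $\rM_n(\bQ)$ with respect to $\phi$, so that it respects the integral structure; this is exactly the $(D_\bR,\dag)$-compatibility of $\phi$ already established in this section.
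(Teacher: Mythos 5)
Your proof is correct and follows essentially the same route as the paper: symmetry and positivity from the positivity of $\dag$, integrality via the $\dag$-stability of $R_u$ together with integrality of reduced traces on orders, and the discriminant from the block-diagonal Gram matrix. The only cosmetic differences are that you prove $R_u^\dag \subset R_u$ using the explicit formula $a^\dag = J_n^{-1}a^tJ_n$ (the paper argues instead with the perfectness of $\phi$ on $L$, which is equivalent) and that you evaluate the diagonal blocks via \cref{disc-a-form} at $a=1$ rather than quoting the identity of that form with the norm form on $R_u$; both variants are valid.
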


\begin{proof}
The form $\sigma$ is symmetric because $\Trd_{D/\bQ}(xy^\dag) = \Trd_{D/\bQ}(yx^\dag)$ and it is positive definite because $\dag$ is a positive involution of~$D$.

For each $a \in R_u$ and $y \in L$, the map
\[ x \mapsto \phi(x, a^\dag y) = \phi(ax, y) \]
is $\bZ$-linear and maps $L$ into $\bZ$.
Since $\phi$ is a perfect pairing on $L$, this implies that $a^\dag y \in L$ for all $y \in L$.
Hence $a^\dag \in \Stab_D(L) = R_u$.

Thus if $x_1, \dotsc, x_m, y_1, \dotsc, y_m \in R_u$, then each $x_iy_i^\dag$ is in $R_u$ and so $\Trd_{D/\bQ}(x_iy_i^\dag) \in \bZ$.
Hence $\sigma(\sum x_i v_i, \sum y_i v_i) \in \bZ$.

For each $i$, the restriction of $\sigma$ to $R_uv_i$ is isometric to the inner product associated with $\abs{\cdot}_D$ on $R_u$.
Hence $\abs{\disc(R_uv_i, \sigma)} = d^{-d^2e} \abs{\disc(R_u)}$ and so
\[ \abs{\disc(R_uv_1 + \dotsb + R_uv_m, \sigma)} = d^{-d^2em} \abs{\disc(R_u)}^m.
\qedhere
\]
\end{proof}

\begin{lemma} \label{theta-def}
There exists an $\bR$-linear map $\theta \colon V_\bR \to V_\bR$ with the following properties:
\begin{enumerate}[(i)]
\item $\theta(\alpha(a)x) = \iota_0(a)\theta(x)$ for all $a \in D_{0,\bR}$, $x \in V_\bR$;
\item $\psi = \alpha \circ \theta^* \psi_0$;
\item $\sigma_0(\theta(x), \theta(x)) \leq \newC{theta-sigma-multiplier} \abs{\disc(R_u)}^{\newC{theta-sigma-exponent}} \sigma(x,x)$ for all $x \in V_\bR$, where the constants depend only on $d$, $e$ and $m$ (and not on $u$).
\end{enumerate}
\end{lemma}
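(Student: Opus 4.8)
The plan is to build $\theta$ by comparing two normalised bases of $V_\bR$: the symplectic (or $\alpha$-unitary) basis obtained by feeding the weakly symplectic / weakly unitary $D$-basis $v_1, \dotsc, v_m$ from \cref{minkowski-hermitian-perfect} into \cref{semi-orthogonal-normalise}, and the symplectic (or unitary) basis $w_1, \dotsc, w_m$ that was used to define $\psi_0$ and $\sigma_0$. Concretely, I would first apply \cref{semi-orthogonal-normalise} to the isomorphism $\alpha \colon (D_{0,\bR}, t) \to (D_\bR, \dag)$ and the basis $v_1, \dotsc, v_m$, obtaining $s_1, \dotsc, s_m \in D_\bR^\times$ such that $s_1^{-1}v_1, \dotsc, s_m^{-1}v_m$ is a symplectic or $\alpha$-unitary $D_\bR$-basis for $V_\bR$ with $\abs{s_i}_D \leq (de)^{1/4}\abs{\psi(v_i, v_{j(i)})}_D^{1/2}$, where $j(i)$ is the unique index with $\psi(v_i, v_{j(i)}) \neq 0$. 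Since $s_1^{-1}v_1, \dotsc, s_m^{-1}v_m$ is a $D_\bR$-basis and $\alpha$ is an isomorphism, every $x \in V_\bR$ is uniquely $x = \sum_i \alpha(a_i)\,s_i^{-1}v_i$ with $a_i \in D_{0,\bR}$, and I would set $\theta(x) = \sum_i \iota_0(a_i)\,w_i$. This is $\bR$-linear and well-defined, and property (i) is immediate from the rule $\theta(\alpha(a)\,s_i^{-1}v_i) = \iota_0(a)\,w_i$.

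For (ii) I would check $\psi(x,y) = \alpha(\psi_0(\theta x, \theta y))$ first on the basis vectors $s_i^{-1}v_i$: both $\{s_i^{-1}v_i\}$ (symplectic / $\alpha$-unitary for $\psi$ with the $D_{0,\bR}$-action via $\alpha$) and $\{w_i\}$ (symplectic / unitary for $\psi_0$ with the action via $\iota_0$) have the same Gram matrix — entries in $\{1,-1,0\}$ in the symplectic case, $\omega_0$ or $0$ on the diagonal in the unitary case — so $\alpha^{-1}\psi(s_i^{-1}v_i, s_j^{-1}v_j) = \psi_0(w_i, w_j)$ for all $i,j$. Extending by $D_{0,\bR}$-bilinearity, using $\alpha(a)^\dag = \alpha(a^t)$, then gives the identity for all $x, y \in V_\bR$. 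This is the same manipulation as in the proof of \cref{conj-class-mt}.

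For (iii) the key point is that $\alpha$ is an isometry from $(D_{0,\bR}, \abs{\cdot}_{D_0})$ to $(D_\bR, \abs{\cdot}_D)$: being an isomorphism of $\bR$-algebras with involution, it preserves reduced traces, so $\abs{\alpha(a)}_D^2 = \Trd_{D_\bR/\bR}(\alpha(aa^t)) = \Trd_{D_{0,\bR}/\bR}(aa^t) = \abs{a}_{D_0}^2$. Writing $x = \sum_i \alpha(a_i)\,s_i^{-1}v_i$, the definitions of $\sigma_0$ and $\sigma$ unwind to $\sigma_0(\theta x, \theta x) = \sum_i \abs{a_i}_{D_0}^2$ and $\sigma(x,x) = \sum_i \abs{\alpha(a_i)s_i^{-1}}_D^2$, and then $\abs{a_i}_{D_0} = \abs{\alpha(a_i)}_D = \abs{(\alpha(a_i)s_i^{-1})s_i}_D \leq \abs{\alpha(a_i)s_i^{-1}}_D\,\abs{s_i}_D$ by \cref{length-submult} gives
\[ \sigma_0(\theta x, \theta x) \leq \Bigl( \max_{1 \leq i \leq m} \abs{s_i}_D^2 \Bigr)\, \sigma(x,x). \]
It then remains to bound $\max_i \abs{s_i}_D^2 \leq (de)^{1/2}\max_i \abs{\psi(v_i, v_{j(i)})}_D$ by a power of $\abs{\disc(R_u)}$. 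Here I would invoke \cref{minkowski-hermitian-perfect}(iv), noting that in the present situation $\Trd_{D/\bQ}\psi = \phi$ is the standard symplectic form $J_n$ on $L = \bZ^n$, so $\disc(L, \Trd_{D/\bQ}\psi) = \det(J_n) = 1$ and the $\abs{\disc(L)}$-factor in \cref{minkowski-hermitian-perfect}(iv) is trivial. This leaves a bound on each $\abs{\psi(v_i, v_{j(i)})}_D$ by a constant multiple of a power of $\abs{\disc(R_u)}$, with constants depending only on $m$ and $\dim_\bQ(D) = d^2e$, hence only on $d,e,m$, which is exactly (iii).

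I do not expect a genuine obstacle: the argument is essentially a bookkeeping assembly of \cref{semi-orthogonal-normalise}, the isometry property of $\alpha$, and \cref{minkowski-hermitian-perfect}(iv). The one point that needs care is the scaling in (iii): since $\sigma$ is defined from the basis $v_1, \dotsc, v_m$ rather than from the normalised basis $s_1^{-1}v_1, \dotsc, s_m^{-1}v_m$, the comparison of $\sigma_0 \circ \theta$ with $\sigma$ necessarily acquires the factors $\abs{s_i}_D^2$, and it is precisely these — controlled through \cref{minkowski-hermitian-perfect}(iv) — that produce the power of $\abs{\disc(R_u)}$ in the final estimate.
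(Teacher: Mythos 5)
Your proposal is correct and is essentially the paper's own argument: your $\theta$ (defined via $x=\sum_i\alpha(a_i)s_i^{-1}v_i\mapsto\sum_i\iota_0(a_i)w_i$) coincides with the paper's $\theta(\sum_i x_iv_i)=\sum_i\iota_0(\alpha^{-1}(x_is_i))w_i$, and the verifications of (i), (ii) and the chain of inequalities for (iii) — trace-isometry of $\alpha$, submultiplicativity from \cref{length-submult}, then \cref{semi-orthogonal-normalise} and \cref{minkowski-hermitian-perfect}(iv) — match the paper step for step. Your explicit remark that $\abs{\disc(L)}=1$ (since $\phi$ is the standard form $J_{2g}$) is a fine way to make the application of \cref{minkowski-hermitian-perfect}(iv) transparent, but it is the same reasoning the paper uses implicitly.
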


\begin{proof}
Use \cref{semi-orthogonal-normalise} to choose $s_1, \dotsc, s_m \in D_\bR^\times$ such that $s_1^{-1} v_1, \dotsc, s_m^{-1} v_m$ is a symplectic or $\alpha$-unitary $D_\bR$-basis for $V_\bR$.

Define $\theta \colon V_\bR \to V_\bR$ by
\[ \theta(x_1v_1 + \dotsb + x_mv_m) = \iota_0(\alpha^{-1}(x_1 s_1))w_1 + \dotsb + \iota_0(\alpha^{-1}(x_m s_m))w_m \]
for all $x_1, \dotsc, x_m \in D_\bR$.

Claim~(i) holds because $\alpha \colon D_{0,\bR} \to D_\bR$ is a ring homomorphism.

Claim~(ii) holds because $s_1^{-1} v_1, \dotsc, s_m^{-1} v_m$ is a symplectic or $\alpha$-unitary $D_\bR$-basis for $V_\bR$ while $w_1, \dotsc, w_m$ is a symplectic or unitary $D_{0,\bR}$-basis for $D_{0,\bR}^m$.
Thus
\[ \alpha(\psi_0(\theta(s_i^{-1} v_i), \theta(s_j^{-1} v_j))) = \alpha(\psi_0(w_i, w_j)) = \psi(s_i^{-1} v_i, s_j^{-1} v_j) \text{ for all } i, j. \]

For claim~(iii): for every $x = x_1v_1 + \dotsb + x_mv_m \in V_\bR$, where $x_1, \dotsc, x_m \in D_\bR$,
\begin{align}
    \sigma_0(\theta(x), \theta(x))
  & = \Trd_{D_{0,\bR}/\bR} \bigl( \sum_{i=1}^m \alpha^{-1}(x_is_i) \alpha^{-1}(x_is_i)^t \bigr)
\notag
\\& = \sum_{i=1}^m \Trd_{D_{0,\bR}/\bR} \bigl( \alpha^{-1}(x_is_i s_i^\dag x_i^\dag) \bigr)
\notag
\\& = \sum_{i=1}^m \Trd_{D_\bR/\bR}(x_is_i s_i^\dag x_i^\dag)
    = \sum_{i=1}^m \abs{x_is_i}_D^2
\notag
\\& \leq \sum_{i=1}^m \abs{x_i}_D^2 \abs{s_i}_D^2
    \leq \bigl( \max_{i=1,\dotsc,m} \abs{s_i}_D^2 \bigr) \sigma(x, x).
\label{eqn:sigma0-sigma-bound}
\end{align}
Thanks to \cref{semi-orthogonal-normalise} and \cref{minkowski-hermitian-perfect}(iv), we have
\[ \max_{i=1, \dotsc, m} \abs{s_i}_D^2
   \leq (de)^{1/2} \max_{i,j = 1, \dotsc, m} \abs{\psi(v_i, v_j)}_D
   \leq \newC* \abs{\disc(R_u)}^{\newC*} \]
where the constants depend only on $d$, $e$ and $m$.
Combined with \eqref{eqn:sigma0-sigma-bound}, this proves claim~(iii).
\end{proof}

\begin{lemma} \label{h-in-h0}
Let $h = u^{-1} \theta^{-1} \colon V_\bR \to V_\bR$.
Then $uh = \theta^{-1} \in \gSp_n(\bR)$ and $h \in \gH_0(\bR)$.
\end{lemma}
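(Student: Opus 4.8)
The plan is to verify the two assertions in turn: the first is essentially immediate from the definition of $h$, and the second follows from the two defining properties of $\theta$ recorded in \cref{theta-def}.

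First, $uh = u(u^{-1}\theta^{-1}) = \theta^{-1}$ by the definition of $h$, so it suffices to show $\theta^{-1} \in \gSp_n(\bR)$, equivalently that $\theta$ preserves~$\phi$. For this I would apply $\Trd_{D_\bR/\bR}$ to the identity of \cref{theta-def}(ii), namely $\psi(x,y) = \alpha(\psi_0(\theta(x), \theta(y)))$ for all $x, y \in V_\bR$. Since the $\bR$-algebra isomorphism $\alpha \colon D_{0,\bR} \to D_\bR$ preserves reduced traces, this yields $\Trd_{D_\bR/\bR}\psi(x,y) = \Trd_{D_{0,\bR}/\bR}\psi_0(\theta(x), \theta(y))$. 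The left-hand side equals $\phi(x,y)$ because $\phi = \Trd_{D/\bQ}\psi$ on $V$ and reduced traces are compatible with extension of scalars; the right-hand side equals $\phi(\theta(x), \theta(y))$ by the same reasoning applied to $\psi_0$, which was defined by $\phi = \Trd_{D_0/\bQ}\psi_0$. Hence $\phi(\theta(x), \theta(y)) = \phi(x,y)$ for all $x, y \in V_\bR$, so $\theta \in \gSp_n(\bR)$ and therefore $uh = \theta^{-1} \in \gSp_n(\bR)$.

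Second, to show $h \in \gH_0(\bR)$, I would use that $\gH_0$ is the centraliser in $\gG$ of $\iota_0(D_0)$, so it is enough to check that $h \in \gG(\bR) = \gGSp_n(\bR)$ and that $h$ commutes with $\iota_0(a)$ for every $a \in D_{0,\bR}$. The first point is clear: $u \in \gG(\bR)$ and, by the previous paragraph, $\theta^{-1} \in \gSp_n(\bR) \subset \gGSp_n(\bR)$, so $h = u^{-1}\theta^{-1} \in \gGSp_n(\bR)$. For the commutation, note $h^{-1} = \theta u$, and that \cref{theta-def}(i) reads, in operator form, $\theta \circ \alpha(a) = \iota_0(a) \circ \theta$, where here $\alpha(a)$ denotes the endomorphism $u\iota_0(a)u^{-1}$ of $V_\bR$ by the definition of $\alpha$. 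Composing with $\theta^{-1}$ on the right rearranges this to $(\theta u)\,\iota_0(a)\,(\theta u)^{-1} = \iota_0(a)$ for all $a \in D_{0,\bR}$; thus $h^{-1} = \theta u$, and hence $h$, centralises $\iota_0(D_{0,\bR})$, giving $h \in \gH_0(\bR)$.

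There is no substantial obstacle; the only thing needing care is the bookkeeping around the skew-Hermitian forms $\psi$ and $\psi_0$ and the various algebras $D$, $D_0$, $D_{0,\bR}$, in particular keeping straight that $V_\bR$ carries two actions of $D_{0,\bR}$-type data — one via $\iota_0$ and one via $\alpha$ — and that the operator written $\alpha(a)$ in \cref{theta-def}(i) is literally $u\iota_0(a)u^{-1}$, which is exactly what makes the conjugation identity come out.
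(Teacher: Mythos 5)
Your proof is correct and follows essentially the same route as the paper: part one is the paper's computation $\theta^*\phi = \Trd_{D_\bR/\bR}\circ\alpha\circ(\theta^*\psi_0) = \phi$ written out with the trace-compatibility of $\alpha$ made explicit, and part two is the paper's conjugation check $h\iota_0(a) = \iota_0(a)h$ via \cref{theta-def}(i) and $\alpha(a) = u\iota_0(a)u^{-1}$, only phrased through $h^{-1} = \theta u$ instead of $h$ directly. No gaps.
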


\begin{proof}
Firstly, $\theta \in \gSp_n(\bR)$ by the following calculation, which relies on \cref{theta-def}(ii):
\begin{align*}
    \theta^* \phi
    = \theta^*({\Trd_{D_{0,\bR}/\bR}} \circ \psi_0)
  & = \theta^*({\Trd_{D_\bR/\bR}} \circ \alpha \circ \psi_0)
\\& = {\Trd_{D_\bR/\bR}} \circ \alpha \circ (\theta^* \psi_0)
    = {\Trd_{D_\bR/\bR}} \circ \psi
    = \phi.
\end{align*}

Since $\gSp_n(\bR) \subset \gGSp_n(\bR) = \gG(\bR)$ and $u \in \gG(\bR)$, it follows that $h \in \gG(\bR)$.

By definition, $\gH_0 = Z_\gG(\iota_0(D_0))$ and so it remains to prove that $h$ commutes with the action of $D_0$ on $V$.
For $a \in D_{0,\bR}$ and $x \in V_\bR$, we have
\begin{align*}
    h(\iota_0(a)x)
    = u^{-1} \theta^{-1} (\iota_0(a)x)
  & = u^{-1} \alpha(a) \theta^{-1}(x)
\\& = \iota_0(a) u^{-1} \theta^{-1}(x)
    = \iota_0(a)h(x)
\end{align*}
where we use \cref{theta-def}(i) and the fact that $\alpha(a) = u\iota_0(a)u^{-1}$ (from the definition of $\alpha$).
Thus $h$ commutes with all $a \in \iota_0(D_0)$.
\end{proof}

\begin{lemma} \label{z-basis-bound}
There exists a $\bZ$-basis $\{ e_1', \dotsc, e_n' \}$ for $L$ such that the coordinates of the vectors $\theta(e_1'), \dotsc, \theta(e_n')$ in $V_\bR = \bR^n$ are polynomially bounded in terms of $\abs{\disc(R_u)}$.
\end{lemma}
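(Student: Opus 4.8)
The plan is to construct $e_1', \dotsc, e_n'$ by applying the reduction theory of positive definite quadratic forms to the lattice $L$ together with the form $\sigma$ of \cref{sigma-integral}, and then to transport the resulting length bounds through $\theta$ by means of \cref{theta-def}(iii). The key point is that $\sigma$ takes \emph{integer} values on the sublattice $M = R_uv_1 + \dotsb + R_uv_m$, which both bounds $\disc(L,\sigma)$ from above and bounds $\sigma$ from below on $L \setminus \{0\}$.

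First I would record these two consequences. Since $M \subseteq L$, we have $\abs{\disc(L, \sigma)} \leq \abs{\disc(M, \sigma)} = d^{-d^2em}\abs{\disc(R_u)}^m$ by \cref{sigma-integral}. Writing $N = [L:M]$, every $x \in L$ satisfies $Nx \in M$, so $N^2\sigma(x,x) = \sigma(Nx,Nx) \geq 1$ for all $x \in L \setminus \{0\}$, that is, $\sigma(x,x) \geq N^{-2}$. Finally $N = [L : R_uv_1 + \dotsb + R_uv_m]$ is bounded by a polynomial in $\abs{\disc(R_u)}$ by \cref{minkowski-hermitian-perfect}(iii), because here $\abs{\disc(L)} = \abs{\det J_n} = 1$.

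Next I would take a $\bZ$-basis $e_1', \dotsc, e_n'$ of $L$ which is reduced with respect to $\sigma$ in the sense of Minkowski; then $\sigma(e_1',e_1') \leq \dotsb \leq \sigma(e_n',e_n')$, with $e_1'$ realising the first minimum of $(L,\sigma)$, and $\prod_{i=1}^n \sigma(e_i',e_i') \leq c_n \abs{\disc(L,\sigma)}$ by Hermite's inequality for reduced forms, where $c_n$ depends only on $n$. Combining these with $\sigma(e_1',e_1') \geq N^{-2}$ and the previous paragraph gives, for each $i$,
\[ \sigma(e_i',e_i') \leq \frac{\prod_{j=1}^n \sigma(e_j',e_j')}{\sigma(e_1',e_1')^{i-1}} \leq c_n \abs{\disc(L,\sigma)}\, N^{2(n-1)}, \]
which is bounded by a polynomial in $\abs{\disc(R_u)}$ whose degree and coefficients depend only on $d$, $e$ and $m$. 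Then \cref{theta-def}(iii) yields $\sigma_0(\theta(e_i'),\theta(e_i')) \leq \refC{theta-sigma-multiplier} \abs{\disc(R_u)}^{\refC{theta-sigma-exponent}} \sigma(e_i',e_i')$, so each $\sigma_0(\theta(e_i'),\theta(e_i'))$ is polynomially bounded; and since $\sigma_0$ is a fixed positive definite bilinear form on $V_\bR = \bR^n$, not depending on $u$, the squared Euclidean length of a vector is at most a constant multiple of its $\sigma_0$-square, so the coordinates of $\theta(e_1'), \dotsc, \theta(e_n')$ in $\bR^n$ are polynomially bounded in $\abs{\disc(R_u)}$, as required.

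The step I expect to be the only non-routine one is the passage to an \emph{honest $\bZ$-basis} of $L$ with individually controlled $\sigma$-lengths: \cref{minkowski-2nd} as stated supplies only $\bR$-linearly independent vectors realising the successive minima, so one must invoke the classical reduction theory of positive definite quadratic forms (Minkowski-reduced bases and Hermite's inequality) — or, equivalently, a lemma passing from a system of successive-minima vectors to a $\bZ$-basis of the lattice at the cost of a further $n$-dependent constant — in order to obtain a genuine basis. Everything else is bookkeeping with bounds already in hand.
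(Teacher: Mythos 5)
Your proof is correct, and it follows the same overall strategy as the paper — reduce the lattice $(L,\sigma)$ to obtain a $\bZ$-basis of controlled $\sigma$-length, then transport through $\theta$ via \cref{theta-def}(iii) and the fact that $\sigma_0$ is a fixed positive definite form independent of $u$ — but the lattice-reduction step is implemented differently. The paper works with the successive minima $\lambda_1,\dotsc,\lambda_n$ of the sublattice $M=R_uv_1+\dotsb+R_uv_m$: integrality of $\sigma$ on $M$ (\cref{sigma-integral}) gives $\lambda_i\geq 1$, \cref{minkowski-2nd} bounds their product and hence each $\lambda_i$ individually; since $M\subset L$ the successive minima of $L$ are at most those of $M$, and a theorem of Weyl is then cited to convert successive-minima vectors of $L$ into an honest $\bZ$-basis with comparably bounded lengths. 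You instead stay with $L$ itself: you bound $\abs{\disc(L,\sigma)}\leq\abs{\disc(M,\sigma)}$, bound $\sigma$ from below on $L\setminus\{0\}$ by $N^{-2}$ with $N=[L:M]$ controlled through \cref{minkowski-hermitian-perfect}(iii) (using $\abs{\disc(L)}=1$ for the standard symplectic form), and then take a Minkowski-reduced basis together with Hermite's inequality for reduced forms. Both routes are sound; yours requires the index bound from \cref{minkowski-hermitian-perfect}(iii), which the paper's argument does not need for this lemma, while the paper's requires the Weyl basis-extraction result — and you correctly identified the passage from short independent vectors to an actual $\bZ$-basis as the one non-routine point.

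One small slip in the bookkeeping: in your displayed inequality the denominator should be $\sigma(e_1',e_1')^{\,n-1}$ rather than $\sigma(e_1',e_1')^{\,i-1}$; as written, the step can fail when some $\sigma(e_j',e_j')<1$ for $j>i$, since only the lower bound $N^{-2}$ (not $1$) is available on $L$. The corrected version gives exactly your stated bound $c_n\abs{\disc(L,\sigma)}\,N^{2(n-1)}$, so the conclusion is unaffected.
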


\begin{proof}
Let $\lambda_1, \dotsc, \lambda_n$ denote the successive minima of $R_u v_1 + \dotsb + R_u v_m$ with respect to~$\sigma$.
By \cref{minkowski-2nd} and \cref{sigma-integral}, we have
\[ \lambda_1 \lambda_2 \dotsm \lambda_n
\leq \gamma_{d^2em}^{n/2} \covol(R_u v_1 + \dotsb + R_u v_m)
 \leq \newC{sigma-lambda-multiplier} \abs{\disc(R_u)}^{-m} \]
where $\refC{sigma-lambda-multiplier}$ depends only on $d$, $e$ and~$m$.

For each $i$, $\lambda_i^2 = \sigma(v,v)$ for some $v \in R_u v_1 + \dotsb + R_u v_m$ and so $\lambda_i \geq 1$ by \cref{sigma-integral}.
We deduce that, for each $i$,
\[ \lambda_i \leq \refC{sigma-lambda-multiplier} \abs{\disc(R_u)}^{-m}. \]

Let $\lambda_1', \dotsc, \lambda_n'$ denote the successive minima of $L$ with respect to $\sigma$.
Since $R_u v_1 + \dotsb + R_u v_m \subset L$, $\lambda_i' \leq \lambda_i$ for each~$i$.
By \cite[Theorem~4]{Wey40}, there exists a $\bZ$-basis $e_1', \dotsc, e_n'$ for $L$ such that
\[ \sqrt{\sigma(e_i', e_i')} \leq \newC{weyl-multiplier} \lambda_i' \]
where $\refC{weyl-multiplier}$ depends only on~$n$.
Combining the above inequalities, we obtain
\begin{equation*}
\sigma(e_i', e_i') \leq \newC* \abs{\disc(R_u)}^{-2m}.
\end{equation*}

Combining this with \cref{theta-def}(iii), we obtain that
\[ \sigma_0(\theta(e_i'), \theta(e_i'))
   \leq \refC{theta-sigma-multiplier} \abs{\disc(R_u)}^{\refC{theta-sigma-exponent}} \sigma(e_i', e_i')
   \leq \refC{sigma0-multiplier} \abs{\disc(R_u)}^{\refC{sigma0-exponent}} \]
for some constants $\newC{sigma0-multiplier}$, $\newC{sigma0-exponent}$ independent of $u \in \gG(\bR)$.
Since $\sigma_0$ is a fixed positive definite quadratic form on $V_\bR$, this implies that the coordinates of the vectors $\theta(e_1'), \dotsc, \theta(e_n')$ are likewise bounded by a polynomial in $\abs{\disc(R_u)}$.
\end{proof}

Let $\gamma'$ be the matrix in $\gGL_n(\bZ)$ which maps the vectors $e'_1, \dotsc, e'_n$ to the standard basis of $L = \bZ^n$.

\begin{lemma} \label{entries'-bound}
The entries of the matrices $\gamma' uh, (\gamma' uh)^{-1} \in \gGL_n(\bR)$ are polynomially bounded in terms of $\disc(R_u)$.
\end{lemma}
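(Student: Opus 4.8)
The plan is to rewrite both matrices in terms of $\theta$ and $\gamma'$ and then bound them one at a time. By \cref{h-in-h0} we have $uh = \theta^{-1}$, so $\gamma' uh = \gamma'\theta^{-1}$ and $(\gamma' uh)^{-1} = \theta(\gamma')^{-1}$.

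First I would bound $(\gamma' uh)^{-1} = \theta(\gamma')^{-1}$. Since $\gamma' \in \gGL_n(\bZ)$ maps $e_1', \dotsc, e_n'$ to the standard basis of $L = \bZ^n$, the columns of $(\gamma')^{-1}$ are precisely $e_1', \dotsc, e_n'$; hence the columns of $\theta(\gamma')^{-1}$ are $\theta(e_1'), \dotsc, \theta(e_n')$. By \cref{z-basis-bound} the coordinates of these vectors are polynomially bounded in terms of $\abs{\disc(R_u)}$, so the entries of $(\gamma' uh)^{-1}$ are polynomially bounded.

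Next I would deduce the bound for $\gamma' uh$ by inverting $\theta(\gamma')^{-1}$. The crucial observation is that $\det\bigl(\theta(\gamma')^{-1}\bigr) = \pm 1$: indeed $\det(\gamma') = \pm 1$ because $\gamma' \in \gGL_n(\bZ)$, while $\theta^{-1} = uh \in \gSp_n(\bR)$ --- more precisely, $\theta$ preserves $\phi$ exactly, as shown in the proof of \cref{h-in-h0} --- so $\det(\theta) = 1$. By Cramer's rule, $\gamma' uh = \bigl(\theta(\gamma')^{-1}\bigr)^{-1} = \pm\operatorname{adj}\bigl(\theta(\gamma')^{-1}\bigr)$, and each entry of the adjugate is, up to sign, an $(n-1)\times(n-1)$ minor of $\theta(\gamma')^{-1}$, hence a sum of at most $(n-1)!$ products of $n-1$ of its entries. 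Since $n$ is fixed and those entries are polynomially bounded in $\abs{\disc(R_u)}$, the entries of $\gamma' uh$ are also polynomially bounded. The argument is essentially routine bookkeeping; the only point requiring care is that the determinant is exactly $\pm 1$, which is what makes inversion harmless for polynomial boundedness, and this is precisely where the symplectic property of $\theta$ and the integrality of $\gamma'$ enter.
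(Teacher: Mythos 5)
Your proof is correct and follows essentially the same route as the paper: identify the columns of $(\gamma' uh)^{-1} = \theta(\gamma')^{-1}$ with the vectors $\theta(e_1'), \dotsc, \theta(e_n')$ bounded by \cref{z-basis-bound}, then recover $\gamma' uh$ by Cramer's rule (adjugate), using $\abs{\det(\gamma' uh)}=1$ from $\det(\gamma')=\pm 1$ and $uh=\theta^{-1}\in\gSp_n(\bR)$. No gaps; the emphasis on the unit determinant being the key point matches the paper's argument exactly.
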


\begin{proof}
Let $A = \gamma' uh = \gamma' \theta^{-1} \in \gGL_n(\bR)$.
Observe that $A$ maps the vectors $\theta(e_1'), \dotsc, \theta(e_n')$ to the standard basis.
In other words, the entries of $A^{-1}$ are the coordinates of $\theta(e_1'), \dotsc, \theta(e_n')$ and so are bounded by \cref{z-basis-bound}.

By \cref{h-in-h0}, $\det(uh)=1$, while $\abs{\det(\gamma')} = 1$ since $\gamma' \in \gGL_n(\bZ)$.
Hence $\abs{\det(A)}=1$.
By Cramer's rule, each entry of $A$ is a fixed polynomial in the entries of $A^{-1}$, multiplied by $\det(A)$.
We conclude that the entries of $A$ are polynomially bounded in terms of $\disc(R_u)$.
\end{proof}

We now show that we can modify $\gamma' \in \gGL_n(\bZ)$ to obtain $\gamma \in \gSp_n(\bZ)$, with a similar bound on $\gamma uh$.  This establishes \cref{rep-bound-arithmetic}(b), and we will subsequently use it to prove \cref{rep-bound-arithmetic}(a).

\begin{lemma} \label{entries-bound}
There exists $\gamma \in \Gamma = \gSp_n(\bZ)$ such that the entries of $\gamma uh$ and $(\gamma uh)^{-1}$ are polynomially bounded in terms of $\abs{\disc(R_u)}$.
\end{lemma}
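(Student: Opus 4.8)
The goal is to upgrade the element $\gamma' \in \gGL_n(\bZ)$ produced by \cref{z-basis-bound} to an element $\gamma \in \gSp_n(\bZ) = \Gamma$, while keeping polynomial control of the entries of $\gamma u h$ and $(\gamma u h)^{-1}$. The point is that $\gamma' u h$ and $u h$ need not be symplectic, but $u h = \theta^{-1} \in \gSp_n(\bR)$ by \cref{h-in-h0}. So $\gamma' u h = \gamma' \theta^{-1}$ fails to be symplectic only because $\gamma'$ fails to be symplectic; equivalently, the symplectic form $\phi$ pulled back along $\gamma'$ is some other integral non-degenerate alternating form $\phi' = (\gamma')^t J_n \gamma'$ on $L$ whose discriminant is $1$ (since $\det \gamma' = \pm 1$), i.e. $\phi'$ is again a perfect pairing on $L$. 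The plan is: (1) understand the $\bZ$-lattice $\Lambda' = \gamma' L = L$ equipped with the two forms $\phi$ and the transported form, apply the structure theorem for alternating forms over $\bZ$ (elementary divisors all equal to $1$ by unimodularity) to find $\delta \in \gGL_n(\bZ)$ with $\delta^t \phi' \delta = J_n$; (2) set $\gamma = \gamma' \delta^{-1}$... no — rather, arrange $\gamma$ so that $\gamma u h \in \gSp_n(\bR)$ and $\gamma \in \gSp_n(\bZ)$; (3) control the entries of $\delta$ and $\delta^{-1}$ polynomially in $\abs{\disc(R_u)}$.

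More concretely, I would argue as follows. By \cref{h-in-h0} the matrix $\theta^{-1} = uh$ is symplectic, so $\gamma u h$ is symplectic if and only if $\gamma$ is symplectic. We have $\gamma' u h \in \gGL_n(\bR)$ with polynomially bounded entries (\cref{entries'-bound}), but $\gamma' \notin \gSp_n(\bZ)$ in general. Consider the alternating $\bZ$-valued form $\phi''$ on $\bZ^n$ defined by $\phi''(x,y) = \phi(\gamma'^{-1} x, \gamma'^{-1} y)$; since $\gamma' \in \gGL_n(\bZ)$ this is a unimodular (perfect) alternating pairing on $\bZ^n$. By the classification of unimodular alternating forms over $\bZ$, there exists $\delta \in \gGL_n(\bZ)$ with $\delta^t M_{\phi''} \delta = J_n$, i.e. $\gamma := \delta \gamma'$ lies in $\gSp_n(\bZ) = \Gamma$. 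Then $\gamma u h = \delta (\gamma' u h)$ and $(\gamma u h)^{-1} = (\gamma' u h)^{-1} \delta^{-1}$, so it suffices to bound the entries of $\delta$ and $\delta^{-1}$ polynomially in $\abs{\disc(R_u)}$ and then multiply using \cref{entries'-bound}.

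For the bound on $\delta$, the cleanest route is to reconstruct $\delta$ directly rather than invoke a black-box classification: one can run a symplectic Gram--Schmidt / Siegel-type reduction algorithm on the lattice $(\bZ^n, \phi'')$ to produce a symplectic basis, and a version of Minkowski's second theorem (as in \cref{minkowski-2nd}, applied to a fixed reference quadratic form together with the alternating form) gives a symplectic basis whose vectors have length polynomially bounded in terms of the entries of the Gram matrix of $\phi''$ with respect to the standard basis. Those Gram entries are the numbers $\phi((\gamma')^{-1} e_i, (\gamma')^{-1} e_j)$, which — since $(\gamma' u h)^{-1} = (uh)^{-1} \gamma'^{-1} = \theta \gamma'^{-1}$ has bounded entries and $\theta$ is fixed — are polynomially bounded in $\abs{\disc(R_u)}$. (Alternatively, note $\gamma'^{-1}$ itself maps the standard basis to $e_1', \dots, e_n'$, whose $\sigma$-lengths were bounded in \cref{z-basis-bound}, and $\phi$ is a fixed form comparable to $\sigma$ on $V_\bR$.) This produces $\delta$ with bounded entries, and since $\det \delta = \pm 1$, Cramer's rule bounds $\delta^{-1}$ as well. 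Putting $\gamma = \delta \gamma'$ and combining with \cref{entries'-bound} completes the proof.

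**Main obstacle.** The genuinely substantive point is (3): producing the change-of-basis matrix $\delta \in \gGL_n(\bZ)$ transforming $\phi''$ to $J_n$ \emph{with polynomially bounded entries}. The abstract classification of unimodular alternating forms over $\bZ$ gives existence of $\delta$ but no size control; one must instead carry out an effective/algorithmic version — essentially a symplectic analogue of the reduction theory behind Weyl's theorem \cite{Wey40}, already used in \cref{z-basis-bound} — ensuring each step of the reduction only inflates entries polynomially in the initial Gram data. I expect the authors package this either by a direct induction (peel off a hyperbolic plane $\langle a, b\rangle$ with $\phi''(a,b) = 1$ and $a$ short, complete to a symplectic basis of the orthogonal complement, controlling sizes at each stage) or by citing an effective structure theorem; either way it is a bookkeeping argument with no conceptual difficulty beyond what has already appeared, but it is the only non-formal step. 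Everything else — $uh$ symplectic, $\det \gamma' = \pm 1$, Cramer's rule for the inverse, multiplying bounded matrices — is immediate from the lemmas already established.
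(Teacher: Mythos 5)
Your proposal is essentially the paper's proof: bound the Gram entries $\phi(\gamma'^{-1}e_i,\gamma'^{-1}e_j)$ via the invariance of $\phi$ under $uh\in\gSp_n(\bR)$ (\cref{h-in-h0}) together with \cref{entries'-bound}, produce a symplectic $\bZ$-basis of $(L,\phi)$ with polynomially bounded coordinates --- the paper does this by citing \cite[Lemma~4.3]{Orr15}, precisely the effective symplectic reduction you anticipated as the only non-formal step --- and finish using unimodularity and Cramer's rule. Two minor corrections: with your normalisation $\delta^t M_{\phi''}\delta=J_n$ the symplectic element is $\delta^{-1}\gamma'$ rather than $\delta\gamma'$ (harmless, since $\delta$ and $\delta^{-1}$ both have bounded entries), and the Gram-entry bound must be justified by the $\phi$-invariance under $\theta=(uh)^{-1}\in\gSp_n(\bR)$ as in the paper, not by ``$\theta$ is fixed'' ($\theta$ depends on $u$) nor by comparing $\phi$ with $\sigma$, which also depends on $u$.
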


\begin{proof}
Let $e_1, \dotsc, e_n$ denote the standard basis of $L = \bZ^n$.

According to \cref{h-in-h0}, $uh \in \gSp_n(\bR)$.
Consequently
\[ \phi(\gamma'^{-1} e_i, \gamma'^{-1} e_j) = \phi((uh)^{-1}\gamma'^{-1} e_i, \, (uh)^{-1} \gamma'^{-1} e_j) \text{ for all } i, j \in \{ 1, \dotsc, n \}. \]
By \cref{entries'-bound}, the entries of $(uh)^{-1} \gamma'^{-1}$ are polynomially bounded in terms of $\abs{\disc(R_u)}$, and hence the same is true of the values $\phi(\gamma'^{-1} e_i, \gamma'^{-1} e_j)$.

Hence, by \cite[Lemma~4.3]{Orr15}, there exists a symplectic $\bZ$-basis $\{ f_1, \dotsc, f_n \}$ for $(L, \phi)$ whose coordinates with respect to $\{ \gamma'^{-1}e_1, \dotsc, \gamma'^{-1}e_n \}$ are polynomially bounded in terms of $\abs{\disc(R_u)}$.
Applying $\gamma'$, we deduce that the coordinates of $\gamma' f_1, \dotsc, \gamma' f_n$ with respect to the standard basis are polynomially bounded.

Let $\gamma \in \gGL_n(\bZ)$ be the matrix such that $e_i = \gamma f_i$ for each~$i = 1, \dotsc, n$.
Since $\{ f_1, \dotsc, f_n \}$ is a symplectic basis, we have $\gamma \in \Gamma$.
We have just shown that the coordinates of $\gamma' f_i = \gamma' \gamma^{-1} e_i$ are polynomially bounded for each~$i$.
In other words, the entries of the matrix $\gamma' \gamma^{-1}$ are polynomially bounded in terms of $\abs{\disc(R_u)}$.

Multiplying $(\gamma'uh)^{-1}$ by $\gamma' \gamma^{-1}$ and applying \cref{entries'-bound}, we deduce that the entries of $(\gamma uh)^{-1}$ are polynomially bounded in terms of $\abs{\disc(R_u)}$.
Thanks to \cref{h-in-h0}, $\abs{\det(\gamma uh)} = 1$, 
so it follows that the entries of $\gamma uh$ are also polynomially bounded in terms of $\abs{\disc(R_u)}$.
\end{proof}

Let $S_u = \End_{R_u}(L) = uE_{0,\bR}u^{-1} \cap \rM_n(\bZ)$, where $E_0$ is defined in \eqref{eqn:E_0}.
By \cref{reps-closed-orbits}(v), there exists $d_u \in \bR_{>0}$ such that
\[ d_u \rho_R(u) \rho_L(u) w_0 \in \Lambda  \quad  \text{and}  \quad  d_u \leq \refC{du-multiplier} \abs{\disc(S_u)}^{1/2}. \]

In order to prove \cref{rep-bound-arithmetic}(a), we shall use the vector
\[ w_u = d_u\rho_R(\gamma u)w_0 \in W_\bR. \]
Observe first that $d_u\rho_R(\gamma u) \in \Aut_{\rho_L(\gG)}(\Lambda_\bR)$ thanks to \cref{reps-closed-orbits}(iii), and that $\rho_L(u)w_u = \rho_R(\gamma) d_u \rho_R(u) \rho_L(u) w_0$ is in $\Lambda$ thanks to \cref{reps-closed-orbits}(iv) .
Hence $w_u$ satisfies the qualitative conditions of \cref{rep-bound-arithmetic}(a), and it only remains to prove the bound for $\abs{w_u}$.

\begin{lemma} \label{length-wu}
$\abs{w_u} \leq \newC* \abs{\disc(R_u)}^{\newC*}$.
\end{lemma}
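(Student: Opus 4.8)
The plan is to unwind the definition $w_u = d_u \rho_R(\gamma u) w_0$ and to reduce the required estimate to bounds already established. First I would use that $h \in \gH_0(\bR)$ and $\Stab_{\gG,\rho_R}(w_0) = \gH_0$ (from \cref{reps-closed-orbits}(i)) to conclude $\rho_R(h) w_0 = w_0$, so that $\rho_R(\gamma u) w_0 = \rho_R(\gamma u) \rho_R(h) w_0 = \rho_R(\gamma u h) w_0$ since $\rho_R$ is a group homomorphism. By \cref{h-in-h0}, $uh = \theta^{-1} \in \gSp_n(\bR)$, and $\gamma \in \gSp_n(\bZ) \subset \gSp_n(\bR)$, so $\nu(\gamma u h) = 1$. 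Since $\rho_R = \extpower^{mn} \sigma_R \otimes \nu^{mn/2}$, this gives
\[ w_u = d_u \bigl( \extpower^{mn} \sigma_R(\gamma u h) \bigr) w_0. \]

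Next I would bound $\abs{ \bigl( \extpower^{mn} \sigma_R(\gamma u h) \bigr) w_0 }$. The operator $\sigma_R(\gamma u h)$ on $\End(V_\bR)$ is right multiplication by $(\gamma u h)^{-1}$, so, in a fixed basis of $\End(V_\bR)$, its matrix entries are linear in the entries of $(\gamma u h)^{-1}$, which by \cref{entries-bound} are polynomially bounded in terms of $\abs{\disc(R_u)}$. The entries of $\extpower^{mn} \sigma_R(\gamma u h)$ are $mn \times mn$ minors of that matrix, hence also polynomially bounded; as $w_0$ depends only on $d, e, m$, the coordinates of $\bigl( \extpower^{mn} \sigma_R(\gamma u h) \bigr) w_0$ in $W_\bR$ are polynomially bounded in terms of $\abs{\disc(R_u)}$, and therefore $\abs{ \bigl( \extpower^{mn} \sigma_R(\gamma u h) \bigr) w_0 } \leq \newC* \abs{\disc(R_u)}^{\newC*}$ for a fixed norm on $W_\bR$.

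It remains to bound $d_u$. By \cref{reps-closed-orbits}(v), $d_u \leq \refC{du-multiplier} \abs{\disc(S_u)}^{1/2}$, where $S_u = \End_{R_u}(L)$. Since $R_u = \Stab_D(L)$ for the division algebra $D = \End_{\gH_u}(V)$ of type~I or~II, \cref{disc-R-S} yields $\abs{\disc(S_u)} \leq \abs{\disc(R_u)}^{(d^2em+1)m^2}$, so $d_u \leq \refC{du-multiplier} \abs{\disc(R_u)}^{(d^2em+1)m^2/2}$. Combining this with the previous paragraph gives $\abs{w_u} \leq \newC* \abs{\disc(R_u)}^{\newC*}$. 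I do not expect any genuine obstacle: the only step that uses more than routine manipulation is the passage from $\disc(S_u)$ to $\disc(R_u)$, which is exactly \cref{disc-R-S}, and everything else is a matter of combining bounds already in hand.
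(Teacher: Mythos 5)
Your proof is correct and follows essentially the same route as the paper: replace $\rho_R(\gamma u)w_0$ by $\rho_R(\gamma uh)w_0$ using that $h$ lies in the stabiliser $\gH_0$ of $w_0$, bound the entries of $\rho_R(\gamma uh)$ via \cref{entries-bound} (you just make the paper's phrase ``fixed polynomials in the entries and inverse determinant'' explicit through $\nu(\gamma uh)=1$ and the minors of right multiplication by $(\gamma uh)^{-1}$), and control $d_u$ via \cref{reps-closed-orbits}(v) together with \cref{disc-R-S}. No gaps.
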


\begin{proof}
According to \cref{reps-closed-orbits}(i), $\gH_{d,e,m} = \Stab_{\rho_R(\gG)}(w_0)$.
Therefore
\begin{align*}
w_u = d_u\rho_R(\gamma u)w_0 = d_u\rho_R(\gamma uh)w_0.
\end{align*}

The homomorphism $\rho_R \colon \gG \to \gGL(W)$ is given by fixed polynomials in the entries and inverse determinant.
Since the entries of $\gamma uh$ and $\det(\gamma uh)^{-1}$ are bounded by \cref{entries-bound}, we deduce that the entries of $\rho_R(\gamma uh)$ are likewise polynomially bounded in terms of $\disc(R_u)$.

Meanwhile, by definition, $d_u$ is polynomially bounded in terms of $\disc(S_u)$.
By \cref{disc-R-S}, $\disc(S_u)$ is polynomially bounded in terms of $\disc(R_u)$.
We conclude that $\abs{w_u}$ is polynomially bounded in terms of $\abs{\disc(R_u)}$, as required.
\end{proof}

\section{Cases of Zilber--Pink}\label{cases-of-ZP}

In this section, we prove Theorems \ref{main-theorem-zp} and \ref{unconditional}. The proofs follow closely \cite[sec.~6]{QRTUI}. We refer to notation and terminology from \cite[sec. 2.2 and 2.4]{Orr18}.

\subsection{Proof of Theorem \ref{main-theorem-zp}}

In fact, instead of proving \cref{main-theorem-zp}, we will prove the following, more general theorem. (Recall that, by \cref{codim-pel}, for $g\geq 3$, all proper special subvarieties of PEL type of $\cA_g$ have codimension at least~$2$.)

\begin{theorem}\label{ZP-end}
Let $g\geq 3$ and let $C$ be an irreducible algebraic curve in $\cA_g$. Let $S$ denote the smallest special subvariety of $\cA_g$ containing $C$.
Let $\Omega$ denote the set of special subvarieties of $\cA_g$ of simple PEL type I or~II of dimension at most $\dim(S)-2$. Let $\Sigma$ denote the set of points in $\cA_g(\CC)$ which are endormorphism generic in some $Z\in\Omega$.

If $C$ satisfies Conjecture \ref{LGO-general}, then $C\cap\Sigma$ is finite.
\end{theorem}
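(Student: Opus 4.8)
The plan is to run the Pila--Zannier strategy of \cite[sec.~6]{QRTUI} essentially verbatim, with \cref{rep-bound-arithmetic} (which rests on \cref{minkowski-hermitian-perfect}) supplying the parameter height bound. First I would reduce to a single $\gG(\bR)^+$-conjugacy class of Shimura subdatum components. By \cref{conj-class-mt}, the Shimura subdatum components attached to the members of $\Omega$ fall into finitely many classes, indexed by triples $(d,e,m)$ with $d^2em=2g$, $d\in\{1,2\}$ and $dm$ even. Writing $\Sigma_{d,e,m}$ for the set of $s\in\Sigma$ that are endomorphism generic in some $Z\in\Omega$ whose general Lefschetz group is $\gG(\bR)^+$-conjugate to the group $\gH_0$ of \eqref{eqn:H0}, it suffices to prove that $C\cap\Sigma_{d,e,m}$ is finite for each such triple, since a Galois orbits bound from \cref{LGO-general} for $\Sigma$ restricts to one for each $\Sigma_{d,e,m}$.

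Fix $(d,e,m)$, put $n=d^2em=2g$, and let $\rho_L\colon\gG\to\gGL(W)$, $\Lambda\subset W$ and $w_0\in\Lambda$ be as in \cref{reps-closed-orbits}, so that $\Stab_\gG(w_0)=\gH_0$ and $\rho_L(\gG(\bR))w_0$ is closed in $W_\bR$. Set $Y=\Aut_{\rho_L(\gG)}(W_\bR)\,\rho_L(\gG(\bR))\,w_0$, and for $y\in Y$ put $\gH_y=\Stab_{\gG_\bR}(y)$ and $X_y=\{z\in X^+:z(\bS)\subset\gH_y\}$. As explained in section~\ref{subsec:parameter-space}, using \cref{unique-datum,conj-class-datum}, for each rational point $y\in Y$ the set $X_y$ is a connected pre-special subvariety of $X^+$ whose underlying group is $\gG(\bR)^+$-conjugate to $\gH_0$; consequently $\pi(X_y)$ is a component of a special subvariety of PEL type of the same dimension as the members of $\Omega$ in the class $(d,e,m)$, and $\rho_L$ satisfies conditions (i)--(ii) of \cite[Theorem~1.2]{QRTUI}. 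With $\cC=\pi^{-1}(C(\bC))\cap\Fg$ for a Siegel fundamental set $\Fg$, the set $D=\{(y,z)\in Y\times\cC:z\in X_y\}$ is definable in $\Ranexp$.

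For the parameter height bound, take $s\in C\cap\Sigma_{d,e,m}$ and write $R_s=\End(A_s)$, a division algebra of type I or~II. Pick $z\in\pi^{-1}(s)\cap\Fg$ and $u\in\gGSp_{2g}(\bR)$ with $z\in uX_0^+$; the polarisation and the identification $L\cong H_1(A_s,\bZ)$ make $\gH_u=u\gH_{0,\bR}u^{-1}$ a $\bQ$-group with $\End_{\gH_u}(L)=R_s$, and $L_\bQ$ is an irreducible $\gH_u$-module because $R_s\otimes\bQ$ is a division algebra. \Cref{rep-bound-arithmetic} then provides $\gamma\in\gSp_{2g}(\bZ)$, $h\in\gH_0(\bR)$ with $\length{\gamma uh}$ polynomially bounded in $\abs{\disc(R_s)}$, together with $w_u\in\Aut_{\rho_L(\gG)}(\Lambda_\bR)w_0$ satisfying $\rho_L(u)w_u\in\Lambda$ and $\abs{w_u}$ polynomially bounded in $\abs{\disc(R_s)}$. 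Feeding $\gamma u$ and $w_u$ into the argument of \cite[Proposition~6.3]{QRTUI} (with \cite[Theorem~1.2]{QRTUI} applied to $\rho_L$) yields a lift $z'\in\pi^{-1}(s)\cap\Fg$ and a rational point $y\in Y$ with $z'\in X_y$ and $H(y)$ polynomially bounded in $\abs{\disc(\End(A_s))}$. This establishes properties (1) and~(2) of section~\ref{subsec:proof-strategy-high-level}.

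Finally, suppose $C\cap\Sigma_{d,e,m}$ is infinite. Each $Z\in\Omega$ has $\dim Z\le\dim S-2<\dim S$, so $C\not\subset Z$ by the minimality of $S$, hence $C\cap Z$ is finite; by \cref{prop:finitely-many-intro} the special subvarieties in which points of bounded complexity are endomorphism generic form a finite set, so $\abs{\disc(\End(A_s))}\to\infty$ along $C\cap\Sigma_{d,e,m}$. Let $K$ be a finitely generated field over which $C$ is defined; by \cref{LGO-general} the orbit $\Aut(\bC/K)\cdot s$ has size at least a fixed positive power of $\abs{\disc(\End(A_s))}$, its conjugates again lie in $C\cap\Sigma_{d,e,m}$ with the same discriminant, and by the previous paragraph each conjugate contributes a point of $D$ with rational first coordinate of height polynomially bounded in $\abs{\disc(\End(A_s))}$, these points being pairwise distinct. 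Hence for arbitrarily large $T$ the number of rational points of $D$ of height $\le T$ exceeds a fixed positive power of $T$, so by the Habegger--Pila--Wilkie theorem \cite[Corollary~7.2]{HP16} the set $D$ contains a semialgebraic block $B$ of positive dimension whose projection to $\cC$ is not constant. Running the functional transcendence argument of \cite[sec.~6.5]{QRTUI} (hyperbolic Ax--Schanuel) on $B$ shows that $C$ lies in a special subvariety $T$ of $\cA_g$ with $\dim T\le\dim C+\dim\pi(X_y)=1+\dim Z\le\dim S-1$ for some $y$ appearing in $B$; but $C\subset T$ and the minimality of $S$ force $S\subseteq T$, so $\dim S\le\dim T\le\dim S-1$, a contradiction. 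Thus $C\cap\Sigma_{d,e,m}$ is finite, and summing over the finitely many triples $(d,e,m)$ proves the theorem. The delicate step — which I expect to be the main obstacle — is extracting from the block $B$, via the o-minimal functional transcendence package, a special subvariety $T\supseteq C$ with $\dim T\le\dim C+\dim\pi(X_y)$ for some $y$ occurring in $B$, so that the hypothesis $\dim Z\le\dim S-2$ bites; everything else is either imported from \cite{QRTUI} or is a direct application of \cref{rep-bound-arithmetic}.
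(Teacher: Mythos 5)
Your proposal is correct and follows essentially the same route as the paper: reduction to the finitely many classes $\Sigma_{d,e,m}$ via \cref{conj-class-mt}, the parameter height bound from \cref{reps-closed-orbits,rep-bound-arithmetic} combined with \cite[Theorem~1.2]{QRTUI} (this is the paper's Lemma~\ref{z-w}), then \cref{LGO-general}, Habegger--Pila--Wilkie, and the functional transcendence endgame of \cite[sec.~6.5]{QRTUI}, concluded by the dimension count $1+\dim(X_0^+)\leq\dim(S)-1$. The ``delicate step'' you flag is handled in the paper exactly by the listed modifications to \cite[sec.~6.5]{QRTUI} (inverse Ax--Lindemann identifying the Zariski closure of $\tilde C$ with a component of $\pi^{-1}(S)$, and the bound $\dim(\tilde B\cdot(X_0^+)^\vee)\leq 1+\dim(X_0^+)$), so no new idea is needed beyond what you describe.
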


Conjecture \ref{LGO-general} is the natural generalisation of Conjecture \ref{galois-orbits}.

\begin{conjecture}\label{LGO-general}
Let $C$ and $\Sigma$ be as in Theorem \ref{ZP-end} and let $L$ be a finitely generated subfield of $\CC$ over which $C$ is defined.
Then there exist positive constants $\newC{ZP-end-mult}$ and $\newC{ZP-end-exp}$ such that
\begin{align*}
    \#\Aut(\CC/L)\cdot s\geq\refC{ZP-end-mult}|\disc(\End(A_s))|^{\refC{ZP-end-exp}}
\end{align*}
for all $s\in C\cap\Sigma$.
\end{conjecture}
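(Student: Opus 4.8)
The plan is to prove the instance of \cref{LGO-general} that suffices, together with \cref{ZP-end}, to deduce \cref{unconditional} and a mild strengthening of it; in full generality \cref{LGO-general} is open, just as \cref{galois-orbits} is. Concretely, I would prove: if $C$ is defined over a \emph{number} field $L$, the Zariski closure of $C$ in the Baily--Borel compactification of $\cA_g$ meets the $0$-dimensional stratum, and $\Sigma$ is shrunk to the locus $\Sigma^*$ of points whose endomorphism algebra is a non-split totally indefinite quaternion algebra over a totally real field $F$ of degree $e$ with $4e\nmid g$, then the inequality asserted by \cref{LGO-general} holds. (One also checks that the proof of \cref{ZP-end} goes through with $\Sigma$ replaced by any $\gGSp_{2g}(\RR)$-conjugacy-invariant sublocus, in particular $\Sigma^*$.) No new idea is required beyond \cite{ExCM} and \cite{QRTUI}: the argument is an application of André's G-functions method \cite{And89} in the form of \cite[Theorem~8.2]{ExCM}, the only point special to type~II being the verification of the arithmetic input, which is where the hypothesis $4e\nmid g$ enters.

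First I would reduce to a bound on a field of definition. For every special subvariety $Z$ of simple PEL type~II occurring in the definition of $\Sigma^*$ one has $C\not\subseteq Z$, since such a $Z$ has dimension $\leq\dim(S)-2$ while $S$ is the smallest special subvariety containing $C$; hence $C\cap\Sigma^*$ is a set of algebraic points, and for $s\in C\cap\Sigma^*$ we have $\#\Aut(\CC/L)\cdot s=[L(s):L]$. So it is enough to bound $[L(s):L]$ below by a fixed positive power of $\abs{\disc(\End(A_s))}$. For this I exploit the degeneration: because the closure of $C$ in the Baily--Borel compactification meets the $0$-dimensional stratum, the universal principally polarised abelian scheme over $C$ extends over the smooth compactification of $C$ to a semiabelian scheme whose fibre at some boundary point $c_0$ (defined over a number field) is a torus. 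Choosing a local parameter $t$ at $c_0$ over a number field, the Picard--Fuchs system expresses a period matrix of the family near $c_0$ through G-functions, with the point of maximal multiplicative reduction at $t=0$; this is the set-up of \cite[Theorem~8.2]{ExCM}.

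Applying \cite[Theorem~8.2]{ExCM}, together with the reduction theory of \cite{QRTUI}, a point $s\in C$ with $\abs{\disc(\End(A_s))}$ large necessarily specialises close to $c_0$ at some place $v$ of $L(s)$, and there the effective transcendence estimate for G-functions gives
\[ [L(s):L] \geq c_1\,\kappa(A_s)^{c_2}, \]
where $c_1,c_2>0$ depend only on $g$ and $\kappa(A_s)$ is the arithmetic invariant of the maximal-reduction degeneration at $s$ controlled by the estimate --- built from the denominators of the $t$-expansion evaluated at $s$, i.e.\ from how $v$-adically small the coordinate of $s$ at $c_0$ is. It then remains to show $\kappa(A_s)\gg\abs{\disc(\End(A_s))}^{c_3}$ for $s\in C\cap\Sigma^*$: the $D$-action on $A_s$ extends to the torus at $c_0$, so the maximal-reduction torus at $s$ acquires an $F$-action, and tracing this through the estimate, as in \cite{ExCM}, expresses $\kappa(A_s)$ in terms of the discriminant of an order in $F$, the ramification of $D$, and a conductor --- hence in terms of $\abs{\disc(\End(A_s))}$ up to fixed powers.

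The step I expect to be the main obstacle is this last inequality. A priori the invariant $\kappa(A_s)$ produced by the G-function estimate is controlled only by a divisor of the relevant discriminant, and one must rule out such a loss uniformly over the entire $\gGSp_{2g}(\RR)$-conjugacy class of simple PEL data of type~II; this is exactly what the hypothesis $4e\nmid g$ secures, and it is the one arithmetic point that has to be checked beyond what is already in \cite{ExCM} and \cite{QRTUI}. Without that hypothesis (or without the degeneration hypothesis on $C$) the required bound on $\kappa(A_s)$ is not available, which is why \cref{LGO-general}, like \cref{galois-orbits}, remains open in general. Granting the check, chaining the inequalities above gives $\#\Aut(\CC/L)\cdot s\geq\refC{ZP-end-mult}\abs{\disc(\End(A_s))}^{\refC{ZP-end-exp}}$ for all $s\in C\cap\Sigma^*$, the desired instance of \cref{LGO-general}.
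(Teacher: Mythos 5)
First, note that \cref{LGO-general} is stated as a conjecture and the paper does not prove it in general; what it actually proves is the special case recorded in \cref{EQ-scheme} (equivalently \cref{unconditional}), and you have correctly scoped your proposal to that case. Your identification of the two external hypotheses --- the degeneration condition feeding into Andr\'e's G-functions method via \cite[Theorem~8.2]{ExCM}, and the restriction to quaternionic endomorphism algebras with $4e\nmid g$ --- matches the paper. However, the chain of inequalities you propose in the middle is not the paper's argument and contains a genuine gap. The proof of \cref{EQ-scheme} runs: (1) \cite[Theorem~8.2]{ExCM} gives a \emph{height} bound $h(s)\leq c_1[L(s):L]^{c_2}$ for exceptional points $s$; (2) the Masser--W\"ustholz endomorphism estimates \cite{MW94} (in the form of \cite[Theorem~6.6]{QRTUI}) give $\abs{\disc(\End(\fA_s))}\leq c_3\max\bigl(h(s),[L(s):L]\bigr)^{c_4}$; chaining these yields the Galois bound. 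Your proposal omits step (2) entirely and replaces it by an invented degeneration invariant $\kappa(A_s)$ together with the claimed lower bound $\kappa(A_s)\gg\abs{\disc(\End(A_s))}^{c_3}$, which you yourself flag as ``the main obstacle'' and for which you offer only a heuristic (the $F$-action extending to the torus at $c_0$). That bound is established nowhere, is not how the discriminant enters the argument, and I do not see how to prove it as stated; the Masser--W\"ustholz input is the missing idea.

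Second, you have misattributed the role of the hypothesis $4e\nmid g$. It is not used to control $\kappa(A_s)$ or to ``rule out a loss'' in a discriminant bound: it is precisely the condition under which $D=\End(\fA_s)\otimes_\ZZ\QQ$ does \emph{not} embed into $\rM_g(\QQ)$, i.e.\ under which $s$ is \emph{exceptional} in the sense of \cite[Definition~8.1]{ExCM} (for type~II one has $\dim_\QQ(D)=4e$, whereas for type~I the dimension always divides $g$, which is why type~I drops out of \cref{unconditional} altogether). Exceptionality is the hypothesis of \cite[Theorem~8.2]{ExCM}; without it the G-function method produces no nontrivial relations among periods and hence no height bound at all. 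So $4e\nmid g$ enters at step (1), not at a step relating a degeneration invariant to the discriminant. With these two corrections --- replace the $\kappa$-argument by the Masser--W\"ustholz estimates, and move $4e\nmid g$ to its correct place as the exceptionality condition --- your outline becomes the paper's proof of \cref{EQ-scheme}.
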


Let $L=\ZZ^{2g}$ and let $\phi:L\times L\to\ZZ$ be the standard symplectic form as in section~\ref{subsec:shimura-data}.
Let $\gG=\gGSp(L_\bQ, \psi)=\gGSp_{2g}$ and let $\Gamma=\gSp_{2g}(\ZZ)$.
Define $h_0 \colon \mathbb{S}\to\gG_\RR$ as in \eqref{eqn:h0} and let $X^+$ denote the $\gG(\bR)$-conjugacy class of $h_0$ in $\Hom(\bS, \gG_\bR)$.
Then $(\gG, X^+)$ is a Shimura datum component and so $\Stab_{\gG(\bR)}(h_0) = \bR^\times K^+_\infty$ where $K^+_\infty$ is a maximal compact subgroup of $\gG(\bR)^+$ \cite[chapter~6]{Mil05}.

Let $(\gP, \gS, K_\infty)$ be a Siegel triple for $\gG$, as defined in \cite[sec.~2B]{Orr18}, where $K_\infty$ is a maximal compact subgroup of $\gG(\RR)$ such that $K^+_\infty=\gG(\RR)^+\cap K_\infty$.
By the results of Borel quoted in \cite[sec.~2D]{Orr18}, there exists a Siegel set $\fS \subset \gG(\RR)$ with respect to $(\gP, \gS, K_\infty)$ and a finite set $C_\gG\subset\gG(\QQ)$ such that $\cF_{\gG}=C_\gG\fS$ is a fundamental set for $\Gamma$ in $\gG(\RR)$.

Let $\cF = (\cF_\gG \cap \gG(\bR)^+) h_0$.  Since $\Gamma \subset \gG(\bR)^+$, $\cF$ is a fundamental set in $X^+$ for~$\Gamma$.  If we denote by $\pi:X^+\to\cA_g$ the uniformising map, then $\pi|_{\cF}$ is definable in the o-minimal structure $\RR_{\rm an,exp}$ (see \cite{PS10} for the original result and \cite{kuy:ax-lindemann} for a formulation in notations more similar to ours).

As explained in section \ref{subsec:proof-strategy-high-level}, $\Sigma$ is the union of sets $\Sigma_{d,e,m}$, where $d$, $e$, $m$ are positive integers satisfying $d^2em=2g$, $d=1$ or $2$ and $dm$ is even.
Since there are only finitely many choices for such $d$, $e$, $m$ (given~$g$), in order to prove \cref{ZP-end}, it suffices to prove that $C \cap \Sigma_{d,e,m}$ is finite for each $d$, $e$, $m$.

From now on, we fix such integers $d$, $e$ and~$m$.  Let $\gH_0 \subset \gG$ be the group defined in \eqref{eqn:H0} associated with these parameters.
Let $X_0^+ = \gH_0(\bR)^+h_0$, so that $(\gH_0, X_0^+)$ is the unique Shimura subdatum component of $(\gG, X^+)$ given by \cref{unique-datum}.

By \cref{reps-closed-orbits,rep-bound-arithmetic}, there exists a finitely generated, free $\ZZ$--module $\Lambda$, a representation $\rho_L:\gG\to\gGL(\Lambda_\QQ)$ such that $\Lambda$ is stabilised by $\rho_L(\Gamma)$, a vector $w_0\in\Lambda$ and positive constants $\refC{rep-multiplier}$ and $\refC{rep-exponent}$ such that:
\begin{enumerate}[(i)]
\item $\Stab_{\gG,\rho_L}(w_0) = \gH_0$;

\item the orbit $\rho_L(\gG(\bR))w_0$ is closed in $\Lambda_\bR$;

\item for each $u \in \gG(\bR)$, if the group $\gH_u = u \gH_{0,\bR} u^{-1}$ is defined over~$\bQ$ and $L_\bQ$ is irreducible as a representation of $\gH_u$ over $\bQ$, then there exists $w_u \in \Aut_{\rho_L(\gG)}(\Lambda_\bR) w_0$ such that $\rho_L(u) w_u \in \Lambda$ and
\[ \abs{w_u} \leq \refC{rep-multiplier} \abs{\disc(R_u)}^{\refC{rep-exponent}}, \]
where $R_u$ denotes the ring $\End_{\gH_u}(L) \subset \rM_{2g}(\bZ)$.
\end{enumerate}

By \cite[Theorem 1.2]{QRTUI}, there exist positive constants $\newC{QRTUI-multiplier}$ and $\newC{QRTUI-exponent}$ with the following property: for every $u\in\gG(\RR)$ and $w_u\in \Aut_{\rho_L(\gG)}(\Lambda_\bR) w_0$ such that $\gH_u = u \gH_{0,\bR} u^{-1}$ is defined over~$\bQ$ and $\rho_L(u) w_u \in \Lambda$, there exists a fundamental set for $\Gamma\cap\gH_u(\RR)$ in $\gH_u(\RR)$ of the form 
\[ B_u\cF_\gG u^{-1}\cap\gH_u(\RR),\]
where $B_u\subset\Gamma$ is a finite set such that
\[\abs{\rho_L(b^{-1}u)w_u} \leq \refC{QRTUI-multiplier} \abs{w_u}^{\refC{QRTUI-exponent}}\]
for every $b\in B_u$.

\medskip

For any $w\in\Lambda_\RR$, we write $\gG(w)$ for the real algebraic group $\Stab_{\gG_\RR,\rho_L}(w)$. Fixing a basis for $\Lambda$, we may refer to the height $\rH(w)$ of any $w\in\Lambda$ (namely, the maximum of the absolute values of its coordinates with respect to this basis.)

\begin{lemma} \label{z-w}
Let $P\in\Sigma_{d,e,m}$. There exists $z\in \pi^{-1}(P)\cap\cF$ and
\[w\in \Aut_{\rho_L(\gG)}(\Lambda_\bR)\rho_L(\gG(\RR)^+)w_0\cap\Lambda\] 
such that $z(\mathbb{S})\subset\gG(w)$ and
\[\rH(w)\leq \refC{QRTUI-multiplier}\refC{rep-multiplier}^{\refC{QRTUI-exponent}} \abs{\disc(R)}^{\refC{rep-exponent}\refC{QRTUI-exponent}},\]
where $R=\End(A_P)\cong\End_{\gG(w)}(L)\subset \rM_{2g}(\ZZ)$.
\end{lemma}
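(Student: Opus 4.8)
The plan is to trace a point $P\in\Sigma_{d,e,m}$ back to the group-theoretic data and then feed everything into \cref{rep-bound-arithmetic} and the quoted \cite[Theorem~1.2]{QRTUI}. First I would pick any preimage $\tilde z\in\pi^{-1}(P)\subset X^+$. Since $P$ is endomorphism generic in a special subvariety of simple PEL type I or~II whose underlying group is $\gG(\bR)$-conjugate to $\gH_0$ (by \cref{conj-class-mt} and the definition of $\Sigma_{d,e,m}$), we may choose $u\in\gG(\bR)^+$ with $\tilde z\in uX_0^+$; equivalently $h_{\tilde z}$ factors through $\gH_u=u\gH_{0,\bR}u^{-1}$, and $\gH_u$ is defined over~$\bQ$ because it is the general Lefschetz group of the special subvariety through~$P$. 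Moreover $L_\bQ$ is irreducible as a $\gH_u$-representation since the generic endomorphism algebra $D=\End_{\gH_u}(L_\bQ)=R_{u,\bQ}$ is a division algebra, and $R_u=\End_{\gH_u}(L)=\End(A_P)=:R$ under the identification $H_1(A_P,\bZ)\cong L$ induced by the chosen lift. This is exactly the setting in which \cref{rep-bound-arithmetic}(a) applies, yielding $w_u\in\Aut_{\rho_L(\gG)}(\Lambda_\bR)w_0$ with $\rho_L(u)w_u\in\Lambda$ and $\abs{w_u}\leq\refC{rep-multiplier}\abs{\disc(R)}^{\refC{rep-exponent}}$.

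Next I would move $\tilde z$ into the fundamental set $\cF=(\cF_\gG\cap\gG(\bR)^+)h_0$. Write $\tilde z=u'h_0$ for some $u'\in\gG(\bR)^+$ (possible since $h_0\in X_0^+\subset X^+$ and $X^+$ is a single $\gG(\bR)^+$-orbit); note $\gH_{u'}=\gH_u$ because both $u$ and $u'$ carry $h_0\in X_0^+$ into $uX_0^+=u'X_0^+$, so $u^{-1}u'$ centralises $\gH_0$ up to the subtlety that it only needs to fix $X_0^+$ pointwise — more carefully, replacing $u$ by $u'$ changes $w_u$ only by an element of $\Aut_{\rho_L(\gG)}(\Lambda_\bR)\cdot\Stab_{\rho_L(\gG)}(w_0)=\Aut_{\rho_L(\gG)}(\Lambda_\bR)\cdot\gH_0$, so without loss of generality we may take $u=u'$, i.e.\ $\tilde z=uh_0$. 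Applying \cite[Theorem~1.2]{QRTUI} to this $u$ and $w_u$, there is a finite set $B_u\subset\Gamma$ and, for each $b\in B_u$, the bound $\abs{\rho_L(b^{-1}u)w_u}\leq\refC{QRTUI-multiplier}\abs{w_u}^{\refC{QRTUI-exponent}}$, together with the fundamental-set statement for $\Gamma\cap\gH_u(\bR)$. The construction of that fundamental set, combined with $\tilde z=uh_0$, produces $b\in B_u$ and $\gamma\in\Gamma$ with $\gamma\tilde z\in b(\cF_\gG\cap\gG(\bR)^+)h_0\subset$ (essentially) $\cF$; one then sets $z=\gamma\tilde z\in\pi^{-1}(P)\cap\cF$ and $w=\rho_L(\gamma)\cdot(\text{appropriate conjugate of }w_u)$, more precisely $w=\rho_L(b^{-1}u)w_u$ conjugated back — the point is that $w\in\Aut_{\rho_L(\gG)}(\Lambda_\bR)\rho_L(\gG(\bR)^+)w_0\cap\Lambda$ and $z(\bS)\subset\gG(w)=\Stab_{\gG_\bR,\rho_L}(w)$, because $\gG(w)$ is the $\gamma$-conjugate of $\gH_u$ which contains $h_{z}$.

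Finally I would assemble the height bound: $\rH(w)$ is bounded by $\abs{w}=\abs{\rho_L(b^{-1}u)w_u}\leq\refC{QRTUI-multiplier}\abs{w_u}^{\refC{QRTUI-exponent}}\leq\refC{QRTUI-multiplier}\bigl(\refC{rep-multiplier}\abs{\disc(R)}^{\refC{rep-exponent}}\bigr)^{\refC{QRTUI-exponent}}=\refC{QRTUI-multiplier}\refC{rep-multiplier}^{\refC{QRTUI-exponent}}\abs{\disc(R)}^{\refC{rep-exponent}\refC{QRTUI-exponent}}$, possibly after enlarging the constant to absorb the comparison between the chosen norm $\abs{\cdot}$ on $\Lambda_\bR$ and the height $\rH$ relative to a fixed $\bZ$-basis (these differ by a bounded factor). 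The identification $R=\End(A_P)\cong\End_{\gG(w)}(L)$ follows from the Shimura-theoretic description of special subvarieties of PEL type recalled in section~\ref{subsec:shimura-data}: the general Lefschetz group of the smallest special subvariety of PEL type through~$P$ is $\gG(w)$, and its endomorphism ring of $L$ is the centraliser, which is $\End(A_P)$ since $P$ is endomorphism generic.

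I expect the main obstacle to be the bookkeeping in the middle paragraph: carefully matching the element $u$ used to invoke \cref{rep-bound-arithmetic} with the element used to realise $\tilde z=uh_0$, and tracking how $w_u$ transforms under this matching and under the left multiplication by $b^{-1}$ and by $\gamma$, so that the final $w$ genuinely lies in $\Aut_{\rho_L(\gG)}(\Lambda_\bR)\rho_L(\gG(\bR)^+)w_0\cap\Lambda$ and cuts out the correct group. The analogous argument is \cite[Proposition~6.3]{QRTUI} in the case $g=2$, and here the only new input is that the required length bound for $w_u$ is supplied by \cref{rep-bound-arithmetic} (ultimately by \cref{minkowski-hermitian-perfect}) rather than by the $g=2$ computation, so the structure of the proof is unchanged.
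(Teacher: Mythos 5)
Your proposal is correct and follows essentially the same route as the paper: the paper likewise uses \cref{conj-class-mt} and \cref{conj-class-datum} to produce $u\in\gG(\bR)^+$ with $\gH_u$ defined over $\bQ$, applies \cref{rep-bound-arithmetic}(a) for $w_u$, invokes \cite[Theorem~1.2]{QRTUI} to get $B_u$ and the bound on $\rho_L(b^{-1}u)w_u$, sets $z=fh_0\in\cF$ and $w=\rho_L(b^{-1}u)w_u$, and defers the verification that $z(\bS)\subset\gG(w)$ and $\End_{\gG(w)}(L)\cong R_u$ to the argument of \cite[Proposition~6.3]{QRTUI}, exactly as you do. The only differences are bookkeeping: instead of your normalisation $\tilde z=uh_0$ (which is fine if you take $u'=uh'$ with $h'\in\gH_0(\bR)^+$, so $\gH_{u'}=\gH_u$), the paper keeps a lift in $\cF$ and decomposes its $\gH_u(\bR)^+$-component through the fundamental set $B_u\cF_\gG u^{-1}\cap\gH_u(\bR)$, and one must set $z=b^{-1}\gamma^{-1}\tilde z=fh_0$ rather than leaving the stray factor $b$ as in your ``essentially $\cF$'' remark.
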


\begin{proof}
Let $z'\in \pi^{-1}(P)\cap\cF$.
Since $P\in\Sigma_{d,e,m}$, it is an endomorphism generic point of a special subvariety $S \subset \Ag$ of simple PEL type I or~II with parameters $d,e,m$.
Therefore, there is a Shimura subdatum component $(\gH, Y^+) \subset (\gG, X^+)$ of simple PEL type I or~II such that $\pi(Y^+) = S$ and $z' \in Y^+$. (In particular, $z'(\mathbb{S})\subset\gH_{\RR}$.) By \cref{conj-class-mt}, $\gH_{\bR} = u\gH_{0,\RR}u^{-1}$ for some $u\in\gG(\RR)^+$, and so we write $\gH_u=\gH$.
By \cref{conj-class-datum},
\[Y^+=uX^+_0=u\gH_0(\RR)^+h_0=\gH_u(\RR)^+uh_0. \]

Let $R_u = \End_{\gH_u}(L)$.
Since $\gH_u$ is the general Lefschetz group of $S$, $R_u$ is the generic endomorphism ring of~$S$ and, hence, isomorphic to $\End(A_P)$.  Since $S$ is a special subvariety of simple PEL type,  $R_{u,\bQ}$ is a division algebra.
Hence $L_\bQ$ is irreducible as a representation of~$\gH_u$.

By \cref{rep-bound-arithmetic}(a), there exists $w_u \in \Aut_{\rho_L(\gG)}(\Lambda_\bR) w_0$ such that
\[ \rho_L(u) w_u \in \Lambda \quad \text{ and } \quad \abs{w_u} \leq \refC{rep-multiplier} \abs{\disc(R_u)}^{\refC{rep-exponent}}. \]
Hence, by \cite[Theorem 1.2]{QRTUI}, there exists a fundamental set for $\Gamma\cap\gH_u(\RR)$ in $\gH_u(\RR)$ of the form 
\[ \cF_u=B_u\cF_\gG u^{-1}\cap\gH_u(\RR),\]
where $B_u\subset\Gamma$ is a finite set such that
\[\abs{\rho_L(b^{-1}u)w_u} \leq \refC{QRTUI-multiplier} \abs{w_u}^{\refC{QRTUI-exponent}}\]
for every $b\in B_u$.
Therefore, we can write $z' \in \gH_u(\bR)^+ uh_0$ as
\[ z' = \gamma b f u^{-1}\cdot uh_0 \]
for some $\gamma \in \Gamma \cap \gH_u(\bR)$, $b \in B_u$, and $f \in \cF_\gG$.

Let
\[ z = b^{-1}\gamma^{-1}z' = fh_0 \in \cF_\gG h_0\cap X^+=\cF, \]
where the last equality uses the fact that $\Stab_{\gG(\bR)}(h_0)\subset\gG(\RR)^+$.
Since $b, \gamma \in \Gamma$, we obtain $z \in \pi^{-1}(P) \cap \cF$.

Let $w=\rho_L(b^{-1}u)w_u$.
As in \cite[Proposition 6.3]{QRTUI}, we can show that $z(\bS) \subset \gG(w)$ and that $\gG(w)$ is a $\Gamma$-conjugate of $\gH_u$, so $R_u \cong \End_{\gG(w)}(L)$.
Consequently, $z$ and $w$ satisfy the requirements of the lemma.
\end{proof}

We can now deduce \cref{prop:finitely-many-intro}.

\begin{corollary}\label{prop:finitely-many}
Define $\Sigma \subset \Ag$ as in \cref{main-theorem-zp}.
For each $b\in\RR$, the points $s\in\Sigma$ such that $\abs{\disc(\End(A_s))} \leq b$ belong to only finitely many proper special subvarieties of simple PEL type I or II.
\end{corollary}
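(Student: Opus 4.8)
The plan is to repackage the ingredients already assembled for \cref{ZP-end}, using \cref{z-w} as the main input together with the finiteness of lattice points of bounded height. As in section~\ref{subsec:proof-strategy-high-level}, write $\Sigma=\bigcup_{d,e,m}\Sigma_{d,e,m}$, the union being over the finitely many triples of positive integers $d,e,m$ with $d^2em=2g$, $d\in\{1,2\}$ and $dm$ even. It therefore suffices to show, for each fixed such triple, that
\[ T_{b,d,e,m}=\{\,s\in\Sigma_{d,e,m}:\abs{\disc(\End(A_s))}\le b\,\} \]
is contained in the union of finitely many proper special subvarieties of simple PEL type I or~II. Fix $d,e,m$ from now on, and let $W$, $\Lambda$, $\rho_L$, $w_0$ and the constants $\refC{rep-multiplier},\refC{rep-exponent},\refC{QRTUI-multiplier},\refC{QRTUI-exponent}$ be as in the proof of \cref{ZP-end}.

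First I would apply \cref{z-w} to each $P\in T_{b,d,e,m}$. This produces a point $z\in\pi^{-1}(P)\cap\cF$ and a vector $w\in\Aut_{\rho_L(\gG)}(\Lambda_\bR)\rho_L(\gG(\bR)^+)w_0\cap\Lambda$ with $z(\bS)\subset\gG(w)$ and
\[ \rH(w)\le\refC{QRTUI-multiplier}\refC{rep-multiplier}^{\refC{QRTUI-exponent}}\abs{\disc(\End(A_P))}^{\refC{rep-exponent}\refC{QRTUI-exponent}}\le\refC{QRTUI-multiplier}\refC{rep-multiplier}^{\refC{QRTUI-exponent}}b^{\refC{rep-exponent}\refC{QRTUI-exponent}}. \]
Thus $\rH(w)$ is bounded by a constant $B=B(b,d,e,m)$ independent of $P$, so $w$ lies in the finite set $\Lambda_B=\{\,w\in\Lambda:\rH(w)\le B\,\}$. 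Choosing for each $P$ one admissible pair $(z,w)$ gives a map $P\mapsto w(P)\in\Lambda_B$, and it remains to attach to each $w\in\Lambda_B$ a single proper special subvariety $Z_w$ of simple PEL type I or~II such that $P\in Z_w$ whenever $w(P)=w$.

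To do this, recall from the proof of \cref{z-w} that $\gG(w)=\Stab_{\gG_\bR,\rho_L}(w)$ is a $\Gamma$-conjugate of the general Lefschetz group of the special subvariety in which $P$ is endomorphism generic; in particular $\gG(w)$ is defined over $\bQ$ and is $\gG(\bR)^+$-conjugate to the group $\gH_0$ of \eqref{eqn:H0}. Hence, by exactly the argument of section~\ref{subsec:parameter-space} (invoking \cref{conj-class-mt} and \cref{unique-datum}), the set $X_{\gG(w)}=\{\,z'\in X^+:z'(\bS)\subset\gG(w)_\bR\,\}$ is connected and $(\gG(w),X_{\gG(w)})$ is a Shimura subdatum component of $(\gG,X^+)$ of simple PEL type I or~II; so $Z_w=\pi(X_{\gG(w)})$ is a special subvariety of simple PEL type I or~II. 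It is proper, since $P\in\Sigma$ forces $\End(A_P)\not\cong\bZ$, whence $\gG(w)\subsetneq\gG$ and $Z_w\ne\cA_g$ (alternatively one may quote \cref{codim-pel}). Finally, $z\in X_{\gG(w)}$ and $\pi(z)=P$ give $P\in Z_w$. Therefore $T_{b,d,e,m}\subseteq\bigcup_{w\in\Lambda_B}Z_w$, a union of finitely many proper special subvarieties of simple PEL type I or~II, which is what we wanted.

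I expect the only step needing care to be the last paragraph: verifying that $\gG(w)$, being defined over $\bQ$ and $\gG(\bR)^+$-conjugate to $\gH_0$, cuts out a \emph{connected} Shimura subdatum component, so that $\pi(X_{\gG(w)})$ is an unambiguously defined special subvariety of simple PEL type I or~II. This is, however, precisely the verification already performed for rational points of the parameter space $Y$ in section~\ref{subsec:parameter-space}, now applied with the rational vector $w$ playing the role of $y$; so the argument is routine. Everything else is bookkeeping with the bounds of \cref{z-w}.
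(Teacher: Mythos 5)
Your proposal is correct and follows essentially the same route as the paper: the paper's proof simply cites \cite[Corollary 6.4]{QRTUI}, whose argument is exactly what you spell out — use \cref{z-w} to attach to each point of bounded discriminant a lattice vector $w\in\Lambda$ of bounded height, note there are finitely many such $w$, and observe that each $w$ determines (via $\gG(w)$ and \cref{conj-class-mt,unique-datum}) a single proper special subvariety of simple PEL type I or~II containing the point. No gaps to flag.
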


\begin{proof}
The proof is essentially the same as \cite[Corollary 6.4]{QRTUI}.
\end{proof}

The proof of Theorem \ref{ZP-end} now proceeds as in 
\cite[sec.~6.5]{QRTUI} with some modifications, which we outline below (following the notation from \cite[sec.~6.5]{QRTUI} \textit{mutatis mutandis}).
\begin{enumerate}[(1)]
\item The argument is carried out inside $X^+ \cong \Hg$ instead of $\cH_2$.

\smallskip

\item If $P \in \Sigma_{d,e,m}$, then $P$ is endomorphism generic in some special subvariety $Z \in \Omega$ (where $\Omega$ is defined in \cref{ZP-end}).
Then $Z$ is an irreducible component of $\cM_R$, where $R = \End(A_P)$ (see definitions in section~\ref{subsec:intro-zp-context}).
Since $\cM_R$ is $\Aut(\CC)$-invariant and its (analytic) irreducible components are algebraic subvarieties of $\Ag$, for each $\sigma \in \Aut(\CC)$, $\sigma(Z)$ is also an irreducible component of $\cM_R$.
Thus, $\sigma(Z)$ is also a special subvariety of simple PEL type I or~II with the same parameters $d,e,m$. 
Furthermore, $\dim(\sigma(Z)) = \dim(Z)$, so $\sigma(Z) \in \Omega$.
Since $\End(A_{\sigma(P)}) \cong \End(A_P)$, $\sigma(P)$ is endomorphism generic in $\sigma(Z)$, so $\sigma(P) \in \Sigma_{d,e,m}$.

\smallskip

\item In the definition of the definable set~$D$, we replace $\gG(\RR)$ with $\gG(\RR)^+$. That is, $w\in\Aut_{\rho_L(\gG)}(\Lambda_\bR)\rho_L(\gG(\RR)^+)w_0$, as in \cref{z-w}. Then 
\[g_t \in \Aut_{\rho_L(\gG)}(\Lambda_\bR) \rho_L(\gG(\bR)^+)\] for all $t$.  So $g_t^{-1}z_t$ is in the same connected component of $X$ as $z_t \in \cC \subset X^+$. We conclude that $g_t^{-1}z_t$ lies on the unique pre-special subvariety of $X^+\cong\Hg$ associated with $\gH_0$, namely, $X^+_0$ (see \cref{unique-datum}).

\smallskip

\item By the inverse Ax--Lindemann conjecture, the smallest algebraic subset of $X^+$ containing $\tilde{C}$ is an irreducible component of $\pi^{-1}(S)$, which we call~$\tilde{S}$.

\smallskip

\item As in the penultimate paragraph of \cite[sec.~6.5]{QRTUI}, we choose a complex algebraic subset $\tilde{B} \subset \Aut_{\rho_L(\gG)}(\Lambda_\bC) \rho_L(\gG(\bC))$ of dimension at most~$1$ whose image under the map $g \mapsto g \cdot w_0$ is~$B$.
Here, the map
\[ \cdot : \Aut_{\rho_L(\gG)}(\Lambda_\bC) \rho_L(\gG(\bC)) \times (X^+)^\vee\to (X^+)^\vee\cong\Hg^\vee \] (which is used in \cite[sec.~6.5]{QRTUI}, but not explicitly defined there) is given by
\begin{align*}
    (a\rho_L(g),x)\mapsto g\cdot x,
\end{align*}
for each $a\in\Aut_{\rho_L(\gG)}(\Lambda_\bC)$ and $\rho_L(g)\in\rho_L(\gG(\CC))$. This is well-defined since
\begin{align*}
    \Aut_{\rho_L(\gG)}(\Lambda_\bC)\cap\rho_L(\gG(\CC))\subset\rho_L(Z(\gG)(\CC))\text{ and }\ker(\rho_L)\subset Z(\gG),
\end{align*}
and $Z(\gG)$, the centre of $\gG$, acts trivially on $(X^+)^\vee$.

\smallskip

\item In the final step, we conclude that $\tilde{B}\cdot(X^+_0)^\vee$ has uncountable intersection with $\tilde{C}$ and, hence, contains it. Therefore, $\tilde{S}$ is contained in $\tilde{B}\cdot(X^+_0)^\vee$, but
\[ \dim(\tilde{B} \cdot (X^+_0)^\vee) \leq 1 + \dim(X_0^+) \leq \dim(S) - 1, \]
delivering the contradiction.
\end{enumerate}

\subsection{Proof of Theorem~\ref{unconditional}}

If $C$ is an algebraic curve over a number field, and $\mathfrak{A}\to C$ is an abelian scheme of even relative dimension $g$, we say that $s\in C(\Qbar)$ is an \defterm{exceptional quaternionic point} if $\End(\fA_s)\otimes_\ZZ\QQ$ is a non-split totally indefinite quaternion algebra over a totally real field of degree $e$ such that $4e$ does not divide~$g$.
Note that these are precisely the points for which:
\begin{enumerate}[(i)]
\item $\fA_s$ is simple and $D := \End(\fA_s) \otimes \bQ$ has type I or~II; and
\item $\fA_s$ is exceptional in the sense of \cite[Definition~8.1]{ExCM}, that is, $D$ is not isomorphic to a subring of $\rM_g(\QQ)$.
\end{enumerate}
Indeed, if $\fA_s$ is simple, then $D$ is a division algebra and hence embeds into $\rM_g(\QQ)$ if and only if $\dim_\QQ(D)$ divides~$g$.
If $D$ has type~I, then $\dim_\QQ(D)$ always divides~$g$, while if $D$ has type~II, then $\dim_\QQ(D) = 4e$.

In order to prove \cref{unconditional}, it suffices to prove the following theorem, by the same argument as in \cite[sec.~6.7]{QRTUI}.
This theorem is a direct generalisation of \cite[Theorem~6.5]{QRTUI}.
Note that the image of $C \to \Ag$ is Hodge generic if and only if the generic Mumford--Tate group of the abelian scheme $\fA \to C$ is $\gGSp_{2g,\bQ}$.

\begin{theorem}\label{EQ-scheme}
Let $C$ be a irreducible algebraic curve and let $\fA\to C$ be a principally polarised non-isotrivial abelian scheme of even relative dimension $g$ such that the image of the morphism $C\to\cA_g$ induced by $\fA$ is Hodge generic.

Suppose that $C$ and $\mathfrak{A}$ are defined over a number field $L$ and that there exists a smooth curve \( C' \), a semiabelian scheme \( \fA' \to C' \) and an open immersion \( \iota \colon C \to C' \), all defined over \( \Qbar \), such that \( \fA \cong \iota^* \fA' \) and, for some point \( s_0 \in C'(\ov\bQ) \setminus C(\ov\bQ) \), the fibre \( \fA'_{s_0} \) is a torus.

Then there exist positive constants $\newC{GOEQ-scheme-mult}$ and $\newC{GOEQ-scheme-exp}$ such that, for any exceptional quaternionic point $s\in C$,
\[\#\Aut(\CC/L)\cdot s\geq\refC{GOEQ-scheme-mult}\abs{\disc(\End(\fA_s))}^{\refC{GOEQ-scheme-exp}}.\]
\end{theorem}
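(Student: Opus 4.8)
\emph{Proof strategy.} The argument is that of \cite[Theorem~6.5]{QRTUI}, with the surface case $g=2$ replaced by an arbitrary even~$g$; the only point that is genuinely different, the identification of the endomorphism algebras in play, has already been carried out in the paragraph preceding the statement. By that paragraph, an exceptional quaternionic point $s\in C$ is exactly a point at which $\fA_s$ is simple, $D:=\End(\fA_s)\otimes_\bZ\bQ$ is a division algebra with positive involution of Albert type~I or~II, and $\fA_s$ is exceptional in the sense of \cite[Definition~8.1]{ExCM}, i.e.\ $D$ is not isomorphic to a subring of $\rM_g(\bQ)$. Thus the set of exceptional quaternionic points on~$C$ is contained in the exceptional locus of $\fA\to C$, which is the locus to which André's G-functions method applies. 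Since $\dim_\bQ(D)=4e$ divides~$2g$, only finitely many values of~$e$ occur, and it is harmless (though not strictly necessary) to treat one value of~$e$ at a time.

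First I would set up the G-functions input exactly as in the proof of \cite[Theorem~8.2]{ExCM}. By hypothesis $\fA$ extends to a semiabelian scheme $\fA'\to C'$ over~$\ov\bQ$ whose fibre at~$s_0$ is a torus, so $\fA'$ has purely toric reduction at~$s_0$; choosing a local parameter at~$s_0$, the period functions of $\fA$ in the canonical frame near~$s_0$ are G-functions, with the archimedean and non-archimedean size bounds required by the quantitative Hasse principle, uniformly in the place. For an exceptional quaternionic point $s$, any element of~$D$ not lying in the image of $\rM_g(\bQ)$ produces a nontrivial algebraic relation among the period coordinates of~$\fA_s$ which does not extend across~$s_0$. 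The quantitative form of André's theorem, in the packaging of \cite[Theorem~8.2]{ExCM}, then gives positive constants depending only on $\fA\to C'$, $L$ and~$s_0$, such that $\#\Aut(\bC/L)\cdot s$ is bounded below by a fixed positive power of a complexity quantity attached to~$s$ (a suitable height, respectively the denominator measuring how $\fA_s$ approaches~$s_0$ at its place of worst reduction).

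The remaining step is to bound this complexity below by a positive power of $\abs{\disc(\End(\fA_s))}$. For this I would use the same elementary estimate as in the $g=2$ case of \cite{QRTUI}: a simple $g$-dimensional abelian variety whose endomorphism ring has discriminant of absolute value~$\delta$ must approach the toric boundary at some place at a rate bounded below by a fixed positive power of~$\delta$, since (as $D$ is a division algebra) the extra endomorphism imposes a genuine constraint on the period lattice. Combining this with the previous step, and adjusting $\refC{GOEQ-scheme-mult}$ and $\refC{GOEQ-scheme-exp}$, yields
\[ \#\Aut(\bC/L)\cdot s \;\geq\; \refC{GOEQ-scheme-mult}\,\abs{\disc(\End(\fA_s))}^{\refC{GOEQ-scheme-exp}} \]
for every exceptional quaternionic point $s\in C$, as required.

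The main obstacle is checking that \cite[Theorem~8.2]{ExCM}, so far exploited only in relative dimension~$2$, applies (or its proof adapts) for arbitrary even~$g$: one needs the period functions attached to a purely toric degeneration in dimension~$g$ to be G-functions with the size estimates demanded by the quantitative Hasse principle, and the period relation coming from a type~I or~II division algebra acting on a $g$-dimensional abelian variety to be of the form to which that principle applies. As remarked in the introduction, no new ideas beyond those of \cite{ExCM,QRTUI} are needed, so the bulk of the work is bookkeeping; the final conversion step is routine, modulo extending to general~$g$ the boundary-proximity estimate for abelian varieties with large endomorphism discriminant.
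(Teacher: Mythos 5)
Your identification of the set of exceptional quaternionic points with the exceptional locus of \cite[Definition~8.1]{ExCM}, and your use of \cite[Theorem~8.2]{ExCM} (applicable thanks to the purely toric fibre at $s_0$), match the paper's proof up to that point; the paper also quietly handles some bookkeeping you omit (enlarging $L$, normalising $C'$ so that it is smooth, and noting that $s \in C(\Qbar)$ because special subvarieties are defined over $\Qbar$), but those are minor.

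The genuine gap is in your final conversion step. The quantitative G-functions theorem, as packaged in \cite[Theorem~8.2]{ExCM}, gives an \emph{upper} bound for the height of an exceptional point $s$ which is polynomial in its degree over $L$; it does not directly give a lower bound for $\#\Aut(\CC/L)\cdot s$ in terms of a ``complexity'' such as a rate of approach to the boundary. To pass from this height bound to the stated inequality one needs an \emph{upper} bound for $\abs{\disc(\End(\fA_s))}$ that is polynomial in the Faltings height of $\fA_s$ and the degree $[L(s):L]$; this is exactly the Masser--W\"ustholz endomorphism estimate \cite{MW94} (in the form \cite[Theorem~6.6]{QRTUI}), which is the second key ingredient of the paper's proof and of \cite[Theorem~6.5]{QRTUI}, and which is entirely absent from your argument. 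In its place you invoke an ``elementary estimate'' asserting that a simple abelian variety with endomorphism ring of large discriminant must approach the toric boundary at a rate bounded below by a power of the discriminant. No such estimate appears in \cite{QRTUI}, and it is false as stated: for instance, for type~II the relevant quaternionic loci are frequently compact subvarieties of $\Ag$, staying at a fixed distance from the boundary however large the discriminant is, and even in the type~I (real multiplication) case points of large discriminant exist in any fixed compact region of the moduli space. So the step you describe as ``routine'' is precisely where the proof needs the Masser--W\"ustholz input, and without it your argument does not yield the bound $\#\Aut(\CC/L)\cdot s\geq\refC{GOEQ-scheme-mult}\abs{\disc(\End(\fA_s))}^{\refC{GOEQ-scheme-exp}}$.
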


\begin{proof}
After replacing $L$ by a finite extension, we may assume that  \( C' \),  \( \fA' \to C' \) and \( \iota \colon C \to C' \) are all defined over $L$.
After replacing $C'$ by its normalisation and $\fA'$ by its pullback to this normalisation, we may assume that $C'$ is smooth.  (Note that this step, which is required in order to apply \cite[Theorem~8.2]{ExCM}, was erroneously omitted in the proofs of \cite[Prop.~9.2]{ExCM} and \cite[Theorem~6.5]{QRTUI}.)
Observe that \( \fA \to C \) satisfies the conditions of \cite[Theorem 8.2]{ExCM}.

Let $s\in C$ be an exceptional quaternionic point. The image of $s$ under the map $C\rightarrow\mathcal{A}_g$ induced by $\fA\to C$ is in the intersection between the image of $C$ and a proper special subvariety of PEL type.
Since $C$ is a curve defined over $\Qbar$ and special subvarieties of $\Ag$ are defined over $\Qbar$, it follows that $s\in C(\Qbar)$.

The remainder of the proof proceeds as in the proof of \cite[Theorem 6.5]{QRTUI}.
The key ingredients are:
\begin{enumerate}
\item \cite[Theorem~8.2]{ExCM}, a height bound for exceptional points of~$C$ (including exceptional quaternionic points) which generalises \cite[Ch.~X, Theorem~1.3]{And89};
\item endomorphism estimates of Masser and W\"ustholz \cite{MW94} (a version using present notations is \cite[Theorem 6.6]{QRTUI}).
\end{enumerate}
\end{proof}

\bibliography{PEL13}
\bibliographystyle{amsalpha}

\end{document}